\newcommand{\R}{\mathbb{R}}
\newcommand{\C}{\mathbb{C}}
\newcommand{\Z}{\mathbb{Z}}
\newcommand{\Q}{\mathbb{Q}}
\newcommand{\M}{\mathfrak{M}} 
\newcommand{\Jplus}{\mathbb{J}_+}
\newcommand{\Jzero}{\mathbb{J}_0}
\newcommand{\lambdapert}{\lambda_{\varepsilon,L}}
\newcommand{\Jpert}{J_{\varepsilon}}
\newcommand{\upert}{u_{\varepsilon}}
\newcommand{\op}{\operatorname}
\newcommand{\at}{\mathfrak{at}}
\newcommand{\simp}{\operatorname{simp}}
\newcommand{\ind}{\operatorname{ind}}
\newcommand{\wind}{\operatorname{wind}}
\newcommand{\image}{\operatorname{image}}
\newtheorem{theorem}{Theorem}[section]
\newtheorem{lemma}[theorem]{Lemma}
\newtheorem{cor}[theorem]{Corollary}
\newtheorem{proposition}[theorem]{Proposition}
\newtheorem*{T:ot}{Theorem \ref{proposition ot}}
\newtheorem*{T:concavetoric}{Theorem \ref{theorem general}}
\newtheorem*{T:concaveOT}{Theorem \ref{theorem ot}}
\newtheorem*{T:concaveTight}{Theorem \ref{theorem tight}}
\newtheorem*{T:at}{Theorem \ref{theorem at}}
\newtheorem*{T:construction}{Theorem \ref{theorem:construction}}
\newtheorem*{T:asymptotics}{Theorem \ref{theorem:asymptotics}}
\newtheorem*{T:noright}{Theorem \ref{thm:no curve from right}}
\theoremstyle{definition}
\newtheorem{definition}[theorem]{Definition}
\newtheorem{setup}[theorem]{Setup}
\newtheorem{convention}[theorem]{Convention}
\newtheorem{notation}[theorem]{Notation}
\theoremstyle{remark}
\newtheorem{example}[theorem]{Example}
\newtheorem{remark}[theorem]{Remark}
\DeclarePairedDelimiter{\ceil}{\lceil}{\rceil}
\DeclarePairedDelimiter{\floor}{\lfloor}{\rfloor}
\title[Contact toric structures and concave boundaries of linear plumbings]{Properties of contact toric structures and concave boundaries of linear plumbings}
\author[A. Marinkovi\'c]{Aleksandra Marinkovi\'c}
\address{A. Marinkovi\'c, University of Belgrade, Faculty of Mathematics, Belgrade, Serbia}
\email{aleksandra.marinkovic@matf.bg.ac.rs}
\author[J. Nelson]{Jo Nelson}
\address{J. Nelson,
Rice University, Houston, Texas}
\email{jo.nelson@rice.edu}
\author[A. Rechtman]{Ana Rechtman}
\address{A. Rechtman, Institut Fourier,
Universit\'e Grenoble Alpes,
100 rue des math\'ematiques,
38610, Gi\`eres, France\\
Institut Universitaire de France (IUF)}
\email{ana.rechtman@univ-grenoble-alpes.fr}
\author[L. Starkston]{\\Laura Starkston}
\address{L. Starkston,
UC Davis, California}
\email{lstarkston@ucdavis.edu}
\author[S. Tanny]{Shira Tanny}
\address{S. Tanny, Weizmann Institute of Science, Rehovot, Israel}
\email{tanny.shira@gmail.com}
\author[L. Wang]{Luya Wang}
\address{L. Wang, Institute for Advanced Study, Princeton, New Jersey}
\email{luyawang@ias.edu}
\date{}
\begin{document}

\begin{abstract}
We consider plumbings of symplectic disk bundles over spheres admitting concave contact boundary, with the goal of understanding the geometric properties of the boundary contact structure in terms of the data of the plumbing. We focus on the linear plumbing case in this article. We study the properties of the contact structure using two different sets of tools. First, we prove that all such contact manifolds have a global contact toric structure, and use tools from toric geometry to identify when the contact structure is tight versus overtwisted. Second, we study algebraic torsion measurements from embedded contact homology (ECH) for these concavely induced contact manifolds, which has largely been unexplored. We develop a toolkit establishing existence and constraints of pseudoholomorphic curves adapted to the Morse-Bott Reeb dynamics of these plumbing examples, to provide the ECH algebraic torsion and contact invariant calculations for the concave boundaries of linear plumbings.

\end{abstract}
\maketitle

\tableofcontents

\section{Introduction}

This article makes connections between contact toric geometry, plumbing concave caps and algebraic torsion measurements in embedded contact homology (ECH). The motivation stems from problems on the realization of singular complex algebraic plane curves, as well as questions about symplectic fillability of a large natural class of contact manifolds and detection of non-fillability and tightness through Floer theories.

Our main results include the following. Beginning in the setting of toric geometry, we explain a complete criterion to detect overtwistedness through an explicit overtwisted disk in contact toric 3-manifolds. We develop a construction of a global symplectic toric structure on any minimal concave linear plumbing inducing a contact toric structure on its boundary. We use the term concave in this article to mean that there is an inward pointing Liouville vector field along the boundary. Through these two results, we give a characterization of which concave linear plumbings have tight contact boundaries and which have overtwisted contact boundaries in terms of the normal Euler numbers appearing in the plumbing. 

Next we turn to the Floer theory side, and our theory of choice is embedded contact homology (ECH). Our goal is to calculate a measurement called ``the algebraic torsion" in ECH \cite[Appendix {by Hutchings}]{at} for contact structures arising as concave boundaries of plumbings. The algebraic torsion can be thought of as a more subtle measurement than the contact invariant and can detect non-fillability. We calculate it {as well as the ECH contact invariant} in the case of contact toric $3$-manifolds by developing a toolkit for analyzing the {Morse-Bott} Reeb dynamics and pseudoholomorphic curves. This toolkit includes constructions of pseudoholomorphic curves, obstructions coming from positivity of intersection and action arguments, and constraints coming from the asymptotic expansion of the asymptotic ends of the curve and the interaction between the toric structure with the stable and unstable manifolds of hyperbolic orbits of the Reeb flow.

In future work, we plan to apply this toolkit to more complicated examples of plumbings which are non-linear, in order to detect tightness and non-fillability in these contact manifolds. A particularly interesting subset of contact manifolds obtained as the concave boundary of a plumbing, are those arising as the boundary of a neighborhood of a singular complex plane curve, where the plumbing is obtained by considering the normal crossing resolution. If the singular curve can be realized in the complex plane, the corresponding contact structure is symplectically (Stein) fillable. However, there are many types of singular configurations which cannot be realized, and many of these have non-fillable contact structures (see~\cite{GollaStarkston}). Our motivating question is, can we use the contact structure to detect non-realizability? If the contact structure were overtwisted it certainly would not be fillable and thus the singular curve would not be realizable. More subtly, the contact structure could be tight, but non-fillability could in principle be detected through algebraic torsion, still obstructing realizability of the singular curve.

From the point of view of ECH, there are very few examples of computations in the literature with interesting topology. Most existing examples study different contact forms on $S^3$. Other examples in the literature include the three-torus \cite{HutchingsSullivanT3}, prequantization bundles \cite{NelsonWeiler} and connected sums \cite{Luya}. Computations of a transverse knot filtered version on $S^3$ were done in~\cite{Weiler, NelsonWeiler2, NelsonWeiler3}. A preprint of Choi provided a conjectural combinatorial formula for the ECH complex for invariant contact forms on $I\times T^2$
and work in the preprint of Yao~\cite{Yao} developing the Morse-Bott setting under a technical hypothesis makes some progress towards showing this combinatorial formula aligns with the ECH differential.  In particular, additional work is needed to  establish that the combinatorial and ECH differentials agree after collapsing at end points of the interval, when the generators include the elliptic orbits appearing as the images of the two collapsed tori, cf. \cite[\S 5.6]{NelsonWeiler3}.  In our paper, we will not rely on these works, but rather prove our computations directly. 

One goal of this article is to expand the range of computations of ECH quantities to more topologically complex $3$-manifolds. We look closely at lens spaces (including $S^1\times S^2$), but our tools are designed to be useful in our future work studying these invariants on other graph manifolds (boundaries of plumbings). There are many examples of concave boundaries of plumbings which have been shown to be non-fillable through fairly subtle ad hoc methods distinct from those we use here~\cite{GollaStarkston}. Thus we expect that these examples can exhibit a range of different finite non-zero values of algebraic torsion, corresponding to different levels of non-fillability that are still tight. This would enable the observed phenomena in natural examples to be understood in terms of direct ECH algebraic torsion measurements. Then applying our computational methods to examples where fillability is unknown could in turn give new results about fillability, tightness, and singular curve realizability.

Now we will explain our motivation and results in more detail.

\subsection{Concave plumbings}

A contact manifold can arise naturally as the boundary of a symplectic manifold if there is a transverse Liouville vector field near the boundary. When the Liouville vector field points outward from the boundary, we say the boundary is convex, while when it points inward, we say the boundary is concave. Contact manifolds arising as the convex boundary of a symplectic manifold are (strongly) symplectically fillable and are known to be tight~\cite{gromov,eliashbergfilling}. Thus, there are contact manifolds which cannot arise as a convex boundary of any symplectic manifold. While overtwisted structures were the first such example, there are tight examples as well starting with~\cite{EtnyreHondaTightNonfillable}. It is generally an interesting problem to identify contact manifolds which fail to be fillable in this more subtle (tight) way. On the other hand, every contact manifold arises as the \emph{concave} boundary of a symplectic manifold~\cite{etnyrecaps,eliashbergcaps,conwayetnyrecaps}. Thus, if we want to understand properties of a concave contact boundary, we will need specific information about the symplectic topology of the concave cap, not just its existence.

A particularly nice class of symplectic $4$-manifolds with boundary are those built from plumbing together disk bundles over surfaces. Given any collection of transversely intersecting symplectic surfaces in a symplectic $4$-manifold, its tubular neighborhood is a symplectic plumbing. As mentioned above, a normal crossing resolution of a singular complex curve is a significant example. Thus, it is relatively easy to find and recognize plumbings inside of a closed symplectic $4$-manifold.

It is often convenient to encode a plumbing with a decorated graph as follows. The vertices correspond to the disk bundles over surfaces. Each vertex is decorated by a pair of integers $(g_i,s_i)$ giving the genus of the surface and the Euler number of the disk bundle. There is an edge between two vertices if the corresponding disk bundles are plumbed together (i.e. if the corresponding surfaces intersect). To specify a symplectic structure, we must also keep track of the symplectic area of each surface, which can be recorded by upgrading the decoration to $(g_i,s_i,a_i)$.

Some work was done to explore when symplectic plumbings admit a convex or concave Liouville structure. Gay and Stipsicz~\cite{gs} proved that negative definite plumbings admit a convex filling structure. Li and Mak~\cite{lm} used the same method to show that many plumbings which are not negative definite admit a concave cap structure.

Our aim in this program is to find ways of characterizing key geometric properties of contact manifolds arising as the concave boundary of a plumbing. We are particularly interested in characterizing when the contact manifold is overtwisted versus tight, and when it is symplectically fillable or non-fillable.

In this article, we focus on an initial restricted class of plumbings: linear plumbings of disk bundles over spheres. In this case, the contact 3-manifolds appearing on the boundary are lens spaces (including $S^3$ and $S^1\times S^2$). Since there is a classification of tight contact structures on lens spaces~\cite{Honda} which implies that every tight contact structure is fillable, this class will not be a source of examples of tight but non-fillable contact manifolds. However, more general concave plumbings are likely to admit such examples, and our work in the linear case is an integral step in ascertaining such phenomena. 

In particular, there are plumbings coming from normal crossing resolutions of non-realizable singular complex curves, whose concave contact boundaries are known to be non-fillable, but through quite subtle arguments~\cite{GollaStarkston}. It is not currently known whether these contact boundaries are tight or overtwisted. There are no obvious overtwisted disks, and the subtlety of the obstructions to fillability suggest that these contact manifolds may lie in this interesting range of tight but non-fillable manifolds. Analyzing these contact structures is the goal of the next stage of this program which we will address in future work. This paper develops the machinery to study concave boundaries of linear plumbings and provides a toolkit that we will use to study more complicated plumbings.

\subsection{Contact toric 3-manifolds}

While there is an abundance of articles and results about toric structures on symplectic manifolds, the study of their contact cousins has been more limited. Important foundational work was done by Banyaga and Molino (\cite{BM93},\cite{BM96}), Boyer and Galicki (\cite{BG00}) and Lerman (\cite{Lerman_contact_toric}) to characterize contact toric manifolds and their moment map images.  In this article, we focus on $3$-dimensional contact toric manifolds. By work of Lerman~\cite{Lerman_contact_toric}, the underlying 3-manifolds must be $T^3$, or a lens space $L(p,q)$ (including $S^3=L(1,0)$ and $S^1\times S^2=L(0,1)$). The $T^3$ case is distinguished by the property that the toric action is free. The contact manifold in the non-free case is characterized by two real numbers, representing the starting and ending angles of the moment image curve which we denote by $t_1$ and $t_2$. We demonstrate the existence of an overtwisted disk in the contact manifold in the case when the moment image curve traverses an angle strictly greater than $\pi$, and see that when the angle is less than or equal to $\pi$ the contact structure is tight. See also \cite[Proposition 9.11]{sym42}.

\begin{T:ot}
	Let $(Y^3,\xi)$ be a compact connected contact manifold with a non-free toric action obtained from Lerman's construction from real numbers $t_1,t_2$. The contact structure $\xi$ is overtwisted if and only if  $t_2-t_1> \pi$.
\end{T:ot}

In concurrent work~\cite{MarinkovicStarkston}, the first and fourth authors classify all contact toric $3$-manifolds up to contactomorphism, thus characterizing which contact structures support a toric action.

Next, we turn our attention to $4$-dimensional symplectic plumbings of disk bundles over surfaces admitting a concave Liouville structure along their boundary. Our goal is to understand the contact boundary of such a concave plumbing, in terms of the data of the decorated plumbing graph. We focus on linear plumbings of disk bundles over $2$-spheres. Namely, the graph is connected and linear ({there are two vertices of valency $1$ and the rest have valency $2$}), and $g_i=0$ for all $i$. Thus, the data of the plumbing can be encoded by the ordered list of normal Euler numbers following the linear ordering of the graph. We show, with a non-negativity condition which is necessary for concavity, that such a plumbing admits a symplectic toric structure with concave contact toric boundary.

\begin{T:concavetoric} 
	A linear plumbing $(s_1,\ldots,s_n)$ where each surface is a 2-sphere and $s_i \geq0$ for at least one index $i=1, \ldots, n$ admits a structure of a symplectic toric 4-manifold with a concave contact toric boundary.
\end{T:concavetoric} 

Figure~\ref{fig:3figs} represents the moment map images of three such plumbings, where the green curves correspond to the boundaries. The radial rays emanating from the origin  point transversally into the polytopes. Thus, the corresponding vector field is Liouville, pointing inward the manifolds. 
In particular, this proves that such plumbings admit a concave Liouville structure. More general conditions for the existence of a concave Liouville structure are provided in~\cite{lm}, but our proof through toric geometry is independent of their results. A converse to this theorem is given in~\cite{MarinkovicStarkston}--that every contact toric manifold arises as the concave boundary of a linear plumbing. The main power of Theorem~\ref{theorem general}, is that it allows us to study the contact manifold induced on the concave boundary of the plumbing using the contact toric structure and Theorem~\ref{proposition ot}. By analyzing our construction of the toric structure on the plumbing, we are able to determine conditions on the normal Euler numbers ensuring that the angle traversed by the moment map image is strictly greater than $\pi$, and thus the contact structure is overtwisted by Theorem~\ref{proposition ot}.

\begin{T:concaveOT} The contact structure on the  boundary of a linear plumbing $(s_1,\ldots,s_n)$, 
	is overtwisted if for some index $i\in\{1,\ldots,n\}$ where   $s_i\geq0$,  one of the following cases occurs:
\begin{itemize}
    \item[(a)] either $s_is_{i+1}\geq 2$ or  $s_is_{i+1}\geq1$ and $n>2$;
    \item[(b)] there exists  $j\in\{1, \ldots, n\}$ such that $|i-j|>1$, and for which either $s_j\geq1$ or $s_j\geq0$ and $n>3$;
  \item[(c)] $s_i=0$ and either $s_{i-1}+s_{i+1}\geq1$ or $s_{i-1}+s_{i+1}\geq0$ and $n>3$.     
\end{itemize}       
\end{T:concaveOT}

Similarly, we provide conditions on the normal Euler numbers such that the contact structure on the concave boundary of the corresponding plumbing is tight.

\begin{T:concaveTight} 
 The contact structure on the boundary of a linear plumbing $(s_1,\ldots,s_n)$,  where   $s_i\geq0$  for at least one $i\in\{1,\ldots,n\},$ is tight if one of the following cases occurs:   
 \begin{itemize}
     \item[(a)] $s_j\leq -2$, for all $j\neq i,$
     
 \item[(b)]$s_i=0$,  $s_{i-1}+s_{i+1} \leq-2$ and
$s_j \leq-2$, for all $j\neq i,i+1.$        \end{itemize}
\end{T:concaveTight}

   \begin{figure}\centering
\includegraphics[width=17cm]{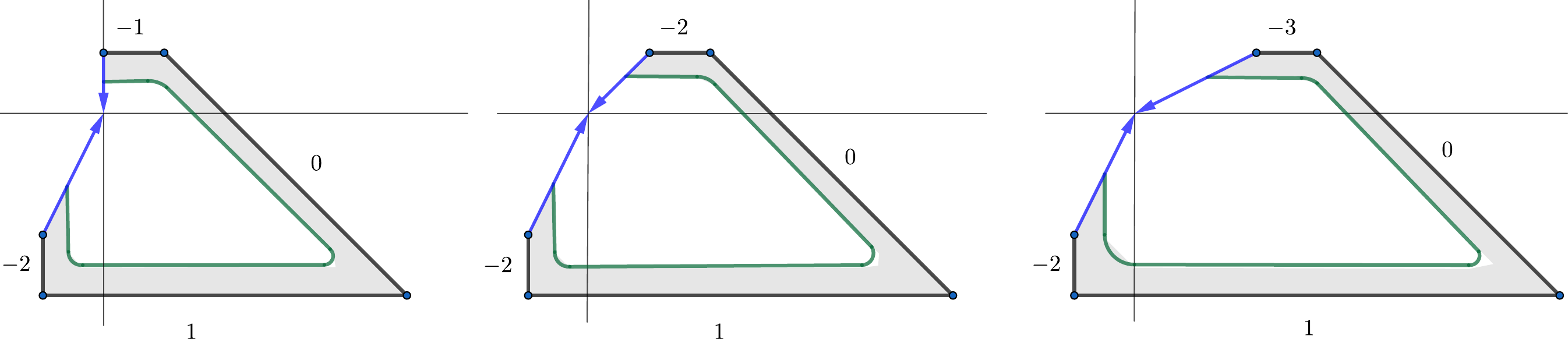}
\caption{The moment map images of the plumbings $(-2,1,0,-1)$ (left), $(-2,1,0,-2)$ (middle) and $(-2,1,0,-3)$ (right).
}\label{fig:3figs}
   \end{figure}

Note that these two theorems are very close to complementary, but there are some borderline cases of concave plumbings that are not covered by either of these two theorems. This is not an indication of a failure of the techniques, but rather the complexity of the statement. There is a fractal-like boundary (an infinitely repeating pattern) between the tight and overtwisted cases, of which condition (c) of Theorem~\ref{theorem ot} and condition (b) of Theorem~\ref{theorem tight} are the first level. We discuss other cases in Remark~\ref{remark tight or ot}.
{In Figure \ref{fig:3figs} we demonstrate this subtlety by presenting the moment map images of  three plumbings.
The contact structure on the boundary of  $(-2,1,0,-1)$ is overtwisted by Theorem~\ref{theorem ot} (c) 
and that of $(-2,1,0,-3)$ is tight by Theorem~\ref{theorem tight} (b). The plumbing $(-2,1,0,-2)$ is not covered by these theorems. However, the conclusion can be derived directly from Theorem~\ref{proposition ot}, as the blue rays of the moment cone span an angle greater than $\pi$ only in the left image. Therefore, the contact structure on the boundary of $(-2,1,0,-2)$ is tight 
(see also Example~\ref{ex:4vertices}).}

Note that most concave boundaries of non-linear plumbings will not admit a global contact toric structure. This is because the underlying 3-manifold is usually neither a lens space nor $T^3$. However, non-linear plumbings will still have many co-dimension zero submanifolds with boundary which \emph{do} admit a contact toric structure. This is why our development of pseudoholomorphic tools in the linear case will prove useful in the non-linear case, even though the global classification results about contact toric manifolds will not apply.

\subsection{Algebraic torsion}

In order to detect contact manifolds which are non-fillable and tight, we need measurements which obstruct overtwistedness and fillability. For this, we turn to algebraic torsion and contact invariants from Floer theory. There are different flavors of algebraic torsion, starting with the version defined by Latschev and Wendl using symplectic field theory~\cite{at}. In the appendix to that article, Hutchings defined a variant using embedded contact homology. Variants in Heegaard Floer homology were explored by Kutluhan, Mati\'c, Van Horn-Morris, and Wand in~\cite{KMVHMW1,KMVHMW2}. Roughly speaking, the algebraic torsion vanishes on overtwisted contact manifolds, and takes the value $\infty$ on fillable contact manifolds, so finite non-zero values of algebraic torsion are meant to measure the interesting case of tight and non-fillable contact manifolds. Because algebraic torsion is defined through Floer theories, it is generally quite involved to compute it in examples.

In this article, we focus on the version of algebraic torsion from embedded contact homology (ECH). Our long term goal is to be able to compute the ECH algebraic torsion for the contact boundary of any concave plumbing. We hope to detect tightness and interesting levels of non-fillability in many of these examples, particularly those motivated by singular complex curves.  Since the Floer theoretic pieces of the algebraic torsion computation are so intricate and as a result, left largely unexplored, we start by building a toolkit which applies to the concave plumbing setting. 

As a first application of this toolkit, we show that we can explicitly calculate algebraic torsion and vanishing/non-vanishing of the ECH contact invariant for concave boundaries of linear plumbings, which we know by Theorem~\ref{theorem general} are contact toric $3$-manifolds with non-free action. Below, $c(\cdot)$ denotes the ECH contact invariant and $\at,\at_{\simp}$ are two flavors of the ECH algebraic torsion. For precise definitions see \S~\ref{subsec:ECH_contact_inv} and \ref{subsec:AT_in_ECH}.

\begin{T:at} 
 Let $(Y^3,\xi)$ be a compact connected contact toric manifold characterized by two real numbers $t_1,t_2$, which define the corresponding moment cone as in Remark~\ref{rem:cone}.  
 \begin{enumerate}[label=(\alph*)]
  \item   If $t_2-t_1< \pi$ then
$c(\xi) \neq 0$ and $\at_{\simp}(Y, \lambda, J)= \infty$;
  
     \item if  $t_2-t_1> \pi$ then $c(\xi)=0$ and $\at(Y, \lambda,J)=0$;   

  \item if $t_2-t_1= \pi$ then
 $\at_{\simp}(Y, \lambda, J)>0$,
 \end{enumerate} 
    for all ECH data $(\lambda, J).$ 
\end{T:at}

The results are what we expect, based on properties of algebraic torsion and Theorem~\ref{proposition ot} (together with results of Honda~\cite{Honda} implying that all tight contact structures on lens spaces are fillable). The value of this theorem is not the result, but rather the methods used to prove it. To do this, we calculate information about the closed Reeb orbits and pseudoholomorphic curves which contribute towards the calculations of algebraic torsion.

\begin{remark}
We note that there are no explicit computations in the literature of ECH contact invariants or algebraic torsion measurements for lens spaces with contact structures. 
Prior computations of ECH have been mostly focused on different toric contact forms on $S^3$, with limited other examples, as described below.  Non-toric computations of ECH have been carried out for prequantization bundles of negative Euler number in \cite{NelsonWeiler, chenECH} and Seifert $T(p,q)$ fibrations of $S^3$ in \cite{NelsonWeiler2, NelsonWeiler3}.

Toric combinatorial ECH chain complexes, which are distinct from our methods, have been described in other settings, including some tight lens spaces $L(n,1)$, cf. \cite{intoconcave, ramosferreira, trejos2025echcapacitiesconcavesingular, ferreiraRamosVicente}.  These draw inspiration from the one first described for $T^3$ in \cite{HutchingsSullivanT3}, and utilize the Morse-Bott means to compute it \cite{yao2022cascades, Yao} as well as an algorithm previously given to compute toric combinatorial chain complexes for toric manifolds \cite{Choi}. 

However, the combinatorial toric differential described by other authors has only partially been proven to equal the ECH differential.  In particular, the combinatorial ``rounding corners" differential does agree with the ECH differential on  $T^2\times[0,1]$.  When collapsing circles in $T^2\times\{0\}$ and $T^2\times\{1\}$ via the contact cut procedure described in \cite{Lerman_contact_cut}, there is no proof as of yet in the literature that the combinatorial rounding corners differential agrees with the ECH differential when the Reeb currents  include the elliptic orbits, which arise as the images of the two collapsed tori. (For example, if the manifold is $S^3$, then when it is realized by way of being embedded in $\C^2$, these orbits are the Hopf link consisting of the intersections of $S^3$ with the complex axes.) 
\end{remark}

We now review the general strategy and some of the intermediate results in proving Theorem~\ref{theorem at}. First, we consider the Reeb dynamics associated to a contact form which is invariant under the toric action. For this contact form, closed Reeb orbits come in Morse-Bott families which foliate torus fibers. Then, we determine closed Reeb orbits of a perturbed contact form whose short periodic orbits are nondegenerate. After the perturbation, closed Reeb orbits under a specified action bound appear in pairs in torus fibers, where one orbit is elliptic and the other is hyperbolic. See \textsection\ref{subsec:nondeg_contact_form}.

The ECH chain complex is generated by collections of embedded Reeb orbits with multiplicities, and the differential counts  pseudoholomorphic curves of ECH and Fredholm index one in the symplectization that are positively and negatively asymptotic to closed Reeb orbits. The contact invariant is the homology class represented by the empty collection of Reeb orbits, $c(\xi):=[\emptyset]$. The order of algebraic torsion roughly measures a certain topological index of pseudoholomorphic curves which contribute towards ``killing'' the {empty set} $\emptyset$ (see \textsection\ref{s:ECHbackground}). To compute the algebraic torsion, we are particularly interested in pseudoholomorphic curves with no negative ends, because these curves contribute to differentials with the $\emptyset$ in their image.

Our first result about pseudoholomorphic curves which contributes towards our calculation of the algebraic torsion for contact toric $3$-manifolds is a construction. In what follows, $\lambda$ denotes a toric contact form satisfying certain assumptions stated in Setup~\ref{setup:construction}.

\begin{T:construction} 
    Let $(N,\lambda)$ be a contact toric manifold  with exactly one singular fiber and boundary a torus, satisfying the assumptions of Setup~\ref{setup:construction}. {There exists  a $\lambda$-compatible almost complex structure $J_0$ on $\R\times N$, such that for each Reeb orbit $\gamma_c$ in {the} {torus fiber} $T_{x_0}$ there is {an embedded} $J_0$-holomorphic plane $u_0:\C\to \R\times N$ which is positively asymptotic  to $\gamma_c$}. Additionally, there exist for any $L>1$:
    \begin{itemize}
        \item an $L$-nondegenerate perturbation $\lambdapert$ of $\lambda$,
        \item a closed positive hyperbolic Reeb orbit $\gamma_h$ of $\lambdapert$ in $T_{x_0}$,
        \item and a regular $\Jpert$-holomorphic plane $\upert$ positively asymptotic to $\gamma_h$ with $\ind(\upert)=I(\upert) = 1$ and $\Jplus(\upert) = 0$ for any $\lambdapert$-compatible almost complex structure $\Jpert$ on $\R\times N$ sufficiently close to $J_0$.
    \end{itemize}
\end{T:construction}

Here $\ind$ stands for the Fredholm index, $I$ is the ECH index and $\Jplus$ is a topological index defined in \S\ref{s:ECHbackground}. Note that $N$ is a solid torus, and the key property of the toric contact form $\lambda$ from Setup~\ref{setup:construction}  is that there exists a torus fiber $T_{x_0}$ in $N$ with Reeb slope equal to the meridian of the solid torus (i.e. the Reeb slope bounds a disk in $N$).

Theorem~\ref{theorem:construction} tells us that the empty set appears in $\partial \gamma_h$. If we can show that nothing else appears in $\partial \gamma_h$, we will be able to calculate algebraic torsion. We do this in the case that $t_2-t_1>\pi$ through two different types of constraints on pseudoholomorphic curves.

The first obstruction uses a positivity of intersection argument to confine all $\Jpert$-holomorphic curves whose positive ends are exactly $\gamma_h$ from reaching the second singular fiber, and prevents them from having any negative asymptotic ends. This forces them to lie in the same relative homology class as our constructed $\Jpert$-holomorphic plane obtained in Theorem~\ref{theorem:construction}. Additionally, we use an action argument to ensure that the positive asymptotic end of such a $\Jpert$-holomorphic curve does not lie ``to the right'' of the torus fiber $T_{x_0}$.

\begin{T:noright}
Let $\lambda$ be a toric contact form satisfying Setup~\ref{setup:morethanpi}, $\lambdapert$ its $L$-nondegenerate perturbation satisfying Setup~\ref{setup:deltaperturbation}, $\Jpert$  any $\lambdapert$-compatible almost complex structure, and $u$ a connected $\Jpert$-holomorphic curve, with exactly one positive asymptotic end which is at the closed positive hyperbolic Reeb orbit $\gamma_h$ in the torus fiber $T_{x_0}$. 
Then $u$ has no negative ends, $[u]=E\in H_2(Y,\gamma_h)$, $u$ has Fredholm index $1$, and $u$ does not approach $\gamma_h$ from the right.
\end{T:noright}

The notions of \emph{from the left} and \emph{from the right} mean that the asymptotic end lies in one component of $\R\times (Y\setminus T_{x_0})$. Note that Setup~\ref{setup:morethanpi} adds a key condition on the invariant contact form which utilizes the property that $t_2-t_1>\pi$ (and cannot be satisfied when $t_2-t_1\leq \pi$). The relative homology class $E$ is the unique relative homology class supported in the left half of $Y$ (away from the second singular fiber) which has boundary on $\gamma_h$ with multiplicity $1$.

To complete the count of $\Jpert$-holomorphic curves contributing to $\partial \gamma_h$, we analyze the positive asymptotic end.

\begin{T:asymptotics}
    For sufficiently small positive $\varepsilon$, let $\lambdapert$ be the perturbed contact form constructed in \S\ref{ss:perturb} and let $\Jpert$  be a compatible almost complex structure on $\R\times Y$. Let $\gamma_h$ be a closed positive hyperbolic Reeb orbit lying in the perturbation of a positive Morse-Bott torus at a center of the perturbation, and $\tau_0$ the trivialization from (\ref{eqn:tau0}).  
    
    Then any Fredholm index one $\Jpert$-holomorphic curve $u$ with a positive asymptotic end at $\gamma_h$ approaches from the left or from the right. 
    
    Moreover, if two Fredholm index one, $\Jpert$-holomorphic curves have $Q_{\tau_0}(u_1,u_2)=0$, no negative ends, the positive end is exactly $\gamma_h$, and $u_1$ and $u_2$ approach $\gamma_h$ from the same side then $u_1=u_2$ up to $\R$-translation.
\end{T:asymptotics}

Theorem~\ref{thm:no curve from right} ruled out curves approaching from the right, so we complete the argument by deducing that the curve $\upert$ from Theorem~\ref{theorem:construction} is the unique curve from the left. We are able to compute that $Q_{\tau_0}(u',\upert)=0$ for any $\Jpert$-holomorphic curve $u'$ positively asymptotic to $\gamma_h$ because we know the relative homology class of $u'$ based on Theorem~\ref{thm:no curve from right}. Thus the final part of Theorem~\ref{theorem:asymptotics} completes the count of curves and allows us to deduce Theorem~\ref{theorem at}\ref{case more pi}.

The argument for part~\ref{case equal pi} of Theorem~\ref{theorem at} uses similar tools, while part~\ref{case less pi} is a straightforward argument which does not require analysis of pseudoholomorphic curves (see \textsection\ref{section computing}).

\subsection{Organization of the paper}

Section~\ref{s:preliminaries} provides a review of elementary background on plumbings and symplectic toric geometry. Section~\ref{sec:contact_toric_manifolds} gives the fundamentals and background of contact toric geometry and proves Theorem~\ref{proposition ot}. Section~\ref{section concave toric} shows how to construct a global concave toric structure on linear plumbings with at least one non-negative Euler number (Theorem~\ref{theorem general}). Section~\ref{section tight-ot} determines which plumbings concavely induce tight or overtwisted contact boundary (Theorems~\ref{theorem ot} and~\ref{theorem tight}). Then we move to the ECH side of the article. Section~\ref{s:ECHbackground} gives an overview of the definition of ECH and algebraic torsion, as well as some fundamental properties. Section~\ref{section computing} gives the statement outline of the proof of the computations of algebraic torsion for contact toric $3$-manifolds. Sections~\ref{section plane exists},~\ref{section constraints}, and~\ref{sec:asymptotic}  prove the technical results that go into this proof outline.

\subsection*{Acknowledgements}

This project was initiated through the Women in Geometry conference in 2023 at the Banff International Research Station. The project was further supported by the Summer Collaborators program at the Institute for Advanced Study. Our work would not have been possible without these two opportunities to collaborate in person. We thank Michael Hutchings for helpful conversations and correspondence and thank the referee for their careful reading of our paper and their helpful suggestions.

A.M. is partially supported by the Ministry of Education, Science and Technological Development, Republic of Serbia, through the projects 451-03-66/2024-03/200104 and 451-03-136/2025-03/200104.

J.N. is partially supported by National Science Foundation grants DMS-2104411 and CAREER DMS-2142694. 

L.S. was supported by NSF CAREER grant DMS-2042345, and a Sloan Fellowship FG-2021-16254.

 S.T. was partially supported by a grant from the Institute for Advanced Study School of Mathematics,  the Schmidt Futures program,  a research grant from the Center for New Scientists at the Weizmann Institute of Science and Alon fellowship.

L.W. acknowledges support from NSF Grant DMS-2303437 and IAS Giorgio and Elena Petronio Fellow II Fund.

\section{Preliminaries on plumbings and symplectic toric geometry} \label{s:preliminaries}
In this section, we briefly review some topological and symplectic toric constructions, setting up our notation along the way.  

\subsection{Plumbings} We start by reviewing the plumbing construction and some previously understood interactions with symplectic structures.

A plumbing graph is a finite connected graph whose vertices, numbered from 1 to $n$, are assigned two numbers $(g_i,s_i)$ and have valency $d_i$, and whose edges are assigned a sign.  A plumbing graph defines a closed 4-manifold that is obtained in the following way: 
\begin{itemize}
    \item Each vertex corresponds to a disc bundle over a surface of genus $g_i$ with self-intersection number $s_i$.
\end{itemize}
In our case, the base surface is always a sphere so $g_i=0$ for every $i$ and thus vertices are assigned only the self-intersection number $s_i$. Two disc bundles are plumbed together if the corresponding vertices are joined by an edge. 

To plumb two disc bundles together, first choose a small disc in each base $D_1$ and $D_2$. The union of fibers over each $D_i$ is bundle isomorphic to $D_i\times D^2$. To plumb the disc bundles, we glue $D_1\times D^2$ to $D_2\times D^2$ by a map which preserves the product structure but switches the two factors. This gluing must be either orientation preserving or reversing, depending on a sign labeling the edge in the plumbing graph. When the graph is a tree, the result is independent of this choice. In this article we will not specify signs on edges, and will always assume the positive (orientation preserving) case.

  Associated to a (general) plumbing graph $\Gamma$ with $n$ vertices there is an $n\times n$ intersection matrix defined by $Q_\Gamma=[a_{ij}]$ such that
  \begin{itemize}
      \item   $a_{ii}=s_i$ for every $1\leq i\leq n$;
  \item $a_{ij}=0$ if there is no edge between the vertices $i$ and $j$ and $i\neq j$;
  \item  $a_{ij}=\pm 1$ if there is an edge between the vertices $i$ and $j$, $ i \neq j$, with an orientation $\pm$ assigned. 
\end{itemize}

A plumbing graph is linear if it is connected, it has exactly two vertices of valency 1 and all remaining vertices of valency 2. The two vertices of valency 1, are called end-vertices.
 Such a graph is of the form:
  \begin{center} \includegraphics[scale=.7]{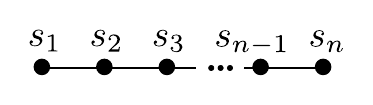} \end{center} 
Note, we have ommitted the label $g_i$ since we have assumed $g_i=0$, and we choose to order the indices of the vertices such that consecutive indices correspond to adjacent vertices in the graph.

For a linear plumbing graph, we have
\begin{equation*}
    Q_\Gamma =\begin{pmatrix} s_1 & 1& 0 & 0 & \cdots &0\\
    1 & s_2& 1 & 0 &\cdots& 0\\ 
    0 & 1 & s_3 &1 & \cdots & 0\\
    && \ddots&&&\\
    0 & 0& \cdots &0 & 1 & s_n
    \end{pmatrix}.
\end{equation*}

There are natural symplectic structures on general plumbings with positive edges, obtained by choosing a symplectic structure on each disk bundle such that the zero section is a symplectic surface, and the disk bundle is its standard small symplectic neighborhood. Then assuming the symplectic areas of $D_1$ and $D_2$ are the same as the symplectic area of the $D^2$ fibers, the plumbing gluing is a symplectomorphism. To a linear plumbing graph with $n$ vertices, we can associate the vector of the symplectic areas of each zero section $a=(a_1,\dots, a_n)\in \R_{>0}^n$.

As plumbings have a non-empty boundary it is natural to explore if the boundary admits a contact type structure. We recall the definition.

\begin{definition}
Let $(W,\omega)$ be a compact symplectic 4-manifold with boundary $Y=\partial W$, then:
\begin{enumerate}
    \item \label{i:cttype} $(W, \omega)$ has contact type boundary if there is a primitive $\tilde{\lambda}$ for $\omega$ ($d\tilde{\lambda}=\omega$) defined in a neighborhood of $\partial W$ in $W$, such that the restriction, $\lambda$, of $\tilde{\lambda}$ to $\partial W$ is a contact form.
    \item The contact boundary $(Y,\lambda)$ is \emph{convex} if the orientation induced on $Y$ by the non-vanishing form $\lambda\wedge d\lambda$ coincides with the orientation of $Y$ as boundary of $(W,\omega)$. Otherwise, $(Y,\lambda)$ is a \emph{concave} contact boundary.
\end{enumerate}
\end{definition}

Condition (\ref{i:cttype}) is equivalent to the existence of the Liouville vector field $V$ defined near the boundary  that is transversal to the boundary. When $V$ points out of (respectively, into) the boundary, the contact structure is convex
(respectively, concave).

\begin{remark}\label{rem:GS}
    
According to Gay-Stipsicz \cite{gs}, if  $Q_{\Gamma}$ is negative-definite, then a symplectic structure on the corresponding plumbing admits a Liouville vector field pointing transversaly out of the boundary, making the boundary of convex contact type. Using the same technique, Li-Mak 
\cite{lm} show
 a symplectic plumbing admits a concave Liouville structure if it satisfies the ``negative GS''  criterion: 
 \begin{itemize}
     \item  There exists $z\in \R_{<0}^n$ such that $-Q_{\Gamma} z = a$, where
$a=(a_1,\dots, a_n)\in \R_{>0}^n$ are the symplectic areas of the base spheres.
 \end{itemize}
 \end{remark}

\subsection{Symplectic  toric manifolds}\label{subsec:toric_intro}
We now give a short survey on the properties of symplectic toric manifolds that will be significantly used in this article. For more details we refer to Cannas da Silva's book \cite{ana_cannas}.

A symplectic toric manifold is a symplectic manifold $(W^{2n},\omega)$  equipped with an effective Hamiltonian action of the torus $T^n=\mathbb R^n/\mathbb{Z}^n$.
An associated  moment map  $\mu=(\mu_1,\ldots,\mu_n): W\to\mathbb R^{n}$ is defined, up to translation,
by 
$$\iota_{X_k}\omega=-d\mu_k, \qquad k=1,\ldots,n,$$
where the vector fields $X_k,$ $k=1,\ldots,n,$ are the generators of the action.
Beside translations, $ \pm SL(n,\mathbb{Z})$ transformations of the moment map image $\mu(W)$ do not change the symplectic toric structure. Indeed, any $G\in  \pm SL(n,\mathbb{Z})$ transformation of $\mu(W)\subset \mathbb{R}^n$ is induced by a reparametrization 
of the torus action.

\vskip1mm
   \begin{figure}\centering
\includegraphics[width=11cm]{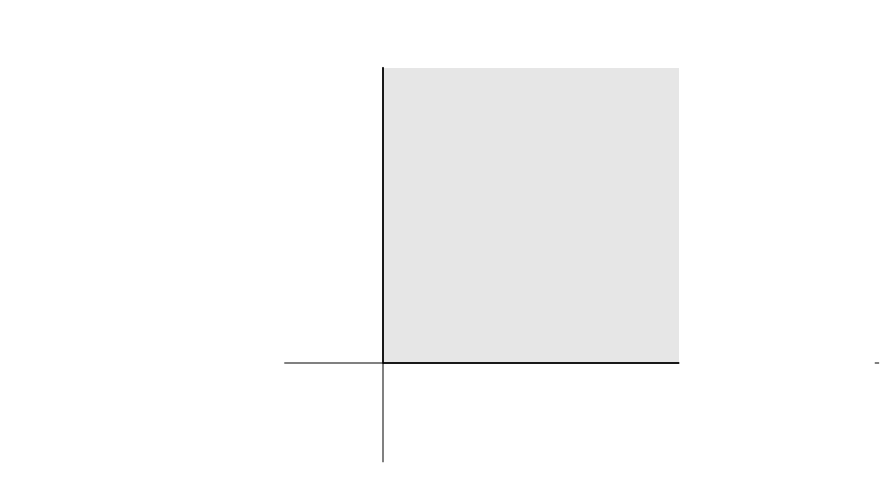}
\caption{The standard moment cone as the moment map image of $(\mathbb{C}^2,\omega_{st})$.
}\label{standard moment cone}
   \end{figure}

An equivariant version of the Darboux theorem states that a small neighborhood of a fixed point in a symplectic toric manifold $W$ is modelled by a toric complex space $(\mathbb{C}^n,\omega_{st}=\frac{i}{2}\sum_{k=1}^ndz_k\wedge d\bar{z}_k)$, with the toric action generated by the Hamiltonian vector fields 
$X_k=2\pi i(z_k\frac{\partial}{\partial z_k}-\bar{z}_k\frac{\partial}{\partial \bar{z}_k})$, for $k=1,\ldots,n$. The corresponding moment map is given by
\begin{equation}\label{standard H}
\mu(z_1,\ldots, z_n)=\pi(|z_1|^2,\ldots,|z_n|^2)+c
\end{equation}
and the moment map image is the standard cone (see Figure~\ref{standard moment cone}). Therefore, a moment map image of  a small neighborhood of a vertex can be mapped by some $SL(n,\mathbb{Z})$ transformation to the standard cone. 
 In particular,  fixed points in a symplectic toric manifold   correspond to vertices of a moment map image.

According to Atiyah–Guillemin–Sternberg convexity theorem
(\cite{Atiyah, GuiSte}), a moment map image of a closed symplectic toric manifold is a convex polytope and a pre-image of any point in the moment polytope is connected, i.e. one orbit. Moreover, a moment polytope is a Delzant polytope 
(\cite{Delzant}), that is, simple (there are $n$ edges emanating from any vertex), rational (inward normal vectors of the facets of the moment polytope belong to $\mathbb Z^n$) and smooth (inward normal  vectors of the facets meeting in a vertex form a $\mathbb Z^n$-basis).

In general, any $d$-dimensional orbit is an isotropic torus $T^d$ and it corresponds to a point in a $d$-dimensional face of a moment map image. If $v_1,\ldots, v_{n-d}\in\mathbb Z^n$ are linearly independent primitive inward normal vectors to that face, then
the corresponding $d$-dimensional orbit is obtained  by collapsing $T^n$ along the circles of slopes $v_j\in H_1(T^n,\mathbb Z),$ $j=1,\ldots, n-d.$
The set of full dimensional (Lagrangian) orbits  is an  open dense subset of $W$ and it is equivariantly symplectomorphic to   $(T^n\times U, \sum_{k=1}^ndp_k\wedge dq_k)$ with  a moment map $\mu(q_1,\ldots,q_n,p_1,\ldots,p_n)=(p_1,\ldots,,p_n)$, where $U\subset\mathbb R^n$ is an open set diffeomorphic to an open disc. That is, 
Hamiltonian functions make a completely integrable system with global action-angle coordinates $(q_1,\ldots,q_n,p_1,\ldots,p_n)\in T^n\times U.$

\subsubsection{\bf Symplectic toric 4-manifolds}\label{section: symplectic 4}
We now focus on some properties of symplectic toric 4-manifolds and their moment map images. 

A finite edge in the moment map image corresponds to an invariant symplectic sphere that is a collection of isotropic circle orbits and two fixed points of the toric action. If $w \in  \mathbb Z^2$
denotes an inward normal vector to that edge, then these circle orbits are obtained by collapsing the tori orbits along the circle of the slope 
$w \in H_1(T^2, \mathbb Z^2).$ 
Further, a symplectic area and a self-intersection number of such a sphere  can be read off from the moment map image in the following way (for details, see \cite[\S 7.2,7.3]{sym42}):
\begin{itemize}
    \item The symplectic area of the symplectic sphere corresponding to the edge with end points $V_1$ and $V_2$ is equal to the affine length of the interval $(V_1,V_2)$, namely it is equal to 
    $$
    |a|  \in \mathbb R_{>0},\quad\text{ where  }\quad V_2-V_1=a\cdot e,\quad\text{ for some primitive vector }e \in  \mathbb Z^2.$$ 
    (see Figure~\ref{self-int}, where $e=(u_2+v_2,-(u_1+v_1))$). 
\item The self-intersection number of this symplectic sphere is equal to the determinant of the inward normal vectors of the edges that are adjacent to the given edge, taken in the clock-wise direction.
\end{itemize}
In particular, $SL(2,  \mathbb Z)$ transformations of a moment map image preserve symplectic areas and self-intersection numbers.

\begin{figure}
\centering
\includegraphics[width=6cm]{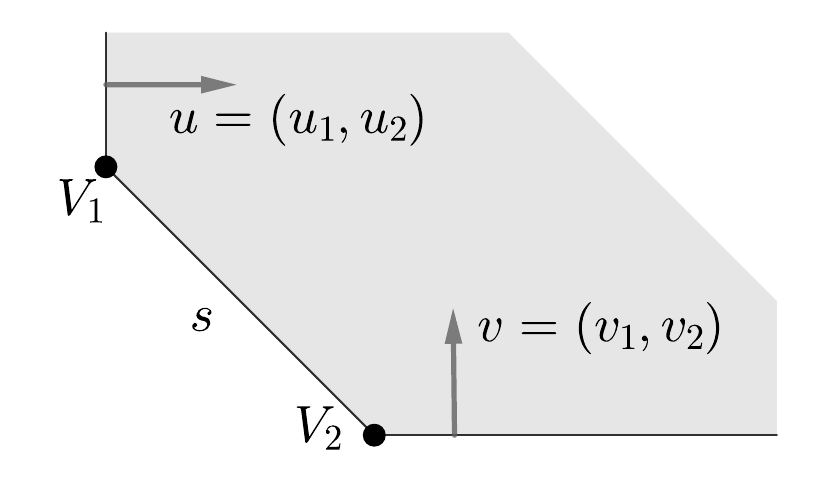}
\caption{A moment map image of a neighborhood of a symplectic sphere with self-intersection number $s=v_1u_2-v_2u_1$.}
\label{self-int}
\end{figure}

A moment map image of any closed symplectic toric 4-manifold is a 2-dimensional convex polytope where  primitive inward normal vectors of any two edges meeting in a vertex form a $ \mathbb Z^2$ basis (Delzant condition).  When the polytopes have exactly 4 edges they are called Hirzebruch trapezoids (see Figure~\ref{trapezoid}), as they are precisely moment map images of Hirzebruch surfaces, $\mathbb{CP}^2$ blown up once and $S^2 \times S^2$.
In \textsection\ref{section tight-ot}, we make use of Hirzebruch trapezoids in order to construct certain linear plumbings with a tight concave boundary.

\begin{figure}
\centering
\includegraphics[width=7cm]{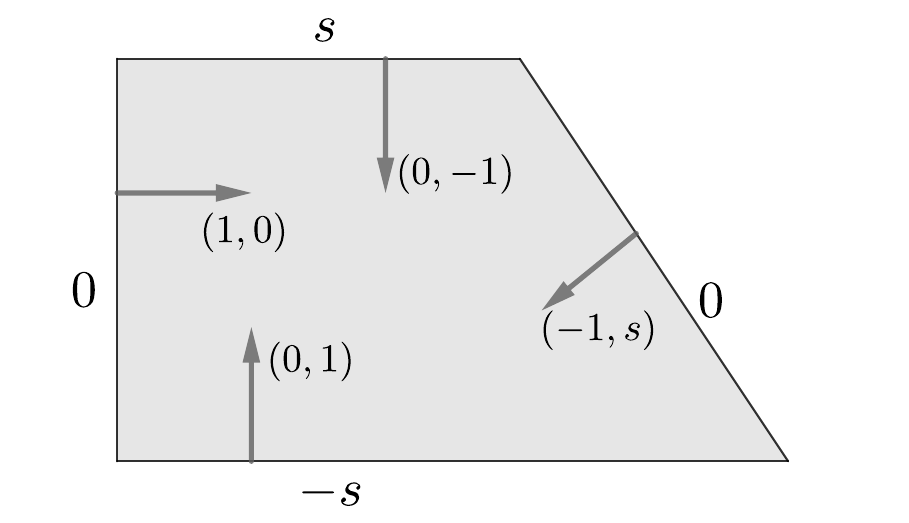}
\caption{A Hirzebruch trapezoid}
\label{trapezoid}
\end{figure}

A symplectic blow up at a fixed point of a symplectic toric 4-manifold $(W,  \omega)$ is a new symplectic toric 4-manifold. The corresponding moment map image is obtained by chopping off the corner of a moment map image of $(W, \omega)$ as shown in Figure~\ref{blow up}. The new edge obtained in this way is the moment map image  of the exceptional divisor of a blow up and has  self-intersection number $-1$. Self-intersection numbers of the spheres corresponding to the  edges that form the corner  lower by 1, after blow up.

    \begin{figure}
\centering
\includegraphics[width=15cm]{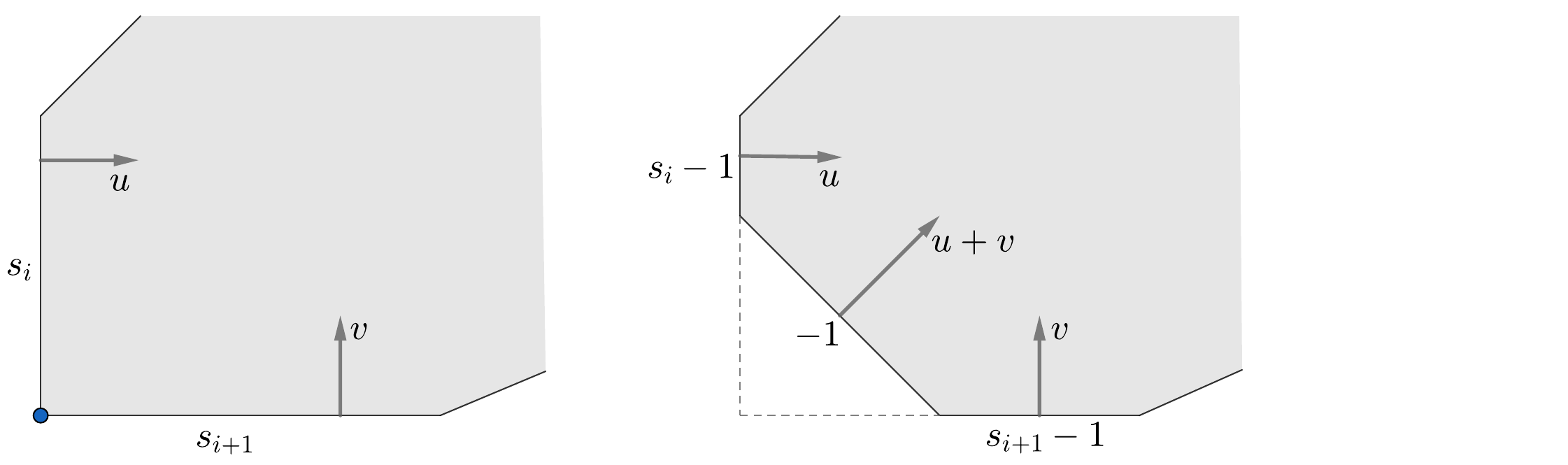}
\caption{A moment map image before and after a blow up of the fixed point, with the changes in  the self-intersection numbers. }
\label{blow up}
\end{figure}

\section{Contact toric manifolds}\label{sec:contact_toric_manifolds}
In this section we focus on contact manifolds equipped with a toric action. To any such manifold one can associate a moment cone that can be very useful for
 reading topological properties of the manifold as well as properties of the contact structure.
 In Theorem~\ref{proposition ot}, we will see that it is sufficient to know an angle related to the 2-dimensional moment  cone to determine whether the contact structure on a 3-manifold is overtwisted or tight. This criterion is then used in \S\ref{section tight-ot} to explore when the concave boundary of the linear plumbing is overtwisted or tight. Moreover, the contact toric structure will be especially helpful to understand the Reeb dynamics and used significantly 
 in our construction of a pseudoholomorphic plane in \S\ref{section plane exists}.  These are then utilized to compute the contact invariant and algebraic torsion in ECH in  \S\ref{section computing}.

 For more details associated to contact toric manifolds, we refer the reader to \cite{Lerman_contact_toric}.  We start from a symplectic toric manifold
$(W^{2n},\omega)$ that has a non-empty connected boundary $Y$. 
Let $V$ be a Liouville vector field defined near  $Y$  and  transverse to the boundary. The vector field $V$ induces a contact structure $\xi=\ker \iota_V\omega$ on  $Y$. As already mentioned,
if $V$ points out of (respectively, into) $W$ then $(Y,\xi)$ is called a convex (respectively, concave) contact boundary.

\begin{lemma}\label{lemma toric contact} 
A Hamiltonian action of a torus $T^n$ on a symplectic manifold $(W^{2n},\omega)$ with a convex (resp., concave) contact boundary $(Y,\xi)$ preserves the contact structure $\xi$.

\end{lemma}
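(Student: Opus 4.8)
The plan is to exhibit each generator of the torus action as an automorphism of the contact structure, by reducing to the case of an invariant Liouville vector field and then applying a one-line Cartan computation. First I would record the two structural facts that make the argument go. Since the $T^n$-action is Hamiltonian it is in particular symplectic, so each group element $\phi_g$ satisfies $\phi_g^*\omega=\omega$ and the infinitesimal generators $X_1,\dots,X_n$ satisfy $\mathcal{L}_{X_k}\omega=0$. Moreover every $\phi_g$ is a diffeomorphism of $W$ and hence preserves $\partial W=Y$, so each $X_k$ is tangent to $Y$. Thus it makes sense to ask whether the flow of $X_k$ preserves the hyperplane field $\xi=\ker(\iota_V\omega)|_{TY}$; establishing this for every $k$ is equivalent to the claim, since $T^n$ is connected and is generated by these flows.

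The main step is to arrange that the Liouville vector field is itself $T^n$-invariant. In the toric construction the relevant $V$ is the radial vector field associated to the moment data (the rays emanating from the origin in the moment cone), which is manifestly invariant; in general one may replace $V$ by its average $\bar V:=\int_{T^n}(\phi_g)_\ast V\,dg$ over the compact group. Here I would verify three routine points: each pushforward $(\phi_g)_\ast V$ is again Liouville because $\phi_g$ is a symplectomorphism and the Liouville condition $\mathcal{L}_{(\cdot)}\omega=\omega$ is affine in the vector field; the average is transverse to $Y$ and points into (resp.\ out of) $W$ because all the $(\phi_g)_\ast V$ do and this condition is convex; and $\bar V$ is invariant by invariance of Haar measure, i.e.\ $[X_k,\bar V]=0$ for all $k$. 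This produces an invariant Liouville field inducing an invariant contact form of the same (convex/concave) type, which is the operative case for the toric picture.

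With $V$ invariant, the conclusion is immediate. On $Y$ the contact form is $\lambda=(\iota_V\omega)|_{TY}$, and by Cartan's formula
\[
\mathcal{L}_{X_k}(\iota_V\omega)=\iota_{[X_k,V]}\omega+\iota_V(\mathcal{L}_{X_k}\omega)=0,
\]
since $[X_k,V]=0$ by invariance and $\mathcal{L}_{X_k}\omega=0$ because the action is symplectic. Restricting to $TY$ (legitimate because $X_k$ is tangent to $Y$) gives $\mathcal{L}_{X_k}\lambda=0$, so the flow of each $X_k$ preserves $\lambda$ and a fortiori its kernel $\xi$. As the $X_k$ generate the $T^n$-action, the action preserves $\xi$; and the convex/concave dichotomy is untouched because the action fixes $\omega$, the orientation of $Y$, and $\lambda\wedge d\lambda$.

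I expect the only real content to be the reduction to an invariant Liouville field: the Cartan computation is a formality once $[X_k,V]=0$, so the work lies in justifying that one may take $V$ invariant without disturbing transversality or the convex-versus-concave type. It is worth flagging that the invariance of the Liouville field is essential rather than cosmetic — automatic for the radial field of the toric construction — since for a non-invariant $V$ the induced plane field need not be preserved by the action.
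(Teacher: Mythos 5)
Your proposal is correct and follows essentially the same route as the paper: averaging the Liouville vector field over the compact torus (using that the Liouville and transversality conditions are affine/convex, so the average $V_{\text{inv}}=\int_{\theta\in T^n}(\theta^*V)\,d\theta$ is again a transverse Liouville field of the same convex/concave type) to obtain an invariant contact form $\iota_{V_{\text{inv}}}\omega$ on $Y$. Your closing Cartan computation just makes explicit what the paper leaves implicit once $[X_k,V_{\text{inv}}]=0$, so there is no substantive difference.
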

\begin{proof}
A toric  action on $W$ is an effective action by diffeomorphisms, thus, it induces a well defined effective $T^n$ action by diffeomorphisms on the boundary $Y$. 
Let $V$ be a Liouville vector field defined near $Y.$ Then, the vector field defined by  the average
$$V_{\text{inv}}=\int_{\theta\in T^n}(\theta^*V)d\theta,$$
is a Liouville vector field invariant under the toric action on $W$. Therefore, the induced contact form
$\lambda_{\text{inv}}=\iota_{V_{\text{inv}}}\omega$ on the boundary $Y$ is also invariant under the toric action.
\end{proof}

A  contact manifold $(Y^{2n-1},\xi)$  equipped with an effective  $T^n$ action that preserves the contact structure is called a \emph{contact toric manifold}. 
Due to Lemma~\ref{lemma toric contact}, a convex or a concave contact boundary of a symplectic toric manifold is a contact toric manifold.

\begin{example}

\begin{itemize}
\item[(a)]  Consider the case $n=2$.  A toric domain $X_{\Omega}$ in $\mathbb C^2$ is defined as a preimage under the moment map $\mu$ given by (\ref{standard H}), for $c=0$, of a domain
$\Omega\subset(\mathbb R_{\geq0})^2$. Therefore, a toric domain $X_{\Omega}$ inherits a symplectic structure and a toric action from $(\mathbb C^2, \omega_{st})$. If the radial vector field 
$V=\frac{1}{2}\sum_{k=1}^2(z_k\frac{\partial}{\partial z_k}+\bar{z}_k\frac{\partial}{\partial \bar{z}_k})$  is  transverse to the boundary $\partial X_{\Omega}$, then $X_{\Omega}$ is a symplectic toric manifold with a convex contact boundary.

\item[(b)] Linear plumbings $(s_1, \ldots, s_n)$ where $s_i\geq0,$ for some $i\in\{1, \ldots, n\}$, admit a structure of symplectic toric manifolds with a concave contact  boundary, as established in \textsection\ref{section concave toric}.
\end{itemize}
\end{example}

If  $(Y^{2n-1}, \xi)$ is a contact boundary of a symplectic toric manifold $(W^{2n},\omega)$  and if $\lambda=\iota_V \omega$ is an invariant contact form, then, according to Cartan's formula, it holds
$$0=\mathcal{L}_{X_k}\lambda=d(\lambda(X_k))+\iota_{X_k}d\lambda,$$
where $X_k,$ $k=1,\ldots, n,$ are the generators of the toric action.
Since $d\lambda=\omega$, the function $\lambda(X_k)$ defined on  $Y$ is  a restriction of the unique moment map function defined on $W$.
Therefore, to every contact toric manifold $(Y^{2n-1}, \xi)$ with an invariant contact form $\lambda$ we associate  a contact moment map  $\mu_{\lambda}=(\mu_1,\ldots,\mu_n): Y\to\mathbb R^{n}$  uniquely
defined by 
$$\mu_k=\lambda(X_k) \qquad \mbox{for} \qquad k=1,\ldots,n.$$
In particular, along the set of full dimensional orbits  where $W$ admits global action-angle coordinates, one can choose $\lambda=\sum_{k=1}^np_kdq_k.$ 

\vskip2mm

The natural lift of the toric action on $(Y^{2n-1}, \xi=\ker\lambda)$  to the symplectization $( \R \times Y, d(e^r\lambda))$ is also a toric action that makes the symplectization  a symplectic toric manifold with a moment map  $\mu_{ \xi}=e^r\mu_{ \lambda}.$ 
Note that the origin is never in the moment map image $\mu_{ \xi}(\R \times Y)$ by \cite[Lemma 2.12]{Lerman_contact_toric}. 
The \emph{moment cone} of a contact toric manifold is defined as the moment map image $\mu_{ \xi}(\R \times Y)$ together with the origin. 

For example,  
the moment cone  of the standard contact sphere $(S^{2n-1}, \ker(\lambda_{st}=\frac{i}{4}\sum_{k=1}^n(z_kd\bar{z}_k-\bar z_kdz_k))$ with respect to 
the Hamiltonian vector fields 
$X_k=2\pi i(z_k\frac{\partial}{\partial z_k}-\bar{z}_k\frac{\partial}{\partial \bar{z}_k})$, for $k=1,\ldots,n$,
is the standard cone  (Figure~\ref{standard moment cone}). 

Note that every full dimensional orbit in a contact toric manifold is a pre-Lagrangian submanifold, that is, it is a diffeomorphic image of a Lagrangian submanifold in the symplectisation.

\begin{remark}

While a contact moment map $\mu_{\lambda}$ depends on the choice of an invariant contact form $ \lambda$, the moment cone depends only on the contact structure. Moreover, translating from the symplectic toric case, any $SL(n,\mathbb Z)$ transformation of the moment cone does not change the corresponding contact toric structure.
Note also that any $d$-dimensional orbit in $Y$ lifts to a line of $d$-dimensional orbits in $ \R \times Y.$ Because fixed points are zero-dimensional orbits and are isolated in a symplectic toric manifold, there are no fixed points in a contact toric manifold. 
    
\end{remark}

\begin{remark}\label{remark curve}
The moment cone of a contact toric manifold $(Y, \xi=\ker\lambda)$ can equivalently be defined as a cone over a hypersurface $\mu_{\lambda}(Y)$, for any invariant contact form $\lambda.$ 
Moreover, any embedded hypersurface  in the moment cone, disjoint from the origin,  that intersects  the facets of the cone along the boundary and that is transversal to the radial vector field is a moment map image $\mu_{\lambda}(Y),$ for a particular choice of an invariant contact form $\lambda.$ 
In particular, in dimension 3, any embedded curve in the moment cone, disjoint from the origin, that intersects the rays of the cone in end points and that is transversal to the radial vector field is a moment map image.
Note that the contact structure does not depend on this choice of the hypersurface defining the cone, but the corresponding invariant contact form and associated Reeb dynamics do (see  \S\ref{subsec contact toric}).

\end{remark}

\subsection{Classification of contact toric 3-manifolds with singular orbits}\label{section:lens spaces}

 In this section we restate and explain the classification of compact connected contact toric 3-manifolds with a non-free toric action, done by Lerman in \cite{Lerman_contact_toric}, that will help us to understand the contact boundary of a linear plumbing.

We start from one example that, according to the classification Theorem~\ref{theorem Lerman class} presented below, covers all possible cases with convex moment cone.

 \begin{example}\label{example lens tight} 
 The lens space $L(k,l)$, $k\geq0$, where $k$ and $l$ are relatively prime integers, is obtained by gluing two solid tori along their boundaries by a diffeomorphism $S^1\times \partial D^2 \to S^1\times \partial D^2$ that sends the circle represented by $(0,1) \in H_1(T^2,\mathbb{Z})$ (i.e. meridian) to the  circle represented by $(k,l) \in H_1(T^2,\mathbb{Z}).$

\begin{itemize}
\item[(a)] If $k\neq0$ the lens space $L(k,l)$   can equivalently be defined as a quotient space of the unit sphere $S^3$ under the free $\mathbb Z_k$ action generated by the rotation 
$$e^{i 2\pi /k}\ast(z_1,z_2)\to (e^{i 2\pi /k}z_1, e^{i 2\pi l/k}z_2).$$
The contact form $\alpha_{st}$ on $S^3$ is invariant under this action, thus, it induces a contact form $\alpha_{kl}$ on $L(k,l).$ We refer to $\ker\alpha_{kl}$ as a standard tight contact structure on $L(k,l).$

The following reparametrization of the standard toric action  on $(S^3,\ker\lambda_{st})$
$$(e^{i2\pi t_1},e^{i2\pi t_2})*(z_1,z_2)\mapsto(e^{i2\pi t_1}z_1,e^{i2\pi  l t_1}e^{i2\pi  t_2}z_2)$$
commutes with the given $\mathbb Z_k$ action on $S^3$ and, therefore, induces a $T^2$ action on $L(k,l).$ In order to have an effective action  we divide the first acting circle by $k.$ Then, we obtain a toric action on $(L(k,l), \ker\alpha_{kl})$ whose moment map is given by
$$\mu([z_1,z_2])=\pi\left(\frac{1}{k}(|z_1|^2+l |z_2|^2), |z_2|^2\right).$$
 The corresponding moment map image and the moment cone are shown in Figure~\ref{convex lens} on the left.
  
 \item[(b)] If $(k,l)=(0,1)$ the corresponding lens space is $S^1_{\theta}\times S^2_{h,z}$,  where $S^2$ is thought at the unit sphere in $\mathbb{R}^3\simeq \mathbb{R}\times \mathbb{C}$. For the standard contact form  $hd\theta+\frac{i}{4}(zd\bar{z}-\bar zdz)$ and the toric action given by
 $$(e^{i2\pi t_1},e^{i2\pi t_2})*(e^{i2\pi \theta},h,z)\mapsto(e^{i2\pi (t_1+\theta)},h,e^{i2\pi   t_2}z)$$
   the moment map is
 $$\mu(\theta, h,z)=(h,\pi|z|^2).$$
 The corresponding moment map image and the moment cone are shown in Figure~\ref{convex lens} on the right.
 \end{itemize}

    \begin{figure}
\centering
\includegraphics[width=14cm]{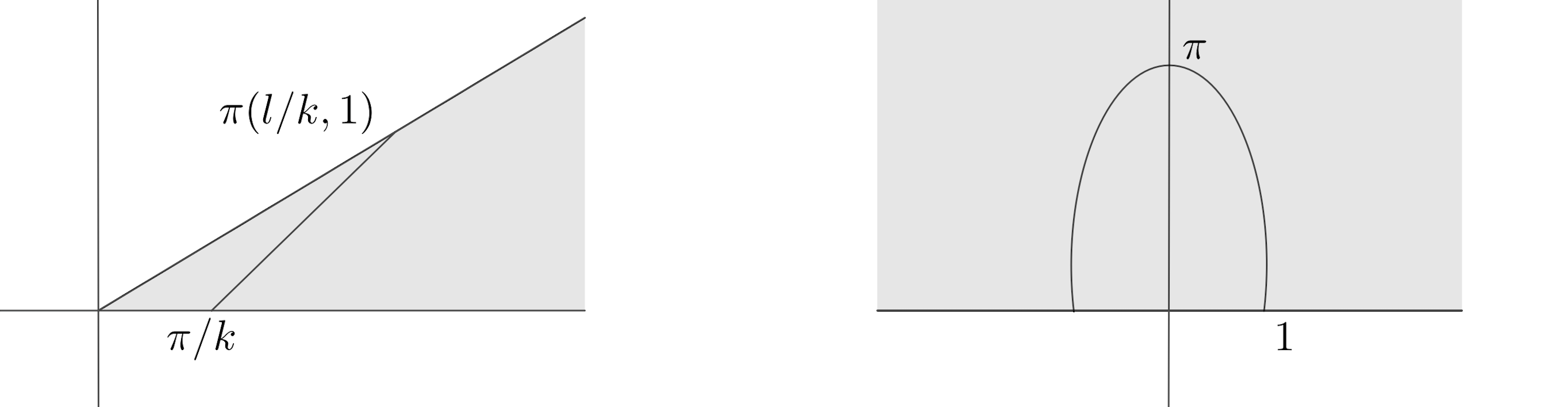}
\caption{A moment cone of the lens space  $L(k,l)$ (left) and of $S^1\times S^2$ (right) with the standard tight contact structure.}
\label{convex lens}
\end{figure}      
\end{example}

\begin{remark}\label{remark Lens}
From the moment cone of a lens space (Figure~\ref{convex lens}) we conclude that a lens space $L(k,l)$ can equivalently be defined as a quotient of $T^2\times[0,1]$ where $T^2\times \{0\}$  is collapsed along the circle of slope $(0,1)$ and $T^2 \times \{1\}$ is collapsed along the circle of slope $(k,-l)$. Namely, $(0,1)$ and $(k,-l)$ are primitive inward normal vectors to the rays that span the moment cone and they define collapsed orbits (see \textsection\ref{subsec:toric_intro}).

\end{remark}
Next we state Lerman's classification result, and  afterwards elucidate the moment cones associated to the lens spaces.

\begin{theorem}{\em \cite[Theorem 2.18.(2)]{Lerman_contact_toric}}\label{theorem Lerman class}
Any  compact connected contact toric manifold $(Y^3,\xi)$ with a non-free toric action can be obtained from the below construction, which is determined by two real numbers $t_1,t_2$ with $0\leq t_1<2\pi,t_1<t_2$ such that 
$\tan t_1$ and $\tan t_2$, when defined, are rational numbers.
\end{theorem}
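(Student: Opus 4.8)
The plan is to treat this as the three-dimensional, contact analogue of the Delzant classification, with the \emph{moment cone} playing the role of the Delzant polytope. By the remarks above, to $(Y^3,\xi)$ one attaches its moment cone $C=\mu_\xi(\R\times Y)\cup\{0\}\subset\R^2$, and this cone depends only on $\xi$, not on the chosen invariant contact form; moreover $SL(2,\Z)$-transformations act on cones without changing the contact toric structure. So I would first reduce the whole statement to understanding which cones $C$ arise from a non-free action and how $C$ recovers $Y$, after which the pair of boundary angles $(t_1,t_2)$ becomes the classifying data.

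First I would analyze the facial structure of $C$. Since the action is non-free, $Y$ contains singular (circle) orbits, and by the orbit-to-face correspondence recalled in \S\ref{subsec:toric_intro} these are precisely the one-dimensional faces, i.e. rays of $C$; because a contact toric manifold has no fixed points, there are no zero-dimensional faces, so $C$ is genuinely a two-dimensional cone bounded by rays. Using compactness and connectedness of $Y$, together with Remark~\ref{remark curve} (that $\mu_\lambda(Y)$ is an embedded radial-transverse curve cutting across $C$), I would argue that $\mu_\lambda(Y)$ is a single embedded arc meeting $\partial C$ in exactly two points. Hence $C$ is bounded by exactly two rays $R_1,R_2$ from the origin, at angles which after a rotation by an $SL(2,\Z)$-element and a relabeling may be normalized to $0\le t_1<2\pi$ and $t_1<t_2$.

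Next I would pin down the rationality condition and the converse. The singular orbit over $R_i$ is obtained by collapsing the circle whose slope is the primitive inward normal $v_i\in H_1(T^2,\Z)$ of $R_i$; for $v_i$ to be a primitive integer vector the direction of $R_i$ must be rational, which is exactly $\tan t_i\in\Q$ when this is defined (the vertical directions $t_i\in\{\pi/2,3\pi/2\}$ give integer normals and are permitted). This shows every such $Y$ yields a cone of the asserted form. For the reverse direction, given $t_1<t_2$ with rational tangents I would run the reconstruction: form $T^2\times[0,1]$, collapse the circle of slope $v_1$ over $\{0\}$ and of slope $v_2$ over $\{1\}$, and equip the quotient with the invariant contact form induced by a radial-transverse arc joining $R_1$ to $R_2$ as in Remark~\ref{remark curve}; primitivity of $v_1,v_2$ guarantees that each collapse is smooth, exactly as in the lens-space models of Example~\ref{example lens tight} and Remark~\ref{remark Lens}.

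The hard part will be the uniqueness/reconstruction step: proving that the cone determines $(Y,\xi)$ up to equivariant contactomorphism. This needs an equivariant normal form near each singular orbit (a contact slice theorem) showing that the germ of the structure along $R_i$ is rigidly fixed by its primitive normal $v_i$, followed by gluing these local models over the arc $\mu_\lambda(Y)$ via an equivariant Moser-type isotopy. A secondary subtlety, absent in the symplectic Delzant setting, is that here $t_2-t_1$ is allowed to exceed $\pi$, so $C$ need not be convex; I would therefore carry out both the collapse construction and the gluing intrinsically in terms of $(t_1,t_2)$ and the lattice normals $v_1,v_2$, never invoking convexity of $C$, since the non-convex range is precisely the overtwisted regime detected by Theorem~\ref{proposition ot}.
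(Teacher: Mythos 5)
First, a point of comparison: the paper does not prove this statement at all — it is quoted verbatim from Lerman \cite[Theorem 2.18.(2)]{Lerman_contact_toric}, and the text following the theorem only \emph{describes} the construction (collapse $T^2\times\{0\}$ and $T^2\times\{1\}$ along the primitive circles determined by the rays, using Lerman's contact cut). Your constructive direction matches that description, so you are attempting strictly more than the paper does, and your sketch has to be judged on its own merits.

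Judged that way, there is a genuine gap at the very first step: you reduce the classification to the moment cone $C$, reading the singular orbits off as the one-dimensional faces of $C$ and extracting the angles $t_1,t_2$ from its two bounding rays. But the moment cone is \emph{not} a complete invariant here, and this is precisely the subtlety recorded in Remark~\ref{rem:cone}: the cone determines the contact toric 3-manifold only when $t_2-t_1<2\pi$, and when $t_2-t_1\geq 2\pi$ the cone is all of $\R^2$. In that regime your facial analysis collapses — $C=\R^2$ has no rays at all, yet $Y$ still has exactly two singular orbits — and all the structures with data $(t_1,\,t_2+2\pi n)$, $n\in\mathbb N$, share the same cone while being distinct as contact toric manifolds (Remark~\ref{remark adding 2pi}). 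Your claim that $\mu_\lambda(Y)$ is a single \emph{embedded} arc crossing $C$ fails for the same reason: once the angle traversed exceeds $2\pi$ the moment image curve wraps around the origin and is only immersed, cf.\ Example~\ref{example ot but convex rays}, where the two rays even span a convex-looking cone although the true angular data exceeds $2\pi$. The correct classifying object is therefore not $C$ but the angular lift — the total angle swept by the curve, i.e.\ the pair $(t_1,t_2)$ itself with $t_2$ unbounded — and the entire argument (facial structure, normalization, uniqueness via slice theorem and Moser gluing, which is otherwise a reasonable plan and close in spirit to Lerman's) must be carried out for this lifted data from the start, not recovered from the cone afterwards. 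Your closing paragraph guards against non-convexity when $t_2-t_1>\pi$, but that does not address the wrap-around degeneration at $t_2-t_1\geq 2\pi$, which is where the reduction actually breaks.
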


We briefly describe the compact connected contact toric manifold $(Y^3, \xi)$ determined by the numbers $t_1$ and $t_2$. 
Start from $T^2\times [0,1]$ with coordinates $(q_1, q_2,x)$,  the contact structure 
\[
\xi_{t_1,t_2}:=\ker(\cos (t_1(1-x)+t_2x)dq_1+\sin (t_1(1-x)+t_2x) dq_2)
\]
and the toric action given by the standard rotation of $T^2$ coordinates.
According to Lerman, we  recover the topology of a manifold $Y$ from $T^2\times [0,1]$ by collapsing the  tori $T^2\times \{0\}$ and $T^2\times \{1\}$ along particular circles. As explained in \textsection\ref{subsec:toric_intro}, if $v_0  \in \mathbb{Z}^2$ and $v_1 \in \mathbb{Z}^2$ are primitive inward normal vectors to the given rays of the moment cone, then the collapsing circles are represented by the slopes $v_0 \in H_1(T^2,\mathbb{Z})$ and $v_1 \in H_1(T^2,\mathbb{Z})$  respectively.  After performing a suitable $SL(2,\mathbb{Z})$ transformation of  the given rays (and the moment cone), we may assume  that the first ray is given by the direction $(1,0)$ and the second ray is given by the direction $(l,k).$ 
Then, the  tori $T^2\times \{0\}$ and $T^2\times \{1\}$ are collapsed along the circles of slopes $(0,1)$ and $(k,-l)$, respectively.  Therefore, $Y$ is diffeomorphic to the lens space $L(k,l)$ (see Remark~\ref{remark Lens}). Further, 
 by the method of contact cut, Lerman in \cite{Lerman_contact_cut} shows that $Y$ inherits a contact structure from $T^2\times [0,1]$. Finally, the toric action on $T^2\times [0,1]$ induces a toric action on $Y$ whose corresponding moment cone  is defined precisely by the given rays.
 
 In particular, every real number $x\in(t_1,t_2)$ corresponds to a unique regular $T^2$ orbit in $Y$, while the numbers $t_1$ and $t_2$ correspond to singular (circle) orbits in $Y$.

As a result, we have the following means of determining when the moment cone uniquely determines the contact toric 3-manifold.
\begin{remark}\label{rem:cone}
The numbers $t_1$ and $t_2$ are  the angles (in radians) between the positive part of the $x$-axis and the  rays $R_1$ and $R_2$, respectively (see Figure~\ref{fig: cone}). As 
$\tan t_1$ and $\tan t_2$, when defined, are rational numbers, the slopes of the rays $R_1$ and $R_2$ are always elements of $\mathbb Z^2.$
If $t_2-t_1<2\pi$,  the  moment cone of $(Y,\xi)$ is  spanned by the  rays $R_1$ and $R_2$. If $t_2-t_1\geq 2\pi$,  the moment cone  is the whole space $\mathbb R^2.$ In particular, due to Theorem~\ref{theorem Lerman class} the moment cone uniquely determines the contact toric 3-manifold if and only if $t_2-t_1< 2\pi.  $ 

    \begin{figure}
\centering
\includegraphics[width=16cm]{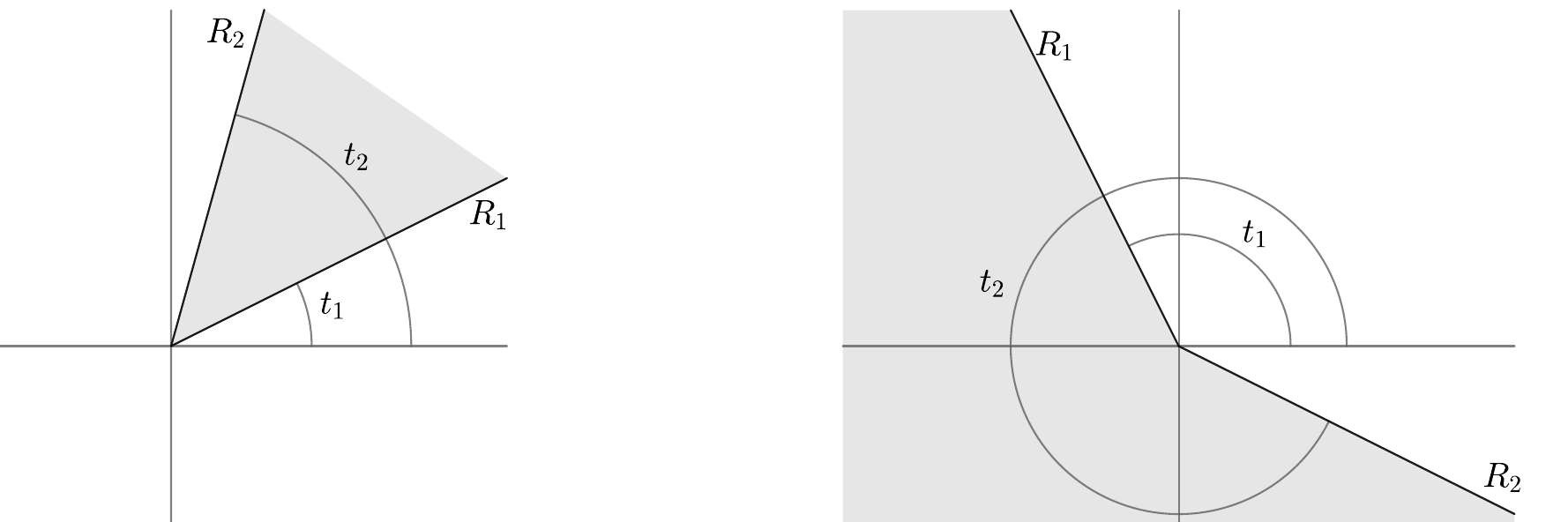}
\caption{Moment cones defined by the angles $t_1$ and $t_2$.}
\label{fig: cone}
\end{figure}

\end{remark}

 \begin{remark}  \label{remark: all cones for L(k,l)}
  From a classical theorem by Reidemeister (\cite{Reid}) it follows that
$$L(k,l)\cong L(k',l')\qquad \textrm{if and only if} \qquad k=k', l'\equiv \pm l^{\pm1} \ (\textrm{mod})\  k .$$
That is, $L(k,l)$ is homeomorphic to $L(k,-l),$ $L(k, l+nk),$ $L(k,-l+nk),$ $L(k,r+nk)$ and $L(k,-r+nk),$ for any $n\in\mathbb Z$ and $r,s\in \mathbb Z$, where $lr-ks=1.$ 

Therefore, according to Remark~\ref{remark Lens} all the  convex and concave cones whose first ray is the positive part of the $x$-axis and the second ray is given by the directions $ (\pm l+nk,  \pm k)$ or $(\pm r+nk,\pm k)$ correspond to $L(k,l).$ 

 \end{remark}

 \begin{remark} To complete the classification of contact toric 3-manifolds we remark that contact toric 3-manifolds with all regular orbits are also classified by Lerman in  \cite{Lerman_contact_toric}. These are  $$(T^3, 
\ker(\cos (2n \pi t)dq_1+\sin (2n \pi t) dq_2), \mbox{ for all }n \in\mathbb N,$$ where the toric action is given by the rotation of $( q_1, q_2)$ coordinates.
Therefore, the corresponding moment cone is always the whole plane $ \mathbb R^2$ and the pre-image of a point in the moment cone is a disjoint union of $n$ regular torus orbits.
 
  \end{remark} 

 \subsection{Tight and overtwisted contact toric 3-manifolds with singular orbits}
 For a contact toric 3-manifold with singular orbits, we now explain how one can read from the 2-dimensional moment cone  if the corresponding contact structure  is tight or overtwisted. 
 We remark that no such phenomena exist in higher dimensions, as all contact toric manifolds in higher dimensions  are at least weakly fillable by Marinkovi\'c \cite{am}. 
 
 \begin{theorem}\label{proposition ot}
 Let $(Y^3,\xi)$ be a compact connected contact manifold with a non-free toric action obtained from Lerman's construction from real numbers $t_1,t_2$. The contact structure $\xi$ is overtwisted if and only if  $t_2-t_1> \pi$.
\end{theorem}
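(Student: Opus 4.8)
The plan is to work directly inside Lerman's model $T^2\times[0,1]$ and to settle the dichotomy by reading off the characteristic foliation of an explicit meridian disk. First I would normalize: after an $SL(2,\mathbb{Z})$ transformation and a translation we may assume $t_1=0$, so that in coordinates $(q_1,q_2,x)$ the invariant contact form is $\alpha=\cos\theta(x)\,dq_1+\sin\theta(x)\,dq_2$ with $\theta(x)=x\,t_2$, and $T^2\times\{0\}$ is collapsed along the slope $v_0=(0,1)=\partial_{q_2}$. Two elementary observations drive everything: since $\alpha$ has no $dx$-term, $\partial_x\in\xi=\ker\alpha$ \emph{everywhere}; and $\alpha(\partial_{q_2})=\sin\theta(x)$, so $\partial_{q_2}\in\xi$ exactly when $\theta(x)\in\{0,\pi\}$.

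For the overtwisted direction, assume $t_2-t_1=t_2>\pi$ and set $x_\ast=\pi/t_2\in(0,1)$, so that $\theta(x_\ast)=\pi$. Consider the solid torus $N=\{0\le x\le x_\ast\}$ with $T^2\times\{0\}$ collapsed along $\partial_{q_2}$; its meridian disk is $D_\ast=\{q_1=\mathrm{const}\}$, an embedded disk with center the collapsed point at $x=0$ and boundary the $(0,1)$-curve in $T^2\times\{x_\ast\}$. Since $TD_\ast=\langle\partial_x,\partial_{q_2}\rangle$ and $\partial_x\in\xi$ throughout, the characteristic foliation of $D_\ast$ is spanned by the radial field $\partial_x$ on the interior, and by the observations above the disk is tangent to $\xi$ exactly at the center ($\theta=0$) and along the entire boundary ($\theta=\pi$). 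Thus the characteristic foliation consists of radial leaves emanating from a single elliptic singularity at the center and limiting onto the closed leaf $\partial D_\ast$, which is precisely the model overtwisted disk; hence $\xi$ is overtwisted. The hypothesis $t_2-t_1>\pi$ enters exactly in guaranteeing that $\theta$ attains the value $\pi$ at an interior $x_\ast\in(0,1)$, that is, on a genuine uncollapsed torus fiber whose $(0,1)$-curve still bounds a meridian disk.

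For the tight direction, assume $t_2-t_1\le\pi$. When $t_2-t_1<\pi$ the moment cone is a proper convex cone, so by Remark~\ref{rem:cone} (uniqueness of the contact toric structure for cone angle $<2\pi$), together with Remark~\ref{remark: all cones for L(k,l)} and Example~\ref{example lens tight}, $(Y,\xi)$ is the standard tight contact structure $\ker\alpha_{kl}$ on the lens space $L(k,l)$; when $t_2-t_1=\pi$ the cone is a half-plane and $(Y,\xi)$ is the standard tight $S^1\times S^2$. In either case $(Y,\xi)$ is symplectically fillable---for instance as the boundary of the minimal resolution of the cyclic quotient singularity $\mathbb{C}^2/\mathbb{Z}_k$ (a negative-definite linear plumbing) in the first case, and of $D^2\times S^2$ in the second---so by the theorem of Gromov and Eliashberg it is tight, and in particular not overtwisted.

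The main obstacle is the overtwisted direction, since that is where the new content lies: the delicate points are to place the disk on an honest torus fiber (which forces $\theta=\pi$ to be achieved strictly inside $(0,1)$, exactly the regime $t_2-t_1>\pi$) and to confirm that the only tangencies of $\xi$ with $D_\ast$ occur at the center and along the boundary, so that $\partial D_\ast$ is a bona fide closed-leaf limit cycle rather than an interior circle of tangencies. The tight direction, by contrast, reduces to identifying $\xi$ with a standard fillable structure through its moment cone and then invoking the classical fact that fillable contact structures are tight.
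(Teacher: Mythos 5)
Your proposal is correct and follows essentially the same route as the paper: the paper's overtwisted disk is exactly your $D_*$ (the annulus $A_c=\{q_1=c,\ x\in[0,\pi/t_2]\}$ quotiented at $x=0$, whose boundary lies in the fiber where $\theta=\pi$ and is tangent to $\xi$), and the tight direction is likewise settled by using Lerman's classification to identify $(Y,\xi)$ with the standard tight structures of Example~\ref{example lens tight}, with your characteristic-foliation analysis simply making explicit what the paper leaves as a one-line tangency check. One small caveat on an inessential aside: the split symplectic form on $D^2\times S^2$ is non-exact near the boundary and so does not give a contact-type filling of $S^1\times S^2$ (the standard Stein filling is $S^1\times B^3$), but your argument does not actually need fillability, since tightness of the standard structures already follows as in the paper (e.g.\ because an overtwisted disk would lift to the tight universal cover $S^3$).
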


 \begin{proof} 
   If $t_2-t_1\leq \pi$ then according to Theorem~\ref{theorem Lerman class} and Example~\ref{example lens tight}, the 3-manifold $(Y, \xi)$ is contactomorphic to a lens space with the standard tight contact structure. 

 Assume $t_2-t_1> \pi$. After suitable $SL(2,\mathbb{Z})$ transformation of the moment cone, we  assume $t_1=0.$ Consider the annulus $A_c\subset T^2\times [0,1]$ 
 defined by
 $$A_c=\{(c,\theta, x)\in  T^2\times [0,1] \hskip2mm |\hskip2mm \theta\in S^1,  x\in[0,{\pi}/{t_2}]  \},$$
 for any $c\in S^1.$
 
 To obtain $Y$, the boundary torus $T^2\times \{0\}$ is collapsed along the circle of slope $(0,1)$ and hence the circle $\{c\}\times S^1\times \{0\}$ quotients to a point  in $Y$. Therefore $A_c$ quotients to a disc $D_c$ in $Y$. The  boundary of $D_c$ corresponds to 
 $x=\frac{\pi}{t_2}$ 
and the contact structure on $Y$ near the boundary of $D_c$ is
 defined by 
 $\ker(\cos (t_2x)d\theta_1+\sin (t_2x) d\theta_2)$. Since $\theta_1=c$ along $D_c$ it follows that the boundary of $D_c$ 
  is tangent to the contact structure. Therefore, this is an overtwisted disc.

 Alternatively, an overtwisted disc can be constructed in the following way.  Denote by 
$\mu=(\mu_1,\mu_2)$  the moment map  for the toric action on $Y$. Note that $\mu_2$ vanishes along the circle orbit $o_1$  that corresponds to $t_1=0$ and, since $t_2> \pi$, 
$\mu_2$ vanishes along the torus orbit $o_2$  that corresponds to $x=\pi$. 
 Consider a circle subaction of the given toric action on $Y$ defined by the inclusion $x \rightarrow (0,x).$    This is a Hamiltonian circle action with the induced Hamiltonian function  $\mu_2.$
 With respect to this circle subaction the circle $o_1$ is a collection of fixed points,  as it is obtained by collapsing $T^2$  along the circle of slope   $(0,1)$,  
 and the torus $o_2$ is a collection of Legendrian orbits. 
 We now connect one fixed point $F$ in  $o_1$ orbit with any  Legendrian orbit $l$ in $o_2$, through the family of non-Legendrian circle orbits in the following way. Let $\pi: Y \to Y/S^1$ denote a natural projection to the orbit space $Y/S^1$. Take any smooth curve $\gamma:[0,1] \to Y/S^1$ such that $\gamma(0)=\pi(F)$, $\gamma(1)=\pi(l)$ and $\gamma$ does not intersect any other point that is a projection of a Legendrian orbit in $Y$.  Then, the lift of $\gamma$   is an overtwisted disc in $Y$. For more details we refer to \cite[Lemma IV. 19.]{klaus_thesis}.  
 \end{proof}

We conclude our discussions by observing the following classification of contact toric 3-manifolds, including a uniqueness result for overtwisted contact structures.
   
   \begin{remark} \label{remark adding 2pi} Consider contact toric 3-manifolds $(Y, \xi)$ and   $(Y', \xi')$ that correspond to the numbers $t_1,t_2$ and $t_1'=t_1,t_2'=t_2+2n\pi$, for some $n\in\mathbb N,$ respectively. According to Lerman,  $Y$ and $Y'$ are diffeomorphic manifolds and the corresponding contact structures are homotopic as 2-plane fields by \cite[Theorem 3.2]{Lerman_contact_cut}. 
   
   In particular, if $t_2-t_1\leq \pi$   the contact structure $\xi'$ is the unique  overtwisted   contact structure in the same homotopy class as the  tight contact structure  $\xi$.
   If  $t_2-t_1>\pi$ then  contact structures $\xi$ and $\xi'$ are homotopic overtwisted contact structures.
   
      \end{remark}

\section{Constructions of  linear plumbings with concave boundary using toric models} \label{section concave toric}

In this section we give our construction of the symplectic toric structure  with a concave contact toric boundary, on certain linear plumbings over spheres.  In  \S\ref{section tight-ot} we will use this method to realize the toric moment cones in terms of the self-intersection numbers associated to the plumbings,  allowing us to establish our classification results for when the  concave contact 3-manifold associated to the boundary of a linear plumbing is either tight, and hence symplectically fillable, or overtwisted.  We will also make use of the induced global contact toric structure in our computation of algebraic torsion in  \S\ref{section computing}.

We prove the following theorem.
      
  \begin{theorem} \label{theorem general}    
A linear plumbing $(s_1,\ldots,s_n)$ where each surface is a 2-sphere and $s_i \geq0$ for at least one index $i\in\{1, \ldots, n\}$, admits a structure of a symplectic toric 4-manifold with a concave contact toric boundary.
   \end{theorem}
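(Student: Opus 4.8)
The plan is to encode the plumbing as a toric moment region built directly from the self-intersection sequence, and then to recognize concavity as a positivity condition that is controlled by the hypothesis that some $s_i\geq 0$. First I would construct the inward normal vectors of the edges. Pick an ordered basis $M_0,M_1\in\Z^2$ with $\det(M_0,M_1)=1$ and define $M_{i+1}:=-s_iM_i-M_{i-1}$ for $i=1,\ldots,n$. A one-line induction gives $\det(M_{i-1},M_i)=\det(M_0,M_1)=1$ for every $i$, so each consecutive pair is a $\Z^2$-basis (the Delzant/smoothness condition of \S\ref{subsec:toric_intro}); moreover, by the determinant rule for self-intersection numbers from \S\ref{section: symplectic 4}, the relation $M_{i-1}+M_{i+1}=-s_iM_i$ forces the sphere carried by the edge with normal $M_i$ to have self-intersection exactly $s_i$. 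The vectors $M_1,\ldots,M_n$ are the normals of the $n$ compact edges $E_1,\ldots,E_n$ forming a polygonal chain (the spheres), while $M_0$ and $M_{n+1}$ are the normals of two non-compact rays $R_1,R_2$ that will carry the moment cone of the boundary.

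I would then choose positive reals $a_1,\ldots,a_n$ as the intended symplectic areas and lay out the actual chain, placing the vertices on consecutive edge-lines with affine edge-lengths $a_i$ and attaching $R_1,R_2$ at the two ends. By the local toric normal forms of \S\ref{subsec:toric_intro}, the symplectic toric manifold associated to this region is precisely the linear plumbing with data $(s_1,\ldots,s_n)$ and areas $a_i$; the two valency-one end spheres are $E_1,E_n$, whose outer neighbors are the rays rather than further spheres. Since $M_0,M_{n+1}\in\Z^2$ the rays have rational slope, so after truncating transverse to the radial field the boundary is a contact toric lens space by Lemma~\ref{lemma toric contact} and Lerman's classification (Theorem~\ref{theorem Lerman class}).

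The main point, and the step I expect to be the real obstacle, is arranging concavity: the radial Liouville field $V(x)=x$ must point into the region along a cross-section transverse to it. Writing each edge-line as $\{\langle M_i,x\rangle=c_i\}$ with the region on the side $\langle M_i,x\rangle\geq c_i$, the radial field points inward exactly when the relevant support numbers $c_i$ are positive, and the relation $M_{i-1}+M_{i+1}=-s_iM_i$ turns into a tridiagonal system $c_{i-1}+s_ic_i+c_{i+1}=\pm a_i$ (with the appropriate boundary terms from $R_1,R_2$), i.e. $Q_\Gamma\vec c=\pm\vec a$. This is exactly the Li-Mak relation of Remark~\ref{rem:GS}, so concavity reduces to producing a positive solution $\vec c$ with $\vec a\in\R_{>0}^n$, where both the areas and the placement of the origin are at our disposal. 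I expect the heart of the argument to be the following dichotomy: at an index where $s_i\leq -2$ the constraint forces $\vec c$ to be strictly convex there, so if every $s_i\leq -2$ no admissible positive $\vec c$ can satisfy the boundary constraints (this is precisely the negative-definite, Gay-Stipsicz \emph{convex} case), whereas a single index $i_0$ with $s_{i_0}\geq 0$ supplies the one place where the chain may bend convexly outward and the sequence may carry an interior peak, allowing $\vec c$ to be built positively by propagating the recursion outward from $i_0$ toward both ends while choosing the $a_i>0$ accordingly. Once all support numbers are positive, every radial ray from the origin meets the boundary transversally, so I truncate the region by a curve transverse to $V$ (Remark~\ref{remark curve}); the compact piece between this curve and the chain is the moment image of the desired plumbing $W$, and $V$ is an inward-pointing Liouville field along $\partial W$. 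By Lemma~\ref{lemma toric contact} the induced contact structure is torus-invariant, so $\partial W$ is the required concave contact toric boundary.
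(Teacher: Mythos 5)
Your skeleton is essentially the paper's argument in different clothing: your tridiagonal system $Q_\Gamma\vec c=\vec a$ with $\vec c,\vec a\in\R_{>0}^n$ is exactly the relation $-Q_\Gamma(z_1,\ldots,z_n)=(a_1,\ldots,a_n)$, $z_j=-c_j<0$, that the paper's construction produces (cf.\ Remark~\ref{rem:GS} and equation \eqref{eqn:area}), and your normal-vector recursion $M_{i+1}=-s_iM_i-M_{i-1}$ is the composition of the paper's gluing matrices $A_j$. But there are genuine gaps. The most serious is your global layout in a single plane: you treat the moment image as an embedded region cut out by support half-planes $\{\langle M_i,x\rangle\geq c_i\}$ and assert that ``every radial ray from the origin meets the boundary transversally'' once positivity holds. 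This fails within the hypotheses of the theorem whenever the chain of normals winds past $2\pi$; the plumbing $(2,1,3)$ of Example~\ref{example ot but convex rays} is exactly such a case (the moment cone there is all of $\R^2$), and then there is no embedded polygon with your edge data, radial rays meet the immersed boundary repeatedly, and ``the region on the side $\langle M_i,x\rangle\geq c_i$'' is meaningless. The paper avoids this by never invoking a global embedded picture: it builds the manifold abstractly by gluing embedded L-shaped toric charts along solid tori via $SL(2,\Z)$ identifications and checks the radial Liouville condition chart by chart, using only that the radial field is invariant under linear transformations. Your argument is repairable this way, since every condition you use (Delzant at vertices, the area/self-intersection computations, positivity of supports, radial transversality of the cross-section) is local, but as written the global step is wrong, not just unproven.

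Two further points need actual proofs rather than expectations. First, the existence of a positive solution $\vec c$ is exactly the ``heart'' you defer, and your proposed propagation \emph{outward} from $i_0$ is the delicate direction: for $j$ beyond $i_0$ with $s_{j+1}\leq-2$ you must choose $c_{j+1}$ \emph{small} (e.g.\ maintain $-s_{j+1}c_{j+1}<c_j$) so that the terminal constraint $c_{n-1}+s_nc_n>0$ can be met — ``bending outward at $i_0$ and choosing the $a_i$ accordingly'' with large choices fails, as one sees already for $(s_{n-1},s_n)=(-2,-2)$. The paper's Step 2 runs the induction from the two ends inward ($z_1=z_n=-1$, $z_j<-s_{j\mp1}z_{j\mp1}$, choosing $z_i$ last), after which each $a_j>0$ drops out in one line. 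Second, you never address the smooth extension of the Liouville field across the two singular circle fibers at the ends of the truncating curve, where action-angle coordinates degenerate; this is where positivity of the end supports is genuinely used (in the paper's Lemma~\ref{lemma global vector field}, $V$ extends in polar coordinates as $\frac{\rho_1}{2}\partial_{\rho_1}+\bigl(\frac{\rho_2}{2}+\frac{z_1}{\rho_2}\bigr)\partial_{\rho_2}$, and inwardness near $\rho_2=0$ requires $z_1<0$). Concluding ``the boundary is a contact toric lens space'' from Lemma~\ref{lemma toric contact} presupposes, rather than proves, that a transverse Liouville field exists along the whole boundary, singular orbits included.
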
   

Throughout this section $i$  denotes a specific index such that $s_i\geq 0$.
The proof is derived by constructing a  global moment map image in such a way that the chain of $n$ adjacent edges corresponds to the chain of $n$ spheres in the base of the plumbing with the given self-intersection numbers.
The case $n=2$ is proved in \S\ref{L-shape} and the general case is proved in \S\ref{gluing of L-shapes}.

The idea of imposing a global symplectic toric structure on the plumbing is already seen in  \cite{sym42}, where Symington decomposes the plumbing $(s_1,\ldots,s_n)$ into the sequence of $n$ disc bundles over 2-spheres, attaches  a moment map image to each of them and glues them all together. Our contribution is that with our method we are able to recognise the concavity of the boundary, as well as criteria for tightness and overtwistedness of the contact structure, later in \S\ref{section tight-ot}.

Note that, according to Gay-Stipsitz \cite[Theorem 1.2]{gs}, a linear plumbing $(s_1,\ldots,s_n)$ where each surface is a 2-sphere and  $s_j \leq-2$, for all 
$j=1, \ldots, n$, admits a structure of a symplectic manifold with a convex contact type boundary, as the corresponding self-intersection form is negative-definite. Therefore, after performing blow down to any sphere with self-intersection number $-1$,  we are in one of two possible cases. Either $s_j \leq-2$, for all 
$j=1, \ldots, n$ and, then, the boundary is of convex contact type, or, there exists at least one index $i\in\{1, \ldots, n\}$  for which $s_i \geq0$ and then the boundary is of concave contact type, as we establish in Theorem~\ref{theorem general}.

\subsection{Plumbing $(s_1,s_2)$,  with $s_1\geq0 $ or $s_{2}\geq0 $} \label{L-shape}

Consider the plumbing of two disc bundles over 2-spheres with self-intersection numbers $s_1$ and $s_{2}$.
The  self-intersection form of this plumbing is 
$Q_\Gamma=\begin{bmatrix}
s_1 & 1 \\
1 & s_{2} 
\end{bmatrix}.$
Without loss of generality, we assume that the plumbed spheres are not exceptional divisors of a blow up, i.e. $s_1,s_{2}\neq-1$. Otherwise, we could perform a blow down, by replacing such a sphere with a Darboux neighborhood of a point. In this way we change the second homology of the 4-manifold, but not the boundary of the plumbing. 

Suppose $s_1\geq0$. Observe first that  there exist $(z_1,z_{2})\in\mathbb R^2_{<0}$ such that 
  $$-Q_\Gamma(z_1,z_{2})=(a_1,a_{2})$$
  for some $(a_1,a_{2})\in \mathbb R^2_{>0}.$ Namely, if $s_{2}\geq0$ we take $z_1=z_{2}=-1$ and if $s_{2}<0$ we take $z_{2}=-1, z_1=s_{2}-1.$ The proof is analogous if $s_2\geq0$ and $s_1<0.$ Therefore, we construct a moment map image as shown in Figure~\ref{fig:L-shape} and we refer to it as an L-shape. 
  
   Let $W$ denote the  symplectic toric manifold whose moment map image is the constructed L-shape. In this case where we plumb exactly two disk bundles, we perform symplectic reduction along the portions which project to the orange and blue edges on the boundary of the region according to the inward normals of those edges, but we do not perform symplectic reduction along the portion which projects to the green dotted portion of the boundary of the moment map image. The boundary of $W$ projects to the green dotted curve in the boundary of the moment map image.

\begin{figure}
\centering
\includegraphics[scale=.4]{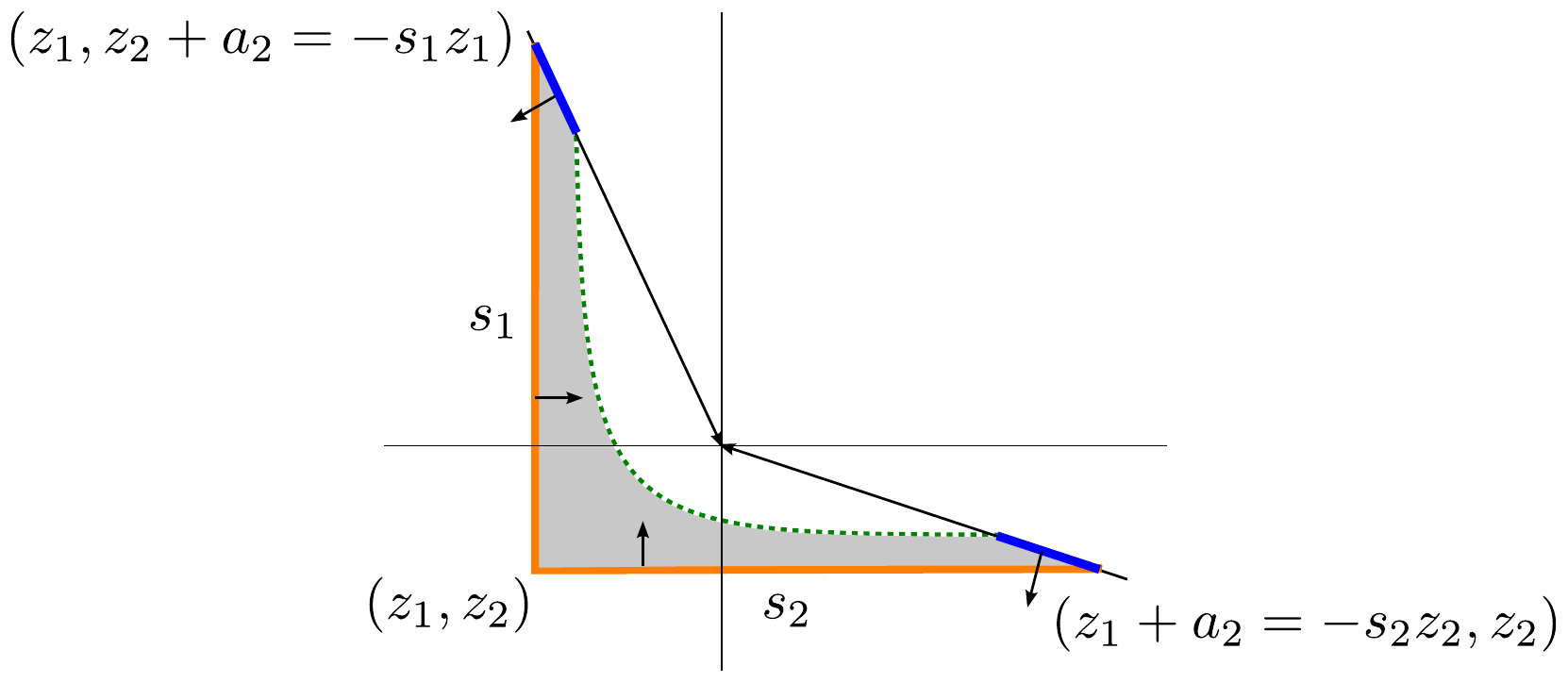}
\caption{L-shape moment map image corresponding to the pair $(s_1,s_{2})$. In this example, $s_1,s_{2}>0$.}
\label{fig:L-shape}
\end{figure}

  The orange edges of the L-shape correspond to symplectic spheres with self-intersection numbers $s_1$ and $s_2$ and symplectic areas  $a_1$ and $a_2,$ as can be computed from the moment map image (see \textsection\ref{section: symplectic 4}). The three vertices of the L-shape  correspond to  fixed points of the action. The points in the blue lines, except the vertices, correspond to circles in $W$ and, therefore, the blue lines correspond to 2-dimensional discs. The interior points of the green curve correspond to tori while the end points correspond to circle orbits obtained by collapsing the tori along particular slopes.  In particular $\partial W$ is diffeomorphic to a lens space. 

   \begin{lemma} \label{lemma global vector field}    
A symplectic toric manifold $W$ corresponding to a linear plumbing $(s_1,s_{2})$, where $s_1\geq0 $,  admits a Liouville vector field defined near the boundary $\partial W$ that is transversal to the boundary and points inward along the boundary.
    \end{lemma}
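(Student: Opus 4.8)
The plan is to produce the Liouville field as the \emph{radial} vector field of the toric structure and to reduce the transversality and inward-pointing claims to a statement about the planar moment image. On the open dense set of full-dimensional (Lagrangian) orbits, $W$ carries global action-angle coordinates $(q_1,q_2,p_1,p_2)$ with $\omega=dp_1\wedge dq_1+dp_2\wedge dq_2$ and moment map $\mu=(p_1,p_2)$; here I normalize the translational ambiguity of $\mu$ so that its image sits in the moment cone with apex at the origin. I then set $V:=p_1\partial_{p_1}+p_2\partial_{p_2}$, so that $\iota_V\omega=p_1\,dq_1+p_2\,dq_2$ and $d(\iota_V\omega)=\omega$; hence $V$ is a Liouville vector field and $\lambda:=\iota_V\omega$ restricts to a contact form on any transverse hypersurface. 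By construction $d\mu(V)=\mu$, i.e.\ $V$ is $\mu$-related to the Euler (radial) field $R(p)=p$ on the moment image.

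First I would check that $V$ extends smoothly to a neighborhood of $\partial W$ and is transverse to it. A neighborhood of $\partial W$ projects to a neighborhood of the green curve, where the only non-Lagrangian orbits are the two circle orbits over the endpoints of the green curve (at which a torus collapses along the slope of the adjacent blue edge). Since the origin does not lie on the green curve, $R$ is smooth and nonzero there, and the standard local model for the collapse of a torus along an edge shows that $V$ extends smoothly across these circle orbits. Because $V$ carries no torus-direction component and $R$ is nowhere tangent to the green curve, $V$ is transverse to $\partial W$; in particular, at the endpoints the field $V$ points along the corresponding blue edge toward its interior fixed point, hence into $W$.

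Since $V$ is $\mu$-related to $R$ and $\partial W=\mu^{-1}(\text{green curve})$, the field $V$ points into $W$ along $\partial W$ \emph{exactly} when the radial rays from the origin cross the green curve transversally and enter the L-shape. Thus the lemma reduces to the planar claim that the region lies on the far side of the green curve as seen from the origin. Using the $SL(2,\Z)$ normalization from Figure~\ref{fig:L-shape} I place the origin at the cone apex; the two blue edges then lie along the cone rays through the origin, the green curve is the embedded arc joining their inner endpoints, and by Remark~\ref{remark curve} any arc transverse to the radial direction realizes $\partial W$ for a suitable invariant contact form. Transversality is then automatic, and it remains to confirm that the orange sphere-edges, and with them the whole region, lie beyond the green curve rather than between it and the origin.

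The main obstacle is this last point, which is precisely where the hypotheses $s_1\ge 0$ and $s_1,s_2\neq -1$ enter through the vector $z\in\R^2_{<0}$ with $-Q_\Gamma z=(a_1,a_2)$. Concretely, I would compute the inward normals $v_0,\dots,v_3$ of the four reduced edges from the relations $v_0+v_2=-s_1v_1$ and $v_1+v_3=-s_2v_2$ together with the Delzant condition at the three fixed points, read off the affine support numbers of the edge-lines in terms of the areas $a_1,a_2$ (equivalently the coordinates of $z$), and verify that $\langle p,\nu\rangle$ has the sign forcing $R$ inward along each smooth arc of the green curve. The sign condition $z_1,z_2<0$ is exactly what guarantees the origin can be taken as the cone apex with the region on the outward side of the green curve---this is the distinction between a concave and a convex contact boundary, and it is the step whose bookkeeping must be done with care, especially when the cone is reflex (the case $t_2-t_1>\pi$) and the green curve bends.
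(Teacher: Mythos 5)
Your proposal follows the paper's proof in essentially the same way: the radial field $V=p_1\partial_{p_1}+p_2\partial_{p_2}$ is exactly the paper's Liouville field, inward-pointing along the regular fibers follows from $z_1,z_2<0$ together with transversality of the green curve to the radial rays (which holds by construction, via Remark~\ref{remark curve}), and the extension over the two circle orbits is precisely what the paper verifies by its explicit $SL(2,\Z)$-plus-polar-coordinates computation, which your appeal to ``the standard local model'' abbreviates (note the computed field $\frac{\rho_1}{2}\partial_{\rho_1}+(\frac{\rho_2}{2}+\frac{z_1}{\rho_2})\partial_{\rho_2}$ is smooth only because $\rho_2\neq 0$ near the circle orbits, so the model must be the edge-collapse one, as you say, not the fixed-point one). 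The only divergence is your final ``support-number bookkeeping,'' which the paper renders unnecessary: the L-shape is built from the outset with the origin as cone apex and the green curve on its origin-facing side, so the region automatically lies radially beyond the curve.
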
     
  
      \begin{proof}  
As explained in \textsection\ref{subsec:toric_intro}, on the set of regular torus orbits in $W$ we have action-angle coordinates $(p_1,q_1,p_2,q_2)$ and  the symplectic form is given by  $\omega=dp_1\wedge dq_1+dp_2\wedge dq_2$. Therefore, the vector field $V=p_1\partial_{p_1}+p_2\partial_{p_2}$ is a Liouville vector field on this set of regular torus orbits, which is the part of the boundary of $W$ that projects to the interior of the green curve. This vector field $V$ is invariant under the toric action so we can depict it in the moment map image as radial arrows emanating from the origin.  Since $z_1,z_{2}<0$, $V$ points inward to the L-shape along the green curve.

Let us explain how $V$ can be smoothly extended to a neighborhood of the two circle orbits in $\partial W$, which are the fibers over the endpoints of the green dotted curve. As global action--angle coordinates are not defined on the singular orbits we will use polar coordinates $(\rho_1,\theta_1,\rho_2,\theta_2)$ in the following way. Consider the circle orbit that projects to a green point on the ray given by the direction $(1,-s_1)$.  
We  suppose that the green curve representing the boundary of $W$ is a straight line in a small neighborhood of this point. Take a small neighborhood of the chosen circle orbit that projects to the shaded parallelogram in Figure~\ref{fig:change}
on the left.

    \begin{figure}
\centering
\includegraphics[scale=.4]{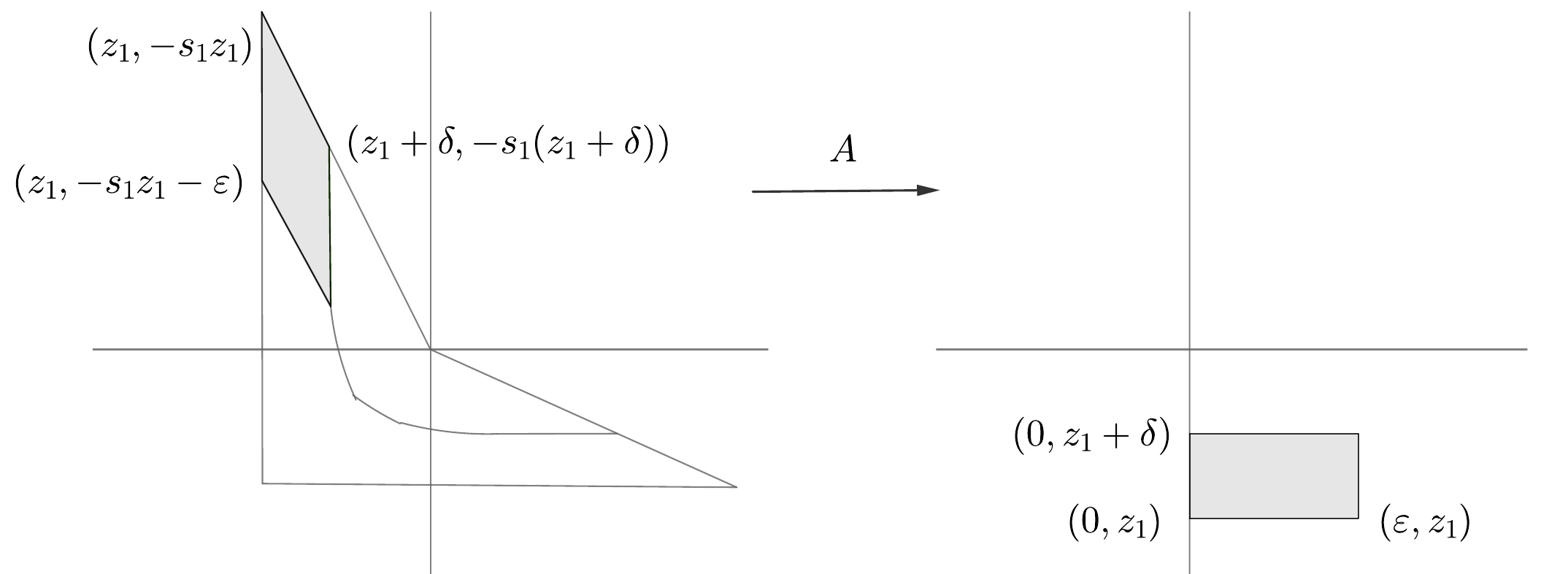}
\caption{$SL(2, \mathbb Z)$-transformation of the action-angle  coordinates}
\label{fig:change}
\end{figure}   

We first perform the transformation
  $A=\left[\begin{array}{cc} -s_1 & -1\\ 1 & 0 \end{array} \right]  $ of the moment map image that maps the shaded parallelogram on the left, to the rectangle on the right part of Figure~\ref{fig:change}. In this way we obtain new $A(p_1,p_2)$ coordinates in the moment map image and $A^{-T}(q_1,q_2)$ coordinates on each torus orbit. Then, we use the following transformation
  \begin{equation}
    \begin{split}\nonumber
     A(p_1,p_2) & \mapsto\left(  \frac{\rho_1^2}{2},  \frac{\rho_2^2}{2}+z_1\right),\\
     A^{-T}(q_1,q_2)  & \mapsto \left( \theta_1,  \theta_2 \right)
    \end{split}
\end{equation}
in order to work with polar coordinates since it is well understood how polar coordinates degenerate when collapsing the angular coordinate $\theta$ at radius $\rho=0$, which corresponds to the circle orbits.

Putting these transformations together we obtain

\begin{equation}
    \begin{split}
       p_1&= \frac{\rho_2^2}{2}+z_1,\hskip7mm p_2= -\frac{\rho_1^2}{2}- \frac{s_1\rho_2^2}{2}-s_1z_1,\\       q_1 &=-s_1 \theta_1+\theta_2,\hskip2mm q_2= -\theta_1.
       \end{split}
\end{equation}
Thus,
\begin{equation}
    \begin{split}
        dp_1&= \rho_2d\rho_2,\hskip13mm dp_2= -\rho_1 d\rho_1-s_1\rho_2 d\rho_2,\\       dq_1 &=-s_1 d\theta_1+d\theta_2,\hskip2mm dq_2= -d\theta_1.    \end{split}
\end{equation}
and
\begin{equation}
    \begin{split}
   \partial_{p_1}&= \frac{1}{\rho_2}\partial_{\rho_2}-  \frac{s_1}{\rho_1}\partial_{\rho_1},\hskip2mm\partial_{p_2}=-\frac{1}{\rho_1}\partial_{\rho_1}\\
     \partial_{q_1}&= \partial_{\theta_2}, \hskip20mm
      \partial_{q_2}= -\partial_{\theta_1}-s_1 \partial_{\theta_2}.     \end{split}
\end{equation}

The symplectic form $
\omega =dp_1\wedge dq_1+dp_2\wedge dq_2$ transforms to $\rho_1d\rho_1\wedge d\theta_1+\rho_2d\rho_2\wedge d\theta_2$ and
 the Liouville vector field $V=p_1\partial_{p_1}+p_2\partial_{p_2}$ transforms to
$$V=\frac{\rho_1}{2}\partial_{\rho_1} + \left(\frac{\rho_2}{2}+ \frac{z_1}{\rho_2}\right)\partial_{\rho_2}.$$
Along the line of singular orbits we have $\rho_1=0$ and thus $V$ extends to this set as well. The proof is analogous in the neighborhood of the other circle orbit. Therefore, $V$ is globaly defined in the neighborhood of the boundary of $W.$

Finally,  the term $\frac{\rho_2}{2}+ \frac{z_1}{\rho_2}$ is negative for a sufficiently small $\rho_2>0,$ since $z_1<0.$
Equivalently, in the moment map image  the radial rays emanating from the origin points into the interior of the L-shape and are transversal to the green curve.
Therefore, the Liouville vector field $V$  is transversal to the boundary of $W$ and points inward the boundary.
      \end{proof}

 As the Liouville vector field points transversely inward  along the boundary of $W_i$, it gives the boundary of $W_i$ a concave contact structure. According to Lemma~\ref{lemma toric contact}, this contact structure admits a toric action as well. This completes the proof of Theorem~\ref{theorem general} in the case $n=2.$

 \vskip2mm
 
\begin{remark}\label{rem:rays of L-shape}    
 We point out that, due to Theorem~\ref{theorem Lerman class}, the contact structure on the boundary of the plumbing $(s_1,s_2)$ is determined by the rays 
 \begin{equation}\label{eq:rays of one L-shape}
 R_1=(1,-s_1), \ \ \  R_2=(-s_{2},1),
\end{equation}
 pointing toward the origin,
 since $z_{2}+a_{1}=-s_1z_1$ and $z_1+a_{2}=-s_{2}z_{2}$.
Note that the contact structure on the boundary does not depend on the choice of $(z_1,z_{2})$ and $(a_1,a_{2}).$ The corresponding moment cone is shown in Figure~\ref{fig: cone} on the right.
\end{remark}

\subsection{Linear plumbing $(s_1,\ldots,s_n)$, with at least one $s_i\geq0 $} \label{gluing of L-shapes}

Consider the linear plumbing of $n$ disc bundles over 2-spheres with self-intersection numbers $s_1, \ldots, s_n\in\mathbb Z,$ where   $s_i\geq0 $ for at least one index $i \in  \{1, \ldots, n\}$. 
As in the case of a plumbing of two disc bundles over 2-spheres, we assume that no sphere is an exceptional divisors,  that is $s_j\neq -1$ for every $j=1, \ldots, n$. 
The corresponding  self-intersection form is 
$$Q_\Gamma=\left(\begin{array}{ccccccc} 
s_1 & 1 & 0 & 0 & \cdots & 0 & 0\\
1 & s_2 & 1 & 0 & \cdots & 0 & 0\\
0 & 1 & s_3 & 1 & \cdots & 0 & 0\\
0 & 0 & 1 & \ddots &        &  & \\
  &  &  &  & \ddots & 1 & \\
  &  &   &    &        1  & s_{n-1} & 1\\
  &  &   &    &          & 1 & s_n
\end{array}  \right) .$$

 Now we will prove Theorem~\ref{theorem general}. The rough idea is to build the toric structure on the linear plumbing by gluing together $n-1$ toric pieces, whose moment map images are the same as the L-shapes for the 2-vertex plumbings of \S\ref{L-shape}. Next, we use $SL(2,\mathbb{Z})$ transformations of the plane to paste all the L-shapes together to obtain a global moment map image. We then show that the resulting global symplectic toric manifold $W$ is diffeomorphic to the given linear plumbing.

\begin{proof}[Proof of Theorem~\ref{theorem general}]
 
 \textbf{Step 1.} \emph{Toric decomposition.}\\
Suppose $i$ is an index such that $s_i\geq 0$. We first decompose the sequence $(s_1,\dots, s_n)$ into $n-1$ pairs 
\begin{equation}\label{sequence of L-shapes}
\mathcal{P}_1:=(s_1,0),\ldots, \mathcal{P}_{i-1}:=(s_{i-1},0),\mathcal{P}_i:=(s_i,s_{i+1}),\mathcal{P}_{i+1}:=(0,s_{i+2}),\ldots, \mathcal{P}_{n-1}:=(0,s_n).
\end{equation}
Observe that in each pair $\mathcal{P}_j$, at least one of the two integers is non-negative.

Next, we associate a toric piece to each $\mathcal{P}_j$.  The corresponding moment map image will be $L_j$, the L-shape of \S\ref{L-shape} determined by the pair $\mathcal{P}_j$, with vertex at coordinates $(z_j,z_{j+1})$ where we will specify the values $z_j$ below. For $L_2,\dots, L_{n-2}$, we keep the toric action free over the points in the blue edges instead of performing symplectic reduction (except at the vertex where the blue and orange edges meet, where the toric fiber will now be a circle). Topologically, this toric piece is just a $4$-ball, but its boundary is decomposed into three pieces: a solid torus above each of the two blue edges, and one $T^2\times I$ above the interior of the green edge. The solid torus preimage of the upper left blue edge of $L_j$ will be glued via an $SL(2,\Z)$ transformation to the solid torus preimage of the lower right blue edge of $L_{j-1}$. For the $L_1$ we do symplectic reduction along the top blue edge, but not the bottom right blue edge, and for $L_{n-1}$ we do symplectic reduction only on the bottom right blue edge. Thus at the end, when all the L-shapes will be glued together via the $SL(2,\Z)$ transformations, the only remaining part of the boundary of the symplectic toric $4$-manifold will be the preimage of the union of the green dotted curves as in Figures~\ref{fig:L-shape0}, \ref{fig:L-shape1} and \ref{fig:L-shape2}. \\

\begin{figure}
\centering
\includegraphics[scale=.5]{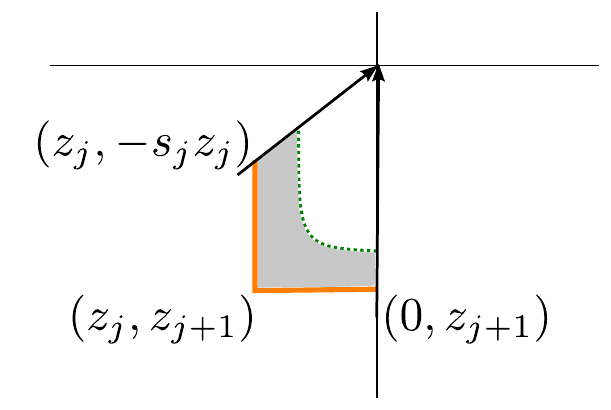}
\caption{L-shape moment map image corresponding to the pair $\mathcal{P}_j=(s_j,0)$ for $j<i$. In this example, $s_j<0$.}
\label{fig:L-shape0}
\end{figure}

\begin{figure}
\centering
\includegraphics[scale=.4]{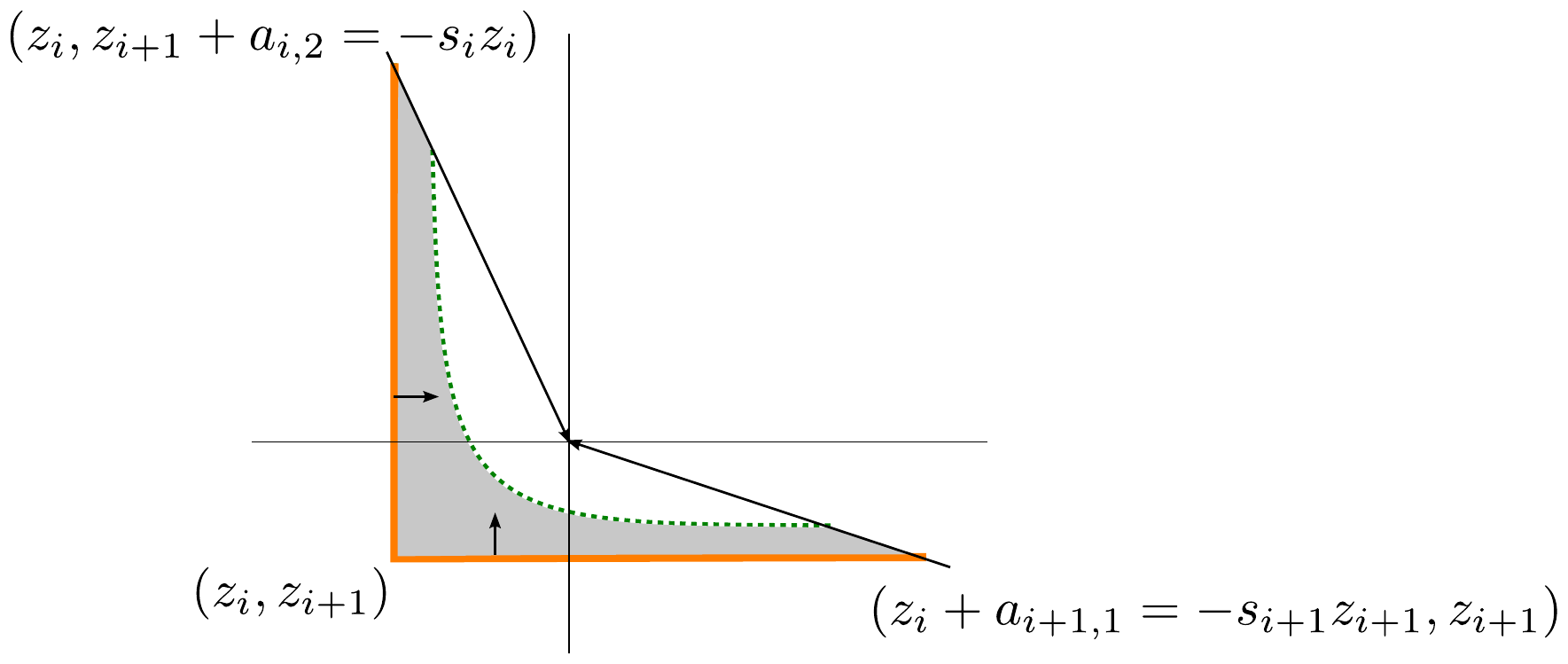}
\caption{L-shape moment map image corresponding to the pair $\mathcal{P}_i=(s_i,s_{i+1})$. In this example, $s_i,s_{i+1}>0$.}
\label{fig:L-shape1}
\end{figure}

\begin{figure}
\centering
\includegraphics[scale=.5]{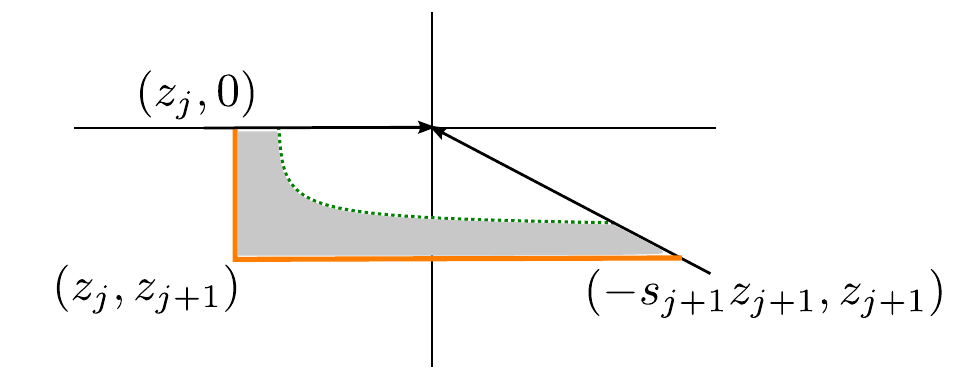}
\caption{L-shape moment map image corresponding to the pair $\mathcal{P}_j=(0,s_{j+1})$ for $j>i$. In this example, $s_j>0$.}
\label{fig:L-shape2}
\end{figure}

\noindent \textbf{Step 2.} \emph{Pinning down coordinates of the L-shapes.}\\ To fully specify the L-shape associated to each pair $\mathcal{P}_j$, we must also specify the coordinates $(z_j,z_{j+1})$ of its corner vertex and the associated symplectic areas $(a_j,a_{j+1})$, which determine the remaining two vertices. For a concave structure, we need $z_j<0$. We now specify how to choose the values $z_1,\dots, z_n \in \R_{<0}$.

Denote by 
 $Q_j$, $j=1, \ldots, n-1$, the corresponding self-intersection forms of the 2-vertices plumbings $\mathcal{P}_j$. 
First, let us  find  $(z_1, \ldots,z_{n})\in\mathbb R^n_{<0}$ such that
\begin{equation}
 \begin{split}\nonumber     -Q_1(z_1,z_{2}) &=(a_{1},a_{2,1}),\\
     -Q_j(z_j,z_{j+1}) &=(a_{j,2},a_{j+1,1}),\hskip2mm j=2,\dots, n-2,\\
     -Q_{n-1}(z_{n-1},z_{n}) &=(a_{n-1,2},a_{n}), 
     \end{split}
\end{equation}
for some $a_1,a_{j,1},a_{j,2}, a_n \in \mathbb R_{>0}$ as follows: 
\begin{itemize}
    \item If $1<i$,  take $z_1=-1$ and   choose $z_j<0$, for all $1<j<i,$ by  using the inequality $z_j<-s_{j-1}z_{j-1}$.

\item Analogously, if $i<n,$ take $z_n=-1$ and choose $z_j<0$ for all $n>j>i$ while satisfying by using the inequality  $z_{j}<-s_{j+1}z_{j+1}.$ 

\item Finally, choose $z_i<0$ so that $z_i<-s_{i-1}z_{i-1}$ and
    $z_i<-s_{i+1}z_{i+1}$.  
\end{itemize}

Set $a_j=a_{j,1}+a_{j,2}$ for all $j=2,\dots, n-1$.  Let us briefly explain why 
\begin{equation}\label{eqn:area}
 \begin{split}
     a_1 & =-s_1z_1-z_2,\\
     a_j  & = -s_jz_j-z_{j+1}-z_{j-1},\hskip2mm j=2,\dots, n-1, \\
     a_n & = -z_{n-1}-s_nz_n. 
     \end{split}
\end{equation}
The equalities for $a_1$ and $a_n$ follow from the respective equations 
$$-Q_{1}(z_{1},z_{2})=(a_{1},a_{2,1}) \qquad  
\mbox{and} \qquad  -Q_{n-1}(z_{n-1},z_{n})=(a_{n-1,2},a_{n}).$$ To show the equality for $a_j$, for any $j=2,\dots, n-1,$ 
consider two adjacent pairs 
$$\mathcal{P}_{j-1}=(s_{j-1},s_j^{-})  \qquad \mbox{and} \qquad \mathcal{P}_j=(s_j^{+},s_{j+1})$$
where $s_j^{-}+s_j^{+}=s_j$. Apply the equations
$$-Q_{j-1}(z_{j-1},z_{j})=(a_{j-1,2},a_{j,1}) \qquad\mbox{and} \qquad -Q_j(z_j,z_{j+1})=(a_{j,2},a_{j+1,1}),$$
together with the definition $a_j=a_{j,1}+a_{j,2}$. Thus \eqref{eqn:area} holds. \\

\begin{remark}  \label{rem: split1}    Note that the choice of $s_j^-$ and $s_j^+$ in the equation 
$s_j^{-}+s_j^{+}=s_j$ does not affect the value of $a_j$,  since the computation gives 
$a_j = -(s_j^-+s_j^+)z_j-z_{j+1}-z_{j-1}$. See 
 also Remark \ref{rem:split s_j} where it is shown that this choice does not affect global self-intersection numbers, as well.
 Note that, in the decomposition (\ref{sequence of L-shapes}) we have chosen 
$s_j^-=0, s_j^+=s_j,$ if $j\leq i,$ 
and $s_j^-=s_j,s_j^+=0$, if $j>i.$ 
Any choice works as long as in each $\mathcal{P}_j$ at least one coordinate is non-negative.
\end{remark}
  
\noindent  \textbf{Step 3.} \emph{Gluing together the L-shapes.}\\
Now we show that these $n-1$ toric pieces can be glued along the solid tori in the preimage of the blue edges via $SL(2,\Z)$ transformations to obtain a global toric structure. We subsequently verify that this global symplectic toric manifold is symplectomorphic to the linear plumbing $(s_1,\dots, s_n)$ where the core spheres have symplectic areas $(a_1,\dots, a_n)$ and that  the boundary admits a concave structure.

We claim that for $j=2,\dots, n-1$, the $SL(2,\Z)$ transformation 
$$A_j = \left[\begin{array}{cc} -s_{j} & -1\\ 1 & 0 \end{array} \right]$$ 
takes the blue interval in the top right of the L-shape associated to $\mathcal{P}_j$ to the blue interval in the bottom right of the L-shape associated to $\mathcal{P}_{j-1}$. For example, if $\mathcal{P}_j=(0,s_{j+1})$ and $\mathcal{P}_{j-1}=(0,s_{j})$, the upper right blue interval of the $j^{th}$ L-shape is the straight line segment from $(z_{j},0)$ to $(z_{j}+\delta,0)$ and the lower left blue interval of $j-1^{st}$ L-shape is the straight line segment from $(-s_jz_j,z_j)$ to $(-s_j(z_j+\delta),z_j+\delta)$, and $A_j$ is the correct transformation relating these. The reader can verify computationally the other cases.

Our global toric manifold will then have moment map image
$$L_1\cup A_2(L_2 \cup A_3(L_3 \cup \cdots A_{n-1}(L_{n-1}) )).  $$

In this global picture, the union of the orange edges become $n$ edges, whose toric preimages are $n$ symplectic spheres. We claim that their self-intersection numbers are exactly $(s_1,\dots, s_n)$, and their symplectic areas are $(a_1,\dots, a_n)$, which suffices to verify that this global toric manifold is symplectomorphic to the specified linear plumbing, according to \cite[Proposition 3.5]{sym_rational_blow}, see also \cite{rae}.

\begin{remark}\label{rem:split s_j}    Note that the same matrix $A_j$ can be applied in order to glue
      $ \mathcal{P}_j=(s_j^{+},s_{j+1}) $  to    $\mathcal{P}_{j-1}=(s_{j-1},s_j^{-})$, for any choice
     of $s_j^-$ and $s_j^+$, where
$s_j^{-}+s_j^{+}=s_j,$ since $A_j$ maps the point $(-1,s_j^+) $ of the L-shape associated to 
$ \mathcal{P}_j$ to the point $(s_j^-,-1)$ of $ \mathcal{P}_{j-1}$. See also Remark
 \ref{rem: split1}.

\end{remark}

\noindent \textbf{Step 4.} \emph{Verification of the plumbing data via the moment map image.}\\
As explained in 
\textsection\ref{section: symplectic 4},  both self-intersection and symplectic area of symplectic spheres corresponding to edges can be read from the moment map image.
 The first sphere is the preimage of the vertical edge of $L_1$ with end points $(z_1,-s_1z_1)$ and $(z_1,z_2)$. 
 The symplectic area is equal to the affine length of this edge $-s_1z_1-z_2=a_1$ by equation~\ref{eqn:area}. The self-intersection number is equal to $\det((0,1),(-s_1,-1))=s_{1},$ where $(-s_{1},-1)$ is the inward normal vector to the ray given by the direction $(1,-s_1)$.
For $j=2,\dots, n-1$, the $j^{th}$ sphere is the preimage of the edge contained in 
$$A_2\dots A_{j-1}(L_{j-1}\cup A_jL_j).$$ 
Instead of computing self-intersection and symplectic area in the moment image $A_2\dots A_{j-1}(L_{j-1}\cup A_jL_j)$ we  compute it in $L_{j-1}\cup A_jL_j$ (Figure~\ref{gluing}),  as the $SL(2,\Z)$ transformation $A_2\dots A_{j-1}$ preserves the computation (see \textsection\ref{section: symplectic 4}). 

In $L_{j-1}\cup A_jL_j$, the sphere is the preimage of the edge with end points $(z_{j-1},z_j)$ and $(-s_jz_j-z_{j+1},z_j)$. The symplectic area is simply the affine length of this interval $$-s_jz_j-z_{j+1}-z_{j-1}=a_j$$ by \eqref{eqn:area}. The self-intersection number is equal to $$\det((-1,-s_{j}),(1,0))=s_{j},$$ where $(-1,-s_{j})$ is the inward normal vector to the edge given by the direction $A_{j}(1,0)$.
 The $n^{th}$ sphere in the chain  is the preimage of the edge contained in 
$A_2\dots A_{n-1}(L_n)$. Equivalently, it is the pre-image of the edge in $L_n$ with end points $(z_{n-1},z_n)$ and $(-s_nz_n,z_n).$ The symplectic area is  $$-s_nz_n-z_{n-1}=a_n$$ by \eqref{eqn:area}. The self-intersection number is  $$\det((-1,-s_{n}),(1,0))=s_{n},$$ where $(-1,-s_{n})$ is the inward normal vector to the ray given by the direction $(-s_n,1)$. \\

\noindent \textbf{Step 5.} \emph{Checking the concave contact boundary condition.}\\
 Finally, the radial vector field 
$p_1\partial_{p_1}+p_2\partial_{p_2}$
 is invariant  under the gluing procedure, since its moment map image is invariant under  linear transformations.
 Thus, it is Liouville and transversal to the boundary along the set of all regular torus orbits in $\partial W$.
The proof of Lemma~\ref{lemma global vector field}   generalises to this case, so  the radial vector field 
   extends to a neighborhood of two singular orbits in $\partial W$. It is a Liouville vector field with respect to this symplectic structure and it points transversaly into $W$ along $\partial W$. Therefore, the boundary $\partial W$ admits a concave contact  structure that is, according to Lemma~\ref{lemma toric contact} also a toric structure. 
\end{proof}

\vskip2mm
 
   We now describe an example of the construction with $i=2$ and $n=4$.
      
   \begin{figure}
\centering
\includegraphics[width=13cm]{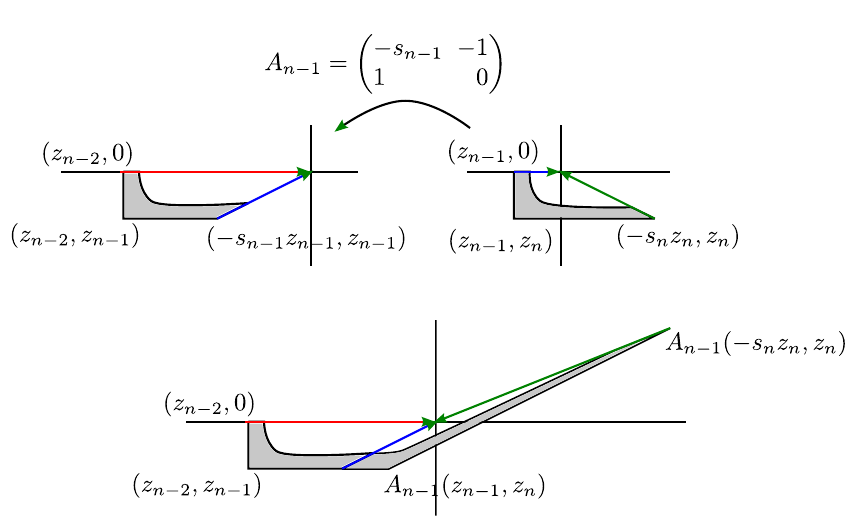}
\caption{Gluing the L-shape $(0,s_n)$ to the L-shape $(0,s_{n-1})$}
\label{gluing}
\end{figure}        

    \begin{example} \label{ex:4vertices}
        Consider the linear plumbing of disk bundles over spheres with self-intersection numbers $(-2,1,0,-2)$. Following the procedure above, we can set $z_1=-1$, $z_2=-3$, $z_3=-3$, $z_4=-1$. We build three L-shapes corresponding to the pairs $(-2,0)$, $(1,0)$, $(0,-2)$. We glue the $(0,-2)$ L-shape to the $(1,0)$ L-shape via the $SL(2,\Z)$ transformation
        $$A_3 = \left[ \begin{array}{cc} 0&-1\\1&0 \end{array} \right] $$
        and glue the union of the $(1,0)$ L-shape with $A_3$ of the $(0,-2)$ L-shape to the $(-2,0)$ L-shape via the $SL(2,\Z)$ transformation
        $$A_2 = \left[ \begin{array}{cc} -1&-1\\1&0 \end{array} \right].$$ The result is shown in Figure~\ref{fig:ex4vertices}.
    \end{example}

    \begin{figure}
        \centering
        \includegraphics[scale=1.5]{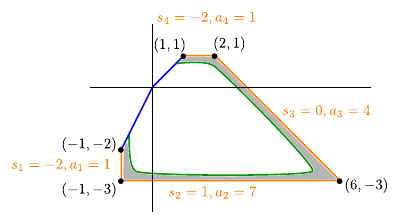}
        \caption{The global toric moment map image of the plumbing $(-2,1,0,-2)$ with symplectic areas $(1,7,4,1)$.}
        \label{fig:ex4vertices}
    \end{figure}

 \begin{remark}

A construction of a symplectic form and a Liouville vector field on a plumbing, in the case when the corresponding self-intersection form is negative-definite, was given by Gay and Stipsicz~\cite{gs}. In that case, the contact structure on the boundary is of convex type. Their construction is generalized to the concave case by  Li and Mak \cite{lm} and uses a combination of toric models that correspond to edges of the plumbing and product models that correspond to the vertices of the plumbing. These are glued together according to the graph of the plumbing.

In the case of a  linear plumbing of disk bundles over spheres, the product models are $D^2\times D^2$ and $A\times D^2$ portions, where $A$ denotes the annulus, which also admit toric coordinates. Therefore, our method for the linear case simplifies their method. However, our method cannot be extended to a non-linear case because a product model 
$\Sigma\times D^2$, where $\Sigma$ is a higher genus $g$ surface, that corresponds to a vertex with valences $g+1$, does not admit toric structure.

Note also that by our construction, we establish that
    $$-Q_\Gamma(z_1,  \ldots, z_n)=(a_1,  \ldots, a_n),$$
    where $(z_1,  \ldots, z_n) \in \mathbb R^n_{<0}$, $(a_1,  \ldots, a_n) \in \mathbb R^n_{>0}$   and this is the necessary conditions for the Gay-Stipsicz and Li-Mak constructions to yield a concave symplectic structure on a manifold $W$  that is diffeomorphic to the linear plumbing $(s_1,\ldots, s_n)$.   
    \end{remark}

  \section{Tight and overtwisted contact structures on the concave boundary of a linear plumbing}   \label{section tight-ot}
  
   Let  $(s_1,\ldots,s_n)$ be any linear plumbing over spheres  where   $s_i\geq0$  for  at least one index $i\in\{1,\ldots,n\}$.  As explained in the previous section, the total space  can be equipped with a symplectic toric structure with a concave contact toric boundary.  In this section we make use of the moment cones associated to the contact toric structures and we show how the intersection numbers  $s_1,\ldots,s_n$ determine the topology of the boundary  and, in almost all cases,  whether this contact structure is tight or overtwisted.

 The first part of the following proposition is well known (\cite[Theorem 6.1.]{Neumann}) in the case when $s_j\leq-2$,  for all $j\in\{1,\ldots,n\}.$ 
To set the stage, we provide the proof in the case when $s_i\geq0$  for at least one $i\in\{1,\ldots,n\}.$   Although the statement is purely topological, the proof is derived using toric tools.
   
  \begin{proposition}\label{prop lens} 
  Let $(s_1,\ldots, s_n)$ be a linear plumbing, where   $s_i\geq0$  for at least one $i\in\{1,\ldots,n\}.$    
  \begin{itemize}
      \item[(a)] Assume that the continued fraction   $$[s_1,\ldots, s_n]=s_1-\frac{1}{s_2-\frac{1}{\cdots-\frac{1}{s_n}}}$$
  is well defined. Then the boundary of the given linear plumbing 
   is diffeomorphic to the lens space $L(k,l)$, where $k/l=[s_1, \ldots, s_n].$

 \item[(b)] If the determinant of the  corresponding intersection form $Q$ is equal to $k\in\mathbb Z$ then the boundary of the  linear plumbing  is diffeomorphic to $L(k,l)$, for some $l\in\mathbb Z$. In particular, if $ \det Q$ is equal to $0,1$ or $2,$ then the boundary is diffeomorphic to $S^1 \times S^2,$ $S^3$ or $ \mathbb{RP}^3$, respectively.
 \end{itemize} 
   
   \end{proposition}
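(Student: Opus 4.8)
The plan is to use the toric model of $\partial W$ from Theorem~\ref{theorem general} and reduce the identification of the boundary to reading off the two rays of its moment cone. By Theorem~\ref{theorem general} the boundary of the linear plumbing is a compact connected contact toric $3$-manifold with a non-free toric action, so by Lerman's classification (Theorem~\ref{theorem Lerman class}) and Remark~\ref{remark Lens} it is diffeomorphic to a lens space that is completely determined by the two rays spanning its moment cone: after an overall $SL(2,\Z)$ transformation normalizing the first ray to the direction $(1,0)$ (with inward normal $(0,1)$), the second ray has direction $(l,k)$ and the resulting manifold is $L(k,l)$. Thus the whole problem becomes the computation of these two rays in terms of the self-intersection data $(s_1,\ldots,s_n)$.

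First I would extract the two rays directly from the construction in the proof of Theorem~\ref{theorem general}. The global moment image is $L_1\cup A_2(L_2\cup A_3(\cdots A_{n-1}(L_{n-1})))$, where $A_j=\left[\begin{smallmatrix} -s_j & -1\\ 1 & 0\end{smallmatrix}\right]\in SL(2,\Z)$. The first ray of the cone is the first ray of $L_1$, which by Remark~\ref{rem:rays of L-shape} applied to the pair $\mathcal P_1=(s_1,0)$ is $R_{\mathrm{first}}=(1,-s_1)$; the last ray is the image under $A_2\cdots A_{n-1}$ of the second ray of $L_{n-1}$, which for the pair $\mathcal P_{n-1}=(0,s_n)$ is $(-s_n,1)$, so $R_{\mathrm{last}}=A_2\cdots A_{n-1}(-s_n,1)^{T}$. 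Since all the gluing matrices are $A_j$ regardless of where the index $i$ with $s_i\ge 0$ sits, neither ray depends on the choice of $i$, and the computation is uniform.

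The heart of the argument is then a linear-algebra computation matching $(R_{\mathrm{first}},R_{\mathrm{last}})$ to the standard cone of $L(k,l)$ with $k/l=[s_1,\ldots,s_n]$. Writing $R^{(m)}:=A_m\cdots A_{n-1}(-s_n,1)^{T}=(x_m,y_m)$, the relation $R^{(m)}=A_m R^{(m+1)}$ gives $y_m=x_{m+1}$ and the three-term recursion $x_m=-s_mx_{m+1}-x_{m+2}$, with initial data $x_n=-s_n$ and $x_{n+1}=1$. After the sign normalization $u_m:=(-1)^{m-1}x_m$ this becomes $u_m=s_mu_{m+1}-u_{m+2}$, which is exactly the recursion governing the convergents of the continued fraction $[s_1,\ldots,s_n]$, and is the same three-term recursion that computes $\det Q$ by cofactor expansion. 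Carrying the recursion one step past the top, so that $x_1=-s_1x_2-x_3$, a direct expansion gives $\det\!\big(R_{\mathrm{first}}\mid R_{\mathrm{last}}\big)=x_3+s_1x_2=-x_1$, and normalizing $R_{\mathrm{first}}$ to $(1,0)$ via $\left[\begin{smallmatrix}1&0\\ s_1&1\end{smallmatrix}\right]$ sends $R_{\mathrm{last}}$ to $(x_2,\,s_1x_2+x_3)$. Comparing with the minus-continued-fraction convergents identifies this with $(l,k)$ where $k/l=[s_1,\ldots,s_n]$ whenever the continued fraction is defined (i.e.\ $l\neq 0$), and the same recursion shows $k=|\det Q|$. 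Invoking Remark~\ref{remark: all cones for L(k,l)} to absorb the usual $L(k,l)\cong L(k,-l)\cong L(k,l+nk)$ ambiguities then proves part~(a).

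For part~(b) the continued fraction need not be defined, but the toric construction, and hence the rays, still exist, so the determinant computation above is unconditional: $|\det(R_{\mathrm{first}}\mid R_{\mathrm{last}})|=|\det Q|$, equivalently $|H_1(\partial W)|=|\operatorname{coker}Q|=|\det Q|$, so $\partial W\cong L(k,l)$ with $k=|\det Q|$ for some $l$. The special values follow since $L(0,1)=S^1\times S^2$, $L(1,0)=S^3$, and the only lens space with $k=2$ is $L(2,1)=\mathbb{RP}^3$. The main obstacle I anticipate is not conceptual but bookkeeping: pinning down the orientation and sign conventions (the $(-1)^{m}$ factors, the direction in which the cofactor recursion is read, and the $SL(2,\Z)$ normalization of the first ray) carefully enough that the recursion genuinely reproduces both $[s_1,\ldots,s_n]$ and $\det Q$ on the nose, and that the residue $l$ is correctly identified modulo the lens space equivalences of Remark~\ref{remark: all cones for L(k,l)}.
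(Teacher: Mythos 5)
Your proposal is correct and follows essentially the same route as the paper: both read off the rays $R_1=(1,-s_1)$ and $R_2=A_2\cdots A_{n-1}(-s_n,1)$ from the L-shape gluing of Theorem~\ref{theorem general}, normalize the first ray by an $SL(2,\Z)$ matrix, and identify the second ray via Lerman's classification, with part (b) reduced to the identity $\det Q_\Gamma=\pm\det(R_1,R_2)$ through the same three-term cofactor recursion. The only differences are cosmetic: you run the recursion on the components $x_m$ with a sign normalization, while the paper runs it on the ratios $k_j/l_j=-1/(s_j+k_{j+1}/l_{j+1})$ and normalizes the first ray to $(-1,0)$ instead of $(1,0)$, which agree up to $-I\in SL(2,\Z)$.
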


     \begin{proof}
   (a) 
   According to Theorem~\ref{theorem Lerman class} and the explanation below, the topology of the boundary is specified by  the two rays bounding the corresponding moment cone. In particular, when these rays are given by the directions $R_1=(-1,0)$ and $R_2=(-l,-k)$, pointing toward the origin, then the boundary is diffeomorphic to a lens space $L(k,l)$, by Remark~\ref{remark: all cones for L(k,l)}. 
   
   Next we explain why $k/l=[s_1, \ldots, s_n].$ First, we observe that   by equation 
   \eqref{eq:rays of one L-shape}
   and the construction developed in 
\textsection\ref{gluing of L-shapes}  it follows that the rays bounding
the moment cone of  the boundary of 
$(s_1,\ldots, s_n)$ are
 $$R_1=(1,-s_1)\hskip2mm\textrm{and}
 \hskip2mm R_2=\begin{cases}
        (-s_2,1),&\mbox{ if $n=2$,  }\\  
  A_2  \cdots A_{n-1}(-s_n,1),&\mbox{ if $n\geq3$,}
 \end{cases}
$$ 
where $(-s_n,1)$ is the second ray defining the last L-shape and 
$A_j = \left[\begin{array}{cc} -s_{j} & -1\\ 1 & 0 \end{array} \right].$
 Note that both rays point toward the origin.

Second, we perform an $SL(2, \mathbb Z)$ transformation
      $B= \left[\begin{array}{cc} -1 & 0
\\ -s_1 & -1 \end{array} \right]$      
    that maps the ray $R_1=(1,-s_1)$ to the ray $(-1,0)$. 
 Then, the ray $R_2$ will be mapped to the ray $(-l,-k),$ for some integer values of $k$ and $l$. Therefore, if $n=2$, 
    $$(-l,-k)=B(-s_2,1)=(s_2,s_1s_2-1)$$
    and, thus, $k/l=s_1-1/s_2.$ If $n\geq3$, then
    $$(-l,-k)=BA_2  \cdots A_{n-1}(-s_n,1) $$
   and the result can be shown  in the following way.  Set $(-l_{n-1},-k_{n-1})=A_{n-1}(-s_n,1)$. Then it holds  
   $$k_{n-1}/l_{n-1}=-\frac{1}{s_{n-1}-1/s_n}.$$
   And, more generally, set $(-l_j,-k_j)=A_{j}(l_{j+1},k_{j+1})$, for all $1<j<n$. Then, it holds
   $$k_j/l_j=-\frac{1}{s_{j}+k_{j+1}/l_{j+1}}.$$ 
   In particular,  $(-l_{2},-k_{2})=A_2 \cdots A_{n-1}(-s_n,1)$ and therefore 
   $$k_{2}/l_{2}=-\frac{1}{s_2-\frac{1}{\ldots-1/s_n}}.$$
   Finally, $(-l,-k)=B(l_2,k_2)$ and the result follows. 
  
(b) Let us first show that 
$$\det Q_\Gamma=(-1)^{n-1} \det(R_1,R_2),$$
where  $R_1$ and $R_2$ are the rays defining the moment cone, as in part (a).

The equality is obviously correct if $n=2.$ Therefore, assume $n\geq3.$
Denote by $Q_{j, \ldots,n}$ the intersection form of the linear plumbing $(s_j,  \ldots,s_n),$ for any $1<j \leq n-1,$ and $Q_n=s_n.$ Then
\begin{equation}
    \begin{split}\nonumber
       R_2&=A_2  \cdots A_{n-1}(-Q_n\ ,\ 1)\\
              &=A_2  \cdots A_{n-2}(Q_{n-1,n}\ ,\ -Q_n)\\
              &=A_2  \cdots A_{n-3}(-Q_{n-2,n-1,n}\ ,\ Q_{n-1,n})\\
              &\  \vdots         \\
              &=((-1)^{n-1}Q_{2, \ldots, n}\ ,\  (-1)^n Q_{3, \ldots, n}).
               \end{split}
\end{equation}
On the other hand
$$\det Q_\Gamma=s_1 \det Q_{2,\ldots, n}-\det Q_{3, \ldots, n}$$
and the equality follows.

From the classification results in \S\ref{section:lens spaces} it is clear that the determinant of the rays that span the moment cone of $L(k,l)$ is precisely equal to $k.$ Therefore, we derive the conclusion.

\end{proof}

We now focus on criteria for overtwistedness and tightness of the contact structure.

  \begin{theorem} \label{theorem ot} The contact structure on the  boundary of a linear plumbing $(s_1,\ldots,s_n)$, 
   is overtwisted if for some index $i\in\{1,\ldots,n\}$ where   $s_i\geq0$,  one of the following cases occurs:
\begin{itemize}
    \item[(a)] either $s_is_{i+1}\geq 2$ or  $s_is_{i+1}\geq1$ and $n>2$;
    \item[(b)] there exists  $j\in\{1, \ldots, n\}$ such that $|i-j|>1$, and for which either $s_j\geq1$ or $s_j\geq0$ and $n>3$;
  \item[(c)] $s_i=0$ and either $s_{i-1}+s_{i+1}\geq1$ or $s_{i-1}+s_{i+1}\geq0$ and $n>3$.     
\end{itemize}    
   \end{theorem}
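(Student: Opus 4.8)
The plan is to reduce the claim to the two-dimensional angle criterion of Theorem~\ref{proposition ot}, which declares the boundary overtwisted exactly when the moment cone spans an angle $t_2-t_1>\pi$. By Remark~\ref{rem:rays of L-shape} and the gluing procedure of \S\ref{gluing of L-shapes}, this cone is bounded by the explicit rays $R_1=(1,-s_1)$ and $R_2=A_2\cdots A_{n-1}(-s_n,1)$, with $A_j=\left[\begin{smallmatrix}-s_j&-1\\1&0\end{smallmatrix}\right]$. Thus the entire theorem becomes the statement that, under each of (a)--(c), the counterclockwise angle swept from $R_1$ to $R_2$ is $>\pi$, and the task is to turn the combinatorial hypotheses into lower bounds on this angle.

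The structural input I would establish first is that the angle is swept monotonically. The successive edge-normals of the moment-map image obey the toric recursion $w_{j+1}=-s_jw_j-w_{j-1}$ carried by the matrices $A_j$, and a one-line computation gives $w_j\times w_{j+1}=w_{j-1}\times w_j$, so all consecutive determinants share a single sign: the boundary of the cone turns the same way at every step. Hence $t_2-t_1=\sum_j\theta_j$ is a sum of positive turning angles, and appending an edge (equivalently, taking a longer prefix of the matrix product) only increases the total. As a calibration, for $n=2$ one gets the exact identity $t_2-t_1=\tfrac{\pi}{2}+\arctan s_1+\arctan s_2$, which exceeds $\pi$ precisely when $\arctan s_1+\arctan s_2>\tfrac{\pi}{2}$, i.e.\ when $s_1s_2>1$; since the $s_i$ are integers this is $s_1s_2\geq2$, matching case (a) at $n=2$ and exhibiting the mechanism in miniature.

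With monotonicity available, each case becomes a matter of locating enough turning. The key local fact is that a nonnegative self-intersection $s_j\geq0$ makes $w_{j+1}=-s_jw_j-w_{j-1}$ point into the half-plane opposite $w_j$, forcing an obtuse turn once the sweep is suitably positioned, whereas an entry $s_j\leq-2$ contributes only a small turn; so the angle accrues precisely at the nonnegative indices. For (a), the adjacent pair $s_i,s_{i+1}\geq0$ with product $\geq1$ (resp.\ $\geq2$) produces two such turns summing to at least $\pi$, and the hypothesis $n>2$ supplies a further edge whose positive turn makes the inequality strict at the borderline $s_is_{i+1}=1$. For (b), the index $i$ and the well-separated index $j$ each force a large turn, and $|i-j|>1$ guarantees an intervening edge with its own positive turn, so the contributions cannot overlap and add past $\pi$; the $n>3$ clauses absorb the borderline thresholds. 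For (c), the entry $s_i=0$ gives $A_i=\left[\begin{smallmatrix}0&-1\\1&0\end{smallmatrix}\right]$, a quarter rotation, and a direct computation around this vertex shows the swept angle is governed by the sum $s_{i-1}+s_{i+1}$, crossing $\pi$ exactly at the stated threshold.

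The hardest part will be controlling the cumulative angle through the recursion when very negative entries are present. Because the turn at index $j$ depends on the incoming ray $w_{j-1}$ and not on $s_j$ alone, an entry $s_j\leq-2$ contributes a vanishingly small turn and can ``decouple'' the chain, so I must verify that the witnessing nonnegative turns are genuinely obtuse and that the running total first crosses $\pi$ where claimed, rather than being diluted below it by the surrounding negative edges. It is essential here that one tracks the \emph{cumulative} sweep: the endpoint data alone is insufficient, since $\det(R_1,R_2)=(-1)^{n-1}\det Q_\Gamma$ by Proposition~\ref{prop lens} only pins down the angle modulo $2\pi$. Fortunately Theorem~\ref{proposition ot} gives overtwistedness for \emph{any} total angle $>\pi$, so it suffices to exhibit, in each case, an explicit prefix of the monotone sweep whose partial angle already lies strictly between $\pi$ and $2\pi$; making the per-step estimates for negative $s_j$ precise enough to guarantee this is the crux of the argument.
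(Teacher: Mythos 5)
Your overall route is the same as the paper's: reduce everything to the angle criterion of Theorem~\ref{proposition ot}, use the L-shape decomposition of \S\ref{gluing of L-shapes} and Remark~\ref{rem:rays of L-shape} to read off rays, observe that the boundary curve is positively transverse to radial rays so the angular sweep is monotone (hence any sub-piece whose partial sweep exceeds $\pi$ suffices), and absorb the borderline thresholds via one extra L-shape's strictly positive turn. Your $n=2$ calibration $t_2-t_1=\tfrac{\pi}{2}+\arctan s_1+\arctan s_2$ is correct and agrees with the paper's criterion $s_is_{i+1}>1$.

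The gap is exactly at what you flag as the crux, and your diagnosis of it is off in two ways. First, the feared ``dilution'' cannot occur: you have already argued every step turns the same way, so edges with $s_j\leq-2$ only \emph{add} (small, positive) angle; since Theorem~\ref{proposition ot} applies to any total sweep $>\pi$, there is no need to track where the running total ``first crosses $\pi$,'' and no per-step $\arctan$ estimates for negative entries are needed at all. Second, the missing certificate for each witness is supplied by a one-line, $SL(2,\mathbb Z)$-invariant determinant computation on a sub-cone bounded by at most two adjacent L-shapes, which is precisely how the paper proceeds: for (a), $\det\big((1,-s_i),(-s_{i+1},1)\big)=1-s_is_{i+1}<0$ when $s_is_{i+1}\geq2$; for (b), gluing the L-shape containing $s_j$ to its predecessor yields, in the generic subcase $i+1<j-1$, the rays $(1,0)$ and $A_{j-1}(-s_j,1)=(s_{j-1}s_j-1,-s_j)$ with determinant $-s_j<0$ when $s_j\geq1$ (note the witness is localized at $j$ alone, the ray $(1,0)$ coming from the zero entry of the preceding pair, rather than requiring ``large turns'' at both $i$ and $j$ as you propose); for (c), gluing $(0,s_{i+1})$ to $(s_{i-1},0)$ via $A_i$ yields rays $(1,-s_{i-1})$ and $(-1,-s_{i+1})$ with determinant $-(s_{i-1}+s_{i+1})<0$ when $s_{i-1}+s_{i+1}\geq1$. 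If such a short sub-piece sweeps $\geq 2\pi$ the conclusion holds trivially; otherwise a negative determinant pins its angle into $(\pi,2\pi)$, and monotonicity finishes. Determinant zero (angle exactly $\pi$) is exactly where the clauses $n>2$ and $n>3$ supply one additional L-shape. Replacing your unproven per-step estimates by these three determinant computations closes the argument within your own framework.
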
      
   
       \begin{proof}  The key idea is to  find a concave piece in the moment map image      such that the bounding rays form an angle strictly bigger than $\pi$ and then apply Theorem~\ref{proposition ot}.  We decompose our linear plumbing as in the proof of Theorem~\ref{theorem general}, cf. (\ref{sequence of L-shapes}), and observe certain L-shapes.

      \begin{itemize}
          \item[(a)]
According to Remark 
\ref{rem:rays of L-shape}, the moment cone corresponding to the contact toric boundary of the plumbing $(s_i,s_{i+1})$ is defined by the rays $R^i_1=(1,-s_i)$ and $R^i_{2}=(-s_{i+1},1).$ 
        These rays span a concave cone if and only if $\det(R^i_1,R^{i}_2)<0,$
        that is $s_is_{i+1}>1.$  Performing suitable $SL(2,\mathbb Z)$ transformations, in order to glue the whole space, the angle between the rays may not be preserved. However,  the determinant is preserved and, therefore, the cone corresponding to L-shape $(s_i,s_{i+1})$ remains concave  after any $SL(2,\mathbb Z)$ transformation. 
       
       If  $s_i=s_{i+1}=1$ then $R^i_1$ and $R^i_{2}$    span the angle $\pi$. In this case any additional  L-shape will contribute to obtain the cone whose rays span an angle greater than $\pi$.
       \item[]
      \item[(b)] Assume $i+1<j$. In the decomposition 
       $$\ldots(s_i,s_{i+1}),(0,s_{i+2}),\ldots, (0,s_j), \ldots, (0,s_n)$$  
       we consider the 
       L-shape corresponding to $(0,s_j)$ and glue it via $A_{j-1}$ to the L-shape corresponding to $(0,s_{j-1})$, if $i+1<j-1$, or to $(s_i,s_{i+1})$, if $i+1=j-1.$  We obtain a piece of the contact boundary whose cone is spanned by the rays
       $R_{j-1}=(1,0)$, if $i+1<j-1,$ or $R_{j-1}=(1,s_i)$, if $i+1=j-1,$ and $R_j=A_{j-1}(-s_j,1)=(s_{j-1}s_j-1,-s_j).$ In both cases the rays $R_{j-1}$ and $R_j$ span an angle greater than $\pi$ and, thus,  the corresponding contact structure is overtwisted. 
       
       If $s_j=0$ then above two rays span the angle $\pi$ and in this case an additional L-shape is needed to obtain the concave piece.
       \item[]
\item[(c)] We glue the L-shape corresponding to  $(0,s_{i+1})$ to the L-shape corresponding to $(s_{i-1},0)$ via $A_{i}.$ We obtain a piece of the contact boundary whose cone is spanned by the rays $R_{i-1}=(1,-s_{i-1})$ and $R_i=A_i(-s_{i+1},1)=(-1,-s_{i+1}).$ These rays span a concave cone if $s_{i-1}+s_{i+1}>0$. 
If $s_{i-1}+s_{i+1}=0$, then these rays span the angle $\pi$ and, as above, any additional L-shape will contribute to obtain greater angle.
\end{itemize}
       
        \end{proof}       

    \begin{figure}
        \centering
        \includegraphics[scale=1]{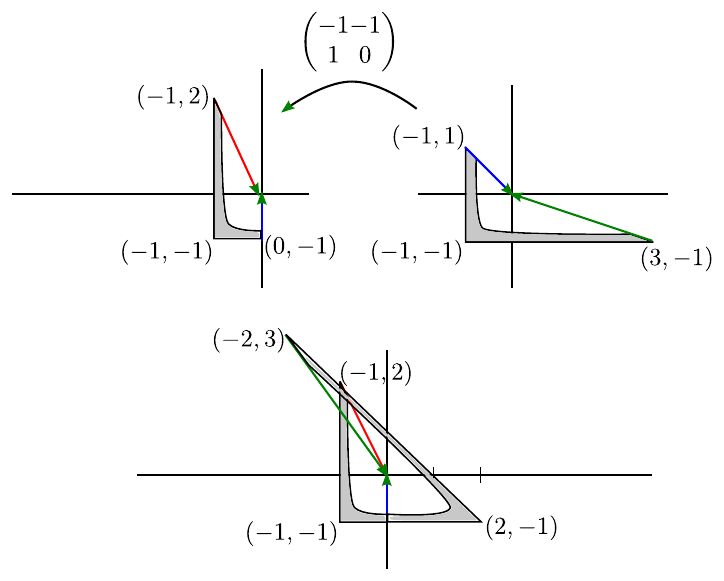}
        \caption{Example of $(2,1,3)$ plumbing.}
        \label{fig:ex213}
    \end{figure}
        
    \begin{example}\label{example ot but convex rays} The boundary of the linear plumbing $(2,1,3)$ is diffeomorphic to a lens space $L(1,2)   \cong S^3$ and the contact structure is overtwisted. Note that the rays that define the contact boundary are $R_1=(1,-2)$ and $R_2=\left[\begin{array}{cc} -1 & -1\\ 1 & 0 \end{array} \right]\left[\begin{array}{c} -3 \\ 1  \end{array} \right]=(2,-3)$, see Figure~\ref{fig:ex213}. These rays span a convex cone, however, the moment cone  over  the boundary of $(2,1,3)$ is the whole space $\mathbb R^2$ because the second ray $R_2$ corresponds to the angle greater than $2\pi.$ See Remark~\ref{rem:cone} for further details.
      \end{example}       
      
      We further show some criteria to obtain a tight concave contact structure on the  boundary of the linear plumbing. The proof is slightly more complicated than in the overtwisted case, as it is not enough to show that the rays that span the cone make a convex span. Namely, as in Example~\ref{example ot but convex rays}, it can happen that the rays of the moment cone span an angle less than $\pi$, but the moment cone is actually the whole space $\mathbb R^2$.  Therefore, we need to compare all the rays that span all L-shapes.

        \begin{theorem} \label{theorem tight} 
         The contact structure on the boundary of a linear plumbing $(s_1,\ldots,s_n)$,  where   $s_i\geq0$  for at least one $i\in\{1,\ldots,n\},$ is tight if one of the following cases occurs:   
         \begin{itemize}
             \item[(a)] $s_j\leq -2$, for all $j\neq i,$

 \item[(b)]$s_i=0$,  $s_{i-1}+s_{i+1} \leq-2$ and
$s_j \leq-2$, for all $j\neq i,i+1.$        \end{itemize}
        \end{theorem}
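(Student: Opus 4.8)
The plan is to invoke Theorem~\ref{proposition ot}: the concave boundary is tight exactly when the angle $t_2-t_1$ spanned by its moment cone is at most $\pi$, so the whole statement reduces to bounding this angle. From the construction in \S\ref{gluing of L-shapes} and the proof of Proposition~\ref{prop lens}, the moment cone is assembled by gluing the L-shapes $L_1,\dots,L_{n-1}$ along the preimages of the blue edges via the unimodular matrices $A_j$, so that its bounding rays are $R_1=(1,-s_1)$ and $R_2=A_2\cdots A_{n-1}(-s_n,1)$; compare \eqref{eq:rays of one L-shape}. I would first write down the ordered family of rays $\rho_0=R_1,\rho_1,\dots,\rho_{n-1}=R_2$ separating consecutive glued L-shapes, so that the quantity to be controlled is the total counterclockwise sweep $t_2-t_1=\sum_{j=0}^{n-2}\angle(\rho_j,\rho_{j+1})$.

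Next I would record that the rays turn monotonically. Away from the distinguished index the gluing produces the three-term recursion $\rho_{j+1}=-s_j\rho_j-\rho_{j-1}$, whence $\det(\rho_j,\rho_{j+1})=\det(\rho_{j-1},\rho_j)$; since $\det(\rho_0,\rho_1)>0$, every consecutive determinant along a negative-definite arm is positive (indeed equal to $1$), and a direct computation shows positivity of the single determinant straddling the distinguished index as well. Thus the $\rho_j$ rotate strictly counterclockwise and $t_2-t_1$ is a genuine monotone sweep; consistently, its leading sign datum is $\det(R_1,R_2)=(-1)^{n-1}\det Q_\Gamma$ by Proposition~\ref{prop lens}(b).

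The main obstacle is to rule out \emph{winding}: by Example~\ref{example ot but convex rays}, mere convexity of the wedge spanned by $R_1,R_2$ does not bound $t_2-t_1$, since the sweep can exceed $\pi$, or even $2\pi$, while the endpoint rays look close. For case (a) I would prove that every intermediate ray $\rho_j$ lies in the closed convex cone spanned by $R_1$ and $R_2$, so that the monotone sweep equals the short-way angle, which is then $<\pi$. This is exactly where the hypothesis $s_j\le -2$ for all $j\ne i$ is essential: along each arm the recursion with $-s_j\ge 2$ is the Hirzebruch--Jung recursion of a negative-definite chain, whose rays are the vertices of the minimal convex cone and hence accumulate a bounded, convergent rotation that never completes a half-turn, while the unique non-negative vertex $s_i$ contributes a single jump that stays controlled because it is flanked by $\le -2$ vertices (or sits at an end of the chain). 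Summing these contributions keeps the sweep below $\pi$, and Theorem~\ref{proposition ot} then gives tightness.

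For case (b) the distinguished vertex has $s_i=0$, and I would exploit the continued-fraction identity $[\dots,s_{i-1},0,s_{i+1},\dots]=[\dots,s_{i-1}+s_{i+1},\dots]$ behind Proposition~\ref{prop lens}(a), which at the level of moment cones is a unimodular recombination that absorbs the $0$-vertex and merges its neighbours into a single vertex of weight $s_{i-1}+s_{i+1}\le -2$. Since all remaining weights are $\le -2$, the reduced chain is negative-definite, so by Gay--Stipsicz~\cite{gs} it caps the boundary convexly and the boundary is tight; because the recombination is an $SL(2,\mathbb{Z})$ equivalence of cones and the trichotomy ``$t_2-t_1<\pi$, $=\pi$, $>\pi$'' is preserved by orientation-preserving linear maps (even though the exact angle is not), the original concave structure is tight as well. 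Alternatively one can run the case-(a) ray-tracking directly, treating the block $(s_{i-1},0,s_{i+1})$ as a single negative-definite step. In either route the decisive and most delicate point remains the no-winding estimate of the previous paragraph, since the determinant sign alone is insufficient and all intermediate rays must be controlled.
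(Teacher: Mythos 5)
Your case (a) is essentially the paper's first proof of this theorem, presented in different bookkeeping: where you track all intermediate rays $\rho_j$ in a fixed global frame and aim to show they stay in the cone spanned by $R_1,R_2$, the paper transforms the single end ray through the successive L-shape frames and proves the inductive region invariants explicitly --- $(x_j,y_j):=A_{j+1}\cdots A_{n-1}(-s_n,1)$ satisfies $x_j>y_j>0$ for $j\geq i$ (using that $s\leq -2$ preserves this region), then the straddling determinant $\det\bigl((1,-s_{i-1}),(-s_ix_i-y_i,x_i)\bigr)=x_i(1-s_{i-1}s_i)-s_{i-1}y_i>0$, then $x_k<y_k<0$ for $k<i-1$, ending with $\det\bigl((1,-s_1),(x_1,y_1)\bigr)=y_1+s_1x_1>0$. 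Your sketch states the right target (containment rules out winding, which the determinant sign alone cannot) but leaves the summation claim ``the contributions keep the sweep below $\pi$'' unproven; that claim does not follow from each arm sweeping less than $\pi$ plus a controlled jump --- two such arms could total nearly $2\pi$ --- and is exactly what the paper's explicit induction supplies. So (a) is the same approach, completable along the paper's lines.

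Case (b), however, contains a genuine error. Your main route asserts that absorbing the $0$-vertex and merging its neighbours into weight $s_{i-1}+s_{i+1}$ is ``a unimodular recombination'' of moment cones preserving the trichotomy $t_2-t_1<\pi$, $=\pi$, $>\pi$. This is false, and the paper itself refutes it in the Neumann-calculus example: for the R3 move $(s_1,s_2,0,s_3,s_4)\to(s_1,s_2+s_3,s_4)$ one computes $\widetilde{R}_1=R_1$ but $\widetilde{R}_2=-R_2$, i.e.\ the move negates the second bounding ray and shifts the angle by $\pi$, and the paper explicitly notes that Theorem~\ref{proposition ot} then implies these operations \emph{can change an overtwisted structure into a tight one}. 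Correspondingly, Gay--Stipsicz tightness applies to the Milnor-fillable convex structure on the boundary of the merged negative-definite chain, which is a different contact structure from the concave boundary structure you must control (the underlying lens spaces agree; the contact structures do not). Your fallback --- ``run the case-(a) ray-tracking, treating the block $(s_{i-1},0,s_{i+1})$ as a single negative-definite step'' --- hits the same obstruction at the matrix level: with $A_s=\bigl(\begin{smallmatrix}-s&-1\\ 1&0\end{smallmatrix}\bigr)$ one has $A_{s_{i-1}}A_0A_{s_{i+1}}=-A_{s_{i-1}+s_{i+1}}$, so the block contributes an extra half-turn relative to the merged step, and accounting for this extra $\pi$ is precisely the delicate point you would need to handle. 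The paper proves (b) by a different route entirely: starting from a Hirzebruch trapezoid, performing $|s_{i-1}+s_{i+1}|$ blow-ups to realize the chain as consecutive edges of a globally convex moment polytope, and placing the origin at the intersection of the two supporting lines of the adjacent edges, so that the moment cone is manifestly a sector of angle at most $\pi$ and no winding can occur.
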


    \begin{proof}
    \begin{itemize}
        \item[(a)] If $n=2$ then the result follows from the fact that the rays $R^1_1=(1,-s_1)$ and $R^1_2=(-s_2,1)$ satisfy the condition $\det(R^1_1,R^1_2)>0$ and therefore span a convex cone.
    
    Let $n\geq3$.      
   In order to show that the contact structure is tight, it is enough to show that the rays  $R^1_1=(1,-s_1)$ and $R^n_2=A_2\cdots A_{n-1}(-s_n,1)$, span a convex cone for any $n\geq3,$ keeping track that they do not span an angle greater than $2\pi$.

Let $i$ be an index such that  $s_i\geq0$. 
 Suppose first $i<n.$
 We obtain the global toric structure by decomposing into L-shapes with pairs
           $$(s_1,0),\ldots, (s_{i-1},0),(s_i,s_{i+1}),(0,s_{i+2}), \ldots, (0,s_n).$$
        
    Observe that if $s\leq -2$, and $(x,y)$ is such that $x>y>0$, and
      $A = \left[\begin{array}{cc} -s&-1\\1&0 \end{array} \right]$
    then $$(x',y'):=A(x,y)\mbox{ satisfies }x'>y'>0.$$
  Denoting the second ray of the L-shape $L_{n-1}$ by $(x_{n-1},y_{n-1})=(-s_n,1)$ and denoting its transformation by $$(x_j,y_j):=A_{j+1}\cdots A_{n-1}(x_{n-1},y_{n-1}) \qquad \mbox{for} \qquad j=i,\dots, n-2,$$ we see inductively from this observation that $(x_j,y_j)$ satisfies $x_j>y_j>0$ for all $j\geq i$.

 If $i=1$ then the gluing is completed and the obtained cone is spanned by the rays $R^1_1=(1,-s_1)$ and $R^n_2=(x_1,y_1)$. Since $$\det((1,-s_1),(x_1,y_1))=y_1+s_1x_1>0$$ the cone is convex.

Suppose $i>1.$ Denote
 $$(x_{i-1},y_{i-1}) : =A_i(x_i,y_i) = (-s_ix_i-y_i,x_i).$$ Since $s_{i-1}\leq -2$, $s_i\geq 0$, and $x_i>y_i>0$ we have 
 $$\det((1,-s_{i-1}),(-s_ix_i-y_i,x_i))=
 x_i(1-s_{i-1}s_i)-s_{i-1}y_i>0.$$
    This tells us the cone is convex when we glue together $L_{i-1},\dots, L_{n-1}$.
 
 If $i=2$ we are done and if not we proceed  in the following way. Denote $$(x_k,y_k) : =A_{k+1}(x_{k+1},y_{k+1}),\mbox{ for all }1\leq k<i-1.$$ Using that $x_{i-1}<0$ and $y_{i-1}>0$, inductively, one can show that $x_k<y_k<0,$ for all $1\leq k<i-1$.  Note that $R^n_2=(x_1,y_1)$. Then $$\det((1,-s_1),(x_1,y_1))=y_1+s_1x_1>0$$ and the corresponding cone is convex. The case $i=n$ can be treated similarly.

\bigskip

Alternatively, the result can be proved using the blow ups of Hirzebruch surfaces, which were defined in \S\ref{section: symplectic 4}. We will also use this method of proof for part (b).
    
If $s_i\geq0$ and $s_j\leq -2$, for all $j\neq i,$ in the linear plumbing $(s_1,\ldots,s_n)$, then  we start from any Hirzebruch trapezoid given by inward normal vectors  $$(1,-s_i), \ (0,1), \ (-1,0), \ (0,-1).$$ Self-intersection numbers of the symplectic spheres corresponding to the edges defined by these inward normal vectors are $$0, \ s_i, \ 0, \ -s_i, \mbox{ respectively.}$$  In order to obtain 
 two edges with self-intersection numbers $s_{i-1}$ and $s_{i+1}$ that are adjacent to the edge with self-intersection number $s_i$
we  perform a suitable sequence of $s_{i-1}$ and $s_{i+1}$ small blow ups  in the following way:

We blow up the fixed point that corresponds to the vertex that is the intersection of edges defined by inward normal vectors  $(1,-s_i)$ and $(0,-1).$ Then, we blow up the fixed point that corresponds to the vertex that is the intersection of edges defined by inward normal vectors  $(1,-s_i)$ and $(1,-1-s_i)$.  We proceed in the same manner and  perform $s_{i-1}$ blow ups to a vertex of the edge defined by $(1,-s_i)$ and we obtain self-intersection number $s_{i-1}$ of this edge. 

Similarly, we perform a sequence of $s_{i+1}$ small blow ups to a vertex of the edge defined by inward normal vector $(-1,0)$. We proceed analogously  and obtain a chain of spheres with self-intersection numbers $s_1,\ldots, s_{i-1}$ on the left side of the edge $s_i$ and a chain of spheres with self-intersection numbers  $s_{i+1},\ldots, s_{n}$ on the right side of the edge $s_i$, so that the edges with self-intersection numbers     $s_1,\ldots,s_n$  are adjacent.  

We place this moment map image in $\mathbb{R}^2$ in such a way that the origin is the intersection point of two lines that contain two edges adjacent to the chain of edges with self-intersection numbers $s_1,\ldots,s_n$  (see Figure~\ref{(-2,1,-3)}). 
Recall that the arbitrary translation of the moment polytope does not change the  symplectic toric structure because a translation corresponds to adding a constant in $\mathbb R^2$ to the moment map  $(\mu_1,\mu_2)$ and, overall, Hamiltonian functions $\mu_1$ and $\mu_2$ are defined up to  a constant. 

Finally, take any embedded curve $\gamma$ in the moment map image that starts and ends at the edges that are adjacent to the chain of edges with self-intersection numbers $s_1,\ldots,s_n$ 
 and that is transversal to the radial vector field emanating from the origin. Then, $\gamma$ divides the moment polytope in two parts in such a way that the chain of edges with self-intersection numbers $s_1,\ldots,s_n$    is in the one part  and no other edge is in the same part.  Moreover, $\gamma$ corresponds to a concave contact toric boundary of the  symplectic toric manifold $(s_1,\ldots, s_n)$ whose moment map image is precisely the chosen part. 
        
       \begin{figure}
\centering
\includegraphics[width=9cm]{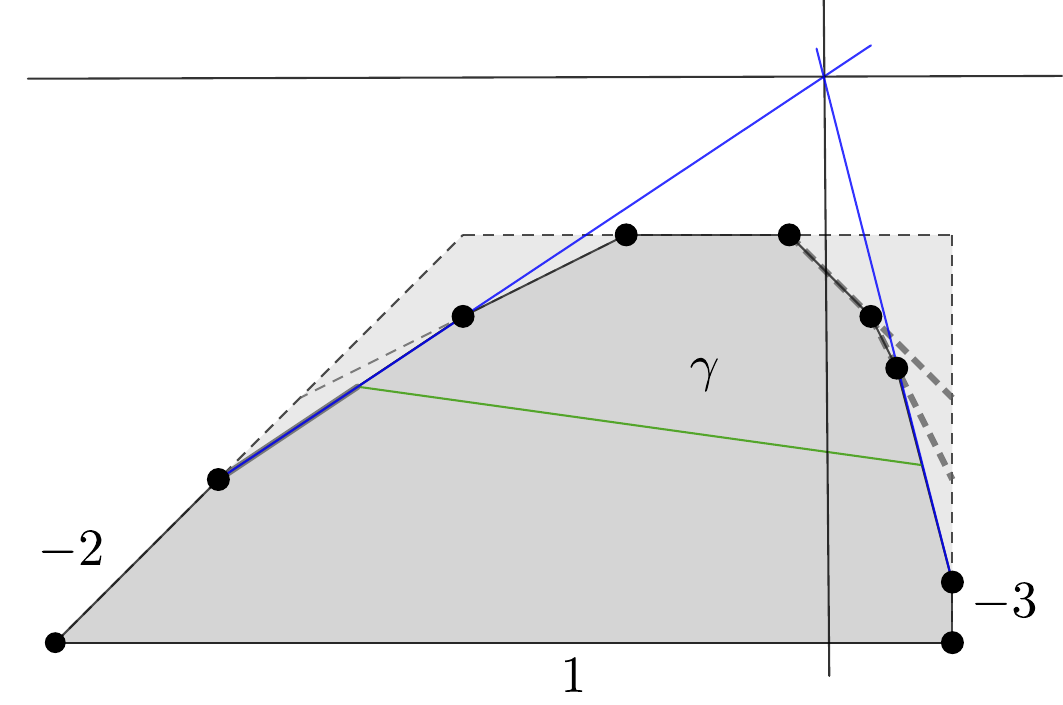}
\caption{A concave contact boundary of the plumbing $(-2,1,-3)$}
\label{(-2,1,-3)}
\end{figure}        

\item[]
   
\item[(b)]
Start from the Hirzebruch trapezoid with inward normal vectors $(1,-s_{i-1})$, $(0,1),$ $(-1,0),$ $(0,-1)$. 
     Self-intersection numbers of the symplectic spheres corresponding to the edges with these inward normal vectors  are $0$, $s_{i-1}$, $0$,  $-s_{i-1},$ respectively.     
      If $s_{i-1}\leq-2$ we are in the case a), therefore, assume $s_{i-1}\geq0$. We first perform a sequence of $|s_{i-1}+s_{i+1}|$ blow ups on the upper right corner to obtain a sphere with self-intersection number $s_{i+1}$.   We further  continue as above.
      \end{itemize}
    \end{proof}
    
    \begin{example} Consider the linear plumbing $(m,\underset{n}{\underbrace{-2,\ldots, -2}}),$ for any $m\geq0.$ The rays that define the moment cone of the contact structure on the boundary are $R_1=(1,-m)$ and
    $R_2=\left[\begin{array}{cc} 2& -1\\ 1 & 0 \end{array} \right]^n\left[\begin{array}{c} 2\\ 1 \end{array} \right]=(n+1,n).$ Therefore, the boundary is diffeomorphic to a lens space $L(mn+m+n,n+1)$ and the corresponding contact structure is tight, since the moment cone is strictly convex, for any $n\in \mathbb N.$
    
    \end{example}

   \begin{remark}\label{remark tight or ot} Theorem~\ref{theorem ot} and Theorem~\ref{theorem tight} cover  most of the cases of  linear plumbings $(s_1,\ldots,s_n)$,  where   $s_i\geq0$,  for at least one 
    $i\in\{1,\ldots,n\}.$ However, there is still an infinite list of  examples where the characterization is subtle due to the computational complexity of the associated linear algebra.  These are precisely the cases when  $$s_i=0 \mbox{ and } \ s_{i-1}+s_{i+1}=-1.$$  We can however work out the details in certain cases, which we explain below.  Our method can additionally be used when the  values of $s_{i-1}$ and $s_{i+1}$ are fixed.)
   
  \begin{enumerate}
      \item The contact boundary of 
 the linear plumbing $(-k,0,k-1)$ is diffeomorphic to $S^3$ with the  tight contact structure, for all $k\geq2$, as can be seen directly by gluing two  L-shapes $(-k,0)$ and $(0,k-1)$. The corresponding cone is spanned by the rays $R_1=(1,k)$ and $R_2=(-1,1-k)$ and, therefore, convex. 
  
\item The contact  boundary of a linear plumbing
 $$(-n,-k,0,k-1,-m_1, \ldots, -m_l), \hskip2mm n,\,k,\, m_i\geq 2,\, l\leq n-1$$
is tight, as the corresponding cone is convex. In particular, the contact boundary of
  
 $$(-n,-k,0,k-1,\underset{n-1}{\underbrace{-2, \ldots, -2}}), \hskip2mm n,k\geq 2,$$
  is diffeomorphic to $S^1\times S^2$ with the tight contact structure, as can be seen by computing the rays that span the corresponding cone. 
The first ray  is $R_1=(1,n)$ and the second ray is 
           $$R_2=\left[\begin{array}{cc} k& -1\\ 1 & 0 \end{array} \right]\left[\begin{array}{cc} 0 & -1\\ 1 & 0 \end{array} \right]\left[\begin{array}{cc} 1-k & -1\\ 1 & 0 \end{array} \right]
           \left[\begin{array}{cc} 2 & -1\\ 1 & 0 \end{array} \right]^{n-2}     \left[\begin{array}{c} 2\\ 1  \end{array} \right] =    \left[\begin{array}{c} -1\\ -n  \end{array} \right].            $$
           Thus, the cone is the half-plane. 
           
\item           
           By adding any L-shape to the construction in the preceding item (2) we obtain a concave cone and therefore the contact boundary of the linear plumbing 
       $$(\cdots, -n,-k,0,k-1,\underset{n-1}{\underbrace{-2, \ldots, -2}}, \cdots), \hskip2mm n,k\geq 2$$
       is overtwisted.
       
 \item Further, the contact boundary of the following linear plumbings

   $$(-n,-k,0,k-1,-m,-t),\hskip2mm n,k,t\geq2, m\geq3, $$
and
 $$(-n,-k,0,k-1,-m,\underset{t}{\underbrace{-2, \ldots, -2}}), \hskip2mm n,k,t\geq 2, m\geq3$$
   are also tight.
   
   \item However, the contact boundary of
    $$(-2,-n,-k,0,k-1,-3,\underset{t}{\underbrace{-2, \ldots, -2}}), \hskip2mm k,t\geq 2,$$
    is overtwisted for $n=2$, and tight for $n\geq3$.

  \end{enumerate}

         \end{remark}  
\begin{example}[Neumann calculus on plumbing graphs]
    Neumann describes a series of operations on plumbing graphs that allow to understand when the boundary of two plumbings is the same topological manifold. Among the operations, two apply to linear plumbing graphs, the ones named R1 and R3 in \cite{Neumann}. The first one, R1, allows to get rid of a vertex of the graph with self-intersection $\pm 1$. The $-1$ case does not change the contact structure on the boundary, as it corresponds to a symplectic blow down  of a $-1$ sphere. This is a reverse procedure of a symplectic blow up, described in \S\ref{section: symplectic 4}. From the toric perspective, there is no change of the inward normal vectors of the edges that are adjacent to the edge that corresponds to a $-1$ sphere, and, therefore, the contact boundary remains unchanged.    
    On the other hand, the $+1$ case changes the contact structure on the boundary as we describe below. The operation R3 allows to get rid of a self-intersection zero vertex, but also changes the contact structure on the boundary manifold. 

    In this example we compute how the contact structure changes assuming that the plumbing corresponds to a manifold with contact convex boundary. We do this by computing the bounding rays before and after each of the Neumann operations. These two operations  change the second bounding ray by multiplying it by $-1$ and reducing the angle between the two rays by $\pi$. In order to have, before and after, a contact concave manifold, the contact structure of the starting plumbing has to be overtwisted. 
    
    All linear plumbings we are considering have spheres as base of the disc  bundles. The fact that the boundaries of the linear plumbings below define the same 3-manifold, that is the same lens space, can be checked by computing the continued fraction in Proposition~\ref{prop lens}.
    \begin{enumerate}
        \item \emph{The operation R1 where the self-intersection 1 vertex is an endpoint.} This operation allows to change the linear plumbing  $(s_1,s_2,1)$ into the linear plumbing $(s_1,s_2-1)$. Following the computations above the bounding rays for the first case are $R_1=(1,-s_1)$ and 
        $$R_2=\begin{pmatrix}-s_2 & -1\\1 & 0
            \end{pmatrix}\begin{pmatrix}
                -1\\1
            \end{pmatrix}=\begin{pmatrix}
            s_2-1\\-1 
            \end{pmatrix}.$$ 
            In the second case the first ray $\widetilde{R}_1=R_1$ and $\widetilde{R}_2=(1-s_2, 1)=-R_2$.
        \item \emph{The operation R1 where the self-intersection 1 vertex has valency 2.} This operation changes the linear plumbing $(s_1,1,s_2)$ into $(s_1-1,s_2-1)$. For the first plumbing the bounding rays are $R_1=(1,-s_1)$ and
        $$R_2=\begin{pmatrix} -1 & -1\\1 & 0
        \end{pmatrix}\begin{pmatrix}-s_2\\1
        \end{pmatrix}=\begin{pmatrix} s_2-1\\-s_2
        \end{pmatrix}.$$
        For the second plumbing the rays are $\widetilde{R}_1=(1,1-s_1)$ and $\widetilde{R}_2=(1-s_2,1)$. The matrix
        $$B=\begin{pmatrix}
            1 & 0\\-1& 1
        \end{pmatrix}\in SL(2,\mathbb{Z}),$$
        satisfies that $B\widetilde{R}_1=R_1$ and $B\widetilde{R}_2=-R_2$.
        \item \emph{The R3 operation for a self-intersection zero vertex with valency 2.} This allows to change the linear plumbing $(s_1,s_2, 0, s_3, s_4)$ into the linear plumbing $(s_1, s_2+s_3, s_4)$. The bounding rays for the first plumbing are $R_1=(1,-s_1)$ and
        $$R_2=\begin{pmatrix}
            -s_2 & -1\\ 1 & 0
        \end{pmatrix}\begin{pmatrix}
            0 & -1\\1 & 0
        \end{pmatrix}\begin{pmatrix}
            -s_3 & -1 \\ 1 & 0
        \end{pmatrix}\begin{pmatrix}
            -s_4\\1
        \end{pmatrix}=\begin{pmatrix}
            s_2+s_3 & 1 \\ -1 & 0
        \end{pmatrix}\begin{pmatrix}
            -s_4\\1
        \end{pmatrix}.$$
        For the second plumbing we have that the first ray is $\widetilde{R}_1=R_1$ and 
        $$\widetilde{R}_2=\begin{pmatrix}
            -s_2-s_3 & -1 \\ 1 & 0
        \end{pmatrix}\begin{pmatrix}
            -s_4\\1
        \end{pmatrix}=-R_1.$$
    \end{enumerate}
    The three situations above change the second bounding  ray by a factor of $-1$. Theorem~\ref{proposition ot} implies that these operations can change an overtwisted contact into a tight contact structure.

Recent work~\cite[\S 3]{vanwhy2025nonaffinesteinmanifoldsnormal} studies Neumann's topological plumbing calculus with respect to the contact topology of the boundary.

\end{example}

\begin{remark}
 We remark that  the concave contact toric structures on the boundary of  $(s_1,\ldots,s_n)$ and $(s_n,\ldots,s_1)$ are equivariantly contactomorphic, meaning that there exists a contactomorphism between them that preserves the toric actions. The reason is that the corresponding moment cones differ by an $SL(2, \mathbb Z)$ transformation. Namely, the cones are defined by the rays $R_1=(1,-s_1), R_2=A_2 \cdots A_{n-1}(-s_n,1)$
 and
 $\tilde R_1=(1,-s_n),\tilde R_2= A_{n-1}\cdots A_2(-s_1,1)$. By making use of linear algebra one can find a desired transformation.

\end{remark}

\section{Background on embedded contact homology} \label{s:ECHbackground}
We now give a self-contained review of embedded contact homology (ECH), including the contact invariant and order of algebraic torsion in ECH.  
 \subsection{The ECH complex}
Let $(Y^3,\lambda)$ be a nondegenerate contact $3$-manifold and $\Gamma \in H_1(Y)$. Recall that a Reeb orbit is nondegenerate if $1$ is not an eigenvalue of the linearized Reeb flow. A contact form $\lambda$ is said to be \emph{nondegenerate} if all Reeb orbits are nondegenerate.
\begin{definition}

A \emph{Reeb current} is a finite set (potentially empty) of pairs $\alpha = \{ (\alpha_i, m_i) \}$, such that
\begin{itemize}
\itemsep-.15em
\item the $\alpha_i$ are distinct embedded Reeb closed orbits, 
\item the $m_i$ are positive integers,
\item the total homology class is given by $\sum_i m_i[\alpha_i] = \Gamma$.
\end{itemize}
We collect a few more remarks, terminology, and definitions below:
\begin{itemize}
\item A closed Reeb orbit $\gamma$ is said to be \emph{embedded} when $\gamma: \R/T\Z \to Y $ is injective, where $T$ is the period or action of the orbit, defined by $T=\mathcal{A}(\gamma):=\int_\gamma \lambda$. 
\item Embedded Reeb orbits are called simple orbits in some literature, but we will not do so as it can lead to serious confusion with respect to the definition of simple algebraic torsion in ECH, cf. Definition~\ref{def:simple_at}.
\item A Reeb current $\alpha = \{ (\alpha_i, m_i) \}$ is said to be an \emph{ECH generator} if  whenever $\alpha_i$ is hyperbolic $m_i=1$. 
\end{itemize}

\end{definition}

The \emph{embedded contact homology chain complex} is a $\Z/2\Z$ vector space freely generated by ECH generators. It splits as a direct sum over first homology classes $\Gamma \in H_1(Y)$ (restricting to the generators in that homology class).  Next we define the ECH differential. 

Let $(\R_r \times Y, d(e^r\lambda))$ be the \emph{symplectization} of $(Y, \lambda)$ and denote by $\pi_\R, \pi_Y$ the projections to each factor.

\begin{definition}
    
An almost complex structure $J$ is said to be a \emph{$\lambda$-compatible} almost complex structure when
\begin{itemize}
\item $J$ is $\mathbb{R}$-invariant,
\item $J(\partial_r) = R$ where $R$ is the Reeb vector field associated to $\lambda$, 
\item $J \xi = \xi $, positively rotating $\xi$ with respect to the orientation on $\xi$ given by $d\lambda$. 
\end{itemize}
\end{definition}

We consider $J$-holomorphic maps $u:(\Sigma, j) \to (\R \times Y, J),$ where $(\Sigma, j)$ is a punctured Riemann surface. The curve $u$ has a \emph{positive asymptotic end} at a closed Reeb orbit $\gamma$ of period $T$, if it admits coordinates in a neighborhood of a puncture $(s, t)\in [0,\infty) \times (\R/T\Z)$, where $T$ is the period of $\gamma$,  such that
\[
j(\partial_s) = \partial_t, \qquad \lim_{s\to \infty} \pi_\R(u(s, t)) = \infty,\qquad \mbox{and} \qquad \lim_{s \to \infty} \pi_Y(u(s, \cdot)) = \gamma.
\]
A \emph{negative asymptotic end} is defined analogously, where the neighborhood of a negative puncture is identified with a \emph{negative half-cylinder}, $s\in(-\infty, 0] \times (\R/T\Z)$ and $s \to -\infty$.   A \emph{$J$-holomorphic curve} is the equivalence class induced by biholomorphisms of a $J$-holomorphic map, which we also denote by $u$. Let $\mathcal{M}(\alpha, \beta, J)$ denote the moduli space of pseudoholomorphic curves in $(\R \times Y, d(e^r\lambda))$ with positive asymptotic ends at $\alpha$ and negative asymptotic ends at $\beta$.

The ECH differential is given in terms of the count of {$J$-holomorphic currents} of {ECH index} one as follows.  Let $\alpha$ and $\beta$ be Reeb currents in the class $\Gamma \in H_1(Y)$. A \emph{$J$-holomorphic current} is a finite set of pairs $\mathcal{C} = \{(u_k, d_k)\}$, where $u_k$ are distinct, connected, somewhere injective $J$-holomorphic curves in $(\R \times Y, d(e^r\lambda))$ and $d_k\in \Z^+$. We say that $\mathcal{C}$ is \emph{positively asymptotic to} a Reeb current $\alpha = \{(\alpha_i, m_i)\}$, if $u_k$ has positive ends at covers of $\alpha_i$ with multiplicity $cov(u_k)_i$ and $\sum_k cov(u_k)_i = m_i$. \emph{Negatively asymptotic} to a Reeb current is defined analogously. Let $\M(\alpha,\beta, J)$ denote the set of $J$-holomorphic currents from $\alpha$ to $\beta$.

The ECH index~\cite{index,revisit}, is used to guarantee that the currents counted by the ECH differential for generic $J$ consist of embedded connected $J$-holomorphic curves and unions of {trivial cylinders} $\R \times \gamma$ with multiplicities, \cite[Proposition~3.7]{lecture}.  In contrast to the Fredholm index, the ECH index is a purely topological index and provides an upper bound on the former \cite[Theorem~4.15]{revisit}, see \eqref{indineq}.  

To define the ECH index, we need to specify a relative homology class $Z \in H_2(Y,\alpha, \beta)$ as follows. Let $\alpha = \{(\alpha_i, m_i)\}$ and $\beta = \{(\beta_j, n_j)\}$ be ECH generators
with $[\alpha] = [\beta]\in H_1(Y)$, where $[\alpha] = \sum_i m_i [\alpha_i]$ and $[\beta] = \sum_j n_j [\beta_j]$. Then $H_2(Y, \alpha, \beta)$ denotes the set of relative homology classes of $2$-chains $Z$ in $Y$ such that
    $$\partial Z = \sum_i m_i\alpha_i - \sum_j n_j \beta_j.$$

\begin{definition}\label{def:I}
 The \emph{ECH index} of a pseudoholomorphic curve $u\in \mathcal{M}(\alpha, \beta, J)$ is defined as 
\begin{equation}\label{ECHindex}
I(u) = c_\tau (u) + Q_\tau(u) + CZ_\tau^I(\alpha,\beta),
\end{equation}
where $\tau$ is a trivialization of the contact structure $\ker \lambda$ along $\alpha$ and $\beta$. 
The first term $c_\tau(u)$ is the \emph{relative first Chern number} of the complex line bundle $\xi|_u$ with respect to $\tau$. The second term $Q_\tau(u)$ is the \emph{relative self-intersection number}, which is a signed count of intersections of $\image(u)$ with a transverse push-off where the push-off is trivial with respect to  $\tau$ at the asymptotic boundary.  The third term is a \emph{total Conley-Zehnder index term},
\[
CZ_\tau^I(\alpha,\beta):= \sum_i \sum_{k=1}^{m_i} CZ_\tau(\alpha^{k}_i) - \sum_j \sum_{k=1}^{n_j} CZ_\tau(\beta^{k}_j).
\]
\end{definition}
\begin{remark}
    The ECH index is well-defined and depends only on $\alpha$, $\beta$ and $[u]=:Z \in H_2(Y,\alpha,\beta)$. This is because $c_\tau$ and $Q_\tau$ both only depend on the relative homology class $Z$. See Lemma 2.5 and \S2.2 in \cite{index}. Therefore, in ECH literature, we are justified to write $I(\alpha, \beta, Z)$.
\end{remark}

The relative intersection number $Q_\tau$ can be similarly defined for a pair of relative homology classes. 
To relate the signed count of intersections between a pair of curves, if they do not frame each other according to the trivialization $\tau$, we need to account for additional ``intersections at infinity.'' This is the role of the asymptotic linking number.

\begin{definition}[Asymptotic linking]\label{def:asymp_linking}    
 To set up the definition of asymptotic linking between two pseudoholomorphic curves, we first define the linking number of two disjoint braids $\zeta$ and $\zeta'$ around an embedded Reeb orbit $\gamma$, which we denote by $$\ell_\tau(\zeta,\zeta') \in \mathbb{Z}.$$  Given a trivialization $\tau$ of $\xi$ along the Reeb orbit $\gamma$, the linking number $\ell_\tau(\zeta,\zeta')$ of two disjoint braids around $\gamma$ is defined to be the linking number of the oriented links $\phi_\tau(\zeta)$ and  $\phi_\tau(\zeta')$ in $\R^3$, where $\phi_\tau$ is an embedding of the tubular neighborhood $\mathcal{N}$ of $\gamma$ which has been identified via the trivialization $\tau$ with $S^1\times D^2$  and then identified with a solid torus in $\R^3$,  so that the projections of $\zeta$ and  $\zeta'$  to $S^1$ are submersions,  cf. \cite[\S 2.6]{revisit}.  The linking number is computed as, one half of the signed count of crossings of a strand of $\phi_\tau(\zeta)$ with a strand of $\phi_\tau(\zeta')$ in the projection to $\R^2\times\{0\}$.  The sign convention is that counterclockwise twists count positively, which differs from knot theory literature \cite[\S 3.1]{index}.

We define the \emph{asymptotic linking number} $l_{\tau}(u_1, u_2)$ between two curves $u_1, u_2$ that are asymptotic to $\gamma$ to be the linking number between the braids 
    \[\zeta_1^+ := \image(u_1) \cap \big(\{s\} \times \mathcal{N}(\gamma)\big)\quad \text{ and }\zeta_2^+ := \image(u_2) \cap \big(\{s\} \times \mathcal{N}(\gamma)\big)\quad\text{ for }s\gg0,\]
minus the linking number between 
\[\zeta_1^- := \image(u_1) \cap \big(\{s\} \times \mathcal{N}(\gamma)\big)\quad\text{ and }\quad \zeta_2^- := \image(u_2) \cap \big(\{s\} \times \mathcal{N}(\gamma)\big)\quad\text{ for }s\ll0.\]
The fact that the isotopy classes of such braids are well-defined is a consequence of \cite[Cor.~2.5 and 2.6]{Siefring}. 
\end{definition}

The following lemma is often useful for computing $Q_{\tau}$ and can be understood as an alternative definition of $Q_{\tau}$.
\begin{lemma}\cite[Lemma 8.5]{index}
\label{lem:Q_tau_lk_def}
Let $u_1, u_2$ be pseudoholomorphic curves. Let $[u_i] = Z_i \in H_2(Y; \alpha, \beta)$ for $i=1,2$. Then,
    \begin{equation}
    Q_{\tau} (Z_1, Z_2) + l_{\tau}(u_1, u_2) = \# (u_1 \cap u_2).
    \end{equation} 
\end{lemma}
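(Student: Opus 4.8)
The plan is to prove the identity by comparing the honest intersections of the holomorphic curves $u_1,u_2$ with a pair of \emph{admissible representatives} of the relative classes $Z_1,Z_2$ used to define $Q_\tau$. Recall that $Q_\tau(Z_1,Z_2)$ is computed by choosing surfaces $S_1,S_2$ representing $Z_1,Z_2$ which, in a neighborhood of each Reeb orbit appearing in $\alpha$ or $\beta$, are disjoint and meet the boundary tori $\{s\}\times\partial\mathcal{N}(\gamma)$ in braids that are \emph{$\tau$-flat}, i.e.\ have vanishing linking with respect to the trivialization $\tau$; then $Q_\tau(Z_1,Z_2):=\#(S_1\cap S_2)$ is the signed count of interior intersections, and this is independent of the admissible representatives chosen (see \cite[\S 2.6]{revisit} and \cite[\S 3.1]{index}). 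Thus it suffices to understand how $\#(u_1\cap u_2)$ differs from $\#(S_1\cap S_2)$, and to show that this difference is exactly $l_\tau(u_1,u_2)$.

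First I would invoke Siefring's asymptotic analysis \cite[Cor.~2.5 and 2.6]{Siefring} to control the ends of the $u_i$: for $|s|$ sufficiently large, each $u_i$ meets the slice $\{s\}\times\partial\mathcal{N}(\gamma)$ in an embedded braid $\zeta_i^\pm$ whose isotopy class is independent of $s$, so that the linking numbers $\ell_\tau(\zeta_1^\pm,\zeta_2^\pm)$ of Definition~\ref{def:asymp_linking} are well-defined. Truncating all four surfaces at $s=\pm R$ for $R\gg0$, the pieces $u_i\cap(\{|s|\le R\}\times Y)$ and the admissible $S_i$ represent the same relative class rel their boundary braids, which differ only in the collar regions near the orbits.

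The heart of the argument is a collar computation. Near each orbit $\gamma$ one homotopes the braid of $u_i$ to the $\tau$-flat braid of $S_i$ through a homotopy supported in the collar. Keeping track of the intersections swept out, the change in the algebraic intersection number under straightening the braids at a given end equals, by the standard relation between linking and intersection of braided surfaces, the $\tau$-linking number $\ell_\tau(\zeta_1,\zeta_2)$ of the two braids. A positive end contributes this with a $+$ sign and a negative end with a $-$ sign, the sign difference coming from the reversed coorientation of the collar as $s\to-\infty$; summing over all ends yields exactly $l_\tau(u_1,u_2)$ as in Definition~\ref{def:asymp_linking}. Rearranging gives $Q_\tau(Z_1,Z_2)+l_\tau(u_1,u_2)=\#(u_1\cap u_2)$.

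The main obstacle is making the collar computation precise and getting the signs right: one must verify that straightening a braid to its $\tau$-flat model changes the count of surface intersections by precisely the $\tau$-linking number, with the stated orientation conventions at positive versus negative ends, and that the curves are transverse enough (after a compactly supported perturbation preserving the asymptotics) for $\#(u_1\cap u_2)$ to be a well-defined finite algebraic count. Positivity of intersections for $J$-holomorphic curves guarantees that the interior intersections are finite and isolated, so any needed perturbation can be taken away from the ends, and Siefring's results guarantee that no intersections escape to infinity during the straightening.
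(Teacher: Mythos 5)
The paper does not actually prove this lemma---it is quoted from Hutchings \cite[Lemma 8.5]{index}---and your sketch reconstructs essentially the argument given in that reference: compare $\#(u_1\cap u_2)$ with the interior intersection count of $\tau$-trivial admissible representatives defining $Q_\tau(Z_1,Z_2)$, and identify the discrepancy via a collar computation that straightens the asymptotic braids (well-defined by Siefring's asymptotic results) to their $\tau$-flat models, picking up exactly the braid linking numbers with a $+$ sign at positive ends and a $-$ sign at negative ends from the reversed coorientation. The only caveats are the implicit hypothesis that $u_1$ and $u_2$ share no irreducible component (otherwise $\#(u_1\cap u_2)$ is undefined), and that your appeal to a compactly supported perturbation is unnecessary: positivity of intersections already makes the interior intersections of distinct $J$-holomorphic curves isolated and finite, and the algebraic count is in any case determined by the relative homology classes and the boundary braids.
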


The ECH index satisfies a number of properties, as established in \cite{index,revisit,lecture}.  We list a few of the important ones used in this paper.
\begin{remark}\label{rem:I}
 {As above, let $\alpha$, $\beta$ be ECH generators and $[u]=:Z \in H_2(Y,\alpha,\beta)$.}  When the Reeb currents $\alpha$ and $\beta$ are fixed, we denote $I(Z):=I(\alpha,\beta,Z)$. Let $\mathcal{M}(\alpha,\beta, J)$ be the set of $J$-holomorphic curves with positive ends at $\alpha$ and negative ends at $\beta$, without explicitly specifying the partition conditions of the asymptotic ends at the Reeb orbits. 
\begin{itemize}
\item As suggested by the notation,  the ECH index does not depend on the choice of the trivialization $\tau$.

\item(Index Inequality)  Suppose that $u \in \mathcal{M}(\alpha,\beta,J)$ is somewhere injective.  Then 
\begin{equation}\label{indineq}
  \mbox{ ind}(u) \leq I(u) - 2 \Delta(u),    
\end{equation}
where $\Delta(u)$ is a count of the singularities of $u$ in $\R \times Y$ with positive integer weights as in \cite[\S7]{mw} and $\mbox{ind}(u)$ is the Fredholm index of $u$.  
Equality holds only if $u$ is embedded and certain partition conditions hold on the ends of $u$, which fully determine the topological asymptotics at the ends of the curve $u$, cf. \cite[Theorem 4.15]{revisit}. 
\item(Additivity) If $\delta$ is another {Reeb current} 
in the homology class $[\alpha]$, and if $W \in H_2(Y,\beta,\delta)$, then $Z+W \in  H_2(Y,\alpha,\delta)$ and 
\[
I(Z+W)=I(Z)+I(W).
\]
\item(Index Parity) If $\alpha$ and $\beta$ are ECH generators, then 
\[(-1)^{I(Z)} = (-1)^{\varepsilon(\alpha)+\varepsilon(\beta)},\] \noindent where $\varepsilon(\alpha)$ is the number of positive asymptotic ends at positive hyperbolic orbits in $\alpha$ and  $\varepsilon(\beta)$ is the number of negative asymptotic ends at positive hyperbolic orbits of $\beta$. In particular, an ECH index one $J$-holomorphic curve, must be asymptotic to an odd number of positive hyperbolic orbits.

\item(Low ECH Index Classification) Let $J$ be generic and $\mathcal{C} \in \mathfrak{M}(\alpha,\beta,J)$, be any $J$-holomorphic current in $\R\times Y$, which is not necessarily assumed to be somewhere injective. Then:
\begin{itemize}
\item[$\star$] $I(\mathcal{C})\geq 0$, with equality if and only if $\mathcal{C}$ is a union of trivial cylinders with multiplicities.
\item[$\star$] If $I(\mathcal{C}) = 1$, then $\mathcal{C} = \mathcal{C}_0 \sqcup u_1$ where $I(\mathcal{C}_0) = 0$ and $u_1$ is an embedded $J$-holomorphic curve with $I(u_1) = \ind(u_1) = 1$.
\item[$\star$] If  $I(\mathcal{C}) = 2$, and if $\alpha$ and $\beta$ are ECH generators, then $\mathcal{C} = \mathcal{C}_0 \sqcup u_2$ where $I(\mathcal{C}_0) = 0$ and $u_2$ is an embedded $J$-holomorphic curve with $I(u_2) = \ind(u_2) = 2$.
\end{itemize}
\end{itemize}\end{remark}

The ECH index gives rise to a relative $\Z/d\Z$ grading of the chain complex and its homology, where $d$ denotes the divisibility of $c_1(\xi) + 2 \op{PD}(\Gamma) \in H^2(Y;\Z)$ mod torsion, {where $\op{PD}$ is a the Poincar\'e dual}. Given two  Reeb currents $\alpha$ and $\beta$, their index difference $I(\alpha,\beta)$ is defined by choosing an arbitrary $Z\in H_2(Y,\alpha,\beta) $ and setting 
\[
I(\alpha,\beta) = [I(\alpha,\beta,Z)] \in \Z/d\Z,
\]
which is well-defined by the index ambiguity formula \cite[\S3.3]{index}.

\begin{remark} 
There are a few more observations relating to the ECH index and gradings.  These are not strictly necessary for the purposes of this paper, but we collect them for clarity:
\begin{itemize}
\item As a result of the ECH index parity property, for every $\Gamma \in H_1(Y,\Z)$, there is a canonical absolute $\Z/2\Z$ grading given by the parity of the number of positive hyperbolic Reeb orbits \cite[\S3.3]{index}.
\item ECH admits an absolute $\Z$-grading if the class $c_1(\xi)+2PD(\Gamma)$ is torsion. 
    \item When $\Gamma=0$, it is common practice to implicitly upgrade the relative grading on  $ECH_*(Y,\xi,0)$ to a canonical absolute grading by mandating that $I(\emptyset)=0$.  
 
\end{itemize}
\end{remark}

{Given a contact structure,} let $\lambda$ be a nondegenerate contact form and let $J$ be a generic $\lambda$-compatible almost complex structure; we subsequently use the terminology \emph{ECH data} $(\lambda,J)$ to refer to such a pair. Let $\M(\alpha,\beta, J)$ be as before.
If $Z\in H_2(Y,\alpha, \beta)$, define
\[
\M^J(\alpha,\beta,Z) = \{ u \in \widetilde{\M}(\alpha,\beta, J) \ | \ [u]=Z\}/\R.
\]
{Every $J$-holomorphic curve $u$ can be assigned a sign $\epsilon(u) \in {\pm 1}$ depending on some orientation choices  explained in \cite[\S9]{obg2}. For the purposes of this paper (and all other known applications of ECH), it suffices to use the mod 2 count of $J$-holomorphic curves for the ECH differential coefficient, making the orientation choice irrelevant.  We therefore take the 
 ECH differential  on the ECH chain complex to be given by}
\begin{equation}\label{eq:d}
    \partial \alpha : = \sum_{\substack{\beta \mbox{\tiny \ ECH generator} \\ [\alpha]=[\beta]  }} \ \ \ \sum_{Z\in H_2(Y, \alpha, \beta)} \ \ \ \sum_{\substack{u\in\M^J(\alpha,\beta,Z) \\ I(u)=1 }} \epsilon(u) \beta \mod 2.
\end{equation}
By the additivity property of the ECH index, the differential decreases the  grading by 1.

That the ECH differential is well-defined and squares to zero was shown in \cite{obg1,obg2}, with the latter requiring  obstruction bundle gluing.  The homology of the ECH chain complex is the embedded contact homology defined with respect to $\Gamma \in H_1(Y,\Z)$,
\[
ECH_*(Y,\lambda,\Gamma,J) := H_*(ECC_*(Y,\lambda,\Gamma,J),\partial).
\]
The empty set of Reeb currents, $\emptyset$, is a generator of ECH and $\partial \emptyset =0$.


Taubes established an isomorphism between ECH and Seiberg-Witten-Floer homology, see Theorem~\ref{thm:taubes} which encapsulates \cite{Taubes_1, Taubes_2, Taubes_3, Taubes_4, Taubes_5}. Taubes' isomorphism combined with the work of Kronheimer and Mrowka \cite{KM_book} means that ECH does not depend on the choice of $J$ or $\lambda$ and is a well-defined ${\Z}[U]$-module, which we denote by $ECH_*(Y,\xi,\Gamma)$.

\subsubsection{\bf Action filtered ECH}\label{ss:action}

It will be useful to consider the action filtered ECH subcomplex, which is defined as follows. We can  associate to a Reeb orbit $\gamma$ its \emph{symplectic action} 
$$\mathcal{A}(\gamma) := \int_\gamma \lambda,$$
and to each ECH generator $\alpha = \{(\alpha_i, m_i)\}$ the action
$$\mathcal{A}(\alpha) := \sum_i m_i \mathcal{A}(\alpha_i).$$ 
By Stokes' theorem and the definition of a $\lambda$-compatible almost complex structure $J$, we know that the ECH differential decreases the symplectic action. Therefore, one may define the \emph{action filtered} embedded contact homology in terms of the subcomplex 
\[
ECH^L(Y,\lambda,\Gamma, J):=H_*(ECC^L(Y,\lambda, \Gamma, J), \partial^L)
\]which is generated by admissible Reeb currents $\alpha$ such that $\mathcal{A}(\alpha)<L$.  Taking $L=\infty$ gives the usual ECH. Action filtered ECH does not depend on the choice of $J$, as established in \cite{HT_Chord_II}, but does depend on the choice of the contact form $\lambda$.  

Additionally, $ECH^L(Y,\lambda, \Gamma, J)$ is well-defined as long as $\lambda$ is \emph{$L$-nondegenerate}, a weaker notion of nondegeneracy, which holds when all Reeb orbits of  $\lambda$ of action less than $L$ are nondegenerate, and there are no ECH generators of $\lambda$ of action exactly $L$.

We summarize a few more useful properties of action filtered ECH.  First, if $c>0$ is a constant then there is a canonical scaling isomorphism
\begin{equation}\label{e:scale}
ECH^L(Y,\lambda) \cong ECH^{cL}(Y,c\lambda).
\end{equation}
This is due to the fact that $\lambda$ and $c \lambda$ admit the same Reeb orbits up to reparametrization.  Moreover, for any generic $\lambda$-compatible almost complex structure $J$ there exists a unique $c\lambda$-compatible almost complex structure $J^c$ which agrees with $J$ on the contact planes.  Thus the bijection on Reeb orbits and the self-diffeomorphism of $\R \times Y$ sending $(s,y) \mapsto (cs,y)$ induces a bijection between $J$-holomorphic curves and $J^c$-holomorphic curves, yielding an isomorphism at the level of chain complexes:
\begin{equation}\label{eq:scale}
ECC^L(Y,\lambda, J) \cong ECC^{cL}(Y,c\lambda, J^c).
\end{equation}

Secondly, if $L\leq L'$ there are also maps induced by the inclusion of chain complexes: 
\begin{equation}\label{eq:incl}
\begin{split}
\iota: ECH^L(Y,\lambda) \longrightarrow & \ ECH(Y,\lambda), \\
\iota^{L,L'}: ECH^L(Y,\lambda) \longrightarrow & \ ECH^{L'}(Y,\lambda).
\end{split}
\end{equation}
None of the maps in \eqref{eq:scale} and \eqref{eq:incl} depend on $J$ as a result of \cite[Theorem~1.3]{HT_Chord_II}.

\subsubsection{\bf The ECH contact invariant}\label{subsec:ECH_contact_inv}
The {ECH contact invariant} is defined to be the homology class of the empty Reeb current and is denoted by $c(\xi):=[\emptyset]$. That $c(\xi)$ is an invariant (meaning that it does not depends on the choice of $\lambda$) follows from the following deep theorem of Taubes, which establishes the topological invariance of embedded contact homology (except that one needs to shift $\Gamma$ if one changes the contact structure), cf. \cite{Taubes_1, Taubes_2, Taubes_3, Taubes_4, Taubes_5}.

\begin{theorem}[Taubes]
\label{thm:taubes}
 If $Y$ is connected, then there is a canonical isomorphism of relatively graded $\Z/2\Z[U]$-modules 
\[
ECH_*(Y,\lambda,\Gamma,J) \simeq \widehat{HM}^{-*}(Y,\mathfrak{s}_\xi + {PD}(\Gamma)),
\]
which sends the ECH contact invariant $c(\xi):=[\emptyset] \in ECH(Y,\xi,0)$ to the contact invariant in Seiberg-Witten Floer (monopole) cohomology.
\end{theorem}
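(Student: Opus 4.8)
The plan is to follow Taubes' program realizing embedded contact homology as a flavor of Seiberg--Witten Floer cohomology, carried out across the series \cite{Taubes_1,Taubes_2,Taubes_3,Taubes_4,Taubes_5}; I will only sketch the architecture, since the analytic substance is formidable. The starting point is to perturb the three-dimensional Seiberg--Witten equations on $Y$ using the contact data: one equips $Y$ with the canonical spin-c structure $\mathfrak{s}_\xi$ determined by $\xi$, twisted by $\op{PD}(\Gamma)$, and deforms the curvature equation by a term proportional to $r\lambda$, where $r\gg 0$ is a large real parameter built from the contact form. First I would establish the a priori estimates showing that, as $r\to\infty$, the distinguished component of the spinor vanishes only in thin tubes and that the Seiberg--Witten solutions concentrate along the closed Reeb orbits of $\lambda$. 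This is what produces a bijection, for $r$ large, between Seiberg--Witten generators and ECH generators $\alpha=\{(\alpha_i,m_i)\}$, with the hyperbolic-multiplicity-one constraint emerging from the spectral-flow and nondegeneracy analysis near each orbit.

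Next I would analyze the four-dimensional monopole equations on the symplectization $\R\times Y$, again perturbed by the parameter $r$, and prove the analogous concentration principle: as $r\to\infty$ the Seiberg--Witten trajectories converge to (broken) $J$-holomorphic currents of ECH index one, and conversely each such current can be glued back to a family of monopoles. Matching the two differentials on the nose is where the hardest analysis lies. One must prove compactness of the $r$-family of instanton solutions, construct the gluing maps establishing surjectivity onto the ECH moduli spaces of index-one curves, and account for the multiply covered and obstruction-bundle contributions so that the signed counts (here, the mod $2$ counts) agree. I expect this to be the main obstacle: the correspondence is not a formal consequence of concentration, and controlling the passage between the monopole and holomorphic-curve pictures near multiply covered elliptic orbits --- precisely the phenomenon responsible for obstruction bundle gluing in \eqref{eq:d} --- is extremely delicate and constitutes the technical heart of Taubes' papers.

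With the chain-level isomorphism in hand, I would verify that it intertwines the relative gradings, matching the $\Z/d\Z$ grading on ECH with the grading of $\widehat{HM}^{-*}$ by spectral flow, and that the decomposition of ECH over $\Gamma\in H_1(Y)$ corresponds to the decomposition of $\widehat{HM}$ over the spin-c structures $\mathfrak{s}_\xi+\op{PD}(\Gamma)$. I would then check that the geometric $U$-map on ECH, defined by a base-point incidence constraint on the curves, is carried to the $U$-action on monopole Floer homology, upgrading the isomorphism to one of $\Z/2\Z[U]$-modules. Finally, to track the contact invariant I would observe that the empty Reeb current $\emptyset$, which is automatically a cycle since $\partial\emptyset=0$, corresponds under the large-$r$ analysis to the distinguished monopole whose concentrating spinor component is nowhere vanishing --- i.e.\ the solution associated to ``no Reeb orbits.'' Identifying this with the canonical contact class in $\widehat{HM}$ defined by Kronheimer--Mrowka is cleanest in the symplectic cobordism picture, where the cylindrical-end solution representing $c(\xi)=[\emptyset]$ is exactly the one whose induced cobordism map produces the Kronheimer--Mrowka contact element. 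The hypothesis that $Y$ is connected ensures all of these constructions are canonical, with no basepoint ambiguity.
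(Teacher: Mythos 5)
Your sketch accurately reproduces the architecture of Taubes' proof from the cited series \cite{Taubes_1, Taubes_2, Taubes_3, Taubes_4, Taubes_5} — the large-$r$ perturbation by the contact form, concentration of spinors along Reeb orbits giving the generator correspondence (including the hyperbolic multiplicity-one constraint), the instanton-to-current matching of differentials, and the identification of $[\emptyset]$ with the Kronheimer--Mrowka contact element — which is exactly the argument this paper invokes by citation rather than reproving. Since the paper offers no independent proof and your outline follows the same route at the level of detail one could reasonably expect, there is nothing to compare beyond noting the match.
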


Here $\widehat{HM}^*$ denotes the ``from'' version of Seiberg-Witten Floer (monopole) cohomology, which is fully explained in the book by Kronheimer and Mrowka \cite{KM_book}, and $\mathfrak{s}_\xi$ denotes the canonical spin-c structure determined by the oriented 2-plane field $\xi$.  The contact invariant in Seiberg-Witten Floer (monopole) cohomology was implicitly defined (up to sign) in \cite{Kronheimer_Mrowka_contact_str}. 

\begin{remark}\label{rem:c}
We collect some of the important results relating to the (non)vanishing of the contact invariant:
\begin{enumerate}
\item If $(Y,\xi)$ is overtwisted, then the ECH contact invariant vanishes by the existence of an embedded positive hyperbolic Reeb orbit $\gamma$ which has smallest action amongst all Reeb orbits and a unique Fredholm and ECH index 1 embedded J-holomorphic plane $u$ in $\R \times Y$ with positive end at $\gamma$ by \cite[Appendix by Eliashberg]{Yau_contact_ot}. 
\item If $(Y,\xi)$ is exactly symplectically fillable then the ECH contact invariant is nonvanishing by \cite{echcap, HT_Chord_II}. Additionally, Hutchings' work in progress shows that if $(Y,\xi)$ is strongly symplectically fillable then $c(\xi)\neq 0$ by way of more refined ECH cobordism maps induced by Seiberg-Witten-Floer (monopole) homology \cite{Hutchings_ech_sft}.
\item There are analogous definitions and nonvanishing results of the contact invariant in Heegaard-Floer homology and Seiberg-Witten Floer (monopole) homology; see \cite{Kronheimer_Mrowka_contact_str, Ozsvath_Szabo_contact_str}.
\item Applications of the monopole contact invariant $c(\xi) \in \widehat{HM}^{-*}(Y,\mathfrak{s}_\xi + {PD}(\Gamma))$ necessitate a naturality property for the map induced by a strong symplectic cobordism between two contact manifolds.  Echeverria gave a proof of this naturality property, which for technical reasons was done with mod 2 coefficients \cite{Echeverria}.
\end{enumerate}
\end{remark}

\subsection{Algebraic torsion in ECH}\label{subsec:AT_in_ECH}
It is of interest to find obstructions to the existence of symplectic fillings and exact symplectic cobordisms as well as to determine if a given nonfillable contact structure is tight.  The ECH contact invariant is not ideally suited to these purposes, and given ECH data $(\lambda,J)$, it is helpful to utilize an integer known as the (order of) algebraic torsion, which is derived from a filtration on the topological complexity of the ECH index one curves that have no negative ends \cite[Appendix by Hutchings]{at}.  The definition of ECH algebraic torsion was inspired by the work of Latschev and Wendl in the context of symplectic field theory \cite{at}\footnote{Symplectic field theory is defined in terms of a very different count of $J$-holomorphic curves, and associates a $BV_\infty$ algebra to a contact manifold.}.

The definition of ECH algebraic torsion utilizes a topological index $\Jplus$ similar to the ECH index.   However, $\Jplus$ more directly captures the topological complexity of the curves counted by the ECH differential and gives rise to a relative filtration on ECH.  The index $\Jplus$ is defined in terms of a related index, $\Jzero$; both enjoy many of the same properties of the ECH index and directly give an absolute grading on the homotopy class of oriented 2-plane fields \cite[Proposition~6.5]{revisit}.  They are defined as follows.

\begin{definition}
Let $\alpha$ and $\beta$ be arbitrary Reeb currents such that $[\alpha] =[\beta]$ and $Z\in H_2(Y,\alpha,\beta)$.  Define
\[
\Jzero(\alpha,\beta,Z):=-c_\tau(Z) + Q_\tau(Z)+ CZ_\tau^{\mathbb{J}}(\alpha,\beta),
\] 
where 
\[
CZ_\tau^{\mathbb{J}}(\alpha,\beta) = \sum_i \sum_{k=1}^{m_i-1}CZ_\tau(\alpha_i^k) -\sum_j \sum_{k=1}^{n_j-1}CZ_\tau(\beta_j^k). 
\]
\end{definition}
The differences between the definition of $I$ in Definition~\ref{def:I} and $\Jzero$ are that the sign of the relative Chern class term is switched and that the Conley-Zehnder terms are slightly different, in that we do not sum to the last respective $m_i$ and $n_j$ iterates of $\alpha_i$ and $\beta_j$.

From $\Jzero$ we can define the variant $\Jplus$ by
\[
\Jplus(\alpha,\beta,Z):= \Jzero(\alpha,\beta,Z) + |\alpha| - |\beta|,
\]
where given an arbitrary Reeb current $\gamma = \{ (\gamma_i,m_i)\}$ we define
\[
|\gamma| := \sum_i \left\{ 
\begin{array}{rc} 1, & \gamma_i \mbox{ elliptic,} \\ 
m_i, & \gamma_i \mbox{ positive hyperbolic,} \\ 
\lceil \frac{m_i}{2} \rceil, & \gamma_i \mbox{ negative hyperbolic.} \\ 
\end{array}\right.
\]
Note that in the above, if $\gamma$ is an ECH generator 
then when $\gamma_i$ is hyperbolic we have that $m_i = \lceil \frac{m_i}{2} \rceil =1 $ and {thus} $|\gamma|$ is the multiplicity of the embedded orbits associated with the Reeb current $\gamma$.

The following result is the analogue of the ECH index inequality for $\Jzero$, which incorporates the genus of a $J$-holomorphic curve and provides a bound on the (negative) Euler characteristic. 

\begin{proposition}{\em\cite[Proposition~6.9, Theorem~6.6]{revisit}}\label{prop:J0}
Let $(\lambda,J)$ be ECH data and let $\alpha$ and $\beta$ be arbitrary Reeb currents. 
Suppose $u \in \mathcal{M}(\alpha,\beta,J)$ is a somewhere injective and irreducible $J$-holomorphic curve {of genus $g$}.  Then $\Jplus(u) \geq 0$ and
\begin{equation}\label{eq:J0irr}
\Jplus(u) \geq 2 \left(g-1 + \Delta( u) + \sum_i \left\{ \begin{array}{ll}n_{\alpha_i}  & {\alpha_i}  \mbox{ elliptic,} \\ 1  & {\alpha_i}  \mbox { hyperbolic,}\\ \end{array}\right. - \sum_j \left\{ \begin{array}{ll}n_{\beta_j} -1  & {\beta_j}  \mbox{ elliptic,} \\ 0  & {\beta_j}  \mbox { hyperbolic,} \\ \end{array}\right.\right)
\end{equation}

If $\op{ind}(u) = I(u)$, e.g. $u$ is an irreducible (connected) curve in $\R \times Y$ contributing to the ECH differential, which does not contain trivial cylinders,  then \eqref{eq:J0irr} is sharp with $\Delta(u)=0$. Here the sum is over all embedded Reeb orbits $\alpha_i$ (respectively, $\beta_i)$ in $Y$ at which $u$ has positive (respectively, negative) ends in $Y$; $\Delta(u)$ is a count of the singularities of $u$ in $\R \times Y$ with positive integer
weights; $n_\gamma$ denotes the number of ends of $u$ at covers of $\gamma$.

\end{proposition}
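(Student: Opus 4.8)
The plan is to mirror the proof of the ECH index inequality \eqref{indineq}, replacing the ECH index $I$ by $\Jzero$, and to extract the genus and topological complexity of $u$ from the relative adjunction formula. Write $\alpha=\{(\alpha_i,m_i)\}$ and $\beta=\{(\beta_j,n_j)\}$ and fix the trivialization $\tau$. The first step is to recall the relative adjunction formula
\[
c_\tau(u)=\chi(u)+Q_\tau(u)+w_\tau(u)-2\delta(u),
\]
where $w_\tau(u)$ is the total asymptotic writhe of the ends (positive ends contributing with a $+$ sign and negative ends with a $-$ sign), and $\delta(u)\geq 0$ is the algebraic count of singularities of the image, equal to the quantity $\Delta(u)$ of the statement, with $\delta(u)=0$ iff $u$ is embedded; here somewhere injectivity of $u$ is what makes $\delta(u)$ well defined. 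Substituting this into the definition of $\Jzero$ cancels the two $Q_\tau$ terms and yields the identity
\[
\Jzero(u)=-\chi(u)-w_\tau(u)+2\delta(u)+CZ_\tau^{\mathbb{J}}(\alpha,\beta).
\]
Since $u$ is irreducible of genus $g$ with $\sum_i n_{\alpha_i}+\sum_j n_{\beta_j}$ ends, we have $-\chi(u)=2g-2+\sum_i n_{\alpha_i}+\sum_j n_{\beta_j}$.

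The key analytic input is Hutchings' writhe inequalities. These bound the writhe of the braid formed by the ends of $u$ at each embedded orbit: at a positive end the total writhe is bounded \emph{above}, and at a negative end it is bounded \emph{below}, by explicit combinations of the Conley--Zehnder indices of the iterates and of the asymptotic winding numbers supplied by Siefring's asymptotic analysis. I would first record these bounds orbit by orbit, separating the elliptic and hyperbolic cases, since the extremal winding numbers, and hence the sharp writhe values, differ in the two cases; this split is precisely the origin of the two-case brackets in the statement. Feeding the positive-end upper bounds and negative-end lower bounds into the identity for $\Jzero$, the writhe terms combine with $CZ_\tau^{\mathbb{J}}(\alpha,\beta)$, with the end counts from $-\chi(u)$, and with the shift $|\alpha|-|\beta|$ entering $\Jplus$, leaving exactly the combinatorial contributions $n_{\alpha_i}$ (resp. $1$) at each positive elliptic (resp. hyperbolic) orbit and $n_{\beta_j}-1$ (resp. $0$) at each negative elliptic (resp. hyperbolic) orbit, together with the surviving $2g-2+2\delta(u)$. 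The bound $\Jplus(u)\geq 0$ is a further consequence of the same writhe inequalities, since the displayed lower bound by itself may be negative when $u$ has many negative elliptic ends.

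The main obstacle is exactly this writhe-to-Conley--Zehnder bookkeeping: one must check that the difference between $CZ_\tau^{\mathbb{J}}(\alpha,\beta)$, which omits the top iterates $\alpha_i^{m_i}$ and $\beta_j^{n_j}$ unlike $CZ_\tau^I$, and the summed writhe bounds reduces, after the $|\alpha|-|\beta|$ correction, to the asserted elliptic/hyperbolic counts. The cleanest organization treats each embedded orbit in isolation and reduces the claim to a purely combinatorial inequality relating partitions of the multiplicity $m_i$, the associated winding numbers, and the Conley--Zehnder values; this is where the asymmetry between ``$n_{\alpha_i}$'' (one term per end) for positive orbits and ``$1$'' for hyperbolic orbits arises.

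Finally, for the sharpness assertion, suppose $\ind(u)=I(u)$. The index inequality \eqref{indineq} gives $\ind(u)\leq I(u)-2\Delta(u)$ with $\Delta(u)=\delta(u)$, forcing $\delta(u)=0$, so $u$ is embedded; moreover equality in \eqref{indineq} forces the ends of $u$ to satisfy the ECH partition conditions. These are precisely the partitions at which the writhe inequalities invoked above hold with equality. Hence every inequality in the derivation becomes an equality, and the lower bound on $\Jplus(u)$ is attained with $\Delta(u)=0$, as claimed.
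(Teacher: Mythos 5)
Your proposal reconstructs precisely the argument of the cited source \cite{revisit} (the paper itself gives no proof of this proposition, quoting it from Hutchings): substitute the relative adjunction formula into the definition of $\Jzero$ so the two $Q_\tau$ terms cancel, apply the writhe bounds (upper bounds at positive ends, lower bounds at negative ends, with the elliptic/hyperbolic dichotomy coming from the extremal winding numbers) to convert writhe into Conley--Zehnder data, and deduce sharpness from $\ind(u)=I(u)$, which forces $\delta(u)=0$ and the ECH partition conditions at which the writhe bounds are equalities. This is essentially the same approach as the proof behind the paper's citation, and your outline, including the observation that $\Jplus(u)\geq 0$ requires a separate application of the same inequalities since the displayed bound can be negative, is sound.
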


\begin{remark} \label{remark J+}
Using various properties of the ECH index $I$ and $\Jplus$ we can conclude the following: 
 \begin{itemize}
    \item  The parity of 
     \[I(u)-\Jplus(u) = 2c_\tau(u) + \sum_i CZ_\tau(\alpha_i^{m_i}) - \sum_j CZ_\tau(\beta_j^{n_j}) - |\alpha| + |\beta| \] 
     agrees with the parity of the number of positive hyperbolic Reeb orbits, which agrees with the parity of $I(u)$ by Remark~\ref{rem:I} (Index Parity). 
     \item  Thus if a $J$-holomorphic curve $u$ contributes to the ECH differential then $\Jplus(u)$ is even.  
     \item The index $\Jplus$ is additive under gluing \cite[Proposition 6.5(a)]{index}: If $Z\in H_2(Y,\alpha,\beta)$ and  $W\in H_2(Y,\beta,\gamma)$ then \[\Jplus(\alpha,\beta,Z)+\Jplus(\beta,\gamma,W)=\Jplus(\alpha,\gamma,Z+W)\]
 \end{itemize}   

\end{remark}

This allows us to define the following $\Jplus$ spectral sequence, which we use to define the order of algebraic torsion.

\begin{definition}[$\Jplus$ spectral sequence]
As a result of Remark~\ref{remark J+}, we have a splitting of the ECH differential $\partial$ into
\[
\partial = \partial_0 + \partial_1 + \cdots,
\]
where $\partial_\ell$ denotes the contribution from $J$-holomorphic curves with $$\Jplus(u) = 2\ell.$$  Since the ECH differential squares to zero and by the additivity property for $\Jplus$,  we have that $\partial_0^2=0$, $\partial_0\partial_1 + \partial_1\partial_0 =0$, and so on. 

This gives rise to the ``$\Jplus$ spectral sequence" $E^*(Y,\lambda,J)$ where $E^1$ is the homology with respect to $\partial_0$, $E^2$ is the homology of $\partial_1$ acting on $E^1$, and so on. 

One can similarly define a $\Jplus$ spectral sequence $E^{L,*}(Y,\lambda,J)$ for the subcomplex $ECC^L(Y,\lambda,J)$ given an action $L>0$.
\end{definition}
For better or for worse, this spectral sequence is not invariant under deformation of the contact form because we do not have control over the $\Jplus$ index of multiply covered curves in exact symplectic cobordisms and negative $\Jplus$  curves may contribute to the chain map. However, by restricting to the so-called ``simple" subcomplex of ECH, detailed further below, we will be able to obstruct the existence of multiple covered curves in the corresponding exact symplectic cobordisms, which provides a weak naturality property for the map induced by a strong symplectic cobordism. This in turn permits the desired applications: obstructions of symplectic fillings and obstructions of exact symplectic cobordisms.

The $\Jplus$ spectral sequence yields the following definition of the order of algebraic $k$-torsion, associated to ECH data $(\lambda,J)$, as follows, summarizing the highlights of \cite[Appendix by Hutchings]{at}.

\begin{definition}[Order of algebraic torsion]
Given $L\in (0,\infty]$, we define the \emph{order of algebraic torsion} $\at^L(Y,\lambda,J)$ to be the smallest nonnegative integer $k$ such that $\emptyset$ becomes zero in the $E^{k+1}$ page of the spectral sequence $E^{L,*}(Y,\lambda,J)$. Note that $\at^L(Y,\lambda,J)$  can be defined for an $L$-nondegenerate contact form $\lambda$ and a generic  $\lambda$-compatible $J$.   If no such $k$ exists, we define $\at^L(Y,\lambda,J)=\infty$. Define $\at(Y,\lambda,J):=\at^\infty(Y,\lambda,J)$, assuming that $(\lambda, J)$ comprise ECH data. 
\end{definition}

We want to underscore that at present, ECH algebraic torsion should be thought of as a measurement of nonfillability rather than a refinement of the contact invariant. This is because it is not yet clear how, if it is possible, to establish the following naturality property: if there is an exact symplectic cobordism from $(Y_+,\lambda_+)$ to $(Y_-,\lambda_-)$ then
\begin{equation}\label{eq:natural}
    \at(Y_+,\lambda_+) \geq \at(Y_-,\lambda_-).
\end{equation}
Should \eqref{eq:natural} hold, then it would follow that the order of algebraic torsion  depends only on $\xi$ and is monotone with respect to exact symplectic cobordisms.

\begin{remark}
We summarize some observations regarding the order of algebraic torsion.
\begin{itemize}

\item If $\at(Y,\lambda,J)= \infty$ for some ECH data $(\lambda, J)$ then $c(\xi)\neq 0$.
    \item A priori, $c(\xi)\neq 0$ does not necessarily imply $\at=\infty$.  While we can conclude that the `total' ECH differential does not kill the generator $\emptyset$, we do not know how the expression of the total differential appears with respect to the pages of the spectral sequence. For example, $\at=1$ means there is a pseudoholomorphic curve of $\Jplus$ index 1 killing $\emptyset$, so $\partial_1(\alpha) = \emptyset$, where $\alpha$ is an orbit set,  but there could still be a $\Jplus$ curve of higher index $k$, with $\partial_k(\alpha) = [\beta] $ e.g. $\partial(\alpha) = \emptyset + \beta$.  In this setting we could thus have $c(\xi)\neq0$.
       \item Proposition~\ref{thm:hutchingsot} below states that $\at=0$ for overtwisted contact manifolds. The converse is an open conjecture. Even weaker, it is not known that $\at=0$ implies $c(\xi)=0.$
\item One needs additional information about the complex so that it is possible to directly conclude from the chain complex whether or not the contact invariant is nonvanishing.  We are able to deduce this as a result of our analysis of pseudoholomorphic curves in our setting.
\item    In particular, if there is a unique pseudoholomorphic curve which is a plane killing the generator $\emptyset$, as in our case and we know enough about the chain complex, we can establish that $c(\xi)=0.$
\item Relying on the Morse-Bott methods of \cite{HutchingsSullivanT3, yao2022cascades}, it can be shown that Morse-Bott ECH yields $$\at(T^3, \cos (2n \pi t)dq_1+\sin (2n \pi t) dq_2),J) = 1 \mbox{ \ for all }  n \in\mathbb \Z_{\geq2}.$$ These are the $T^3$ admitting Giroux torsion; the associated $J$-holomorphic curve is a cylinder with two positive ends one on `$e$' and the other on `$h$', which is obtained from ``rounding the corner'' on `$e$'.

\end{itemize}

\end{remark}

To establish a weaker naturality property, which we use to obstruct fillability,  we need the following definition of \emph{simple algebraic torsion}, as explained in \cite[Appendix by Hutchings]{at}.

\begin{definition}
\label{def:simple_at}
    An ECH generator $\alpha = \{(\alpha_i, m_i)\}$ is a \emph{simple generator} with respect to $J$ when the following conditions hold:
\begin{itemize}
    \item  $m_i = 1$ for all $i$;
    \item For any $\beta = \{(\beta_j, n_j)\}$ at the negative end of a (possibly broken) $J$-holomorphic curve with positive end at $\alpha$, all $n_j = 1$. 
\end{itemize}    
    Given $L\in (0,\infty]$, define $ECC^L_{\simp}(Y,\lambda,J)$ to be the subcomplex $ECC(Y,\lambda,J)$ generated by simple {ECH generators} 
    of action $<L$.
\end{definition}

It is important to note that this notion of simple does not agree with the simple terminology in symplectic field theory literature. As demonstrated in \cite[Lemma A.14]{at}, the second condition in Definition~\ref{def:simple_at} enables to define a chain map induced by an exact symplectic cobordism from $(Y_+, \lambda_+)$ to $(Y_-, \lambda_-)$,
\[
\Phi: ECC^L_{\simp}(Y_+,\lambda_+,J_+) \to ECC(Y_-,\lambda_-,J_-)
\]
such that 
\begin{enumerate}
    \item $\Phi(\emptyset) = \emptyset$;
    \item there is a decomposition $\Phi = \Phi_0 +  \Phi_1 + ... $ such that
    $\sum_{i+j=k}(\partial_i\Phi_j - \Phi_i \partial_j)=0$  for each $k \in \Z_{\geq0}$.
\end{enumerate}
In particular:
\begin{itemize}
    \item[] If the orbit set $\alpha_+$ is simple, no holomorphic curve in $\mathcal{M}^J(\alpha_+,\alpha_-)$ admits a multiply covered component and no broken holomorphic curve arising as a limit of a sequence of curves in $\mathcal{M}^J(\alpha_+,\alpha_-)$  admits a multiply covered component in the cobordism level.
\end{itemize}
This ensures that
\begin{itemize}
    \item If $\alpha_+$ is simple and if $u\in\mathcal{M}^J(\alpha_+,\alpha_-)$ has $I(u)=0$ then $u$ is regular.
    \item $\Phi$ is well-defined because if  $\alpha_+$ is simple then  there are only finitely many curves with $I(u)=0$ and $\Jplus(u)\geq0$ for all  $u\in\mathcal{M}^J(\alpha_+,\alpha_-)$.
\end{itemize}    

In practice, it can be difficult to ascertain which ECH generators comprise the simple subcomplex of ECH if the differential does not vanish by grading reasons.  In the setting under consideration in this paper, we utilize asymptotic analysis of the expansion of $J$-holomorphic curves to understand the simple subcomplex in \S\ref{sec:asymptotic}.

\begin{definition}[Simple algebraic torsion]
    Given $L\in (0, \infty]$, we define $\at^L_{\simp}$ to be the smallest nonnegative integer $k$ such that $\emptyset$ becomes zero in the $E^{k+1}$ page of the spectral sequence $E^{L,*}(Y,\lambda,J)$ when we restrict to simple orbit sets. Again, if no such $k$ exists, we define $\at^L_{\simp}(Y,\lambda,J)=\infty$. Define $\at_{\simp}(Y,\lambda,J):=\at^{\infty}_{\simp}(Y,\lambda,J)$.
\end{definition}

The following theorem relates $\at_{\simp}$ and $\at$ in an exact cobordism and is needed for the computations in \textsection\ref{section computing}. Recall that $(\lambda, J)$ is said to be ECH data if $\lambda$ is nondegenerate and $J$ is a generic $\lambda$-compatible almost complex structure.

\begin{theorem}[{\cite[Theorem A.9]{at}}]
\label{thm:hutchings_A9}
    Suppose there is an exact symplectic cobordism from $(Y_+, \lambda_+)$ to $(Y_-, \lambda_-)$\footnote{Note that Hutchings uses the opposite convention for the direction of symplectic cobordism even of the main text of \cite{at}. We will use Hutchings' convention in this paper.}. Then for any choices of ECH data $(\lambda_\pm, J_\pm)$,
    $$\at^L_{\simp}(Y_+, \lambda_+, J_+) \geq \at^L(Y_-, \lambda_-, J_-)$$
    for each $L\in (0, \infty]$.
\end{theorem}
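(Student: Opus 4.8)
The plan is to obtain the inequality as a formal consequence of the filtered cobordism chain map $\Phi$ recorded immediately above the statement (from \cite[Lemma A.14]{at}), combined with the functoriality of the $\Jplus$ spectral sequence. First I would observe that the decomposition $\Phi = \Phi_0 + \Phi_1 + \cdots$ satisfying $\sum_{i+j=k}(\partial_i\Phi_j - \Phi_i\partial_j)=0$ is precisely the assertion that $\Phi$ is a morphism of filtered complexes for the $\Jplus$ filtration: each $\Phi_\ell$ shifts the $\Jplus$-level by $\ell$, and $\Phi_0$ intertwines the page-zero differentials $\partial_0$. Since exactness of the cobordism forces the induced map to be non-increasing in the symplectic action (by Stokes' theorem, as in \S\ref{ss:action}), $\Phi$ descends to a filtered chain map
\[
\Phi: ECC^L_{\simp}(Y_+,\lambda_+,J_+) \longrightarrow ECC^L(Y_-,\lambda_-,J_-)
\]
of the action-$L$ subcomplexes.

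A filtered chain map induces a morphism of the associated spectral sequences $\Phi^r: E^{L,r}_{\simp}(Y_+) \to E^{L,r}(Y_-)$ on every page $r\geq 1$, with each $\Phi^r$ commuting with the page differential and $\Phi^{r+1}$ being the map induced by $\Phi^r$ on homology; this is standard homological algebra. The generator $\emptyset$ is a cycle on every page because $\partial\emptyset=0$, so it defines a class $[\emptyset]$ on each $E^{L,r}$ until it becomes a boundary. Property~(1), $\Phi(\emptyset)=\emptyset$, together with the bound $\Jplus\geq 0$ (which forces $\Phi_0(\emptyset)=\emptyset$ and $\Phi_\ell(\emptyset)=0$ for $\ell\geq 1$, since $\emptyset$ sits at the bottom of the filtration), yields $\Phi^r([\emptyset]_{Y_+}) = [\emptyset]_{Y_-}$ on each page.

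The inequality now follows by a direct chase of the torsion orders. Write $k := \at^L_{\simp}(Y_+,\lambda_+,J_+)$; if $k=\infty$ the claim is vacuous, so assume $k$ is finite. By definition $\emptyset=0$ in $E^{L,k+1}_{\simp}(Y_+)$. Applying $\Phi^{k+1}$ gives $[\emptyset]_{Y_-} = \Phi^{k+1}([\emptyset]_{Y_+}) = \Phi^{k+1}(0) = 0$ in $E^{L,k+1}(Y_-)$, whence $\at^L(Y_-,\lambda_-,J_-)\leq k = \at^L_{\simp}(Y_+,\lambda_+,J_+)$, as desired. For $L<\infty$ the complex $ECC^L$ is finitely generated by $L$-nondegeneracy, so the $\Jplus$ filtration is bounded and the spectral sequences converge; the case $L=\infty$ is handled by passing to the direct limit over the inclusion-induced maps of \eqref{eq:incl}.

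The genuinely hard analytic input lies not in this chase but in the construction of $\Phi$ itself in \cite[Lemma A.14]{at}, namely in showing that the cobordism map restricted to the \emph{simple} subcomplex is well-defined and enjoys the filtered chain-map properties~(1)--(2). The simplicity hypothesis on $Y_+$ is exactly what rules out multiply covered components, and negative-$\Jplus$ contributions, in the relevant cobordism moduli spaces, thereby guaranteeing the regularity and finiteness of index-zero curves needed to define $\Phi$ and to control its $\Jplus$-degree. This is the structural reason one cannot replace $\at^L_{\simp}$ by $\at^L$ on the left-hand side, and it is the step I would expect to be the main obstacle.
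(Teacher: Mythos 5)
Your proposal is correct, but note that the paper does not prove this statement at all: it is quoted verbatim from Hutchings' appendix \cite[Theorem A.9]{at}, with the only surrounding content being the statement of the cobordism chain map $\Phi$ from \cite[Lemma A.14]{at} and its properties (1)--(2). Your reconstruction — treating $\Phi=\Phi_0+\Phi_1+\cdots$ with $\sum_{i+j=k}(\partial_i\Phi_j-\Phi_i\partial_j)=0$ as a morphism of $\Jplus$ spectral sequences, using $\Phi(\emptyset)=\emptyset$ together with $\Jplus\geq 0$ to propagate $[\emptyset]$ across pages, and chasing the page at which $\emptyset$ dies to get $\at^L_{\simp}(Y_+,\lambda_+,J_+)\geq \at^L(Y_-,\lambda_-,J_-)$ — is precisely Hutchings' original argument, and you correctly locate the genuine analytic content (exclusion of multiply covered and negative-$\Jplus$ contributions via simplicity) in Lemma A.14 rather than in the formal chase.
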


That the order of algebraic torsion is zero for any overtwisted nondegenerate contact form $\lambda$ and generic $\lambda$-compatible $J$ follows from the reasoning given in Remark~\ref{rem:c} (1) in conjunction with the weak naturality property above.

\begin{proposition}[{\cite[Corollary A.11]{at}}]
    \label{thm:hutchingsot}
    Suppose $(Y,\xi)$ is overtwisted. Then $\at(Y,\lambda,J)=0$ whenever $(\lambda,J)$ is ECH data for $(Y,\xi)$.
\end{proposition}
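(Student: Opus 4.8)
The plan is to prove the statement for the given overtwisted contact structure $\xi$ by first establishing vanishing of \emph{simple} algebraic torsion for one convenient model contact form, and then transporting this to an arbitrary ECH datum via the weak naturality of Theorem~\ref{thm:hutchings_A9}. First I would invoke the input recorded in Remark~\ref{rem:c}(1): since $(Y,\xi)$ is overtwisted there is a nondegenerate contact form $\lambda_{OT}$ with $\ker\lambda_{OT}=\xi$, a generic compatible $J_{OT}$, an embedded positive hyperbolic Reeb orbit $\gamma$ whose action is minimal among all Reeb orbits, and a \emph{unique} embedded $J_{OT}$-holomorphic plane $u$ with $\ind(u)=I(u)=1$ that is positively asymptotic to $\gamma$ and has no negative ends. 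Because $\mathcal{A}(\gamma)$ is minimal, an action argument forces any (possibly broken) $J_{OT}$-holomorphic curve with positive end $\gamma$ to have no negative ends, so $u$ is the only curve contributing to $\partial\gamma$ and $\partial\gamma=\emptyset$. The same minimality makes the second condition in Definition~\ref{def:simple_at} vacuous, so $\gamma$ is a simple generator.

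The decisive computation is that $u$ contributes to $\partial_0$, that is $\Jplus(u)=0$. I would read this off from Proposition~\ref{prop:J0}: as $u$ is embedded, somewhere injective, and satisfies $\ind(u)=I(u)$, the bound \eqref{eq:J0irr} is sharp with $\Delta(u)=0$; with genus $g=0$, a single positive hyperbolic end, and no negative ends this gives $\Jplus(u)=2(g-1+1)=0$. Hence $u$ contributes to the $\partial_0$ part of the differential on both the full complex and its simple subcomplex, and $\emptyset=\partial_0\gamma$ already dies on the $E^1$ page. Therefore $\at_{\simp}(Y,\lambda_{OT},J_{OT})=0$ (and likewise $\at(Y,\lambda_{OT},J_{OT})=0$).

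To reach an arbitrary ECH datum $(\lambda,J)$ with $\ker\lambda=\xi$, I would build an exact symplectic cobordism out of a region of a symplectization. Writing $\lambda=h\,\lambda_{OT}$ with $h>0$ and rescaling by a constant $c$---which changes neither $\at$ nor $\at_{\simp}$ by the scaling isomorphism \eqref{eq:scale}---I may arrange that a slab $\{(r,y):a(y)\le r\le b(y)\}$ of a symplectization realizes an exact cobordism having $(Y,\lambda_{OT})$ as its $Y_+$ end and $(Y,c\lambda)$ as its $Y_-$ end in the convention of Theorem~\ref{thm:hutchings_A9}. Applying that theorem with $L=\infty$ and combining with scale-invariance yields
\[
\at(Y,\lambda,J)=\at(Y,c\lambda,J^c)\ \leq\ \at_{\simp}(Y,\lambda_{OT},J_{OT})=0,
\]
and since the order of algebraic torsion is a nonnegative integer (or $\infty$), this forces $\at(Y,\lambda,J)=0$.

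I expect the main obstacle to be organizational rather than analytic. The two points that require care are verifying that $\gamma$ genuinely lies in the simple subcomplex---so that the simple torsion, not merely $\at$, vanishes for the model form, which is exactly what Theorem~\ref{thm:hutchings_A9} consumes---and arranging, by rescaling, that the model form $\lambda_{OT}$ occupies the $Y_+$ end of the symplectization cobordism in Hutchings' convention. Both reduce to the minimality of $\mathcal{A}(\gamma)$ and the scale-invariance of algebraic torsion, while the one genuinely computational ingredient, $\Jplus(u)=0$, is immediate from Proposition~\ref{prop:J0}.
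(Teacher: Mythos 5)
Your proof is correct and follows essentially the same route the paper indicates for this cited result: Remark~\ref{rem:c}(1) supplies the minimal-action positive hyperbolic orbit $\gamma$ with its unique index-one plane $u$, the sharp case of Proposition~\ref{prop:J0} gives $\Jplus(u)=0$ so that $\partial_0\gamma=\emptyset$ already on the simple subcomplex, and the slab-in-the-symplectization cobordism together with Theorem~\ref{thm:hutchings_A9} (at $L=\infty$) and the scaling isomorphism \eqref{eq:scale} transport the vanishing to arbitrary ECH data, exactly as in Proposition~\ref{prop:cobordism} and Lemma~\ref{lemma fillable at}. Your two points of care are the right ones, and both are handled correctly by the minimality of $\mathcal{A}(\gamma)$, which rules out negative ends (making $\gamma$ simple and forcing $\partial\gamma=\emptyset$ exactly) and lets the model form sit at the $Y_+$ end after a constant rescaling.
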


We will use the following observation (similar to \cite[Corollary A.10 ]{at}) to obstruct when $(Y, \xi)$ is not exactly fillable in \S\ref{section computing}.  Since, by Proposition~\ref{prop lens} or \cite{Neumann}, the manifolds we consider are all known to be lens spaces, we will be able to deduce that non-fillability implies overtwistedness in the setting at hand.

\begin{lemma} \label{lemma fillable at}
    If $(Y,\xi)$ is exactly fillable, then $\at_{\simp}(Y,\lambda, J) = \infty$ for any ECH data $(\lambda,J)$ where $\ker \lambda = \xi$.
\end{lemma}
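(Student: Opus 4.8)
The plan is to realize the exact filling as an exact symplectic cobordism whose negative end is empty, and then invoke the cobordism inequality of Theorem~\ref{thm:hutchings_A9}. First I would record that an exact symplectic filling $(W,\omega=d\beta)$ of $(Y,\xi)$ is precisely an exact symplectic cobordism from $(Y,\lambda)$ to the empty contact manifold: its only boundary is the convex end $Y$, which in Hutchings' convention is the positive end $Y_+$, and there is no concave end, so $Y_-=\emptyset$. Given any ECH data $(\lambda,J)$ with $\ker\lambda=\xi$, I would arrange that the filling induces this particular boundary form $\lambda$, which is harmless: passing to the Liouville completion $W\cup_Y\big([0,\infty)\times Y\big)$ and truncating along the graph of a suitable positive function produces an exact filling whose induced boundary contact form is exactly $\lambda$. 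Thus for every choice of ECH data on $(Y,\xi)$ we obtain an exact cobordism to $\emptyset$ with the prescribed data at the top.

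Next I would compute $\at^L(\emptyset)$ directly from the definition. The chain complex $ECC(\emptyset)$ is the $\Z/2\Z$-vector space generated by the single Reeb current $\emptyset$, with vanishing differential; hence $\emptyset$ is never a boundary and survives to every page of its $\Jplus$ spectral sequence. By the definition of the order of algebraic torsion there is no finite $k$ for which $\emptyset$ becomes zero on the $E^{k+1}$ page, so $\at^L(\emptyset)=\infty$ for every $L\in(0,\infty]$. Applying Theorem~\ref{thm:hutchings_A9} to the cobordism with $Y_+=(Y,\lambda)$ and $Y_-=\emptyset$ then gives
\[
\at^L_{\simp}(Y,\lambda,J)\ \geq\ \at^L(\emptyset)\ =\ \infty
\]
for each $L$, whence $\at^L_{\simp}(Y,\lambda,J)=\infty$; taking $L=\infty$ yields $\at_{\simp}(Y,\lambda,J)=\infty$, as desired.

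The mechanism underlying Theorem~\ref{thm:hutchings_A9} that makes this work is the cobordism chain map $\Phi\colon ECC^L_{\simp}(Y,\lambda,J)\to ECC(\emptyset)$ with $\Phi(\emptyset)=\emptyset$ together with a splitting $\Phi=\Phi_0+\Phi_1+\cdots$ compatible with the $\Jplus$-filtration; this induces a morphism of $\Jplus$ spectral sequences sending $[\emptyset]\mapsto[\emptyset]$, and since $[\emptyset]$ never dies in the target it cannot die in the source. The main point requiring care is not the spectral-sequence bookkeeping but verifying the input hypotheses of Theorem~\ref{thm:hutchings_A9}: that the cobordism is genuinely \emph{exact}, so that the simple-generator condition of Definition~\ref{def:simple_at} rules out the multiply covered curves that would otherwise obstruct the definition of $\Phi$, and that the filling may be taken with boundary form $\lambda$ for the arbitrary given ECH data. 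With these confirmed, the conclusion is immediate.
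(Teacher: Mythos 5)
Your proposal is correct in substance, but it takes a genuinely different route from the paper, and it leans on one point you should make explicit. You apply Theorem~\ref{thm:hutchings_A9} with $Y_-=\emptyset$; as stated, that theorem presupposes ECH data $(\lambda_-,J_-)$ on a closed contact $3$-manifold at the negative end, so the empty-set case is not literally licensed by the statement. It does go through, but only after adopting the convention $ECC(\emptyset)=\Z/2\Z\langle\emptyset\rangle$ with vanishing differential and checking that the construction of the chain map $\Phi$ degenerates correctly (curves in an exact cobordism with no positive ends have nonpositive energy, hence are empty, giving $\Phi(\emptyset)=\emptyset$; the simple condition of Definition~\ref{def:simple_at} still excludes multiple covers) --- this is essentially the Latschev--Wendl SFT-style argument transplanted to ECH. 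The paper avoids the edge case entirely: it embeds a small irrational ellipsoid in a Darboux ball inside the filling and removes it, producing an exact cobordism from $(Y,e^c\lambda)$ to $(S^3,\lambda_{\text{irr}})$ between \emph{nonempty} contact manifolds, computes $\at^L(S^3,\lambda_{\text{irr}},J_{\text{irr}})=\infty$ directly --- the irrational ellipsoid has exactly two embedded elliptic orbits, so by index parity (Remark~\ref{rem:I}) there are no ECH index-one classes and the differential vanishes --- and concludes via Theorem~\ref{thm:hutchings_A9} together with the scaling isomorphism \eqref{eq:scale}. Comparing the two: your route is shorter and, since there is no bottom contact form to preserve, you can even dispense with the scaling bookkeeping (though note that truncating the completion along a graph only yields boundary form $e^c\lambda$ for $c\gg 0$; to get $\lambda$ itself you must additionally rescale the whole Liouville form by the constant $e^{-c}$, which is harmless precisely because the negative end is empty). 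The paper's route stays strictly within the stated hypotheses of the cobordism theorem and exhibits the index-parity computation on the ellipsoid, a technique reused in the proof of Theorem~\ref{theorem at}(a). If you keep your version, either state and justify the empty-set convention with the energy argument for $\Phi(\emptyset)=\emptyset$, or insert the ellipsoid as an intermediate target.
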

\begin{proof}
    First, observe that $\at^L(S^3, \lambda_{\text{irr}}, J_{\text{irr}}) = \infty$ for any  irrational ellipsoid contact form $\lambda_{\text{irr}}$ and any $\lambda_{\text{irr}}$-compatible $J_{\text{irr}}$. 
    Here, by irrational ellipsoid we mean to take $S^3 \cong \partial E(a,b)$ where 
    $$\partial E(a,b) = \left\{ (z_1,z_2) \in \C^2 \ | \ \pi \left( \frac{|z_1|^2}{a} + \frac{|z_2|^2}{b} \right)  =1 \right\},$$ with $a,b \geq 0$ and $a/b \in \R \setminus \Q$.   Then for $$\lambda_{\text{irr}}= \frac{i}{2}\left( \sum_{j=1}^2 z_j d\bar{z}_j - \bar{z}_jd z_j \right)$$ restricted to $ \partial E(a,b)$, the Reeb vector field is given by
\[
R_{\text{irr}} = 2\pi \left( \frac{1}{a}\frac{\partial}{\partial \theta_1} + \frac{1}{b}\frac{\partial}{\partial \theta_2}  \right).
\]
Since $a/b \in \R \setminus \Q$, there are exactly two embedded elliptic Reeb orbits, which are given by $\gamma_1$ in the $z_2=0$ plane with action $a$ and $\gamma_2$ in the $z_1=0$ plane with action $b$. Thus there are no generators with ECH index one by the Index Parity Property in Remark~\ref{rem:I}, hence $\at^L(S^3, \lambda_{\text{irr}}, J_{\text{irr}}) = \infty$.
   
     Now if $(Y, \xi)$ is exactly fillable, we know that for any $\lambda$ such that $\ker \lambda = \xi$, there is an exact cobordism from $(Y,e^c\lambda, J^c)$ to $(S^3, \lambda_{\text{irr}}, J_{\text{irr}})$ for a large positive $c$. Now by the ``scaling'' isomorphism  at the level of chain complexes induced by the bijection on Reeb orbits 
\eqref{eq:scale}, in combination with Theorem~\ref{thm:hutchings_A9}, we have that
    $$\at^{e^{-c}L}_{\simp}(Y, \lambda, J) = \at^L_{\simp}(Y, e^c\lambda, J^c)\geq \infty,$$
    where $L$ is arbitrary.
\end{proof}

\section{Algebraic torsion of contact toric manifolds with non-free action}
\label{section computing}
In this section we give our main ECH direct computation results. The more involved cases will rely on delicate arguments about pseudoholomorphic curves which we prove in \S\ref{section plane exists},~\ref{section constraints}, and~\ref{sec:asymptotic}. While we were able to analyze the geometric properties of almost toric contact structures directly to determine overtwistedness/fillability in \S\ref{section tight-ot}, in the remainder of the article, we wish to develop tools to calculate the ECH contact invariant and algebraic torsion directly and explicitly. Our motivation is to be able to extend these tools to more general examples which have portions that can be modeled with toric contact structures, but do not admit a global almost toric structure. As we will see, our analysis of pseudoholomorphic curves will apply to contact manifolds where a co-dimension $0$ submanifold with boundary has a contact toric structure satisfying certain properties. In this article, we will apply these results in the case of globally toric contact manifolds.

The main statement that we will prove explicitly is the following.

\begin{theorem} 
\label{theorem at}
 Let $(Y^3,\xi)$ be a compact connected contact toric manifold characterized by two real numbers $t_1,t_2$, which define the corresponding moment cone as in Remark~\ref{rem:cone}.  
 \begin{enumerate}[label=(\alph*)]
  \item \label{case less pi} If $t_2-t_1< \pi$ then
$c(\xi) \neq 0$ and $\at_{\simp}(Y, \lambda, J)= \infty$;
  
     \item \label{case more pi}
if  $t_2-t_1> \pi$ then $c(\xi)=0$ and $\at(Y, \lambda,J)=0$;   

  \item \label{case equal pi} if $t_2-t_1= \pi$ then
 $\at_{\simp}(Y, \lambda, J)>0$,
 \end{enumerate} 
    for all ECH data $(\lambda, J).$ 
\end{theorem}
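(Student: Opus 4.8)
The plan is to read the angle $t_2-t_1$ off the moment cone, split into the three cases, and in each case combine Theorem~\ref{proposition ot} with the structural ECH results of \S\ref{s:ECHbackground}, upgrading to a direct pseudoholomorphic computation where the paper wants to showcase its toolkit. For part~\ref{case less pi}, since $t_2-t_1<\pi$, the proof of Theorem~\ref{proposition ot} identifies $(Y,\xi)$ with a lens space carrying a standard tight contact structure. By Honda's classification~\cite{Honda} every tight contact structure on a lens space is (Stein, hence exactly) symplectically fillable, so Remark~\ref{rem:c}(2) gives $c(\xi)\neq 0$ and Lemma~\ref{lemma fillable at} gives $\at_{\simp}(Y,\lambda,J)=\infty$ for all ECH data. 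No curve analysis is needed here.

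For part~\ref{case more pi}, the hypothesis $t_2-t_1>\pi$ puts us in the setting of Setup~\ref{setup:morethanpi}. I would fix an invariant toric contact form $\lambda$, take the $L$-nondegenerate perturbation $\lambdapert$ of \S\ref{ss:perturb}, and single out the distinguished torus fiber $T_{x_0}$ (whose Reeb slope bounds a disk) together with the positive hyperbolic orbit $\gamma_h\subset T_{x_0}$. The goal is to show $\partial\gamma_h=\emptyset$. First, Theorem~\ref{theorem:construction} produces a regular $\Jpert$-holomorphic plane $\upert$ positively asymptotic to $\gamma_h$ with no negative ends and $\ind(\upert)=I(\upert)=1$, $\Jplus(\upert)=0$; this shows $\emptyset$ appears in $\partial_0\gamma_h$. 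Next, Theorem~\ref{thm:no curve from right} confines every connected $\Jpert$-holomorphic curve with positive end exactly $\gamma_h$: it has no negative ends, Fredholm index $1$, lies in the fixed relative class $E\in H_2(Y,\gamma_h)$, and does not approach $\gamma_h$ from the right. Because the relative class is pinned to $E$, one computes $Q_{\tau_0}(u',\upert)=0$ for any such $u'$, whereupon Theorem~\ref{theorem:asymptotics} forces $u'$ to approach from the left (the right being excluded) and to coincide with $\upert$ up to $\R$-translation. The low ECH-index classification (Remark~\ref{rem:I}) then says that an ECH-index-one current with positive end $\gamma_h$ and empty negative end is exactly this embedded plane, so $\partial\gamma_h=\emptyset$ mod $2$. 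Consequently $c(\xi)=[\emptyset]=0$; and since $\Jplus(\upert)=0$ we in fact have $\partial_0\gamma_h=\emptyset$, so $\emptyset$ already dies on the $E^1$ page and $\at(Y,\lambda,J)=0$.

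For part~\ref{case equal pi}, $t_2-t_1=\pi$ still yields a tight, hence fillable, lens space by Theorem~\ref{proposition ot} and Honda, so Lemma~\ref{lemma fillable at} immediately gives $\at_{\simp}=\infty>0$; but the model-independent argument the paper intends runs as in part~\ref{case more pi} with the inequality reversed. Now the key condition of Setup~\ref{setup:morethanpi} fails, so the construction of a $\Jplus=0$ plane with negative end $\emptyset$ is unavailable, and the same positivity-of-intersection and asymptotic constraints show that no $\Jplus=0$ curve with empty negative end can kill $\emptyset$. Hence $\emptyset$ survives $\partial_0$, and $\at_{\simp}(Y,\lambda,J)\geq 1>0$.

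The main obstacle is part~\ref{case more pi}: proving that $\partial\gamma_h$ contains nothing beyond $\emptyset$. This is precisely where the three technical theorems carry the load, and the two delicate points are (i) excluding negative ends and curves ``from the right'' by positivity of intersection against the second singular fiber together with an action estimate (Theorem~\ref{thm:no curve from right}), and (ii) the asymptotic uniqueness from the left, which requires controlling the leading asymptotic expansions of the ends relative to the trivialization $\tau_0$ and pairing them against the relative self-intersection $Q_{\tau_0}$ (Theorem~\ref{theorem:asymptotics}). I would work with $\Z/2$ coefficients throughout, as permitted in \S\ref{s:ECHbackground}, so that orientation signs never enter the count.
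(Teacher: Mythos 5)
Your routes for parts~\ref{case less pi} and~\ref{case equal pi} are valid but genuinely different from the paper's. The paper proves~\ref{case less pi} by a direct dynamical computation: it chooses the moment curve to be a straight segment of irrational slope, so the resulting form $\lambda_E$ has exactly two closed Reeb orbits (the singular fibers, both elliptic), index parity (Remark~\ref{rem:I}) kills every ECH index one curve, and Proposition~\ref{prop:cobordism} transfers the conclusion to arbitrary ECH data. You instead outsource to Theorem~\ref{proposition ot} plus fillability of standard tight lens spaces, Remark~\ref{rem:c}(2), and Lemma~\ref{lemma fillable at}; this is sound (Lemma~\ref{lemma fillable at} already quantifies over all ECH data, so no transfer step is needed), and for~\ref{case equal pi} it even yields the stronger conclusion $\at_{\simp}=\infty$. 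What the paper's route buys is independence from classification results and a test of the curve-counting toolkit, which is its stated purpose; what yours buys is brevity at the cost of leaning on external theorems.

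In part~\ref{case more pi} there is a genuine gap. Everything you establish --- $\partial\gamma_h=\emptyset$, $\partial_0\gamma_h=\emptyset$ --- is for the \emph{specific} $L$-nondegenerate pair $(\lambdapert,\Jpert)$, but the theorem asserts $\at(Y,\lambda,J)=0$ for \emph{all} ECH data, and algebraic torsion is not known to be invariant under change of contact form (this is precisely why the paper introduces $\at_{\simp}$ and the weak naturality of Theorem~\ref{thm:hutchings_A9}). The missing steps are: (i) record that $(\gamma_h,1)$ is a \emph{simple} generator in the sense of Definition~\ref{def:simple_at} --- immediate from your uniqueness conclusion, since the only curve out of $\gamma_h$ is the plane $\upert$ with no negative ends --- so that $\at^L_{\simp}(Y,\lambdapert,\Jpert)=0$; (ii) apply Proposition~\ref{prop:cobordism} to get $0=\at^L_{\simp}(Y,\lambdapert,\Jpert)\geq \at^{e^{C}L}(Y,\lambda',J')$ for any nondegenerate data $(\lambda',J')$; and (iii) let $L\to\infty$, noting the perturbation exists for every $L$ with $C$ independent of $L$. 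Without this chain your conclusion simply does not reach arbitrary $(\lambda,J)$. (For $c(\xi)$ the transfer is free by Taubes invariance.)

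Finally, your sketch of the ``model-independent'' argument for~\ref{case equal pi} misdescribes the mechanism and would fail as stated. Setup~\ref{setup:construction} \emph{is} satisfiable when $t_2-t_1=\pi$; what fails is only condition~(\ref{item rotatepi}) of Setup~\ref{setup:morethanpi}, hence Theorem~\ref{thm:no curve from right}. Consequently planes positively asymptotic to $\gamma_h$ do exist --- the paper takes a symmetric moment curve and obtains \emph{two} planes $\upert^{\pm}$, one from each side of $T_{x_0}$, and the point is that these are the only contributions (via Theorem~\ref{theorem:asymptotics}, the decomposition $[u']=n[\upert^+]+(1-n)[\upert^-]$, and $Q_{\tau_0}=0$), so the coefficient of $\emptyset$ in $\partial_0\gamma_h$ is $2\equiv 0$ mod $2$. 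It is cancellation, not absence, that makes $\emptyset$ survive. The direct proof also needs the preliminary step, which you omit, that $\gamma_h$ is the \emph{only} candidate with $\emptyset$ in its $\partial_0$-image: Proposition~\ref{prop:J0} forces a genus-zero curve with one positive end, index parity forces that end to be positive hyperbolic, and the contractibility argument (using that the collapsed slope at the singular fiber matches the Reeb slope in $T_{x_0}$) singles out $\gamma_h$ among all orbits of action below $L$. Since your fillability argument independently covers~\ref{case equal pi}, this flaw does not sink the proposal, but the sketch should not be presented as the paper's intended argument.
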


Recall that if a contact manifold is exactly fillable then $c(\xi)\neq 0$ and $\at_{\simp} = \infty$, whereas any overtwisted contact manifold satisfies $c(\xi)=0$ and $\at = 0$. The results are what we expect in light of Theorems~\ref{theorem ot} and \ref{theorem tight}. The purpose of the remainder of the article is to develop the tools to detect these computations directly and independently, as a test case for more general examples.

\vskip2mm

Let $\lambda$ be an invariant contact form and $\mu_{\lambda}:Y\to\mathbb R^2$  the corresponding moment map defined in \S\ref{sec:contact_toric_manifolds}.

\begin{notation}\label{notation:Y_interval}

Let $P:[0,1]\rightarrow\R^2$ be a parametrization of the image $\mu_{
\lambda
}(Y)$. 
 We denote by $T_x:=\mu^{-1}(P(x))$ the toric fiber over $P(x)$ and 
$$Y_{[x_-,x_+]}=\{ y\in Y\mid \mu(y)=P(x) \textrm{ for some }x\in [x_-,x_+]\} $$
for an interval of toric fibers. We similarly define $Y_{(x_-,x_+)}$, $Y_{[x_-,x_+)}$, and $Y_{(x_-,x_+]},$ to include or exclude the endpoint fibers.
\end{notation}

Recall also that the moment map image determines an associated invariant contact form on the free toric part of $Y$. If $P(x)=\big(p_1(x),p_2(x)\big)$, then $\lambda = p_1(x)dq_1+p_2(x)dq_2$. 
Therefore, by choosing particular curve $P(x)$ we also have unique corresponding invariant contact form.
In each case of Theorem~\ref{theorem at}, we will take a curve  satisfying the following properties. 

\begin{setup}\label{setup well defined}
    Let $\lambda=p_1(x)dq_1+p_2(x)dq_2$ be an invariant contact form on a co-dimension zero submanifold (potentially with boundary) of a contact manifold $(Y^3,\xi)$, corresponding to the moment map image curve $P(x)=(p_1(x),p_2(x))$. In this set-up, we require the following properties of  $P$:
    \begin{enumerate}
        \item \label{item graphical} $P$ is everywhere positively transverse to the radial rays from the origin, namely $$\Big(\langle p_1(x),p_2(x) \rangle\ ,\  \langle p_1'(x),p_2'(x)\rangle \Big)\quad \text{is a positive basis for $\R^2$ for all $x\in [0,1]$}.$$
        Equivalently, $Q(x):=p_1(x)p_2'(x)-p_2(x)p_1'(x)>0$ for all $x\in (0,1)$.
        \item \label{item start} {There are rays $R_1$ and $R_2$ such that} $P(0)\in R_1$, and {either} $P(1)\in R_2$ or the fiber over $P(1)$ is the boundary of the co-dimension zero submanifold.
        \item \label{item singfibers} Near $0$ and $1$, $P$ is linear with irrational slope and $P'(0)=P'(1)=(0,0)$.
    \end{enumerate}
\end{setup}

Condition (\ref{item graphical}) ensures that $\ker(\lambda)$ is a contact structure. Condition (\ref{item start}) defines $R_1$ and $R_2$ from the moment image. Condition (\ref{item singfibers}) allows us to more easily understand the Reeb dynamics in the neighborhood of the singular fibers, since locally these neighborhoods will agree with neighborhoods of the singular fibers in an irrational ellipsoid.

We will first prove case~\ref{case less pi} of Theorem~\ref{theorem at}, as this is the simplest proof and we do not need to perturb the toric contact form.

\begin{proof}[Proof of Theorem~\ref{theorem at}\ref{case less pi}.]
When the angle $t_2-t_1$ traversed by the moment image is strictly less than $\pi$, we can choose the curve defining the toric contact form to be a linear path with irrational slope connecting points in the two rays. We call the induced contact form $\lambda_E$.

\begin{figure}
    \centering
    \includegraphics[scale=.5]{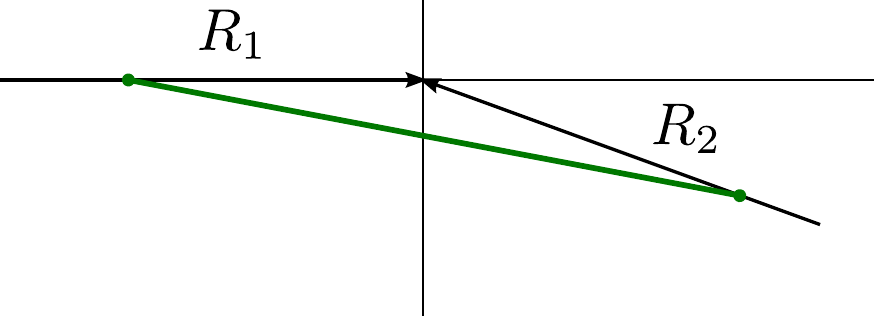}
    \caption{Curve defining the toric contact form $\lambda_E.$}
    \label{fig:ellipsoidal}
\end{figure}

As explained in \textsection\ref{subsec contact toric},  the Reeb direction in the regular torus orbit  that projects to a certain point $P(x)$ is perpendicular to the tangent vector of $P$ at  that point.
Since $P(x)$ is defined by an irrational slope, the only closed Reeb orbits for the Reeb vector field determined by $\lambda_E$ are the two singular toric fibers. Additionally, these Reeb orbits are elliptic, since they are surrounded by invariant tori.

    By the index parity property of ECH index in Remark~\ref{rem:I}, we know that any ECH index $1$ curve must contain an odd number of asymptotic ends at positive hyperbolic orbits. However, by the choice of our contact form, there are no hyperbolic orbits. Therefore, $c(\xi) \neq 0$ and $\at(Y,\lambda_E,J_E) = \infty$.

    For any ECH data $(\lambda, J)$ of $(Y, \xi)$, by Proposition~\ref{prop:cobordism}, there exists $C$ such that
    $$\at_{\simp}^{e^{-C}L}(Y,\lambda, J)\geq \at^L(Y,\lambda_E,J_E),$$
    which implies that $\at_{\simp}(Y,\lambda, J)=\infty$ since $L$ is arbitrary.
\end{proof}

Now we proceed towards cases~\ref{case more pi} and~\ref{case equal pi}. In these cases, we will need actual counts of pseudoholomorphic curves to compute the differential. To provide an explicit construction of a pseudoholomorphic plane asymptotic to a particular Reeb orbit, we will need to impose additional criteria on our toric contact form.

\begin{setup}\label{setup:construction}
    Let $\lambda=p_1(x)dq_1+p_2(x)dq_2$ be an invariant contact form with moment image curve $P(x)=(p_1(x),p_2(x))$ satisfying the conditions of Setup~\ref{setup well defined}. In this set-up, we require the following additional property of $P$:
    \begin{enumerate}
    \setcounter{enumi}{3}
        \item \label{item normalize R-} $R_1 = \{(-c,0)\mid c>0\}$.
        \item \label{item convexity} There exists $x_0\in (0,1)$ such that
\begin{itemize}
    \item {the oriented normal vector to the curve $P$ at $x_0$ is} $\nu_{x_0}=(0,-1)$ (i.e. $p_1'(x_0)>0$, $p_2'(x_0)=0$, and thus by (\ref{eqn:Qsign}) $p_2(x_0)<0$),
    \item $p_2''(x_0)>0$, and
    \item for all $x\in (0,x_0)$, we have $p_2(x)<0$, $p_2'(x)<0$, and $p_1'(x)>0$.
\end{itemize}
    \end{enumerate}
    See Figure~\ref{fig:contact_form_construct_disc}.
\end{setup}

\begin{figure}[h]
     \centering
     \includegraphics[width=0.4\linewidth]{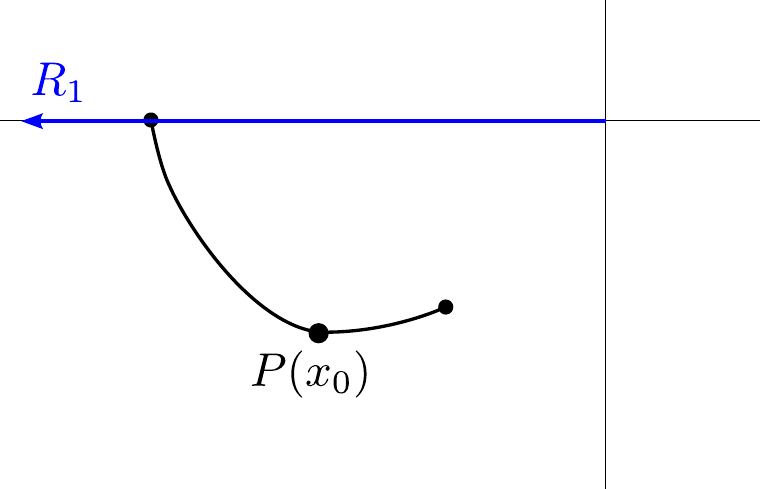}
     \caption{The moment image for a contact form satisfying Setup~\ref{setup:construction}.}
     \label{fig:contact_form_construct_disc}
 \end{figure}

Note that condition~(\ref{item normalize R-}) can always be achieved by an $SL(2,\Z)$ transformation, but it is convenient to perform this normalization for concreteness. In this case, the slope which is collapsed in this toric fiber is $(0,\pm 1)$. Note that this coincides with the slope of the closed Reeb orbits in $T_{x_0}$ which is $(0,-1)$.

Now we can prove part~\ref{case equal pi} of Theorem~\ref{theorem at}.

\begin{proof}[Proof of Theorem~\ref{theorem at}~\ref{case equal pi}]
Let $(Y,\lambda)$ be a contact toric manifold where the angle between the rays is $\pi$. We will require the contact form $\lambda$ to satisfy Setup~\ref{setup:construction}, with 
$$
R_1=\{(-c,0)\mid c>0\},\quad R_2=\{(c,0)\mid c>0\},\quad x_0=\frac{1}{2}
$$ and $P(x)$ symmetric across the $p_2$-axis, namely $(p_1(x),p_2(x)) = (-p_1(1-x),p_2(1-x))$ for $0\leq x\leq \frac{1}{2}$. See Figure~\ref{fig:S1S2graph}.

\begin{figure}
\centering
\includegraphics[scale=1]{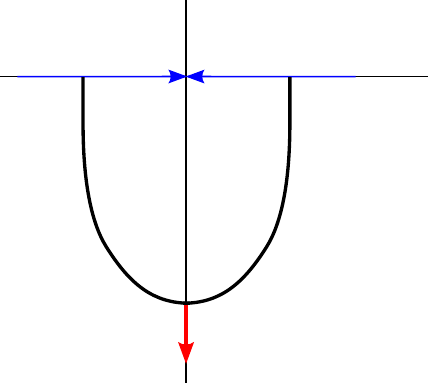}
\caption{Curve $P(x)$ defining the toric contact form $\lambda$ used to compute algebraic torsion in ECH when the angle between the rays is $\pi$.}
\label{fig:S1S2graph}
\end{figure}

{Observe that the Reeb vector field is periodic of slope $(0,-1)$ on the torus fiber $T_{x_0}$ and that these orbits have action equal to 1.} Fix an action bound $L>1$. Let $\lambdapert$ be the perturbed $L$-nondegenerate contact form obtained from the construction of \S\ref{ss:perturb}. Let $\Jpert$ be a compatible almost complex structure on $\R\times Y$. 

Let $\gamma_h$ denote the hyperbolic Reeb orbit in $T_{x_0}$ corresponding to the minimum of the Morse function used in the perturbation. Note that $\gamma_h$ has slope $(0,-1)$. Because of condition (\ref{item normalize R-}) of Setup~\ref{setup:construction}, $\gamma_h$ is contractible. Our strategy is as follows: We first show that $\gamma_h$ is the only generator that could have $\emptyset$ in its $0^{th}$ level differential, then we show that  $ \partial_0\gamma_h\neq \emptyset$. This guarantees that $\emptyset$ is not killed in the first page of the spectral sequence, and hence the algebraic torsion is positive.

\vspace{4pt}
\noindent\underline{1. $\partial_0\alpha = \emptyset\Rightarrow \alpha = \gamma_h$:} Suppose there were an ECH generator $\alpha$ such that $\partial_0\alpha = \emptyset$, we will show that $\alpha=\gamma_h$.  A $\Jpert$-holomorphic curve contributing to $\partial_0$ with $\emptyset$ in the image must have no negative ends. As a consequence, it follows from Proposition~\ref{prop:J0} that such a curve could have $\Jplus=0$ only if it has a single positive end and genus zero.
By the index parity statement of Remark~\ref{rem:I}, the positive end must be a positive hyperbolic orbit. Since the multiplicity of a hyperbolic orbit must be $1$ in an ECH generator, this implies $\alpha = \{(\gamma,1)\}$.

The existence of a $\Jpert$-holomorphic plane positively asymptotic to $\gamma$ with multiplicity $1$ implies that $\gamma$ is null-homotopic (in particular, null-homologous). In $Y$, any Reeb orbit of $\lambdapert$ with action $<L$ is either one of the two singular toric fibers or lies in a regular torus fiber with slope $(p,q)$ with $q<0$. For both of the singular fibers, and any Reeb orbit of slope $(p,q)$ with $p\neq 0$, such a Reeb orbit represents a non-zero class in $H_1(Y)$. Thus, the only possibility for $\gamma$ is $\gamma=\gamma_h$ in $T_{x_0}$.

\vspace{4pt}
\noindent\underline{2. $\partial_0\gamma_h\neq \emptyset$:} 
We start by showing that $\gamma_h$ bounds two pseudoholomorphic planes, and then show that no other curves contribute to the coefficient of $\emptyset$ in $\partial_0 \gamma_h$.

Theorem~\ref{theorem:construction} 
 below states that there exist a $\Jpert$-holomorphic plane $\upert^-$ in $Y_{[0,x_0]}$ positively asymptotic to $\gamma_h$, with ECH and Fredholm indices equal to 1, and $\Jplus$-index equal to zero. Since $Y_{[x_0,1]}$ is strictly contactomorphic to $Y_{[0,x_0]}$ by the symmetry, we conclude that there is another $\Jpert$-holomorphic plane $\upert^+$ in $Y_{[x_0,1]}$, with the same properties. So,
 \[
 \upert^\pm \text{ are positively asymptotic to }\gamma_h \text{ with }\ind(\upert^\pm) = I(\upert^\pm) = 1\text{ and }\Jplus(\upert^\pm)=0.
 \]
To show that $\partial_0\gamma_h\neq \emptyset$ it suffices to show that there are no other index $1$ $\Jpert$-holomorphic curves with no negative ends, whose positive end is exactly $\gamma_h$. Suppose for the sake of contradiction $u'$ is such a curve.
Theorem~\ref{theorem:asymptotics} below states that  any index $1$ $\Jpert$-holomorphic curves whose positive asymptotic end is exactly $\gamma_h$ is sided, that is, it approaches $\gamma_h$ from either $Y_{[0,x_0]}$ or from $Y_{[x_0,1]}$ in its asymptotic end. Moreover, if two curves $u_1$ and $u_2$ approach $\gamma_h$ from the same side, have no other asymptotic ends (in particular no negative ends), and have $Q_{\tau_0}(u_1,u_2) = 0$, then $u_1=u_2$ up to $\R$-translation. Therefore, to know that $u'$ coincides with one of $\upert^\pm$ up to translations, it is enough to prove that 
\begin{equation}\label{eq:Q_tau_u'}
    Q_{\tau_0}(u',\upert^{\pm}) = 0.
\end{equation}
Observe that $[\upert^+]-[\upert^-]$ generates the kernel of $H_2(Y,\gamma_h)\to H_1(\gamma_h)$ in the long exact sequence of the pair. Therefore $[u']=n[\upert^+]+(1-n)[\upert^-]$ so we can calculate 
$$Q_{\tau_0}(u',\upert^{\pm}) = nQ_{\tau_0}(\upert^+,\upert^{\pm})+(1-n)Q_{\tau_0}(\upert^-,\upert^{\pm}).$$
Lemma~\ref{lem:c_tau_shira_plane} below states that $Q_{\tau_0}(\upert^\pm)=0$. Hence, the relative intersection numbers $Q_{\tau_0}([\upert^\pm],[\upert^\pm]) = Q_{\tau_0}([\upert^+],[\upert^-])$ also vanish, and \eqref{eq:Q_tau_u'} holds.

We conclude that $\upert^\pm$ are the only curves counted for $\emptyset$ in $\partial_0\gamma_h$ and thus over $\Z/2\Z$ coefficients, $\partial_0(\gamma_h) \neq \emptyset$ with respect to the ECH data $(\lambdapert,\Jpert)$.

Since $(\gamma_h,1)$ was the only candidate to have image under $\partial_0$ be $\emptyset$, we conclude that $$\at^L(Y,\lambdapert,\Jpert)>0.$$
If $(\lambda',J')$ is any nondegenerate ECH data for the same contact manifold, by Proposition~\ref{prop:cobordism}, there exists $C$ such that
$$\at^{e^{-C}L}_{\simp}(Y,\lambda', J') \geq \at^L(Y,\lambdapert,\Jpert)>0.$$
Since we can choose perturbations for arbitrarily large values of $L$ for which $\at^L(Y,\lambdapert,\Jpert)>0$, (and $C$ does not depend on $L$), we conclude 
$\at^L_{\simp}(Y,\lambda',J')>0$ {for any $L$ and hence $\at_{\simp}(Y,\lambda',J')>0$}. 

\end{proof}

Finally, we consider the case when the angle between the rays is strictly greater than $\pi$.

\begin{proof}[Proof of Theorem~\ref{theorem at}~\ref{case more pi}]
When the angle between the rays is greater than $\pi$, we choose our invariant contact form in a way that heavily utilizes this larger angle.

\begin{setup}\label{setup:morethanpi}
Let $\lambda=p_1(x)dq_1+p_2(x)dq_2$ be an invariant contact form with the moment map image curve $P(x)=(p_1(x),p_2(x))$ satisfying the conditions of Setup~\ref{setup:construction}. In this set-up, we require the following additional property of $P$:
\begin{enumerate} 
\setcounter{enumi}{5}
\item \label{item rotatepi} There exists a point $x_1$ where the oriented vector normal to the curve $P$ at $x_1$ is  $\nu_{x_1} = (0,1)$.
\end{enumerate}
\end{setup}

Because $P(x)$ must be transverse to the radial rays, its tangent vector can get close to the slope of the ray, but not quite realize it. This is enough to ensure condition (\ref{item rotatepi})  holds because we can choose $P(x)$ so that its tangent vector rotates past $(-1,0)$, but does not quite reach the slope of $R_2$. See Figure~\ref{fig:OTgraph}.

\begin{figure}
\centering
\includegraphics[scale=1]{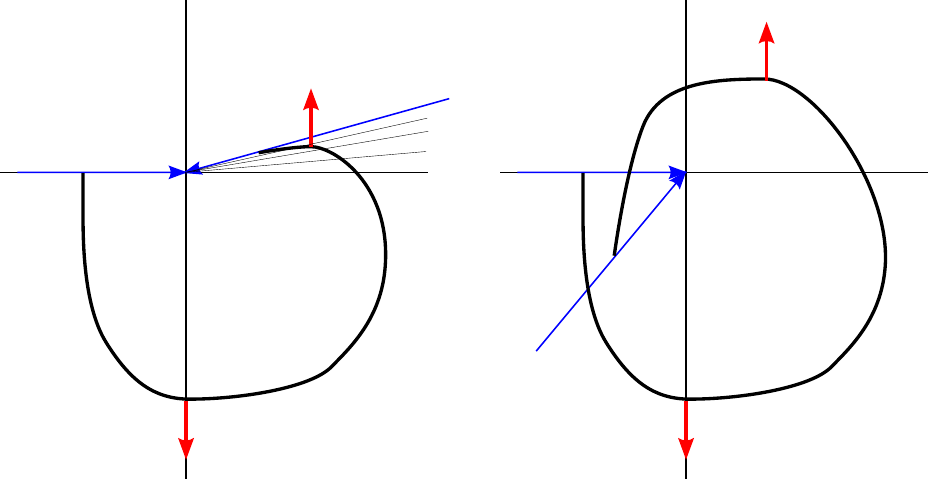}
\caption{Curve $P(x)$ defining the toric contact form $\lambda$ used to compute algebraic torsion in ECH when the angle between the rays is $>\pi$. The curve must have points where the normal vector is $(0,-1)$ and $(0,1)$, and the radial rays from the origin must be everywhere transverse. Observe (left) that when the angle is close to $\pi$, we have just enough space to satisfy both conditions. On the right is a case where the angle is greater than $2\pi$.}
\label{fig:OTgraph}
\end{figure}

Next, we fix $L>1$ and perturb the contact form $\lambda$ as in \S\ref{subsec:nondeg_contact_form} to obtain an $L$-nondegenerate contact form $\lambdapert$. Note that $T_{x_0}$ is a center of the perturbation (since it contains closed Reeb orbits of small action). In the construction of $\lambdapert$, we chose a parameter $\delta$ which determines the support of the perturbation. Note that we can always choose a smaller $\delta$ while maintaining the desired properties of the perturbation, stated in  Lemma~\ref{lem:morsebott_perturbation} below. 
\begin{setup}\label{setup:deltaperturbation}
We will shrink $\delta$ such that 
\begin{enumerate}
    \item for $x_-:=x_0+\delta$, $P'(x_-)$ is a rational slope, and 
    \item there exists an $x_+\in (x_-,1)$ such that $P'(x_+)$ is negatively proportionate to $P'(x_-)$.
    \item $T_{x_-}$ and $T_{x_+}$ are unperturbed tori foliated by closed Reeb orbits which have action greater than $L$.
\end{enumerate}
\end{setup}
\noindent Note that the Reeb vector field rotates by $\pi$ from $T_{x_-}$ to $T_{x_+}$.
Let $\gamma_h$ be the positive hyperbolic Reeb orbit in $T_{x_0}$. Let $\Jpert$ be an generic almost complex structure on $\R\times Y$ compatible with $\lambdapert$. 

Theorem~\ref{theorem:construction} states that there exists a $\Jpert$-holomorphic plane $\upert$ in $Y_{[0,x_0+\delta]}$ positively asymptotic to $\gamma_h$ with multiplicity $1$ such that $\ind(\upert)=I(\upert)=1$ and $\Jplus(\upert)=0$. Moreover Lemma~\ref{lem:c_tau_shira_plane} states that its self intersection number vanishes: $Q_{\tau_0}(\upert)=0$.

We would like to show that there is no other $\Jpert$ holomorphic curve $u'$ positively asymptotic to $\gamma_h$.

Theorem~\ref{thm:no curve from right} shows that any $\Jpert$-holomorphic curve $u'$ whose positive asymptotics are exactly one end at $\gamma_h$ has index $1$, has no negative ends, lies in the relative homology class $[u']=[\upert]\in H_2(Y,\gamma_h)$, and does not approach $\gamma_h$ from the right. 

Theorem~\ref{theorem:asymptotics} states that all Fredholm index one $\Jpert$-holomorphic curves with a positive asymptotic end on $\gamma_h$ approaches either from the left or from the right. 
In particular, $\upert$ and $u'$ both approach from the left. 

Next, we would like to show that $u'=\upert$ up to $\R$-translation. Theorem~\ref{theorem:asymptotics} states that if $u_1$ and $u_2$ are two Fredholm index one $\Jpert$-holomorphic curves approaching $\gamma_h$ from the same side and have $Q_\tau(u_1,u_2)=0$, where $u_1$ has no negative ends, then $u_1=u_2$ up to $\R$-translation.  

To conclude that $u'$ coincides with $\upert$ up to translation, it remains to notice that 
$$
Q_{\tau_0}(\upert,u') = Q_{\tau_0}(\upert,\upert) = Q_{\tau_0}(\upert)=0.
$$
Thus, up to $\R$-translation, $\upert$ is the unique $\Jpert$-holomorphic curve (of any index) whose positive asymptotic ends are precisely $\gamma_h$. Since $\upert$ has ECH index $1$ and $\Jplus(\upert)=0$, it contributes to $\partial_0$. From this we can conclude the following things:
\begin{enumerate}
\item The Reeb current $(\gamma_h,1)$ is a simple generator (Definition~\ref{def:simple_at}). 
\item In the ECH complex, $\partial \gamma_h = \emptyset$.
\item Using the $\Jplus$ splitting of the differential, $\partial_0(\gamma_h) = \emptyset$.
\end{enumerate}
It follows that $c(\xi) = 0$, $\at^L(Y,\lambdapert, \Jpert)=0$, and $\at^L_{\simp}(Y,\lambdapert,\Jpert)=0$.

If $(\lambda',J')$ is any nondegenerate ECH data for the same contact manifold, by Proposition~\ref{prop:cobordism}, there exists $C\geq0$ such that
$$0=\at_{\simp}^L(Y,\lambdapert, \Jpert) \geq \at^{e^{C}L}(Y,\lambda', J').$$
Since we can choose perturbations for which these calculations hold for arbitrarily large values of $L$, we conclude that $\at(Y,\lambda',J')=0$.
\end{proof}

\section{Perturbations of toric contact forms and properties of their Reeb orbits}\label{subsec:nondeg_contact_form}

In this section we will show how to perturb a toric invariant contact form $\lambda$ to make its Reeb orbits, up to a particular action bound, nondegenerate. We will give an explicit formula for the perturbed contact forms, describe the Reeb orbits of low action, and compute their Conley-Zehnder indices with respect to a natural trivialization coming from the contact toric structure.

\subsection{Morse-Bott toric contact form} \label{subsec contact toric}

We can calculate the Reeb vector field for $\lambda$ explicitly in toric coordinates.
As the toric action on the set of regular torus fibers is given by the rotation of every fiber, i.e. 
$X_k=\partial_{q_k}$, $k=1,2$, it follows that the corresponding invariant contact form
 $ \lambda$ is given by
\begin{equation}\label{eq:lambda}
\lambda = p_1(x) dq_1 + p_2(x) dq_2, \quad \mbox{with} \quad (q_1, q_2) \in T^2, 
\end{equation}
where $P(x)=(p_1(x),p_2(x)):[0,1]\to \R^2\setminus \{(0,0)\}$ is a curve winding counterclockwise about the origin. We assume the properties of Setup~\ref{setup well defined}.
Since
$$ d\lambda = p_1'(x) dx\wedge dq_1 +p_2'(x) dx\wedge dq_2,
$$
 the corresponding Reeb vector field is given by
\begin{equation}\label{eq:reeb}
R(x,q_1,q_2)=\frac{1}{Q(x)}\big( p_2'(x) \partial_{q_1} - p_1'(x) \partial_{q_2}\big)
,\quad \text{where}\quad Q(x):=p_1(x)p_2'(x) - p_2(x) p_1'(x).
\end{equation}
The transversality of the curve $P$ to radial rays ensures that $R$ is  well defined ($Q(x)\neq 0$). In fact, because $P$ is oriented counterclockwise,
\begin{equation}\label{eqn:Qsign}
    Q(x)=p_1(x) p_2'(x) - p_2(x) p_1'(x)>0
\end{equation}
Note that the standard inner product $\langle R,\dot{P} \rangle$ vanishes, where $\dot{P}(x) =p_1'(x)\partial_{q_1}
+p_2'(x)\partial_{q_2}$ is the tangent vector field of $P$.  Therefore,
the Reeb direction in the regular torus fiber over a certain point $P(x)$ is perpendicular to the tangent vector of $P$ at  that point. If the slope of $R$ is rational at a point $x \in (0,1)$, then the corresponding regular torus fiber is foliated by closed Reeb orbits of $\lambda$. In particular, these Reeb orbits are degenerate. 

In the ECH set-up, we need to work with an $L$-nondegenerate contact form, meaning that all Reeb orbits whose action is below a chosen bound $L$ are nondegenerate. In \S\ref{ss:perturb} we show how to perturb the toric contact form $\lambda$ to make it $L$-nondegenerate.

\subsection{Nondegenerate perturbation up to large action}\label{ss:perturb}
Fix an action bound $L>0$. Our goal is to modify $\lambda$ to an $L$-nondegenerate contact form, which will be denoted by $\lambdapert$.  There are finitely many torus fibers $\{T_{x_1},\dots, T_{x_N}\}$ with $x_1,\dots, x_N\in (0,1)$ containing all Reeb orbits of action less than $L$.
Note that the Reeb orbits in the singular fibers are already nondegenerate since we assume the slope of $P$ to be irrational there.

Let $(a_i,b_i)\in \Z^2$ be the primitive integral vector positively proportional to the Reeb vector field in $T_{x_i}$.  To construct the perturbation, we first need functions $f_i: T_{x_i} \to \R$ whose level sets are the curves of slope $(a_i,b_i)$ and have Morse-Bott families of critical points along exactly two such curves. Such a function is given by
\begin{equation}\label{cos}
    f_i(q_1,q_2):= \cos(2\pi(-b_iq_1+a_iq_2)).
\end{equation}

Next we choose a small $\delta>0$ such that $\delta$ neighborhoods of distinct $x_i$ are disjoint, and bump functions $\beta_i^\delta:[0,1]\to [0,1]$ such that
\[
\operatorname{supp}(\beta_i^\delta)\subset (x_i-\delta, x_i+\delta),\quad\text{ and }\quad \beta_i^\delta(x)=1-(x_i-x)^2 \text{ for $x$ near $x_i$.}
\]
Note that in this case  $\beta_i^\delta(x_i)=1$ and $(\beta_i^\delta)'(x_i)=0$. Let
$$F_L(x,q_1,q_2):= \sum_{i=1}^N \beta_i^{\delta}(x)f_i(q_1,q_2).$$
{Observe that the circle in $T_{x_i}$ corresponding to the  minimum of $f_i$ is a circle of saddle points of $F_L$ (here we refer to a saddle point of $F_L$ restricted to a transversal to $R$). The circle corresponding to the  maximum of $f_i$ is a circle of maxima of $F_L$.}
Using $F_L$ we define a perturbed contact form
$$\lambda_{\varepsilon,L} := e^{\varepsilon F_L}\lambda.$$

We will call $T_{x_1},\dots, T_{x_N}$ the \emph{centers of the perturbation}, and tori $T_x$ for $x$ outside the support of all $\beta_i$ \emph{unperturbed tori}.

\begin{remark}
    The dependence of $\lambdapert$ on $L$ comes from the set of points $\{x_1,\dots, x_N\}$. As $L$ becomes larger, this set gets larger. To ensure that $\delta$-neighborhoods of different points $x_i$ from this set are disjoint, $\delta$ will need to get smaller as $L$ becomes  large. However any $\delta$ that is sufficiently small for a large $L$ will also work perfectly well for a smaller $L$. For any fixed $L$, we can easily shrink $\delta$ to be smaller, while maintaining all the required properties.
    Observe that as $\varepsilon\to 0$, $\lambdapert\to \lambda$. In the definition of $\lambdapert$, the choice of $\varepsilon>0$ is independent from $L$ and $\delta$. However, to ensure that we do not introduce new closed Reeb orbits of action less than $L$, we must choose $\varepsilon$ very small. Thus, to achieve the properties of $\lambdapert$ that we want, the choice of sufficiently small $\varepsilon$ \emph{will} depend on $L$.
\end{remark}

In order to understand the dynamics of the Reeb vector field for the perturbed contact form, we calculate it, using the explicit formula for the perturbation.

\begin{lemma}[c.f. Lemma 2.3 of~\cite{Bourgeois}] \label{lemma:perturbedReeb}
The Reeb vector field $R_\varepsilon$ for $\lambdapert$ is given by
$$R_\varepsilon = e^{-\varepsilon F_L}(R-\varepsilon X_{F_L})$$
where $R$ is the Reeb vector field for $\lambda$ and $X_{F_L}\in \xi$ is the unique vector field determined by the condition
$$d\lambda(Z,X_{F_L}) = dF_L(Z) \qquad \textrm{for all }Z\in \xi. $$
\end{lemma}

\begin{remark} \label{remark perturbed reeb}
The equality $d\lambda(-,X_{F_L})=dF_L(-)$ holds on $\xi$. It extends to $T_pY$ only for $p\in T_{x_i}$, since there $dF_L(R)=0$, while $d\lambda (R,X_{F_L})=0$ holds everywhere.
\end{remark}

\begin{proof}[Proof of Lemma~\ref{lemma:perturbedReeb}]
We need to check that $d\lambdapert(R_\varepsilon,\cdot) =0$ and $\lambdapert(R_\varepsilon) =1$.
Indeed,
$$\lambdapert(R_\varepsilon) = \lambda(R) -\varepsilon \lambda(X_{F_L}) = \lambda(R) = 1\quad\text{since}\quad X_{F_L}\in \ker(\lambda).
$$
To check $R_\varepsilon$ is in the kernel of $d\lambdapert$, we compute:
$$d\lambdapert = e^{\varepsilon F_L}(d\lambda+\varepsilon dF_L\wedge \lambda) $$
so
\begin{align*}
    d\lambdapert (R_\varepsilon,\cdot):=&\ 
    d\lambdapert\big(e^{-\varepsilon F_L}(R-\varepsilon X_{F_L}), \cdot\big) \\
    =& -\varepsilon d\lambda(X_{F_L},\cdot) +\varepsilon dF_L(R)\lambda(\cdot)-\varepsilon^2 dF_L(X_{F_L})\lambda(\cdot)-\varepsilon dF_L(\cdot)+\varepsilon^2 \lambda(X_{F_L})dF_L(\cdot)\\
    =& -\varepsilon d\lambda(X_{F_L},\cdot) +\varepsilon dF_L(R)\lambda(\cdot)-\varepsilon^2 dF_L(X_{F_L})\lambda(\cdot)-\varepsilon dF_L(\cdot) 
\end{align*}
since $X_{F_L}\in \xi=\ker(\lambda)$. To check this is identically zero, we will show that it vanishes on $\xi$ and it vanishes on $R$. Since $TY = \xi\oplus \langle R \rangle$, this will prove the claim.

For vectors $Z\in \xi=\ker(\lambda)$, we have
\begin{align*}
   d\lambdapert (R_\varepsilon,Z):=&\ 
    -\varepsilon d\lambda(X_{F_L},Z) +\varepsilon dF_L(R)\lambda(Z)-\varepsilon^2 dF_L(X_{F_L})\lambda(Z)-\varepsilon dF_L(Z) \\
 =&\ \varepsilon dF_L(Z)+ 0-0-\varepsilon dF_L(Z) = 0. 
\end{align*}
On $R$, the Reeb vector field for $\lambda$, we have
\begin{align*}
    d\lambdapert (R_\varepsilon,R):=&\ 
    -\varepsilon d\lambda(X_{F_L},R) +\varepsilon dF_L(R)\lambda(R)-\varepsilon^2 dF_L(X_{F_L})\lambda(R)-\varepsilon dF_L(R)\\
    =&\ \varepsilon( 0 +dF_L(R)-\varepsilon dF_L(X_{F_L})-dF_L(R)) = -\varepsilon^2 dF_L(X_{F_L}) = \varepsilon^2 d\lambda(X_{F_L},X_{F_L}) = 0. \qedhere
\end{align*}
\end{proof}

Next we show that the Reeb vector field $R_\varepsilon$ of the perturbed contact form $\lambdapert$ satisfies the non-degeneracy properties we were aiming for and discuss its closed Reeb orbits below the action bound.

\begin{lemma}
\label{lem:morsebott_perturbation}
    For sufficiently small $0<\varepsilon<1$, the Reeb vector field of the contact form
    $$\lambdapert := e^{\varepsilon F_L}\lambda$$ satisfies the following properties:
    \begin{enumerate}
        \item It has exactly two closed orbits  of slope $(a_i,b_i)$ in $T_{x_i}$ for each $i=1,\dots, N$: $\gamma_e^i$ (corresponding to the maximum of $\eqref{cos}$) and $\gamma_h^i$ (corresponding to the minimum of $\eqref{cos}$). Moreover,  $\mathcal{A}(\gamma_e^i)>\mathcal{A}(\gamma_h^i)$.
        \item For each $i=1,\dots, N$, $\gamma_e^i$ is elliptic and $\gamma_h^i$ is positive hyperbolic. 
        \item These $2N$ orbits and the two singular fibers are the only closed Reeb orbits of action $<L$.
    \end{enumerate}
\end{lemma}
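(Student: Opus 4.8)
The plan is to split the phase space into the region where the perturbation is turned off and the $\delta$-neighborhoods of the centers $T_{x_i}$, and to run a Morse--Bott perturbation analysis in the latter. Outside $\bigcup_i \operatorname{supp}(\beta_i^\delta)$ we have $F_L\equiv0$, hence $\lambdapert=\lambda$ and $R_\varepsilon=R$, so the dynamics there is unchanged; since $R$ is tangent to the tori $T_x$, any closed orbit lying \emph{entirely} in this region sits in a single $T_x$, and by the defining property of $\{x_1,\dots,x_N\}$ every torus carrying a closed orbit of action $<L$ is one of the $T_{x_i}$ (inside a support). The two singular fibers are treated separately: near $x=0,1$ the curve $P$ has irrational slope, so locally we are in the model of an irrational ellipsoid, the singular fibers are the only closed orbits there and they are elliptic, and since $\operatorname{supp}(\beta_i^\delta)\subset(0,1)$ the perturbation never touches them. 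This accounts for the singular fibers in (3) and reduces everything to a local statement near each $T_{x_i}$.

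Next I would analyze $R_\varepsilon$ near a fixed $T_{x_i}$ via Lemma~\ref{lemma:perturbedReeb}. Writing $R_\varepsilon=e^{-\varepsilon F_L}(R-\varepsilon X_{F_L})$ and using that $R$ is tangent to the tori, the transverse drift is $dx(R_\varepsilon)=-\varepsilon e^{-\varepsilon F_L}\,dx(X_{F_L})$. Decomposing $X_{F_L}\in\xi$ in the basis $\{\partial_x,\;v\}$ with $v:=p_2\partial_{q_1}-p_1\partial_{q_2}$, and using $d\lambda(v,\partial_x)=Q(x)>0$ together with \eqref{cos}, a direct computation gives $dx(X_{F_L})\propto \beta_i^\delta(x)\sin\!\big(2\pi(-b_iq_1+a_iq_2)\big)$ and the $v$-component of $X_{F_L}$ proportional to $(\beta_i^\delta)'(x)f_i$. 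On $T_{x_i}$ we have $\beta_i^\delta(x_i)=1$ and $(\beta_i^\delta)'(x_i)=0$, so $X_{F_L}$ vanishes on $T_{x_i}$ \emph{exactly} along the two circles where $f_i$ is critical, i.e. where $\cos=\pm1$. Along these circles $R_\varepsilon=e^{\mp\varepsilon}R$ has slope $(a_i,b_i)$, so they are genuine closed Reeb orbits $\gamma_e^i$ (maximum, $F_L=+1$) and $\gamma_h^i$ (minimum, $F_L=-1$). As $F_L$ is constant on each, $\mathcal{A}(\gamma_e^i)=e^{\varepsilon}\mathcal{A}_0>e^{-\varepsilon}\mathcal{A}_0=\mathcal{A}(\gamma_h^i)$, where $\mathcal{A}_0$ is the common $\lambda$-period, giving (1).

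To finish (3) I would rule out, for small $\varepsilon$, any further closed orbit of action $<L$ meeting a support neighborhood. This is a compactness argument: a sequence of extraneous orbits as $\varepsilon\to0$ would $C^0$-converge (Arzel\`a--Ascoli) to a closed $\lambda$-orbit of action $\le L$ inside some $T_{x_i}$; localizing the perturbed return map to a $\xi$-transversal, its fixed points are, to leading order, the critical points of the Morse function $\cos(2\pi s)$ on the circle of orbits, of which there are exactly two, and these are nondegenerate. For (2) I would compute the linearized return map on $\xi$ along $\gamma_e^i,\gamma_h^i$. The unperturbed map is the shear $M_0=\left(\begin{smallmatrix}1&0\\\kappa&1\end{smallmatrix}\right)$ in the basis $(\partial_x,v)$, with $\kappa\neq0$ because the Reeb slope $(p_2'(x),-p_1'(x))$ genuinely rotates transversally to $T_{x_i}$ (Morse--Bott nondegeneracy). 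The $O(\varepsilon)$ correction from $-\varepsilon X_{F_L}$ shifts the trace to $2\mp c\varepsilon+O(\varepsilon^2)$ with $c>0$, the sign being opposite at the maximum and minimum of $f_i$ (it is governed by $\operatorname{Hess}f_i$ and the sign of $\kappa$, fixed by the counterclockwise orientation of $P$, cf.\ \eqref{eqn:Qsign}). Hence one orbit has $|\!\operatorname{tr}|<2$ and is elliptic while the other has $\operatorname{tr}>2$ and is hyperbolic; since $M_\varepsilon$ is $C^0$-close to $M_0$, whose eigenvalue is $+1$, the hyperbolic eigenvalues are positive, so $\gamma_h^i$ is positive hyperbolic and $\gamma_e^i$ is elliptic.

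The hardest and most delicate step is the trace computation in (2): one must track signs carefully — both to obtain the correct correlation (maximum $\leftrightarrow$ elliptic, minimum $\leftrightarrow$ positive hyperbolic) and to verify positivity of the hyperbolic orbit — and must control the interaction between the unperturbed parabolic shear $M_0$ and the Hamiltonian perturbation; a naive ``Hessian of $F_L$'' computation that ignores $M_0$ is insufficient, since it treats the two critical points symmetrically. Equally, the uniformity in $\varepsilon$ needed for the compactness argument in (3), ensuring that no orbit escapes along the $x$-direction within a support before closing up, requires care, and is precisely where the smallness of $\varepsilon$ relative to $L$ is used.
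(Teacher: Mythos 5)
Your overall architecture is sound and, for parts (1) and (3), runs parallel to the paper: the paper also localizes to the supports of the $\beta_i^\delta$, uses Lemma~\ref{lemma:perturbedReeb} to see that on $T_{x_i}$ the field $X_{F_L}$ is proportional to $\sin\bigl(2\pi(-b_iq_1+a_iq_2)\bigr)\partial_x$, hence vanishes exactly on the two critical circles and is transverse to $T_{x_i}$ elsewhere; for (3) it invokes Bourgeois's Lemma~2.3 (closed orbits away from the critical circles drift so slowly in the $X_{F_L}$-direction that their period exceeds $L$) where you run an Arzel\`a--Ascoli limit plus a nondegenerate-fixed-point count of the return map --- both are standard and your version is fine, including your explicit verification of $\mathcal{A}(\gamma_e^i)=e^{\varepsilon}\mathcal{A}_0>e^{-\varepsilon}\mathcal{A}_0=\mathcal{A}(\gamma_h^i)$, which the paper states but does not spell out. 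For part (2) your route genuinely differs: the paper argues qualitatively from $X_{F_L}=J\nabla F_L$ and the Morse picture of $F_L$, claiming the transverse behavior of $R_\varepsilon$ is ``determined by $X_{F_L}$'' and that $\gamma_h^i$ is a transverse saddle of $F_L$. In fact, since $\beta_i''=-2$ and $f_i(\gamma_h^i)=-1$, the transverse Hessian of $F_L$ along $\gamma_h^i$ is $\operatorname{diag}(\beta_i''f_i,\ \beta_if_i'')=\operatorname{diag}(2,4\pi^2)$, a \emph{minimum}, so $-\varepsilon X_{F_L}$ alone rotates around $\gamma_h^i$; the hyperbolic splitting only appears after composing with the $O(1)$ parabolic shear $M_0$ of the unperturbed linearized flow (cf.\ the computation in \S\ref{ss:MBindex}), exactly the Poincar\'e--Birkhoff mechanism your trace computation captures. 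Your insistence that a naive Hessian-of-$F_L$ argument ignoring $M_0$ is insufficient is therefore not just a stylistic choice: it is the correct mechanism, and on this point your approach is more reliable than the paper's written argument.

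The one genuine gap is your justification of the sign and nonvanishing of the shear $\kappa$. You claim $\kappa\neq0$ and that its sign is ``fixed by the counterclockwise orientation of $P$, cf.\ \eqref{eqn:Qsign}.'' This is not correct: \eqref{eqn:Qsign} says $Q(x)=p_1p_2'-p_2p_1'>0$, which is transversality of $P$ to radial rays, while $\kappa$ is proportional to $W(x_i)/Q(x_i)$ with $W=p_1'p_2''-p_1''p_2'$ the curvature of $P$ at $x_i$. A counterclockwise curve transverse to rays may have inflections or concave arcs, so $Q>0$ neither forces $W(x_i)\neq0$ (needed for your Morse--Bott nondegeneracy claim) nor $W(x_i)>0$. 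The sign matters substantively: with your own determinant computation, $W<0$ at some $x_i$ would swap the roles, making the \emph{maximum} circle hyperbolic and the minimum elliptic, contradicting the stated correlation in the lemma. The paper only guarantees $W>0$ at the distinguished torus $T_{x_0}$ (Setup~\ref{setup:construction}(5), and it is the explicit hypothesis of Lemma~\ref{lem:RS_of_gammah}), not at every center $T_{x_i}$; so to close your argument you must either add the hypothesis $W(x_i)>0$ at each perturbed torus, or note that the moment curves used in \S\ref{section computing} can be chosen with monotonically rotating tangent so this holds. As written, your appeal to \eqref{eqn:Qsign} would fail at a concave $x_i$ --- though, to be fair, this is a hypothesis the paper's own proof also uses implicitly without stating.
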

\begin{proof}

    \begin{enumerate}
        \item Using the description of $R_{\varepsilon}$ from Lemma~\ref{lemma:perturbedReeb}, we see that along the critical submanifolds of $F_L$, $X_{F_L}$ vanishes so $R_\varepsilon$ is proportional to $R$. Moreover, $dF_L=0$ along exactly two circles in $T_{x_i}$ (at $-b_iq_1+a_iq_2 = 0$ and $-b_iq_1+a_iq_2 = \frac{1}{2}$). Since $R$ is tangent to these two circles, they are both closed orbits of $R_\varepsilon$.

        Furthermore, in $T_{x_i}$, we have that $d\lambda(\cdot, X_{F_L}) = dF_L(\cdot)$ (see Remark~\ref{remark perturbed reeb}), $\beta_i'(x_i) = 0$, and $(p_1'(x),p_2'(x))$ is proportionate to $(-b,a)$. Therefore we have
        $$d\lambda(\cdot, X_{F_L}) = dF_L(\cdot) = df_i(\cdot) = -\sin(2\pi(-b_iq_1+a_iq_2))(-b_idq_1+a_idq_2).$$
        Writing out $d\lambda$ explicitly, we have
        $$dx\wedge k(-b_idq_1+a_idq_2)(\cdot,X_{F_L}) =  -\sin(2\pi(-b_iq_1+a_iq_2))(-b_idq_1+a_idq_2).$$
        Therefore we compute that in $T_{x_i}$ $X_{F_L}$ is positively proportionate to $-\sin(2\pi(-b_iq_1+a_iq_2))\partial_x$. Since this is non-vanishing and transverse to $T_{x_i}$ away from the two closed Reeb orbits mentioned above, there are no other closed Reeb orbits of $R_\varepsilon$ which are contained in $T_{x_i}$.

        \item (c.f. Lemma 10.3 in \cite{Bourgeois}.) The periodic orbits in $T_{x_i}$ are non-degenerate since $F_L$ is a Morse function.  As the orbits $\gamma_h^i$ and $\gamma_e^i$ are degenerate as orbits of $R$, their transverse behavior as orbits of $R_\varepsilon$ is determined by $X_{F_L}$. 
        
        {Consider an almost complex structure $J$ on $\xi$ compatible with $d\lambda$. Then on the manifold $Y$ we can define a Riemannian metric whose restriction to $\xi$  is $d\lambda(\cdot, J\cdot)$ and since the levels of $F_L$ are not tangent to $\xi$ we can also assume that for this metric $\nabla {F_L}\in \xi$.} Then, for $y\in Y$ and $u,v$ two vectors in $\xi_y$ we have that $d\lambda(u,Jv)=g(u,v)$. The metric allows us to write the equation $$d\lambda(\nabla F_L, X_{F_L})=dF_L(\nabla F_L)=g(\nabla F_L,\nabla F_L)=d\lambda(\nabla F_L, J\nabla F_L),$$
        and thus $X_{F_L}=J\nabla F_L$ and the two vector fields $X_{F_L}$ and $\nabla F_L$ are transverse except where they both vanish.
        
        Near $\gamma_h^i$, $F_L$ is a product of $\beta_i(x)$ and $f_i(q_1,q_2)$. Since $x_i$ is a local maximum of $\beta_i$, and $\gamma_h^i$ is a Morse-Bott minimum of $f_i$, the index of the $\gamma_h^i$ Morse-Bott family of zeros of $\nabla F_L$, and thus of $X_{F_L}$ is $1$; namely, if we project to a slice transverse to $\gamma_h^i$, the stable and unstable manifolds are each $1$-dimensional. Since $R_\varepsilon$ is a rescaling of $R-\varepsilon X_{F_L}$, if we project $R_\varepsilon$ to a slice transverse to $\gamma_h^i$, we will see a hyperbolic singularity. Thus $\gamma_h^i$ is a hyperbolic periodic orbit of $R_\varepsilon$. {To see that $\gamma_h^i$ is a positive hyperbolic orbit, observe first that $\nabla F_L$ is tangent to the torus $T_{x_i}$ and then $X_{F_L}$ is transverse to $T_{x_i}$ whenever it is non-zero. This implies that the stable and unstable manifolds of $R_\varepsilon$ can never be tangent to $T_{x_i}$ and that they do not intersect $T_{x_i}$. If $\gamma_h^i$ was negative hyperbolic, then the stable/unstable manifolds turn (with respect to the $T_{x_i}$) when one goes once around $\gamma_h^i$, which we just showed does not happen. Thus $\gamma_h^i$ is positive hyperbolic.}

        Likewise, since $\gamma_e^i$ is a maximum of $f_i(q_1,q_2)$, the Morse-Bott index of $\nabla F_L$ is 2, so in a slice transverse to $\gamma_e^i$, $\nabla F_L$ has a sink. Applying $-J$, we see that the vector field $-X_{F_L}$ (and thus the projection of $R_\varepsilon$ to the transverse slice) behaves as a rotation around $\gamma_e^i$. Thus $\gamma_e^i$ is an elliptic periodic orbit of $R_\varepsilon$.

         \item This follows from Lemma 2.3 of~\cite{Bourgeois}. The idea is to use Lemma~\ref{lemma:perturbedReeb}, $R_\varepsilon = e^{-\varepsilon F_L}(R-\varepsilon X_{F_L})$ and consider the case where $\varepsilon$ is very small. Note that $X_{F_L}$ is non-vanishing whenever $dF_L$ is non-zero.
         
         Observe that $X_{F_L}\in \xi$ and $R$ are linearly independent. In order for a Reeb orbit to close up, it must return to its original position in all three coordinate dimensions. For small $\varepsilon$, the flow of $R_\varepsilon$ moves very slowly in the $X_{F_L}$ direction because of the multiplicative term by $\varepsilon$.
        Thus, by choosing $\varepsilon$ very small, the closed orbits of $X_{F_L}$ will have very long period, (except for the zeros of $X_{F_L}$), so the closed orbits of $R_\varepsilon$ will also have very long period.
    \end{enumerate}
\end{proof}

It follows from Lemma~\ref{lem:morsebott_perturbation} that $\lambdapert$ is $L$-nondegenerate. If we look at the symplectization of $(Y,\lambda)$, given by $(\R\times Y,d(e^t\lambda))$, the pair  $(Y,\lambdapert)$ appears as the graph of $\varepsilon F_L$:
$$G_{\varepsilon}:=\{(\varepsilon F_L(y),y)\in \R\times Y\}  $$
where the contact form is obtained by restricting the primitive $e^t\lambda$ to the graph. 
Given any other contact form $\lambda'$  for the same contact structure ($\ker(\lambda') = \ker(\lambda)$), there exists some function $f:Y\to \R$ such that $\lambda'= e^f\lambda$. This allows us to build a symplectic cobordism as a subset of the symplectization, which will allow us to relate the simple algebraic torsion computed using $\lambdapert$ to the algebraic torsion computed using $\lambda'$ as follows.

\begin{proposition}\label{prop:cobordism}
    Let $(Y,\lambda)$ be a contact toric manifold with $\xi=\ker(\lambda)$. Let $\lambda'$ be any nondegenerate contact form for $\xi$. Then there exists a constant {$C > 1$} such that for any $L>0$ and $L$-nondegenerate perturbation, $\lambdapert$ as above, and compatible almost complex structure $\Jpert$, we have
    $$\at^{e^{-C}L}_{\simp}(Y,\lambda', J') \geq \at^L(Y,\lambdapert,\Jpert),$$
    and 
    $$\at_{\simp}^L(Y,\lambdapert, \Jpert) \geq \at^{e^{C}L}(Y,\lambda', J').$$
\end{proposition}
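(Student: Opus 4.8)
The plan is to realize both contact forms as graphs inside the symplectization $(\R\times Y, d(e^t\lambda))$ and to interpolate between them by explicit exact symplectic cobordisms, to which Theorem~\ref{thm:hutchings_A9} and the scaling isomorphism \eqref{eq:scale} apply. Since $\ker\lambda' = \ker\lambda = \ker\lambdapert = \xi$, we may write $\lambda' = e^f\lambda$ and $\lambdapert = e^{\varepsilon F_L}\lambda$ for functions $f, \varepsilon F_L : Y\to\R$, and each such form is the restriction of the Liouville form $e^t\lambda$ to the graph $\{(g(y),y)\}$ over $Y$, just as $\lambdapert$ was realized as the graph $G_\varepsilon$. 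Whenever $g_-<g_+$ pointwise, the region $\{(t,y): g_-(y)\le t\le g_+(y)\}$ is an exact symplectic cobordism whose convex (positive) boundary is the graph of $g_+$ and whose concave (negative) boundary is the graph of $g_-$, matching the convention used in Theorem~\ref{thm:hutchings_A9}. This mirrors the mechanism already used in the proof of Lemma~\ref{lemma fillable at}.

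The key quantitative input is that $\varepsilon F_L$ is uniformly bounded independently of $L$, $\delta$, and $\varepsilon$: the bump functions $\beta_i^\delta$ have pairwise disjoint supports, so at most one summand of $F_L$ is nonzero at each point, and since $|\beta_i^\delta|, |f_i|\le 1$ and $0<\varepsilon<1$ we get $|\varepsilon F_L|<1$ everywhere. Setting $M:=\max_Y|f|$ and $C:=\max\{2, M+1\}$, I would observe that $f+C\ge C-M\ge 1 > \varepsilon F_L$ and $\varepsilon F_L > -1 \ge M - C \ge f - C$ pointwise. Hence the graph of $f+C$ lies strictly above the graph of $\varepsilon F_L$, and the graph of $\varepsilon F_L$ lies strictly above the graph of $f-C$. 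These produce two exact symplectic cobordisms: one from $(Y, e^C\lambda')$ down to $(Y,\lambdapert)$, and one from $(Y,\lambdapert)$ down to $(Y, e^{-C}\lambda')$. Crucially $C$ depends only on $f$, hence only on $\lambda'$, and \emph{not} on $L$, $\varepsilon$, or $\delta$, which is exactly what the applications in \S\ref{section computing} require.

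Applying Theorem~\ref{thm:hutchings_A9} to the first cobordism gives $\at^L_{\simp}(Y,e^C\lambda', (J')^{e^C}) \ge \at^L(Y,\lambdapert,\Jpert)$, and the scaling isomorphism \eqref{eq:scale} identifies the left-hand side with $\at^{e^{-C}L}_{\simp}(Y,\lambda',J')$, yielding the first inequality. Applying Theorem~\ref{thm:hutchings_A9} to the second cobordism gives $\at^L_{\simp}(Y,\lambdapert,\Jpert) \ge \at^L(Y,e^{-C}\lambda', (J')^{e^{-C}})$, and scaling identifies the right-hand side with $\at^{e^C L}(Y,\lambda',J')$, yielding the second inequality. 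Here I use that the scaling bijection on Reeb orbits and on $J$-holomorphic curves respects the simple subcomplex and the $\Jplus$-filtration, so it induces the claimed identifications of both $\at$ and $\at_{\simp}$.

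I expect the principal subtleties to be bookkeeping rather than analysis. The point requiring the most care is checking that the region between two graphs really is an exact symplectic cobordism and that the orientations of its two ends match the sign convention of Theorem~\ref{thm:hutchings_A9} (the $\at_{\simp}$-end being the convex top); this is immediate since $\partial_t$ is the Liouville field of $e^t\lambda$, is transverse to every graph over $Y$, and points outward at the top graph and inward at the bottom graph. Next, Theorem~\ref{thm:hutchings_A9} is applied in its action-filtered form, so it suffices that $\lambdapert$ is merely $L$-nondegenerate and that the relevant almost complex structures are generic ECH data; I would note this explicitly. Finally, one should confirm that the single constant $C$ works simultaneously for all $L$ and all admissible $\varepsilon,\delta$, which is precisely the content of the uniform bound $|\varepsilon F_L|<1$ established above.
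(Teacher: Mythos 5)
Your proposal is correct and takes essentially the same route as the paper's own proof: both write $\lambda'=e^f\lambda$, choose a constant $C$ depending only on $\max_Y|f|$ (exploiting the uniform bound $|\varepsilon F_L|<1$, which makes $C$ independent of $L$, $\varepsilon$, and $\delta$), realize the two exact cobordisms as the regions between ordered graphs in the symplectization $(\R\times Y, d(e^t\lambda))$, and conclude via Theorem~\ref{thm:hutchings_A9} combined with the scaling isomorphism \eqref{eq:scale}. The paper's normalization $-C+1<f<C-1$ is the same as your choice $C=\max\{2,M+1\}$, so the two arguments differ only in bookkeeping.
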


\begin{proof}
Since $\ker(\lambda')=\ker(\lambda)$, $\lambda'=e^f\lambda$ for some $f:Y\to \R$. Since $Y$ is compact, there exists  {$C > 1$} such that $-C+1<f<C-1$. Then there is a symplectic cobordism from $(Y,\lambdapert)$ to $(Y,e^{-C}\lambda')$ by taking the subset of the symplectization given by
$$W^-:= \{ (t,y) \in \R\times Y \mid  f(y)-C \leq t\leq \varepsilon F_L(y) \} $$
Note that $-1<-\varepsilon \leq \varepsilon F_L$ so the upper bound is strictly higher than the lower bound.
Since the primitive for the symplectic form on $\R\times Y$ is $e^t\lambda$, the restriction of this form to the upper boundary is $e^{\varepsilon F_L}\lambda=\lambdapert$ and the restriction to the lower boundary is $e^{f-C}\lambda = e^{-C}\lambda'$. 

Similarly, there is a  cobordism $W^+$ from $(Y,e^C\lambda')$ to $(Y,\lambdapert)$ 
$$W^+:= \{ (t,y) \in \R\times Y \mid  \varepsilon F_L(y) \leq t\leq f(y)+C \}. $$
Now, we apply Theorem~\ref{thm:hutchings_A9} to the cobordism $W^-$ and obtain that 
$$\at_{\simp}^L(Y,\lambdapert, \Jpert) \geq \at^L(Y,e^{-C}\lambda',J') = \at^{e^{C}L}(Y,\lambda', J'),$$
by the ``scaling'' isomorphism, equation (\ref{e:scale}).
Similarly, using the cobordism $W^+$ we get
$$\at^{e^{-C}L}_{\simp}(Y,\lambda', J')=\at_{\simp}^L(Y,e^{C}\lambda', J') \geq \at^L(Y,\lambdapert,\Jpert).$$
\end{proof}

\subsection{Positivity of the Morse-Bott torus and Robbin-Salamon index computation} \label{ss:MBindex}
Here we compute indices associated to Reeb orbits for both the toric contact form $\lambda$ and its perturbation $\lambdapert$. These indices will be measured relative to a trivialization of the contact structure near the Reeb orbits.

{We first recall the definition of a positive Morse-Bott torus, see also \cite[Prop.~2.3, 2.4]{Bourgeois}, \cite[\S 4]{CGH_obd}, \cite[\S 4]{Gutt_CZ}, \cite[\S 2]{yao2022cascades}.  Let $T_x$ be a Morse-Bott torus corresponding to a family of embedded Reeb orbits.  With respect to a natural trivialization given by an oriented basis $\{V_1, V_2\}$ for $\xi$ at any point $p\in T_x$, such that $V_1$ is transverse to $T_x$ and $V_2$ is tangent to $T_x$, the derivative of the first return map $\xi_p \to \xi_p$ of the Reeb flow is given by the matrix $\begin{pmatrix}
    1 & 0 \\a & 1
\end{pmatrix}$ (here a vector $V= a_1V_1 + a_2V_2$ is written as a column vector).  The Morse-Bott condition implies that $a\neq 0$.  We say that the Morse-Bott torus of Reeb orbits $T_x$ is \emph{positive} whenever $a>0$ and \emph{negative} whenever $a<0$.  }

{All the embedded orbits comprising a Morse-Bott torus $T_x$ are degenerate and admit a Robbin-Salamon index, which is half integer valued.  With respect to the above trivialization, any embedded orbit in a positive Morse-Bott torus has Robbin-Salamon index $1/2$ while any embedded orbit in a negative Morse-Bott torus has Robbin-Salamon index $-1/2$.  This means that after perturbation with respect to this trivialization, the resulting embedded elliptic orbit of a positive Morse-Bott torus has Conley-Zehnder index 1, while the resulting embedded elliptic orbit of a positive Morse-Bott torus has Conley-Zehnder index $-1$.  The resulting embedded hyperbolic orbit has Conley-Zehnder index 0 in either setting.}

We define a trivialization $\tau_0$ of the contact distribution $\xi=\ker\lambda=\ker(p_1(x)dq_1+p_2(x)dq_2)$ in $Y_{(0,1)}$ by
\begin{equation}\label{eqn:tau0}
    \left(V_1:=\partial_x, \quad V_2:=\frac{1}{Q(x)}\big(p_1(x)\partial_{q_2} - p_2(x) \partial_{q_1}\big)\right).
\end{equation}
Note that $d\lambda(V_1,V_2)>0$, so $(R,V_1,V_2)$ is a positively oriented basis for $Y_{(0,1)}$.
First, we compute the relevant index, the Robbin-Salamon index, in the degenerate toric setting.

\begin{lemma}\label{lem:RS_of_gammah}
    Suppose $x_0$ is a point on the curve $P$ such that
    $$W:=p_1'(x_0)p_2''(x_0)-p_1''(x_0)p_2'(x_0)>0.$$
    Then the Robbin-Salamon index of the Morse--Bott torus fiber $T_{x_0}$ over $P(x_0)$ with respect to $\tau_0$ is
    \[
    RS_{\tau_0}(T_{x_0}) = \frac{1}{2}.
    \]
\end{lemma}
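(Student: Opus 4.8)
The plan is to compute the linearized Reeb flow along a closed orbit in $T_{x_0}$ explicitly in the trivialization $\tau_0$, recognize it as a parabolic (shear) path of symplectic matrices, and read off its Robbin--Salamon index. First I would exploit that the Reeb field $R=\frac{1}{Q(x)}\big(p_2'(x)\partial_{q_1}-p_1'(x)\partial_{q_2}\big)$ from \eqref{eq:reeb} has no $\partial_x$-component and depends only on $x$. Hence its flow preserves each fiber $T_x$ and is linear in the angular coordinates: writing $a(x)=p_2'(x)/Q(x)$ and $b(x)=-p_1'(x)/Q(x)$,
\[
\phi_t(x,q_1,q_2)=\big(x,\ q_1+a(x)t,\ q_2+b(x)t\big).
\]
Differentiating, in the coordinate frame $(\partial_x,\partial_{q_1},\partial_{q_2})$ one gets $d\phi_t(\partial_{q_k})=\partial_{q_k}$ and $d\phi_t(\partial_x)=\partial_x+t\,a'(x_0)\,\partial_{q_1}+t\,b'(x_0)\,\partial_{q_2}$, evaluated along $x=x_0$.

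Next I would pass to the trivialization $\tau_0=(V_1,V_2)$ of \eqref{eqn:tau0}. A short linear-algebra step inverting the change of basis gives $\partial_{q_1}=p_1 R+p_1'V_2$ and $\partial_{q_2}=p_2 R+p_2'V_2$. Since $V_2$ is a combination of the $\partial_{q_k}$ with $x$-dependent coefficients and $x$ is preserved by the flow, $d\phi_t(V_2)=V_2$, while
\[
d\phi_t(V_1)=V_1+t(a'p_1+b'p_2)\,R+t(a'p_1'+b'p_2')\,V_2.
\]
Projecting onto $\xi$ along $R$ therefore produces, in the basis $(V_1,V_2)$, the path $\Phi(t)=\begin{pmatrix}1&0\\ c\,t&1\end{pmatrix}$ with $c=a'(x_0)p_1'(x_0)+b'(x_0)p_2'(x_0)$. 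Substituting $a=p_2'/Q$, $b=-p_1'/Q$ and simplifying, the $Q'$-terms cancel and one finds $c=(p_1'p_2''-p_1''p_2')/Q=W/Q$ at $x_0$. Because $Q(x_0)>0$ by \eqref{eqn:Qsign} and $W>0$ by hypothesis, $c>0$. I would also record that $(V_1,V_2)$ is a symplectic basis, since $d\lambda(V_1,V_2)=1$, so that $\Phi$ is genuinely a path in $\mathrm{Sp}(2,\R)$ and its index is computed with respect to the correct symplectic structure.

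Finally I would compute the Robbin--Salamon index of the parabolic path $\Phi$ on $[0,T]$, where $T$ is the period. The only contributing crossing is at $t=0$: there $\ker(\Phi(0)-I)=\R^2$ and the crossing form $v\mapsto d\lambda(v,\dot\Phi(0)v)$ equals $c\,v_1^2$, of signature $\operatorname{sign}(c)=+1$, contributing $\tfrac12$. For $t\in(0,T]$ the fixed subspace is the single line $\langle V_2\rangle$, on which the crossing form vanishes identically, so these crossings contribute nothing; hence $RS_{\tau_0}(T_{x_0})=\tfrac12\operatorname{sign}(c)=\tfrac12$.

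The main obstacle is this last step: the interior crossings of the shear are degenerate (the line $\langle V_2\rangle$ is fixed for all $t$, so the naive crossing form vanishes there), so one cannot blindly apply the regular-crossing formula. I would handle this either by invoking the standard evaluation of the Robbin--Salamon index of a parabolic path, or via the perturbation cross-check: replacing the generating symmetric matrix $\operatorname{diag}(c,0)$ by $\operatorname{diag}(c,\eta)$ turns $\Phi$ into an elliptic path of index $1$ for $\eta>0$ and a positive-hyperbolic path of index $0$ for $\eta<0$. These are precisely the perturbed orbits $\gamma_e,\gamma_h$ with $CZ=1,0$, whose common Morse--Bott value is $\tfrac12$; this both confirms the computation and checks its compatibility with the perturbation analysis of \S\ref{ss:perturb}.
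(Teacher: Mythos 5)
Your proposal is correct and follows essentially the same route as the paper: both exhibit the linearized Reeb flow along $T_{x_0}$ in the frame $\tau_0=(V_1,V_2)$ as the positive shear $\begin{pmatrix}1&0\\ ct&1\end{pmatrix}$ with $c=W/Q(x_0)>0$ (you integrate the flow explicitly, the paper differentiates $R$ and solves the linearized ODE --- trivially equivalent), and both then evaluate the Robbin--Salamon index of this parabolic path by appeal to a standard result (the paper cites Proposition 4.9 of Gutt, exactly your first fallback). Your explicit flagging of the degenerate, non-isolated crossings along $\langle V_2\rangle$, together with the perturbation cross-check replacing $\operatorname{diag}(c,0)$ by $\operatorname{diag}(c,\eta)$ to recover $CZ=1$ (elliptic, $\eta>0$) and $CZ=0$ (positive hyperbolic, $\eta<0$) with average $\tfrac12$, is a sound independent justification of the step the paper delegates to the citation, and it is consistent with the paper's own perturbed Conley--Zehnder computations.
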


\begin{remark}
We can think of the condition $W>0$ as a kind of convexity/positive curvature condition since
$$W = \langle -p_2'(x_0), p_1'(x_0)\rangle \cdot \langle p_1''(x_0),p_2''(x_0)\rangle = \nu_{x_0} \cdot P''(x_0). $$
\end{remark}

\begin{proof}[Proof of Lemma~\ref{lem:RS_of_gammah}]
Similar computations have been done in \cite{Bourgeois}. We include explicit computations here for completeness in our setting and to show that our Morse-Bott torus is positive.

Recall from (\ref{eq:reeb}) that on $TY$ we have
$$   R=\frac{1}{Q(x)}\left(p_2'(x)\partial_{q_1}-p_1'(x)\partial_{q_2}\right)$$
We now express the linearized flow at the fiber of $x_0$ in the coordinates $\langle R, V_1, V_2\rangle$ and show that it is a $3\times 3$-matrix preserving the first coordinate. 
Recall that $Q(x):=p_1(x) p_2'(x) - p_1'(x) p_2(x)$. Let $\phi^t$ denote the Reeb flow, then taking the spatial derivative of the differential equation
$\dot{\phi^t}=R\cdot \phi^t$ we have that $d\dot{\phi^t}=dR\cdot d\phi^t$. Calculating the partial derivatives, we see the matrix $dR|_{x_0}$ in coordinates $\langle \partial_{x}, \partial_{q_1}, \partial_{q_2}\rangle$ is given by
$$dR_{x_0}=\begin{pmatrix} 0 & 0 & 0\\ A& 0 & 0\\ B& 0 & 0\end{pmatrix},$$
where $$A = \frac{p_2(x_0)\left[-p_2''(x_0) p_1'(x_0)  + p_2'(x_0) p_1''(x_0)\right]}{(p_1(x_0)p_2'(x_0)-p_1'(x_0)p_2(x_0))^2} = -\frac{p_2(x_0)W}{(Q(x_0))^2}$$
and
$$B = \frac{-p_1(x_0)\left[-p_2''(x_0) p_1'(x_0)  + p_2'(x_0) p_1''(x_0)\right]}{(p_1(x_0)p_2'(x_0)-p_1'(x_0)p_2(x_0))^2} = \frac{p_1(x_0)W}{(Q(x_0))^2}.$$
From this we see that, 
$$dR_{x_0}(V_1)=A\partial_{q_1}+B\partial_{q_2}= \frac{W}{Q(x_0)} V_2 =: cV_2,$$
where $c>0$ because $W>0$ by assumption and $Q(x_0)>0$ by the contact condition.
Also, $dR_{x_0}(V_2)=0$. 
Thus the restriction to $\xi$ in coordinates $\langle V_1,V_2\rangle$ is the matrix 
$$\begin{pmatrix}
    0&0\\c&0
\end{pmatrix}$$
with $c>0$, which means that the Morse-Bott tori are positive. The action of the linearized flow at time $t$ on $\xi$ is then given by the matrix 
$$\begin{pmatrix}
    1 & 0 \\ct & 1
\end{pmatrix}.$$
Together with Proposition 4.9 in \cite{Gutt_CZ}, we have that 
\begin{equation}
\label{eq:RS}
    RS_{\tau_0}(T_0) = -\frac{1}{2}(0-1) = \frac{1}{2}.
\end{equation}
Note that our flow is lower triangular, whereas that in \cite{Gutt_CZ} is upper triangular. This means that our basis for the trivialization is of opposite orientation to that in \cite{Gutt_CZ} and therefore we have an extra negative sign in (\ref{eq:RS}).
\end{proof}

Now, we look at indices of the Reeb orbits with the perturbed contact form $\lambdapert$. For each $x_i$ which is a center of the perturbation, the torus $T_{x_i}$ contains two closed Reeb orbits, $\gamma_h^i$ and $\gamma_e^i$ as in Lemma~\ref{lem:morsebott_perturbation}. We can deduce the Conley-Zehnder indices of these Reeb orbits from the Robin-Salamon index computed with the unperturbed contact form.

\begin{cor} \label{cor:CZ}
    Let $x_0$ be a convex point on the moment curve $P$, which is a center of perturbation for the perturbed contact form $\lambdapert$. Let $\gamma_h:=\gamma_h^0$ and $\gamma_e:=\gamma_e^0$ be the resulting closed Reeb orbits in $T_{x_0}$ as in Lemma~\ref{lem:morsebott_perturbation}. Then their Conley-Zehnder indices are 
    \[
    CZ_{\tau_0}(\gamma_h) = 0 \quad \text{and}\quad CZ_{\tau_0}(\gamma_e) = 1.
    \]
\end{cor}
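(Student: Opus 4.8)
The plan is to read off the two Conley--Zehnder indices from the Robbin--Salamon index of the Morse--Bott torus computed in Lemma~\ref{lem:RS_of_gammah}, combined with the standard relationship between the index of a Morse--Bott family of Reeb orbits and the indices of the nondegenerate orbits obtained by perturbing with a Morse function. The closed Reeb orbits foliating $T_{x_0}$ are parametrized by the circle $S := T_{x_0}/\langle R\rangle \cong S^1$, and the perturbing function $f_i$ of \eqref{cos} descends to a perfect Morse function on $S$ with a single maximum (producing $\gamma_e$) and a single minimum (producing $\gamma_h$), as recorded in Lemma~\ref{lem:morsebott_perturbation}.

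First I would invoke the Morse--Bott perturbation formula of Bourgeois~\cite{Bourgeois}: for sufficiently small $\varepsilon$, the orbit $\gamma_p$ arising from a critical point $p$ of the descended Morse function has
\[
CZ_{\tau_0}(\gamma_p) = RS_{\tau_0}(T_{x_0}) + \operatorname{ind}_p(f_i) - \tfrac{1}{2}\dim S,
\]
relative to the trivialization $\tau_0$ of \eqref{eqn:tau0}, where $\operatorname{ind}_p(f_i)\in\{0,1\}$ denotes the Morse index at $p$. Since $\dim S = 1$ and $RS_{\tau_0}(T_{x_0}) = \tfrac{1}{2}$ by Lemma~\ref{lem:RS_of_gammah} (valid because $x_0$ is a convex point, so $W>0$), the right-hand side takes the value $\tfrac12 - \tfrac12 = 0$ at the minimum and $\tfrac12 + \tfrac12 = 1$ at the maximum. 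Thus $\{CZ_{\tau_0}(\gamma_h), CZ_{\tau_0}(\gamma_e)\} = \{0,1\}$.

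To pin down which orbit receives which value while sidestepping a careful bookkeeping of sign and orientation conventions through the perturbation, I would use a parity argument. The Morse--Bott analysis only guarantees that the two perturbed indices straddle the half-integer $RS_{\tau_0}(T_{x_0}) = \tfrac12$ by $\pm\tfrac12$. However, by Lemma~\ref{lem:morsebott_perturbation} the orbit $\gamma_h$ is positive hyperbolic and $\gamma_e$ is elliptic, so $CZ_{\tau_0}(\gamma_h)$ is even while $CZ_{\tau_0}(\gamma_e)$ is odd. Among the two admissible values $\{0,1\}$ this forces $CZ_{\tau_0}(\gamma_h)=0$ and $CZ_{\tau_0}(\gamma_e)=1$.

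The main obstacle is justifying the shift formula in our precise conventions: the trivialization $\tau_0$ has orientation opposite to the one used by Gutt~\cite{Gutt_CZ} (the source of the extra sign already seen in Lemma~\ref{lem:RS_of_gammah}), and the sign of the perturbation parameter $\varepsilon$ controls which critical point yields the hyperbolic rather than the elliptic orbit. Rather than re-derive the shift from scratch, I would rely on the two outputs already established---the value $RS_{\tau_0}(T_{x_0})=\tfrac12$ and the elliptic/positive-hyperbolic classification---so that parity supplies the remaining input. As a consistency check one can compare with the direct linearized picture: near $\gamma_e$ the perturbation deforms the positive shear $\left(\begin{smallmatrix}1&0\\ct&1\end{smallmatrix}\right)$ into a small positive rotation with rotation number in $(0,\tfrac12)$, giving $CZ=1$, whereas near $\gamma_h$ it produces positive real eigenvalues, giving $CZ=0$.
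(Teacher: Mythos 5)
Your proposal is correct and takes essentially the same route as the paper: the paper's proof is precisely the application of Bourgeois's shift formula $CZ_{\tau_0}(\gamma) = RS_{\tau_0}(T) - \tfrac{1}{2}\dim(S^1) + \ind_{\text{Morse}}^{S^1}(\gamma)$, with $RS_{\tau_0}(T_{x_0})=\tfrac{1}{2}$ from Lemma~\ref{lem:RS_of_gammah} and Morse indices $0$ at the minimum ($\gamma_h$) and $1$ at the maximum ($\gamma_e$). The only difference is cosmetic: the paper assigns the two values directly from the Morse indices, whereas you resolve the convention worry via the parity argument, which is valid and independent since parity of $CZ$ is trivialization-invariant and Lemma~\ref{lem:morsebott_perturbation} establishes the positive hyperbolic/elliptic dichotomy dynamically rather than through index computations.
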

\begin{proof}
This follows from the Conley-Zehnder index formula in \cite[Lemma 2.4]{Bourgeois}:
\begin{align*}
    CZ_{\tau_0}(\gamma_h) &= RS_{\tau_0}(T) - \frac{1}{2} \dim(S^1) + \ind_{\text{Morse}}^{S^1}(\gamma_h) = 0,\\
    CZ_{\tau_0}(\gamma_e) &= RS_{\tau_0}(T) - \frac{1}{2} \dim(S^1) + \ind_{\text{Morse}}^{S^1}(\gamma_e) = 1.
\end{align*}

Note that $\ind_{\text{Morse}}^{S^1}$ denotes the index of the critical point of the Morse function $f_i$ on the quotient of $T_{x_i}$ by the Reeb orbits. This quotient is the $S^1$ in the notation. Thus $\ind_{\text{Morse}}^{S^1}(\gamma_h)=0$ and $\ind_{\text{Morse}}^{S^1}(\gamma_e)=1$.
\end{proof}

\section{Constructing pseudoholomorphic planes}\label{section plane exists}

In this section we prove existence of an embedded pseudoholomorphic plane positively asymptotic to a positive hyperbolic Reeb orbit. This will apply to any contact manifold $(Y,\xi)$ with a co-dimension 0 submanifold $N\subset Y$ which is a contact toric manifold with one boundary component and having one singular fiber, namely 
  $N=Y_{[0,k]}$, $0<k<1$,
 and satisfying the conditions of Setup~\ref{setup:construction}.

\begin{theorem} \label{theorem:construction}
    Let $(N,\lambda)$ be a contact toric manifold  with exactly one singular fiber and boundary a torus, satisfying the assumptions of Setup~\ref{setup:construction}. {There exists  a $\lambda$-compatible almost complex structure $J_0$ on $\R\times N$, such that for each Reeb orbit $\gamma_c$ in the torus fiber $T_{x_0}$ there is {an embedded} $J_0$-holomorphic plane $u_0:\C\to \R\times N$ which is positively asymptotic  to $\gamma_c$}. Additionally, there exist for any $L>1$:
    \begin{itemize}
        \item an $L$-nondegenerate perturbation $\lambdapert$ of $\lambda$,
        \item a closed positive hyperbolic Reeb orbit $\gamma_h$ of $\lambdapert$ in $T_{x_0}$,
        \item and a regular $\Jpert$-holomorphic plane $\upert$ positively asymptotic to $\gamma_h$ with $\ind(\upert)=I(\upert) = 1$ and $\Jplus(\upert) = 0$ for any $\lambdapert$-compatible almost complex structure $\Jpert$ on $\R\times N$ sufficiently close to $J_0$.
    \end{itemize}
\end{theorem}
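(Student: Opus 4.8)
The plan is to exploit the circle symmetry of the problem. Condition (\ref{item convexity}) of Setup~\ref{setup:construction} forces the Reeb orbits $\gamma_c$ in $T_{x_0}$ to have slope $(0,-1)$, and condition (\ref{item normalize R-}) ($R_1$ the negative $p_1$-axis) forces the circle collapsed at the singular fiber $T_0$ to have the same slope. Hence each $\gamma_c$ is isotopic to the meridian of the solid torus $N$ and bounds a disk. I would therefore look for planes invariant under the $S^1$-action rotating the $q_2$-coordinate (the Reeb and collapsing direction). Choosing a $q_2$-invariant compatible $J_0$ with $J_0 V_1 = V_2$ on the regular part $Y_{(0,1)}$, and arranging that $J_0$ agrees with the standard integrable model of an irrational ellipsoid near $T_0$ (possible by the linear-irrational condition (\ref{item singfibers})), a curve invariant under simultaneous rotation of $\C$ and of $q_2$ has the form $u_0(s,\theta) = (r(s), x(s), c, \theta)$ on the cylinder $\R_s\times S^1_\theta$. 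The Cauchy--Riemann equations then reduce to the planar system $x'(s) = -p_2'(x(s))$, $r'(s) = -p_2(x(s))$ (the sign depending on the chosen orientation of $q_2$). Because $p_2'(x_0)=0$, $p_2''(x_0)>0$, and $p_2,p_2'<0$ on $(0,x_0)$ with $p_2(0)=0$, this system has a monotone heteroclinic solution running from $x=x_0$ at $s\to+\infty$ (where $r\to+\infty$, giving the positive asymptotic end at $\gamma_c$) to $x=0$ at $s\to-\infty$ (where $r$ converges and the $q_2$-circle collapses). Monotonicity of $x(s)$ together with degree-one wrapping in $q_2$ makes the map embedded.

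First I would promote this symmetric cylinder to an honest plane. The crucial point is that the negative puncture at $s\to-\infty$ is \emph{removable}: as $x\to 0$ the $q_2$-circle shrinks onto the singular fiber, and matching the solution with the explicit ellipsoid model near $T_0$ shows the map extends smoothly across the center of $\C$. This produces the embedded $J_0$-holomorphic plane $u_0:\C\to\R\times N$ asymptotic to $\gamma_c$, and translating in $q_1$ yields one for every $\gamma_c$. For the second part of the statement I would pass to the $L$-nondegenerate perturbation $\lambdapert$ of \S\ref{ss:perturb}, under which $T_{x_0}$ breaks into $\gamma_e$ and $\gamma_h$; since the Reeb slope is $(0,-1)$, the Morse function \eqref{cos} restricts to $\cos(2\pi q_1)$ on the family, with $\gamma_h$ at its minimum. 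Using Morse--Bott gluing in the style of Bourgeois, the $S^1$-family $\{u_0^c\}$ localizes at the critical points of this Morse function, producing for every $\lambdapert$-compatible $\Jpert$ sufficiently close to $J_0$ an embedded plane $\upert$ positively asymptotic to $\gamma_h$.

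It then remains to compute the three indices and establish regularity. A direct computation of the winding of a nonvanishing section of $\xi|_{\upert}$ relative to the trivialization $\tau_0=(V_1,V_2)$ of \eqref{eqn:tau0}, using the explicit form of the plane, gives $c_{\tau_0}(\upert)=1$. Combined with $CZ_{\tau_0}(\gamma_h)=0$ from Corollary~\ref{cor:CZ} and $\chi(\C)=1$, the Fredholm index formula gives $\ind(\upert)=-\chi(\upert)+2c_{\tau_0}(\upert)+CZ_{\tau_0}(\gamma_h)=-1+2+0=1$. Since $\upert$ is embedded with $\Delta(\upert)=0$ and has a single end, the index inequality \eqref{indineq} is sharp, so $I(\upert)=\ind(\upert)=1$; and because $\ind=I$, Proposition~\ref{prop:J0} is an equality, yielding $\Jplus(\upert)=2(0-1+0+1)=0$ (the one positive hyperbolic end contributing $+1$). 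Finally, regularity holds for \emph{every} $\Jpert$ near $J_0$, not only generic ones, by Wendl's automatic transversality criterion: $\upert$ is an embedded genus-zero curve with $\ind(\upert)=1$ and a single end, which comfortably satisfies the transversality inequality.

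The main obstacle, I expect, lies in the two analytic steps rather than the index bookkeeping. The first is the smooth capping at the singular fiber: verifying that the negative puncture of the symmetric cylinder is removable requires controlling the rate at which $x(s)\to 0$ and $r(s)$ converges and matching it to the ellipsoid normal form, information not visible from the reduced ODE alone. The second is the Morse--Bott perturbation step, where one must show the broken family descends to a genuine, regular plane asymptotic precisely to $\gamma_h$ (and not merely to the Morse--Bott torus) for all nearby, possibly non-generic, $\Jpert$; here the role of automatic transversality is essential, since genericity of $\Jpert$ is explicitly not assumed. The one delicate algebraic input supporting everything is the identity $c_{\tau_0}(\upert)=1$, on which all three index identities depend, so I would take care to verify it from the explicit plane rather than deduce it circularly from $\ind(\upert)=1$.
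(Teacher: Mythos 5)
Your proposal is correct in outline and, for the Morse--Bott plane, coincides in substance with the paper: your rotation-invariant ansatz reduces the Cauchy--Riemann equation to exactly the system $x'(s)=-p_2'(x)$, $r'(s)=-p_2(x)$ that the paper obtains as the gradient flow of the Hamiltonian $H=-e^rp_2(x)$ generating a holomorphic $\C^*$-action (Proposition~\ref{prop:explicit_construction}), and your index bookkeeping and use of automatic transversality match Lemmas~\ref{lem:c_tau_shira_plane}, \ref{lem:shira_plane_index}, \ref{lem:u_0_regular} and \ref{lem:perturbed_plane_index}. You diverge in two places. First, for removability at the singular fiber you propose matching against an integrable ellipsoid normal form and flag decay-rate control as a main obstacle; the paper sidesteps this entirely with a soft argument: by Stokes, $\int u^*\omega$ over the punctured disk telescopes to a difference of values of $H$, so the energy is finite and the standard removal of singularities theorem applies with no rate-matching. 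Second, and more significantly, for the perturbation step you invoke Bourgeois-style Morse--Bott gluing to localize the $S^1$-family at the minimum of $\cos(2\pi q_1)$, whereas the paper deliberately avoids gluing (see the remark after Proposition~\ref{prop:existence_survives_pert} and the introduction's caveats about the Morse--Bott ECH literature): it runs a parametric moduli space argument along $\lambda_s=e^{s\varepsilon F_L}\lambda$, where honest regularity of $u_0$ gives parametric regularity, nearby slices $\mathcal{M}_s$ are nonempty, and automatic transversality applied again yields regularity of $\upert$ for every non-generic $\Jpert$ near $J_0$. Your route is standard but outsources precisely the delicate analysis (localization at the correct critical orbit, regularity without genericity) to machinery whose applicability here you would still need to verify; the paper's continuation argument is self-contained and delivers exactly the needed curve. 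One further note: your derivation of $I(\upert)=1$ from sharpness of the index inequality is legitimate since the single multiplicity-one end makes the partition conditions trivial, but the direct computation via $Q_{\tau_0}(\upert)=0$ (Lemma~\ref{lem:c_tau_shira_plane}, by pushing off in the $q_1$-direction) is worth doing anyway, as that vanishing is used again in \S\ref{section computing}.
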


The perturbation of the contact form $\lambdapert$ is the one constructed in \textsection\ref{ss:perturb}, and the Reeb orbit $\gamma_h$ is the hyperbolic Reeb orbit in $T_{x_0}$. The goal of this section is to specify an almost complex structure $J_0$ and construct the {$J_0$} and $\Jpert$-holomorphic planes for {any} $\lambdapert$-compatible almost complex structure $\Jpert$ sufficiently close to $J_0$.

We start with a construction of a $J_0$-holomorphic plane for an almost complex structure $J_0$ compatible with the degenerate Morse--Bott toric contact form $\lambda$, and then give a moduli space argument to prove existence for the perturbed data. 

\subsection{Explicit construction in a Morse--Bott setting} 
\label{subsection MorseBott plane}

Observe that $T_{x_0}$ is foliated by closed Reeb orbits for $\lambda$ of slope $(0,-1)=-\partial_{q_2}$ because $(p_1'(x_0),p_2'(x_0))$ is positively proportionate to $(1,0)$. In $(q_1,q_2,x)$ coordinates, these Reeb orbits are given as:
\[
\gamma_c(t) = (c,-t,x_0)\in Y\quad \text{for all }c\in S^1.
\]

\begin{proposition}\label{prop:explicit_construction}
Let $(N,\lambda)$ be a contact toric 3-manifold with boundary satisfying the conditions of Setup~\ref{setup:construction}.  There exists 
 a $\lambda$-compatible almost complex structure $J_0$ on $\R\times N$, such that for each Reeb orbit $\gamma_c$ in $T_{x_0}$ there is {an embedded} $J_0$-holomorphic plane $u_0:\C\to \R\times N$ which is positively asymptotic  to $\gamma_c$.
\end{proposition}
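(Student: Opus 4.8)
The plan is to realize the obvious disk bounded by $\gamma_c$ as an explicit holomorphic plane and then verify its asymptotics and smoothness. First I would record the local geometry. Since the tangent to $P$ at $x_0$ is positively proportional to $(1,0)$, formula (\ref{eq:reeb}) shows that the Reeb orbits in $T_{x_0}$ are $\gamma_c(t)=(c,-t,x_0)$ with slope $(0,-1)$. By Setup~\ref{setup:construction}(\ref{item normalize R-}) the bounding ray is $R_1=\{(-c,0)\mid c>0\}$, whose inward normal into the region $p_2<0$ is $(0,-1)$; hence by Remark~\ref{remark Lens} this is precisely the slope collapsed at the singular fiber over $x=0$, so each $\gamma_c$ bounds the meridian disk of the solid torus $N$ swept out by the $(0,-1)$-circles as $x$ runs from $x_0$ down to $0$. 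The goal is to lift this disk to a plane in $\R\times N$. I would take $J_0$ defined by $J_0\partial_r=R$ and $J_0V_1=V_2$ on $\xi$, where $(V_1,V_2)$ is the frame (\ref{eqn:tau0}); because $d\lambda(V_1,V_2)=\tfrac1{Q(x)}(p_1p_2'-p_2p_1')=1>0$ this is a $\lambda$-compatible complex structure on $Y_{(0,1)}$, and near $x=0$ I would extend it using the standard model described below.

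Next I would make the rotationally symmetric ansatz
$$u_0(s,t)=\big(a(s),\,c,\,-t,\,\rho(s)\big),\qquad (s,t)\in\R\times(\R/\Z),$$
freezing $q_1\equiv c$ and setting $q_2=-t$ to track the orbit direction. Imposing $J_0(\partial_s u_0)=\partial_t u_0$ and substituting the explicit forms of $R$ and $V_2$, the $\partial_{q_1}$- and $\partial_{q_2}$-components reduce, after dividing by $Q(\rho)\neq 0$, to the autonomous system
$$\rho'(s)=-\,p_2'(\rho(s)),\qquad a'(s)=-\,p_2(\rho(s)).$$
By Setup~\ref{setup:construction}(\ref{item convexity}) we have $p_2<0$ and $p_2'<0$ on $(0,x_0)$ with $p_2'(x_0)=0$, while $p_2'(0)=0$ by Setup~\ref{setup well defined}(\ref{item singfibers}); hence $0$ and $x_0$ are the only equilibria and $\rho'>0$, $a'>0$ strictly between them. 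A solution with $\rho(0)\in(0,x_0)$ therefore increases monotonically, with $\rho(s)\to x_0$ as $s\to+\infty$ and $\rho(s)\to 0$ as $s\to-\infty$, while $a$ increases to $+\infty$. This already exhibits $u_0$ as an embedded cylinder (injectivity is immediate from strict monotonicity of $\rho$ and $q_1\equiv c$) carrying a positive end at $\gamma_c$, and the construction is parametrized by $c\in S^1$, giving one plane per orbit.

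Finally I would analyze the two limits to upgrade the cylinder to a plane and confirm the ends. At $s\to+\infty$, linearizing $\rho'=-p_2'(\rho)$ at $x_0$ gives $\rho-x_0\sim e^{-p_2''(x_0)s}$, genuine exponential decay precisely because $p_2''(x_0)>0$ (Setup~\ref{setup:construction}(\ref{item convexity})); with $a'\to-p_2(x_0)>0$ this verifies that $u_0$ is positively asymptotic to $\gamma_c$ in the required sense. The essential point, and the main technical obstacle, is the limit $s\to-\infty$, where the $q_2$-circle collapses: I must show that the puncture at the center of $\C$ is removable, that the resulting map extends to a smooth embedded plane $u_0:\C\to\R\times N$, and that $J_0$ is a genuine smooth almost complex structure across the singular fiber (the frame (\ref{eqn:tau0}) degenerates there since $V_2$ carries a factor $1/Q$ and $\partial_{q_2}$ collapses). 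For this I would invoke Setup~\ref{setup well defined}(\ref{item singfibers}): near $x=0$ the curve $P$ is linear of irrational slope with $P'(0)=0$, so a neighborhood of the singular fiber is strictly contactomorphic to a neighborhood of the corresponding degenerate core orbit in an irrational ellipsoid (as in Lemma~\ref{lemma fillable at}), equipped with its standard toric complex structure. Under this identification $J_0$ extends smoothly, and the family $\{u_0\}_{c\in S^1}$ matches the restriction of the standard family of embedded holomorphic planes in $\C^2$ that fill the collapsing circle; these are smooth and embedded at the center, which removes the singularity and completes the proof.
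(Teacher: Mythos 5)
Your construction phase is essentially identical to the paper's: you use the same almost complex structure ($J_0\partial_r=R$, $J_0V_1=V_2$, which the paper also verifies is compatible since $d\lambda(V_1,V_2)=1$), and your ODE system $\rho'(s)=-p_2'(\rho)$, $a'(s)=-p_2(\rho)$ is exactly the gradient flow of the Hamiltonian $H(r,q_1,q_2,x)=-e^rp_2(x)$ that generates the lifted circle action of $X=-\partial_{q_2}$. The paper packages the same curves as orbits of a $J_0$-holomorphic $\C^*$-action built from the flows of $X$ and $\nabla H=-p_2(x)\partial_r-p_2'(x)\partial_x$; your direct symmetric ansatz in the Cauchy--Riemann equation recovers precisely these flow lines, and your analysis of the positive end (exponential rate $p_2''(x_0)>0$) and of embeddedness away from the puncture agrees with the paper's.

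The genuine gap is at the negative puncture, which you correctly identify as the crux but do not actually close. Your claim that, under an ellipsoid model, the family $\{u_0\}$ "matches the restriction of the standard family of embedded holomorphic planes in $\C^2$" is not available as stated: the structure in play is the $\R$-invariant cylindrical $J_0$ on the symplectization, and for an \emph{irrational} ellipsoid the standard integrable $i$ on $\C^2$ is not cylindrical --- $i$ applied to the Liouville field is the diagonal rotation field, which is not proportional to the Reeb field $2\pi\bigl(\tfrac1a\partial_{\theta_1}+\tfrac1b\partial_{\theta_2}\bigr)$ unless $a=b$ --- so there is no ready-made family of $i$-holomorphic planes to quote, and both the smooth extension of $J_0$ across $T_0$ and the matching of your ODE solutions with model curves are precisely the assertions that would need proof. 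You also never verify the two hypotheses an extension argument actually requires: boundedness of the end (convergence of $a(s)$ as $s\to-\infty$, which follows from integrability of $dr/d\rho=p_2/p_2'$ near $0$ given the degenerate parametrization in Setup~\ref{setup well defined}(\ref{item singfibers})) and finiteness of energy. The paper dispenses with any local model: it computes $\int_{\mathbb{D}\setminus\{0\}}u^*\omega = H(u(1,t_0))-H(u(0,t_0))<\infty$ using $\omega(\partial_s u, X)=\tfrac{d}{ds}(H\circ u)$, invokes the removal-of-singularities theorem to obtain a smooth $J_0$-holomorphic extension, and then gets embeddedness at the added point by observing that the image is the product of the compactified flow line with the collapsing $q_2$-circle, so the link of the potential singularity is an unknot. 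Your outline becomes a proof if you replace the model-matching claim with this energy computation (or else genuinely carry out the model identification, verifying that it is $J_0$ --- not $i$ --- that extends smoothly and that its holomorphic disks near the core coincide with yours).
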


The outline of the proof is as follows: 
\begin{enumerate}
    \item Consider an $S^1$ sub-action of the 2-torus action, which on $T_{x_0}$ coincides with the Reeb flow;
    \item Lift this action to a Hamiltonian action on the symplectization $\R\times N$, generated by a Hamiltonian $H$;
    \item Show that $H$ is Morse--Bott and has a critical manifold over the singular toric fiber;
    \item Define a compatible $J_0$ that is invariant under the Hamiltonian $S^1$-action, and construct a $J_0$-holomorphic $\C^*$-action;
    \item Use the removal of singularities theorem to obtain holomorphic planes from the $\C^*$-action.
\end{enumerate}

\subsubsection{\bf Hamiltonian $S^1$-actions and holomorphic $\C^*$-actions}\label{subsec:Ham_C*_action}
We adapt a construction illustrated in \cite[Exercise 5.1.5]{mcduff2017introduction} to the setting of symplectizations instead of closed manifolds.
Given a toric action on a contact manifold $N$ (possibly with boundary) with an invariant contact form $\lambda$, let $X$ be a vector field generating an $S^1$ sub-action.
Then $X$ lifts to a Hamiltonian vector field in the symplectization
\[
\R_r\times N, \qquad \omega = d(e^r\lambda).
\]
Indeed {for $H:=e^r\cdot \iota_X\lambda$},  
$$\iota_X \omega = \iota_X d(e^r\lambda) = \mathcal{L}_X e^r\lambda - d\iota_X(e^r\lambda) = 0-d(e^r \cdot\iota_X\lambda) =-dH,\quad $$
Let $J_0$ be an almost complex structure that is invariant under this $S^1$-subaction and sends $\partial_r$ to the Reeb vector field $R$ of $\lambda$. We lift the $S^1$ action on $N$ to a $J_0$-holomorphic $\C^*$ action on the symplectization $\R\times N$, in the following way.
Denote by $\varphi^t$ the flow of $X$ 
lifted to $\R\times N$ in the obvious way, and denote by $\psi^s$ the  flow of $\nabla H := -J_0X_H = -J_0 X$. Since $J_0$ is invariant under the torus action, $\mathcal{L}_X J_0=0$ and thus $[X,\nabla H]=0$. In particular, the flows $\psi^s$ and $\varphi^t$ commute and therefore generate a $\C^*$ action, by
\[
e^{s+it}\in \C^* \qquad \mapsto \qquad \psi^s \varphi^t.
\]

\subsubsection{\bf $J_0$-holomorphic discs in $N$}
Now we apply the above construction of holomorphic $\C^*$ actions to construct holomorphic discs in the symplectization of a toric 3-manifold satisfying Setup~\ref{setup:construction}.

For context, recall that the Reeb vector field is given by
 \begin{equation*}
     R(q_1,q_2,x)=\frac{1}{Q(x)}\big( p_2'(x) \partial_{q_1} - p_1'(x)\partial_{q_2}\big),\quad \text{where}\quad Q(x):=p_1(x) p_2'(x) - p_2(x) p_1'(x) >0,
 \end{equation*}
(see equation (\ref{eqn:Qsign})). Consider the symplectization $\R_r\times N$ with the symplectic form
\[
\omega = d(e^r\lambda) = e^rdr\wedge(p_1(x)dq_1+ p_2(x)dq_2) + e^r(p_1'(x)dx\wedge dq_1 +p_2'(x)dx\wedge dq_2).
\]
We define a compatible almost complex structure  by 
\begin{equation}\label{eq:acs_for_construction}
    J_0\partial_r = R, \qquad J_0V_1 = V_2.
\end{equation}
Here $(V_1,V_2)$ is the trivialization of $\xi$ defined in equation (\ref{eqn:tau0}) in \textsection\ref{ss:MBindex}.
We will now use the $\C^*$-actions constructed in \S\ref{subsec:Ham_C*_action} to construct a $J_0$-holomorphic planes positively asymptotic to any Reeb orbit in $T_{x_0}$.
\begin{proof}[Proof of Proposition~\ref{prop:explicit_construction}]    
Using the notations of \S\ref{subsec:Ham_C*_action}, let $X=-\partial_{q_2}$, and note that $X$ agrees with the Reeb vector field on the regular toric fiber $T_{x_0}$. Then $X$ lifts to a Hamiltonian vector field on $\R_r\times N$, generated by the Hamiltonian
    \[
    H:\R_r\times N\rightarrow \R, \quad H(r,q_1,q_2,x) = -e^r p_2(x).
    \]
    In particular notice that $H$ is Morse--Bott, increasing in $x$, and has a single critical manifold of dimension 2 of index 0 at $\{x=0\}$. 
    The almost complex structure $J_0$ defined in (\ref{eq:acs_for_construction}) is invariant under the torus action and sends $\partial_r$ to $R$, so we have a $J_0$-holomorphic $\C^*$-action given by the Hamiltonian and gradient flows of $H$. 
    We calculate 
  \[
    \nabla H =  - p_2(x)\partial_r
- p_2'(x)\partial_x.
    \]    
    From this observe that $\nabla H$ is independent of the $q_1$ and $q_2$ coordinates, so its flowlines have constant $(q_1,q_2)$ values. Additionally observe that $\R\times T_{x_0}$ is invariant under the flow of $\nabla H$, on which it coincides with 
    $-p_2(x_0)\cdot \partial_r$ since $p_2'(x_0)=0$ and $p_2(x_0)<0$.

\begin{figure}
\centering
\includegraphics[scale=.3]{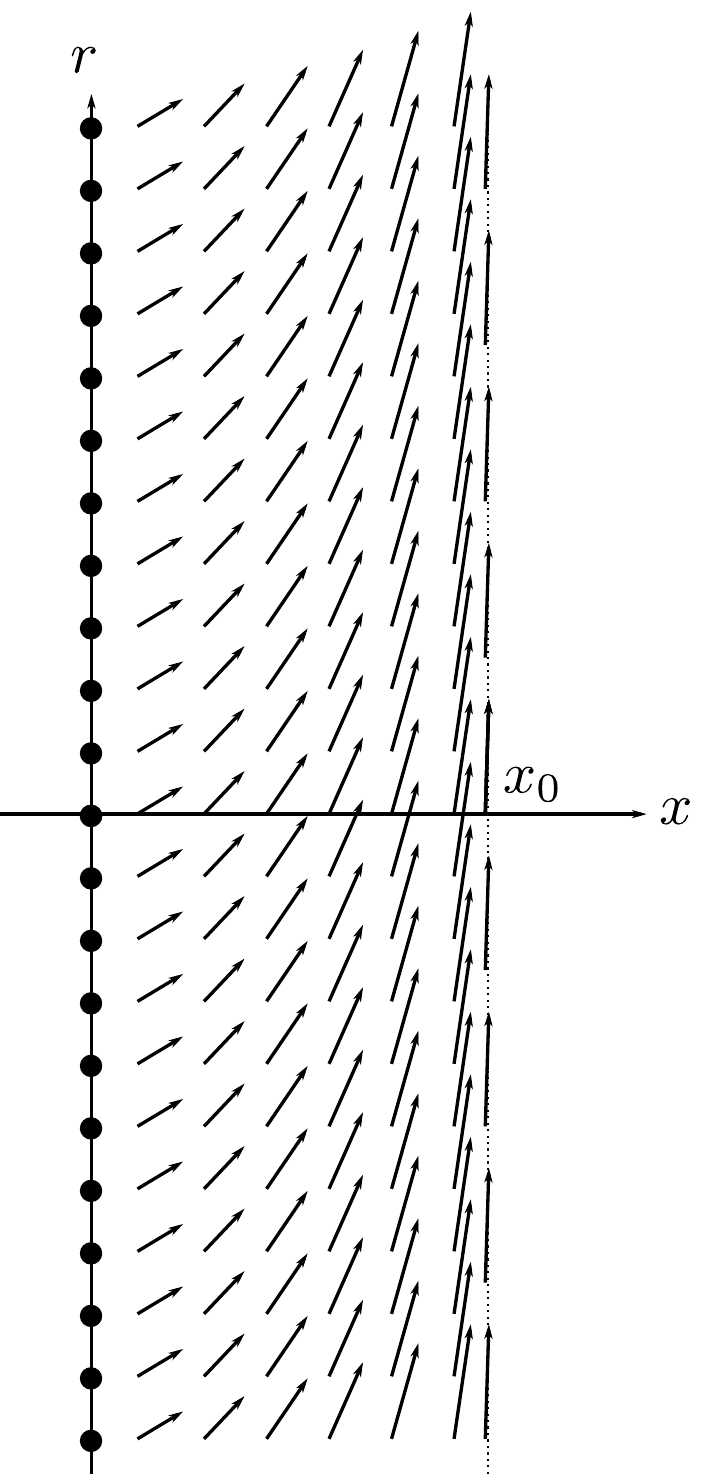}
\caption{The vector field $\nabla H$ in the $(x,r)$ plane.}
\label{fig:gradH}
\end{figure}

   Since $\R\times T_{x_0}$ is a separating hypersurface,
    for every point $(r,q_1,q_2,x)$ with $x\in (0,x_0)$, its $\mathbb C^*$ orbit
    \[
    u(s,t):= \psi^s\varphi^t (r,q_1,q_2,x)
    \]
    is contained in 
$\R\times T^2 \times (0,x_0) $. Observe that the gradient flowlines of $\nabla H$ limit in the positive direction towards $\{\infty\}\times T_{x_0}$. In the negative direction they limit towards a critical point on $\R\times T_0$. (See Figure~\ref{fig:gradH}). Recall also that $T_0$ is not a torus fiber, but a circle, since the fiber over $P(0)$ is singular. Furthermore, the slope that is collapsed in $T_0$ is the $\partial_{q_2}$ slope. Therefore each point in $T_0$ is a fixed point under the Hamiltonian circle action $X$. 

Fix a point $(r^\star,q_1^\star,q_2^\star,x^\star)  \in \R\times N.$
Then, the  $\mathbb C^*$ orbit is 
 \begin{equation} \label{eq:C*-orbit}     
u(s,t)=(r(s),q_1^\star,-t+q_2^\star,x(s)), \quad t\in S^1, \ s\in \mathbb R,
 \end{equation}
where $(r(0),x(0))=(r^\star,x^\star)$ and the functions $r(s), x(s)$ follow a flow line of $\nabla H$, which is determined by the function $p_2(x)$. In particular,
$$\lim_{s\rightarrow +\infty}r(s)=+ \infty, \quad \lim_{s\rightarrow -\infty}r(s)=r^-, \text{ for some }r^-\leq r^\star,$$
$$\lim_{s\rightarrow +\infty}x(s)=x_0, \quad \lim_{s\rightarrow -\infty}x(s)=0. $$
Therefore, 
as $s\rightarrow \infty$, $ u$ is positively asymptotic to a Reeb orbit $\gamma$ in $T_{x_0}$, while, 
as $s\rightarrow -\infty$, $ u$ converges to the critical point $(r^-,q_1^0,0)$ in the critical manifold $\{x=0\}$.\\

We would like to show that we can extend 
$ u$ to a $J_0$-holomorphic curve $u:\mathbb C\to \R\times N$ via removal of singularities. To apply the removal of singularities theorem, we restrict $u$ to the punctured unit disk $\mathbb{D}\setminus \{0\}$ and show that this restriction has finite energy. Indeed,
\[
    \begin{split}     
    \int_{\mathbb{D}\setminus\{0\}} u^*\omega  & = \int_{\mathbb{D}\setminus\{0\}} \omega(\partial_su,\partial_t u) \ ds\ dt= \int_{\mathbb{D}\setminus\{0\}}   \omega(\partial_su,X) \ ds\ dt \\
    &=     \int_{\mathbb{D}\setminus\{0\}}    dH(\partial_su)  \ ds\ dt = \int_{\mathbb{D}\setminus\{0\}} \frac{d}{ds}(H\circ u)\ ds\ dt \\
    & = \int_{S^1}  H(u(1,t)) - H(u(0,t)) dt =H(u(1,t_0)) - H(u(0,t_0))<\infty,  
    \end{split}
    \]
    where $t_0$ can be any choice of value in $[0,2\pi)$.

    Thus, by the removal of singularities theorem (see e.g. \cite[Theorem 4.1.2]{mcduff2012j}), 
    $ u|_{\mathbb{D}\setminus \{0\}}$ extends  to a smooth $J_0$-holomorphic $u|_{\mathbb{D}}: \mathbb{D} \rightarrow \R\times N$ which agrees with $u|_{\mathbb{D}\setminus \{0\}}$. Thus $u$ extends from a $J_0$-holomorphic map of $\C^*$ to a $J_0$-holomorphic plane
    $u:\C\rightarrow \R \times N$, which is positively asymptotic to the Reeb orbit $\{(q_1^\star,-t+q_2^\star)\mid t\in S^1\}$  in  $T_{x_0}$.
    
    Finally, we show that $u$ is embedded. Away from $(0,0)$ it has no critical points because $X$ and $\nabla H$ are non-vanishing. At $(0,0)$, we analyze a neighborhood of the curve and look at the link of the potential singularity.
    Based on the formulas $X=-\partial_{q_2}$ and $  \nabla H =  - p_2(x)\partial_r
- p_2'(x)\partial_x$, we have that the image of $u:\C\to \R\times N$ is the subset 
given by equation \eqref{eq:C*-orbit}, where $r(s),x(s)$ lie on the chosen compactified flowline of $\nabla H$.  
    Since this is simply the product of the compactified flowline with the circle (in the $q_2$ factor), the link of the potential singularity at $(r^-,q_1^\star,q_2^\star,0)$ is $(r(-K), q_1^\star, t, \varepsilon),$ {for some $K\gg 0$ such that $x(-K)=\varepsilon$}. This link is an unknot, viewed in the boundary of a small ball around $(r^-,q_1^\star,q_2^\star,0)$. Thus, $u$ is smoothly embedded at $(r^-,q_1^\star,q_2^\star,0)$ as well.
    
    Note that this construction holds for any value of $q_1^\star\in S^1$. Thus, we obtain an $S^1$ family of embedded $J_0$-holomorphic planes, each positively asymptotic to a different Reeb orbit in the $S^1$-family of Reeb orbits in $T_{x_0}$.
\end{proof}

\subsection{Regularity of the Morse-Bott $J_0$-holomorphic planes}
Now we would like to show that the $J_0$-holomorphic planes we have just constructed are regular. To do this, we will use an automatic transversality result. In order to apply this result, certain indices of our curve must satisfy a particular inequality. To compute the values involved in this inequality, we start by computing the relative Chern and self-intersection numbers, and then do further index calculations. These characteristic numbers will also be helpful later on.
 
Recall, from (\ref{eqn:tau0}), that on $Y_{(0,k]}=N-T_0$, where $N$ is diffeomorphic to a solid torus with a toric contact structure satisfying the conditions of Setup~\ref{setup:construction},  we have a trivialization $\tau_0$ of the contact planes via 
\begin{equation*}
    \tau_0:=\langle V_1=\partial_x,V_2=\frac{1}{Q(x)}(p_1(x)\partial_{q_2}-p_2(x)\partial_{q_1}\rangle.
\end{equation*}  

\begin{lemma}\label{lem:c_tau_shira_plane} The relative Chern number and self-intersection number with respect to the trivialization $\tau_0$ of each of the $J_0$-holomorphic planes $u$ constructed in Proposition~\ref{prop:explicit_construction} are
\begin{align}
 c_{\tau_0}(u) &= 1. \\
Q_{\tau_0}(u) &= 0 
    \end{align}
\end{lemma}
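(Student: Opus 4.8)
The plan is to read off both numbers directly from the explicit description of $u$ in~(\ref{eq:C*-orbit}), using that over $\C^*$ the image of $u$ sits in the single slice $\{q_1=q_1^\star\}$ of $\R\times N$ and that the puncture $z=0$ maps to one point of the collapsed circle $T_0$. Throughout I would work with the trivialization $\tau_0=(V_1,V_2)$ from~(\ref{eqn:tau0}), and the central observation is that $V_1=\partial_x$ is itself the first frame vector of $\tau_0$: hence $V_1|_u$ is a genuine, nowhere-vanishing section of $u^*\xi$ over all of $\C^*=\C\setminus\{0\}$, and it is by definition $\tau_0$-constant wherever $\tau_0$ is defined, in particular at the positive end.

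To compute $c_{\tau_0}(u)$ I would take $V_1|_u$ as the reference section. Since it is $\tau_0$-constant and nonvanishing at the positive end, the relative Chern number equals the algebraic count of zeros of a generic smooth extension of $V_1|_u$ across the puncture $z=0$; equivalently, it is the winding number of $V_1$ along $\partial D_\delta$ measured against a trivialization of $u^*\xi$ that extends over the small disk $D_\delta$ mapping into a neighborhood of $T_0$. By Setup~\ref{setup:construction}(\ref{item normalize R-}) the circle of slope $(0,1)$ collapses at $T_0$, so near $T_0$ the pair $(x,q_2)$ serves as polar coordinates on the disk transverse to the uncollapsed core $q_1$-circle (matching the irrational-ellipsoid model guaranteed by Setup~\ref{setup well defined}(\ref{item singfibers})), and $u$ fills precisely this transverse disk, consistent with the unknotted link found in the proof of Proposition~\ref{prop:explicit_construction}. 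In these coordinates $V_1=\partial_x$ is the radial field, which rotates exactly once, and positively, as $q_2$ traverses the collapsed angle, giving winding $+1$ relative to the disk framing of $\xi$. Hence $c_{\tau_0}(u)=1$. As a check, the Fredholm index formula $\ind(u)=-\chi(\C)+2c_{\tau_0}(u)+CZ_{\tau_0}(\gamma_h)=-1+2+0=1$ together with Corollary~\ref{cor:CZ} recovers $\ind(u)=1$.

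To compute $Q_{\tau_0}(u)$ I would exhibit an explicit transverse pushoff of the embedded curve $u$ and apply Lemma~\ref{lem:Q_tau_lk_def}. Translating the image in the $q_1$-direction yields a surface $u'$; this is a valid pushoff since $\partial_{q_1}=p_1R+p_1'V_2$ is nowhere tangent to $\image(u)$ (a tangency would force $Q(x)=p_1p_2'-p_2p_1'=0$, contradicting~(\ref{eqn:Qsign})), and near $T_0$ the direction $\partial_{q_1}$ is tangent to the core while $u$ is the transverse disk, so transversality persists there. Over $\C^*$ the two copies lie at distinct values $q_1=q_1^\star$ and $q_1=q_1^\star+\epsilon$ and are disjoint, while near $z=0$ they are transverse disks based at distinct points of the uncollapsed core circle, hence again disjoint; thus $\#(u\cap u')=0$. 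Moreover the $\xi$-component of the pushoff direction at the positive end is $p_1'(x_0)V_2$ with $p_1'(x_0)>0$ by Setup~\ref{setup:construction}(\ref{item convexity}), a fixed $\tau_0$-frame vector, so the asymptotic braids of $u$ and $u'$ are $\tau_0$-parallel and $l_{\tau_0}(u,u')=0$. Lemma~\ref{lem:Q_tau_lk_def} then gives $Q_{\tau_0}(u)=\#(u\cap u')-l_{\tau_0}(u,u')=0$.

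The hard part will be the winding computation at the singular fiber underlying $c_{\tau_0}(u)=1$: one must fix the local model of $\xi$ near the collapsed orbit $T_0$, verify that the radial field $V_1=\partial_x$ genuinely winds once relative to a framing of $u^*\xi$ that extends over $z=0$, and confirm that the orientation conventions of $\tau_0$ (and of $\C$) make the contribution $+1$ rather than $-1$. Here the agreement of the resulting indices with $\ind(u)=I(u)=1$ asserted in Theorem~\ref{theorem:construction} provides a reassuring consistency check. By comparison the $Q_{\tau_0}$ computation is routine once the disjointness of the $q_1$-pushoff near $T_0$ has been justified.
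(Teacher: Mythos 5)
Your proposal is correct and follows essentially the same route as the paper: $c_{\tau_0}(u)$ is computed as the zero count/winding of a section of $\xi$ built from $\partial_x$ that is $\tau_0$-trivial along the asymptotic orbit, and $Q_{\tau_0}(u)=0$ is obtained from an explicit pushoff exploiting that $\image(u)$ lies in a constant-$q_1$ slice, with vanishing asymptotic linking fed into Lemma~\ref{lem:Q_tau_lk_def}. The only substantive difference is bookkeeping: the paper uses the section $V=x\,\partial_x$, which extends by zero over the singular fiber and is tangent to the projected disk $\pi_N\circ u$, so Poincar\'e--Hopf gives the count $\chi(\mathbb{D})=1$ outright and sidesteps the local polar-coordinate winding computation at $T_0$ that you flag as the hard part; likewise the paper pushes off along the constant field $V'=\frac{1}{Q(x_0)}\big(p_1(x_0)\partial_{q_2}-p_2(x_0)\partial_{q_1}\big)$, which equals $V_2$ on the orbit and has positive $\partial_{q_1}$-component, equivalent to your $\partial_{q_1}$-translation (and your identity $\partial_{q_1}=p_1R+p_1'V_2$ is correct).
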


\begin{proof}
    To compute $c_{\tau_0}(u)$, we project the image of $u$ to $N$, and then count zeros (with signs and multiplicities) of a vector field $V$ along $\image(\pi_N\circ u)$ such that $V\in \xi$ and $V$ is a trivial non-vanishing section with respect to the trivialization $\tau_0$ when restricted to $\gamma$. Consider the vector field $V:= x \partial_x$, which is well defined on the image of $\pi_N\circ u$. Since $\lambda = p_1(x)dq_1+p_2(x)dq_2$, $V\in \xi=\ker(\lambda)$ and since it is a non-zero multiple of $V_1$, it is a trivial non-vanishing section of $\xi$ when restricted to $\gamma$. Observe that $V$ has an isolated zero only at $x=0$. Finally, $V$ is tangent to the projection $\pi_N\circ u$ of $u$, since $\partial_s u = -p_2(x)\partial_r
- p_2'(x)\partial_x.$ Therefore, we may apply the Poincar\'e-Hopf theorem and obtain that the index of the zero of $V$ at $x = 0$ is equal to the Euler characteristic of $u$, which is 1.

    To compute the self-intersection number $Q_{\tau_0}(u)$, we push $u$ off itself along the vector field 
    $$V'=\frac{1}{Q(x_0)}\left(p_1(x_0)\partial_{q_2}-p_2(x_0)\partial_{q_1}\right).$$ 
    Along the asymptotic boundary, $V'$ agrees with the vector field $V_2$ from $\tau_0$. Since $p_2(x_0)<0$ and $Q(x_0)>0$, $V'$ has a positive $\partial_{q_1}$ component. Since the image of $u$ was contained in a constant slice $\{q_1 = q_1^0\}$ (see the proof of Proposition~\ref{prop:explicit_construction}), the push-off of $u$ along $V'$ will be disjoint from $u$. Therefore $Q_{\tau_0}(u)=0$.
\end{proof}

\begin{definition}[Perturbed Conley-Zehnder indices] 
Let $\gamma$ be a degenerate orbit lying in a Morse-Bott family $T$ of orbits and let $\tau$ be a trivialization of the contact structure along $\gamma$. Following \cite[\S3.2]{wendl2010automatic}, we define perturbed Conley-Zehnder indices 
\[
CZ_\tau(\gamma + \epsilon):= CZ_\tau(-L_\gamma + \epsilon)
\]
where $L_\gamma$ is the asymptotic operator of $\gamma$, see Definition~\ref{def:asymptotic_operator}. (Note that our sign choice for the asymptotic operator is opposite than Wendl's.) Here by $CZ_\tau(-L_\gamma+\epsilon)$ we mean the Conley--Zehnder index of the path of symplectic matrices generated by the symmetric operator $-L_\gamma+\epsilon\cdot \operatorname{Id}$.
We also define the parity
\begin{equation}\label{e:parity}
    p(\gamma + \epsilon):= CZ_\tau(\gamma + \epsilon) \mod 2.
\end{equation}
By spectral flow arguments \cite{robbin1995spectral}, 
\begin{equation}\label{eq:diff_in_pert_CZ}
    CZ_\tau(\gamma - \epsilon) - CZ_\tau(\gamma + \epsilon) = \dim \ker (L_\gamma) = \dim {T} -1,
\end{equation}
and $CZ_\tau(\gamma \pm \epsilon)$ remains the same as we move $\gamma$ in the Morse-Bott family $T$. Note that here, the Morse-Bott family $T$ is that of parametrized orbits, so in our situation this is the torus $T_{x_0}$.

\end{definition}
\begin{definition}[Morse-Bott Fredholm index, (1.1) in \cite{wendl2010automatic}]
For a pseudoholomorphic curve $u: \dot{\Sigma} \to W^{2n}$ with asymptotic ends at Morse-Bott orbits,
\begin{equation}\label{eq:MB_Fredholm_ind}
    \ind(u; \mathbf{\delta}) = (n-3) \chi(\dot{\Sigma}) + 2c_\tau(u) + \mu_\tau(u;\mathbf{\delta}),
\end{equation}
where
\begin{equation}
    \mu_\tau(u;\mathbf{\delta}):= \Sigma_{z\in \Gamma^+} CZ_\tau(\gamma_z + \mathbf{\delta}_z) - \Sigma_{z\in \Gamma^-} CZ_\tau(\gamma_z - \mathbf{\delta}_z).
\end{equation}
and $\delta_z$ is a small non-zero number that is positive if $\gamma_z$ is a constrained end of $u$  and is  negative when $\gamma_z$ is  unconstrained \cite[\S3.2]{wendl2010automatic}.  Here $\Gamma_\pm$ denote the set of positive/negative punctures.
\end{definition}

\begin{lemma}\label{lem:shira_plane_index} The Morse--Bott Fredholm  index of each of the $J_0$-holomorphic planes $u$ constructed in Proposition~\ref{prop:explicit_construction} is
    \[ \ind(u) = 1.\]
\end{lemma}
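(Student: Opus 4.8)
The plan is to apply the Morse--Bott Fredholm index formula \eqref{eq:MB_Fredholm_ind} directly, assembling inputs that have already been computed. First I would record the topology of the domain: the plane $u\colon \C\to \R\times N$ from Proposition~\ref{prop:explicit_construction} is a genus-zero curve with a single positive puncture and no negative punctures, so $\dot\Sigma = \C$ is a once-punctured sphere with $\chi(\dot\Sigma)=1$, and $\Gamma^+$ consists of one point while $\Gamma^-=\emptyset$. Since the symplectization $\R\times N$ has real dimension $4$, we are in the case $n=2$, so the coefficient $(n-3)$ equals $-1$.

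Next I would collect the characteristic numbers. Lemma~\ref{lem:c_tau_shira_plane} gives $c_{\tau_0}(u)=1$, hence $2c_{\tau_0}(u)=2$. It then remains to evaluate the Conley--Zehnder contribution $\mu_{\tau_0}(u;\delta)=CZ_{\tau_0}(\gamma_c+\delta)$ coming from the single positive end. At $x_0$ the conditions of Setup~\ref{setup:construction} give $p_2'(x_0)=0$, $p_2''(x_0)>0$, and $p_1'(x_0)>0$, so the convexity quantity of Lemma~\ref{lem:RS_of_gammah} is $W=p_1'(x_0)p_2''(x_0)-p_1''(x_0)p_2'(x_0)=p_1'(x_0)p_2''(x_0)>0$, and that lemma yields $RS_{\tau_0}(T_{x_0})=\tfrac12$.

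The one genuine choice is the sign of $\delta$ at the positive puncture, and this is where I would be most careful. I would take the end to be constrained to a single orbit in the Morse--Bott family (so $\delta>0$), which is precisely the end behavior that persists when $T_{x_0}$ is perturbed and the asymptotic orbit becomes the nondegenerate hyperbolic orbit $\gamma_h$. Under this choice $CZ_{\tau_0}(\gamma_c+\delta)=CZ_{\tau_0}(\gamma_h)=0$ by Corollary~\ref{cor:CZ}. Substituting into \eqref{eq:MB_Fredholm_ind},
$$\ind(u) = (n-3)\chi(\dot\Sigma) + 2c_{\tau_0}(u) + \mu_{\tau_0}(u;\delta) = (-1)(1) + 2(1) + 0 = 1.$$
As a consistency check I would note that the same formula in the nondegenerate setting computes $\ind(\upert)=1$ for the perturbed plane of Theorem~\ref{theorem:construction}, using the same $c_{\tau_0}=1$ and the genuine index $CZ_{\tau_0}(\gamma_h)=0$; this confirms that the constrained convention is the correct one and that the Morse--Bott and nondegenerate indices agree. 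The main obstacle is therefore purely bookkeeping: justifying the constrained convention, equivalently selecting the correct perturbed value $CZ_{\tau_0}(\gamma_c+\delta)$ among the two integers $0$ and $1$ that differ by $\dim\ker L_{\gamma_c}=\dim T_{x_0}-1=1$ (see \eqref{eq:diff_in_pert_CZ}), and keeping the sign conventions for $\tau_0$ aligned with those of the cited references.
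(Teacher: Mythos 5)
Your proof is correct and follows essentially the same route as the paper: apply the Morse--Bott index formula \eqref{eq:MB_Fredholm_ind} with $(n-3)\chi(\dot\Sigma)=-1$, take $c_{\tau_0}(u)=1$ from Lemma~\ref{lem:c_tau_shira_plane}, and pin down $CZ_{\tau_0}(\gamma+\delta)=0$ for the constrained positive end via $RS_{\tau_0}(T_{x_0})=\tfrac{1}{2}$ and the spectral-flow relation \eqref{eq:diff_in_pert_CZ}. The only cosmetic difference is that the paper makes the disambiguation explicit by citing the bound $\lvert RS_{\tau_0}(\gamma)-CZ_{\tau_0}(\gamma\pm\epsilon)\rvert\leq\tfrac{1}{2}\dim\ker(L_\gamma)=\tfrac{1}{2}$ to conclude the two perturbed values are exactly $0$ and $1$, using Corollary~\ref{cor:CZ} only as a consistency check rather than as the primary justification, which matches the bookkeeping you describe in your final paragraph.
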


\begin{proof}
  The plane $u$ is embedded with one positive constrained end on a degenerate orbit $\gamma$  in the Morse-Bott family $T_{x_0}$. Let $\tau_0$ be the trivialization defined in (\ref{eqn:tau0}).  By (\ref{eq:MB_Fredholm_ind}), 
  \[
  \ind(u) = (2-3)\cdot 1 + 2 c_{\tau_0}(u)+ CZ_{\tau_0}(\gamma +\delta) \quad \text{for}\quad \delta >0.
  \]
  By Lemma~\ref{lem:c_tau_shira_plane}, $c_{\tau_0}(u)=1$. To compute $ CZ_{\tau_0}(\gamma +\delta)$ we will use Lemma~\ref{lem:RS_of_gammah}, which states that the Robbin-Salamon index of $T_{x_0}$ is $\frac{1}{2}$. 
  Note that by Corollary~\ref{cor:CZ}, after a nondegenerate perturbation,  the Conley--Zehnder index of the hyperbolic orbit in $T_{x_0}$ is computed to be 0. We will see that the same holds for $CZ_{\tau_0}(\gamma+\delta)$.

  Indeed, since $RS_{\tau_0} (\gamma):=RS_{\tau_0}(-L_{\gamma})$, the difference between $RS_{\tau_0}(-L_\gamma)$ and any of the perturbations $CZ_{\tau_0}(-L_\gamma\pm\epsilon)$ is at most $\frac{1}{2}\dim\ker(L_\gamma)=\frac{1}{2}$ (see for example \cite[p.7]{ishikawa2016spectral} and \cite{robbin1995spectral}). As the Robbin--Salamon index is a half-integer, it follows from  (\ref{eq:diff_in_pert_CZ}) that indeed
  \begin{equation}
       CZ_{\tau_0}(\gamma+\delta)= 0.
  \end{equation} Overall,
  \[
  \ind(u_0) = -1+2+0=1.
  \]
\end{proof}

\begin{lemma}\label{lem:u_0_regular} 
     Each of the $J_0$-holomorphic planes $u$ from Proposition~\ref{prop:explicit_construction} is regular.
\end{lemma}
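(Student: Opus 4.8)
The plan is to invoke the automatic transversality criterion of Wendl \cite[Theorem 1]{wendl2010automatic} in its Morse--Bott form, which guarantees Fredholm regularity of an immersed curve in a four-dimensional symplectization as soon as a purely numerical inequality among its index, genus, and asymptotic parities is satisfied. The three inputs needed are already in hand: by Proposition~\ref{prop:explicit_construction} each plane $u$ is embedded, hence somewhere injective and immersed; it has genus $g(u)=0$ and a single positive puncture with no negative ends; and by Lemma~\ref{lem:shira_plane_index} its Morse--Bott Fredholm index is $\ind(u)=1$.

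The remaining ingredient is the parity of the asymptotic end. The positive end of $u$ limits onto an orbit $\gamma$ in the Morse--Bott family $T_{x_0}$, and the relevant quantity is the constrained (perturbed) Conley--Zehnder index $CZ_{\tau_0}(\gamma+\delta)$ for $\delta>0$. In the proof of Lemma~\ref{lem:shira_plane_index} this was computed to be $0$ (consistently with the value $CZ_{\tau_0}(\gamma_h)=0$ of Corollary~\ref{cor:CZ}), so the unique end is even and the count of even punctures is $\#\Gamma_0(u)=1$. Wendl's criterion then requires $\ind(u)>2g(u)+\#\Gamma_0(u)-2$; here the right-hand side equals $-1$ while $\ind(u)=1$, so the inequality holds strictly and $u$ is regular. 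Equivalently, in the normal--Chern--number formulation one has $2c_N(u)=\ind(u)-2+2g(u)+\#\Gamma_0(u)=0$, hence $c_N(u)=0<1=\ind(u)$, which is exactly the hypothesis $\ind(u)>c_N(u)$ of \cite[Theorem 1]{wendl2010automatic}. Since the planes of Proposition~\ref{prop:explicit_construction} differ only by the rotation parameter $q_1^\star$, regularity of one yields regularity of the whole $S^1$-family.

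The main point requiring care is not the arithmetic but the fact that the asymptotic orbit $\gamma$ is degenerate: $u$ is a $J_0$-holomorphic plane for the Morse--Bott contact form $\lambda$, so ``regular'' must be interpreted in the Morse--Bott sense, i.e. surjectivity of the linearized operator set up on appropriately exponentially weighted Sobolev spaces with the constrained asymptotic at $\gamma$ (the perturbation $+\delta$ corresponds to constraining the end to vary within the family $T_{x_0}$). I would therefore verify explicitly that $u$ falls within the scope of the Morse--Bott version of the automatic transversality theorem, and that this notion of regularity is precisely the one feeding the subsequent moduli-space argument that produces the regular $\Jpert$-holomorphic plane $\upert$ of Theorem~\ref{theorem:construction}. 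Once the functional-analytic framework is matched to Wendl's hypotheses, the index count above closes the argument.
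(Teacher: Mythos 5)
Your proposal is correct and follows essentially the same route as the paper: both apply Wendl's automatic transversality criterion \cite[Theorem 1]{wendl2010automatic}, feed in $\ind(u)=1$ from Lemma~\ref{lem:shira_plane_index} and the even parity $CZ_{\tau_0}(\gamma+\delta)=0$ of the single positive end, compute $2c_N(u)=\ind(u)-2+2g+\#\Gamma_0=0$, and use embeddedness (so that the zero count $Z(du)$ of $du$ vanishes) to verify $\ind(u)>c_N(u)+Z(du)$. Your closing caution about interpreting regularity in the constrained Morse--Bott sense is apt and consistent with how the paper sets up the perturbed Conley--Zehnder indices and the subsequent parametric moduli argument.
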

\begin{proof}
     We wish to apply Wendl's automatic transversality theorem \cite[Theorem 1]{wendl2010automatic} to the plane $u$, which guarantees that the linearized operator $D\bar \partial_{J_0}(u)$ is surjective, i.e. that $u$ is regular, provided that 
\begin{equation}\label{eq:Wendls_transversality_condition}
\ind(u)>c_N(u)+Z(du).
\end{equation}
In \eqref{eq:Wendls_transversality_condition} the first term is the Fredholm index, which is computed in Lemma~\ref{lem:shira_plane_index} to be
\[
\ind(u)= 1.
\]
The second term is the normal first  Chern number, which can be calculated by
    \begin{equation}\label{eq:c_N_vs_ind}
         2 c_N(u) = \ind(u) -2+2g(\mathbb{C})+\#\Gamma_{0}  = \ind(u) -1=0.
    \end{equation}
Here the set of Reeb orbits with even parity~\eqref{e:parity} is denoted by $\Gamma_{0}$, and the plane is asymptotic to one Reeb orbit with even parity, $\Gamma_{0}= \{\gamma\}$, c.f. \cite[\S1.1, (1.2), \S3.2]{wendl2010automatic}.  (In the case of an immersed closed pseudoholomorphic curve in a four dimensional symplectic manifold, the normal first Chern number agrees with the first Chern number of the normal bundle. To define it in the more general context of asymptotically cylindrical curves, the above numerical formula is typically used as the definition, see \cite[(1.2), \S3.5]{wendl2010automatic}, with the full derivation carried out in \cite[\S 3.4]{wendl-int}.)

The third term $Z(du)$ is a count with multiplicities of the zeros of $du$, which vanishes for immersed curves.  We know that $u$ is embedded (and hence immersed) by Proposition~\ref{prop:explicit_construction}.
Thus (\ref{eq:Wendls_transversality_condition}) is satisfied for $u$ and guarantees that $D\partial_{J_0} (u)$ is surjective.
\end{proof}

\subsection{Persistence under perturbations of the data} \label{subsection persistence}
Our goal for this section is to show that each of the $J_0$-holomorphic planes constructed in \S\ref{subsection MorseBott plane} survives under small deformations of the (non-generic) contact form and almost complex structure. 
\begin{remark}\label{rem:gammah}
    In this section we fix the Reeb orbit that persists after the perturbation of $\lambda$ to $\lambdapert$ in \S\ref{ss:perturb} corresponding to the minimum \eqref{cos}, in which case we denote it by $\gamma_h$, cf. Lemma~\ref{lem:morsebott_perturbation}.
    We stress the slight abuse of notation: $\gamma_h$ is not a positive hyperbolic orbit for $\lambda$, but it is the unique degenerate orbit from the family $T_{x_0}$ that becomes positive hyperbolic after the specific perturbation fixed in \S\ref{ss:perturb}. 
\end{remark}

\begin{proposition}\label{prop:existence_survives_pert}
    Let $\lambda$ be a toric invariant contact form as in Setup~\ref{setup:construction}. Let $J_0$ be the $\lambda$-compatible almost complex structure and $u_0$ be the $J_0$-holomorphic plane from Proposition~\ref{prop:explicit_construction} that is positively asymptotic to $\gamma_h$. Let $\lambdapert$ be the $L$-nondegenerate perturbation of $\lambda$ from \S\ref{ss:perturb}. Then for any $\lambdapert$-compatible almost complex structure $\Jpert$ close enough to $J_0$, there exists a regular $\Jpert$-holomorphic plane $\upert$ positively asymptotic to $\gamma_h$ and close to $u_0$.
\end{proposition}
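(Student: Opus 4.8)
The plan is to deduce persistence of the plane from the regularity of $u_0$ (Lemma~\ref{lem:u_0_regular}) by an implicit function theorem argument, carried out inside the Morse--Bott functional-analytic framework of Bourgeois~\cite{Bourgeois} and Wendl~\cite{wendl2010automatic}. First I would fix an exponentially weighted Sobolev completion $\mathcal{B}$ of the space of finite-energy maps $\C\to\R\times N$ with a single positive puncture, set up so that the asymptotic end is \emph{constrained} to the orbit $\gamma_h$ inside the Morse--Bott torus $T_{x_0}$ (recall from Remark~\ref{rem:gammah} and Lemma~\ref{lem:morsebott_perturbation} that $\gamma_h$ is the degenerate orbit at the minimum of~\eqref{cos}, which becomes positive hyperbolic after the perturbation of~\S\ref{ss:perturb}). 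This is precisely the Fredholm setup underlying the computation $\ind(u_0)=1$ in Lemma~\ref{lem:shira_plane_index}, and for it Lemma~\ref{lem:u_0_regular} asserts that the vertical linearization $D\bar\partial_{J_0}(u_0)$ is surjective, with one-dimensional kernel spanned by the $\R$-translation of the symplectization; the weight $\delta>0$ is taken in the spectral gap of the asymptotic operator so that this operator is Fredholm.

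Next I would assemble a parametrized problem with data $(\varepsilon,J)$ ranging over a neighborhood of $(0,J_0)$ and define the smooth section $\mathcal{S}(u,\varepsilon,J)=\bar\partial_{J}(u)$, with $J$ a $\lambdapert$-compatible almost complex structure, of the corresponding Banach bundle over $\mathcal{B}$, for which $u_0$ is a zero at the basepoint $(0,J_0)$. To handle the deformation of the contact form cleanly it is convenient to use that the shift $\Phi_\varepsilon\colon(\rho,y)\mapsto(\rho+\varepsilon F_L(y),y)$ is a symplectomorphism from the symplectization of $\lambdapert=e^{\varepsilon F_L}\lambda$ onto $(\R_r\times N,d(e^r\lambda))$; it intertwines the two $\R$-translations, so the pushed-forward structures $\widehat J_\varepsilon:=(\Phi_\varepsilon)_\ast\Jpert$ form a family of cylindrical, $d(e^r\lambda)$-compatible almost complex structures on a single fixed target with $\widehat J_\varepsilon\to J_0$ as $(\varepsilon,J)\to(0,J_0)$, and with positive-end asymptotics governed by the nondegenerate Reeb dynamics of $\lambdapert$. (One may equally work directly with the varying family $(\lambdapert,\Jpert)$.)

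The main obstacle is that the asymptotic orbit passes from degenerate Morse--Bott (at $\varepsilon=0$) to nondegenerate positive hyperbolic (at $\varepsilon>0$): the eigenvalue of the asymptotic operator generating the family direction crosses zero, so no single fixed exponential weight is simultaneously adapted to all parameters. This is exactly the situation controlled by Bourgeois's Morse--Bott gluing/correspondence: a perturbed curve positively asymptotic to the orbit lying over a critical point of the Morse function corresponds to a Morse--Bott curve asymptotic to the family, together with a gradient trajectory of the Morse function limiting onto that critical point. In our case the Morse--Bott curve $u_0$ is \emph{already} asymptotic to the critical orbit $\gamma_h$, so the matching gradient trajectory is constant and the correspondence collapses to a plain application of the implicit function theorem in Bourgeois's weighted spaces. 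Since $D\bar\partial_{J_0}(u_0)$ is surjective, the full differential of $\mathcal{S}$ at $(u_0,0,J_0)$ is surjective, and the implicit function theorem yields a smooth family of zeros $u_{(\varepsilon,J)}\to u_0$; I set $\upert:=u_{(\varepsilon,\Jpert)}$.

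It then remains to record the stated properties. By construction $\upert$ is $C^\infty_{\mathrm{loc}}$-close to $u_0$, and as a small perturbation of the embedded curve $u_0$ (Proposition~\ref{prop:explicit_construction}) it is itself embedded; its positive asymptotic end is the constrained orbit, which for $\varepsilon>0$ is the nondegenerate orbit $\gamma_h$. Regularity of $\upert$ is immediate, since surjectivity of the vertical differential is an open condition preserved by the implicit function theorem; alternatively one re-runs Wendl's automatic transversality~\cite{wendl2010automatic} verbatim as in Lemma~\ref{lem:u_0_regular}, because $\upert$ is embedded with the same Fredholm index $1$ and a single even-parity asymptotic end. As indicated, the real content lies in making the Fredholm package uniform across the Morse--Bott degeneration, which is where Bourgeois's framework does the work.
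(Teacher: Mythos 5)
Your proposal is correct and follows essentially the same route as the paper: both run a parametrized implicit-function-theorem argument anchored on the surjectivity of $D\bar\partial_{J_0}(u_0)$ from Lemma~\ref{lem:u_0_regular}, and both then obtain regularity of $\upert$ from Wendl's automatic transversality, since the perturbed plane is embedded with the same Fredholm index~$1$ and a single even-parity asymptotic end. Your two technical refinements---conjugating by the shift $\Phi_\varepsilon$ so as to work in a fixed symplectization (note that $(\Phi_\varepsilon)_*\Jpert$ is $\R$-invariant and $d(e^r\lambda)$-compatible but not adjusted to $\lambda$ itself, a harmless wording point), and invoking Bourgeois's Morse--Bott framework to make the Fredholm package uniform across the degeneration---play exactly the role of the paper's citations to Wendl's parametric moduli spaces and strong implicit function theorem, and of its remark identifying the proposition as the trivial-cascade case of Yao's gluing theorem.
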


\begin{remark}
    One can think of Proposition~\ref{prop:existence_survives_pert} as a special case of gluing of cascades, such as in \cite[Theorem 1]{yao2022cascades}, when the cascade is trivial and consists of a single holomorphic plane. Moreover, Proposition~\ref{prop:existence_survives_pert} shares some similarities with Wendl's strong implicit function theorem in \cite[Proposition 7]{wendl2010open}. The latter is a much stronger result than required for our purposes. For completeness we include a direct proof in our simpler setting.
\end{remark}

Our strategy for the proof of Proposition~\ref{prop:existence_survives_pert} is the following:
\begin{enumerate}
    \item Construct a parametric moduli space of planes in symplectizations corresponding to a path of contact forms and varying almost complex structures.
    \item Show that this moduli space is smooth at $(0,u_0)$ and has dimension $\ind(u_0)+1$.
    \item Deduce that $(0,u_0)$ is not isolated with respect to the parameter value, and thus sufficiently small perturbations of the data admit regular pseudoholomorphic planes close to $u_0$.
\end{enumerate}

Let $\lambda_s= e^{s\varepsilon F_L}\lambda$, and observe that $\lambda_0=\lambda$ and $\lambda_1=\lambdapert$.
Let $\underline{J}:=\{J_s\}_{s\in[0,1]}$ be a smooth path of almost complex structures on $\R\times Y$ such that $J_s$ is $\lambda_s$-compatible. Consider the parametric moduli space 
of homologous planes asymptotic $\gamma_h$ in the symplectizations of $\lambda_s$:
\[
\mathcal{M}(\{J_s\}):= \left\{ (s,u_s):\ \bar\partial_{J_s}u_s=0,\  u_s\text{ is  a plane positively asymptotic to }\gamma_h  \right\}.
\] 
For each $s\in [0,1]$ we consider also the ``sliced" moduli space 
\[
\mathcal{M}_s:= \left\{ u:\ \bar\partial_{J_s}u=0,\  u\text{ is a plane positively asymptotic to }\gamma_h  \right\}.
\]
\begin{remark}
We stress that our parametric moduli space differs from the standard scenario in the literature, since the almost complex structures $J_s$ are not all cylindrical with respect to a fixed contact form. In other words, the contact form $\lambda_s$ varies as well with the parameter. As mentioned earlier,  a similar setting appears in Wendl's strong implicit function theorem in \cite[Proposition 7]{wendl2010open}.

 As explained in \cite[Remark 8.7]{wendl2016lectures}, one can define the parametric moduli space $\mathcal{M}(\{J_s\})$ and the associated notion of parametric regularity, with an explicit description of the associated linearized operator $D\bar\partial_{\{J_s\}_{s\in P}}$.  In particular,  $D\bar\partial_{\{J_s\}}(j,u,s)$ is the sum of $D\bar\partial_{J_s}(j,u)$ with an additional term defined on $T_sP$, where $P=[0,1]$ is the parameter space. 
When these operators are surjective, the implicit function theorem guarantees that the parametric moduli space $\mathcal{M}$ and the slice $\mathcal{M}_s$ are smooth manifolds near $(s,u_s)$ and $u_s$ respectively, of dimensions given by the Fredholm indices.
\end{remark}

\begin{proof}[Proof of Proposition~\ref{prop:existence_survives_pert}]
    Our first step is to show that the parametric moduli space $\mathcal{M}(\{J_s\})$ is a smooth manifold in an open neighborhood of the point $(0,u_0)$. Indeed, this is an instance of ``honest regularity implies parametric regularity". More concretely, by Lemma~\ref{lem:u_0_regular}, the linearized operator $D\bar\partial_{J_0}(j,u_0)$ is surjective at $u_0$. Since the parametric operator $D\bar\partial_{\{J_s\}}(j,u,s)$ is the direct sum of $D\bar\partial_{J_s}(j,u)$ with an additional term and the co-domain is the same for the parametric and non-parametric operators, the parametric operator must be surjective at $(0,u_0)$ as well.  We conclude that  $\mathcal{M}(\{J_s\})$ is a smooth manifold near $(0,u_0)$ of dimension $$
    \dim(\ker(D\bar\partial_{\{J_s\}}(j,u_0,0))) = \dim (\ker(D\bar\partial_{J_0}(j,u_0)))+1 = \ind(u_0)+1.
    $$
    In particular, there exists $s$ arbitrarily close to 0 such that the  slice $\mathcal{M}_s$ is not empty and there exists $u_s\in \mathcal{M}_s$ that is $C^\infty$ close to $u_0$. 
    
   We now use automatic transversality to show that $u_s\in \mathcal{M}$ is regular for $s$ close enough to 0. 
   For that purpose, we first notice that $c_N(u_s)=c_N(u_0)$ since  $u_s$ is close to $u_0$ and has the same asymptotics. Now, it follow from \eqref{eq:c_N_vs_ind} that the indices coincide:  
    \[
    \ind(u_s)=\ind(u_0)\qquad \text{and}\qquad c_N(u_s)=c_N(u_0).
    \]
    Finally, observe that we can choose an embedded $u_s$, {since it is close to $u_0$, which is embedded}.

   First we observe that $u_s$ is embedded by way of the ECH index inequality, cf. Remark \ref{rem:I}.  In particular, $Z(du_s)=0$. This is because we have that both the Fredholm and ECH indices of $u_s$ are one. To deduce this, observe that 
    \[
    \ind(u_s)=\ind(u_0)\qquad \text{and}\qquad c_N(u_s)=c_N(u_0).
    \]
    This can be seen directly from the fact that $u_s$ is close to $u_0$ and has the same asymptotics.  Hence $c_N(u_s)=c_N(u_0)$ and then the equality for the Fredholm indices follow from (\ref{eq:c_N_vs_ind}).  To complete the computation that the ECH index is one, we observe that $Q_{\tau_0}(u_s)=0$ and $c_{\tau_0}(u_s)=1$ by Lemma \ref{lem:Q and Chern upert}.
    
    Thus, we have that Wendl's transversality condition (\ref{eq:Wendls_transversality_condition}) holds, hence his automatic transversality theorem \cite[Theorem 1]{wendl2010automatic} guarantees that $u_s$ is regular, similarly as in the proof of Lemma~\ref{lem:u_0_regular}.     
\end{proof}

\subsection{Index computations for the plane}
\label{sec:iplane}
We conclude by computing the relevant indices of the $\Jpert$-holomorphic plane constructed in \S\ref{subsection persistence}.

\begin{lemma}
\label{lem:Q and Chern upert}
    Let $N$ be diffeomorphic to a solid torus with a toric contact structure satisfying the conditions of Setup~\ref{setup:construction}.
    For any pseudoholomorphic curve $u$ in $\R\times N$ with a single positive asymptotic end at $\gamma_h$, we have that $Q_{\tau_0}(u) = 0$ and $\langle c_{\tau_0}(\omega), [u]\rangle = 1$, where $\omega$ is the symplectic form associated to the symplectization.    
\end{lemma}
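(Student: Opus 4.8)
The plan is to reduce the computation entirely to the explicit model plane of Proposition~\ref{prop:explicit_construction}, using the fact that both $c_{\tau_0}$ and $Q_{\tau_0}$ depend only on the relative homology class $[u]\in H_2(\R\times N,\gamma_h)$ (see the discussion following Definition~\ref{def:I}, together with Lemma~\ref{lem:Q_tau_lk_def}). So the whole lemma will follow once I show that every curve with a single positive end at $\gamma_h$ represents the same relative class as the plane $u_0$, for which Lemma~\ref{lem:c_tau_shira_plane} already records $c_{\tau_0}(u_0)=1$ and $Q_{\tau_0}(u_0)=0$.

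First I would nail down the topology. Since $N$ is diffeomorphic to a solid torus and the singular fiber $T_0$ is collapsed along the $\partial_{q_2}$-slope (as in the proof of Proposition~\ref{prop:explicit_construction}), the orbit $\gamma_h\subset T_{x_0}$ of slope $(0,-1)=-\partial_{q_2}$ is isotopic to a meridian of $N$ and hence bounds an embedded disk; in particular $[\gamma_h]=0\in H_1(N)$. Using the deformation retraction $\R\times N\simeq N$, I would then read off $H_2(\R\times N,\gamma_h)\cong H_2(N,\gamma_h)$ from the long exact sequence of the pair,
\[
H_2(N)\to H_2(N,\gamma_h)\xrightarrow{\ \partial\ } H_1(\gamma_h)\to H_1(N).
\]
Because $H_2(N)=0$ the boundary map $\partial$ is injective, and because $[\gamma_h]=0$ in $H_1(N)$ its image is all of $H_1(\gamma_h)\cong\Z$; thus $\partial$ is an isomorphism. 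Consequently the relative class of any curve whose only end is a single positive end at $\gamma_h$ (with multiplicity one) is pinned down by $\partial[u]=[\gamma_h]$, so all such curves are relatively homologous to one another, and in particular to $u_0$.

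Finally I would combine the two ingredients: since $c_{\tau_0}$ and $Q_{\tau_0}$ are functions of $[u]$ alone and $[u]=[u_0]$ for every admissible $u$, the values $\langle c_{\tau_0}(\omega),[u]\rangle=c_{\tau_0}(u_0)=1$ and $Q_{\tau_0}(u)=Q_{\tau_0}(u_0)=0$ transfer immediately. The hard part will be the homological bookkeeping that makes this reduction legitimate: confirming that $\gamma_h$ is genuinely a meridian so that $\partial$ has full image (and is therefore an isomorphism rather than merely injective), and checking that the hypothesis ``single positive end at $\gamma_h$'' really places $[u]$ in the rank-one group $H_2(N,\gamma_h)$ with no stray negative ends contributing extra boundary. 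For the latter I would invoke that in this toric solid-torus region such curves carry no negative ends (cf.\ Theorem~\ref{thm:no curve from right}), so that $\partial[u]=[\gamma_h]$ exactly and the uniqueness above applies verbatim.
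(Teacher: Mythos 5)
Your core argument is exactly the paper's proof: since $H_2(N)=0$, the boundary map $H_2(N,\gamma_h)\to H_1(\gamma_h)$ in the long exact sequence of the pair is injective, so every curve whose only asymptotic end is a single positive end at $\gamma_h$ with multiplicity one is relatively homologous to the explicit plane of Proposition~\ref{prop:explicit_construction}, and the values $c_{\tau_0}=1$, $Q_{\tau_0}=0$ transfer from Lemma~\ref{lem:c_tau_shira_plane} because both quantities depend only on the relative class.

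Two of your ``hard parts'' are, however, either unnecessary or problematic. First, you do not need $\partial$ to be an isomorphism (i.e.\ you do not need $\gamma_h$ to be a meridian at this point): all the curves under consideration have the \emph{same} boundary, namely $[\gamma_h]$ with multiplicity one, so injectivity alone forces $[u]=[u_0]$; surjectivity plays no role. Second, and more seriously, your plan to invoke Theorem~\ref{thm:no curve from right} to rule out negative ends would be circular: that theorem is proved using the index computation $\ind(\upert)=I(\upert)=1$ of Lemma~\ref{lem:perturbed_plane_index}, which itself rests on the present lemma; moreover it is stated only under the stronger Setup~\ref{setup:morethanpi} (angle $>\pi$), whereas this lemma must hold under Setup~\ref{setup:construction} and is also used in the $t_2-t_1=\pi$ case. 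The correct resolution is simply to read the hypothesis as the paper does: ``a single positive asymptotic end at $\gamma_h$'' means the curve has no other ends at all, so $\partial[u]=[\gamma_h]$ holds by hypothesis, not by a curve-theoretic obstruction. With that reading, your argument collapses to the paper's two-line proof and is correct.
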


\begin{proof}
Observe that $H_2(N)=0$. Therefore, the map $H_2(N, \gamma_h) \to H_1(\gamma_h)$ in the long exact sequence of a pair is injective, so any two relative homology classes in $H_2(N, \gamma_h)$ with the same simple positive asymptotic end at $\gamma_h$ and no negative ends are homologous.

Let $u_0$ be the pseudoholomorphic plane asymptotic to $\gamma_h$, from Proposition~\ref{prop:existence_survives_pert}. Since $Q_{\tau_0}(u)$ and $\langle c_{\tau_0}(\omega), [u]\rangle$ depend only on the underlying relative homology class of $u$, we have that $Q_{\tau_0}(u) = Q_{\tau_0}(u_0) = 0$ and $\langle c_{\tau_0}(\omega), [u]\rangle = \langle c_{\tau_0}(\omega), [u_0]\rangle=1$ by Lemma~\ref{lem:c_tau_shira_plane}.
\end{proof}

The following lemma is the missing part of the proof of Theorem~\ref{theorem:construction}.

\begin{lemma}\label{lem:perturbed_plane_index}
Let $\upert$ be the $\Jpert$-holomorphic plane from Proposition~\ref{prop:existence_survives_pert}, associated to a perturbed $L$-nondegenerate contact form $\lambdapert$. We have 
\begin{align}
        \ind(\upert) &= I(\upert) = 1 \\
        \Jplus(\upert) &= 0
    \end{align}

\end{lemma}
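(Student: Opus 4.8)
I need to compute three quantities for the perturbed plane $\upert$: its Fredholm index $\ind(\upert)$, its ECH index $I(\upert)$, and its $\Jplus$ index. The strategy is to assemble these from the characteristic numbers already computed in this section, namely $c_{\tau_0}(\upert)=1$ and $Q_{\tau_0}(\upert)=0$ (Lemma~\ref{lem:Q and Chern upert}), together with the Conley--Zehnder computation $CZ_{\tau_0}(\gamma_h)=0$ from Corollary~\ref{cor:CZ}. The three index formulas I will invoke are the Fredholm index formula in the nondegenerate case, the ECH index definition~\eqref{ECHindex}, and the definition of $\Jplus$ via $\Jzero$.

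Let me走 through the steps. First I would compute $\ind(\upert)$. Since $\lambdapert$ is $L$-nondegenerate and $\gamma_h$ is the positive hyperbolic orbit in $T_{x_0}$, the (nondegenerate) Fredholm index of a genus-zero curve with one positive end and no negative ends is
\[
\ind(\upert) = -\chi(\C) + 2c_{\tau_0}(\upert) + CZ_{\tau_0}(\gamma_h) = -1 + 2\cdot 1 + 0 = 1,
\]
using $\chi(\C)=1$. (Alternatively this follows from Proposition~\ref{prop:existence_survives_pert}, since $\upert$ is constructed $C^\infty$-close to the Morse--Bott plane $u_0$ which has $\ind(u_0)=1$ by Lemma~\ref{lem:shira_plane_index}, and the index is locally constant.) Next I compute the ECH index from~\eqref{ECHindex}: with a single positive end at the embedded orbit $\gamma_h$ of multiplicity one,
\[
I(\upert) = c_{\tau_0}(\upert) + Q_{\tau_0}(\upert) + CZ_{\tau_0}(\gamma_h) = 1 + 0 + 0 = 1.
\]
So $\ind(\upert)=I(\upert)=1$, as claimed; note this is consistent with the index inequality~\eqref{indineq} being an equality, which is forced here since $\upert$ is embedded (Proposition~\ref{prop:explicit_construction}) with $\Delta(\upert)=0$.

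Finally I compute $\Jplus(\upert)$. Using the definition $\Jzero(\upert) = -c_{\tau_0}(\upert) + Q_{\tau_0}(\upert) + CZ^{\mathbb J}_{\tau_0}$, and observing that the modified Conley--Zehnder term $CZ^{\mathbb J}_{\tau_0}$ vanishes because it sums only up to $m_i-1=0$ iterates of the single multiplicity-one positive end (there are no negative ends), I get
\[
\Jzero(\upert) = -1 + 0 + 0 = -1.
\]
Then $\Jplus(\upert) = \Jzero(\upert) + |\alpha| - |\beta|$ where $\alpha=\{(\gamma_h,1)\}$ and $\beta=\emptyset$. Since $\gamma_h$ is positive hyperbolic of multiplicity one, $|\alpha| = 1$ and $|\beta|=0$, giving
\[
\Jplus(\upert) = -1 + 1 - 0 = 0.
\]

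\textbf{The main obstacle.} None of these are hard calculations once the inputs are in place — the real content lives in the already-proved lemmas. The one point requiring care is the sign and normalization conventions: I must ensure the $\chi(\C)=1$ convention in the Fredholm formula, the trivialization $\tau_0$, and the $CZ_{\tau_0}(\gamma_h)=0$ value are all mutually consistent, and that I am correctly reading off $CZ^{\mathbb J}_{\tau_0}=0$ from the truncated sum in the $\Jzero$ definition. I would double-check that the value $CZ_{\tau_0}(\gamma_h)=0$ used here (an honest nondegenerate Conley--Zehnder index) matches the perturbed value from Corollary~\ref{cor:CZ} rather than the perturbed-degenerate index $CZ_{\tau_0}(\gamma+\delta)$ from the Morse--Bott regularity argument; both equal $0$, so the conclusion is unaffected, but the provenance should be stated cleanly. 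A final cross-check is that $\Jplus(\upert)$ is even, as required for a curve contributing to the ECH differential (Remark~\ref{remark J+}), which $0$ indeed satisfies.
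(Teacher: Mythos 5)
Your proposal is correct and takes essentially the same approach as the paper: the paper's proof likewise plugs $c_{\tau_0}(\upert)=1$, $Q_{\tau_0}(\upert)=0$ (Lemma~\ref{lem:Q and Chern upert}) and $CZ_{\tau_0}(\gamma_h)=0$ (Corollary~\ref{cor:CZ}) into the Fredholm and ECH index formulas to get $\ind(\upert)=I(\upert)=1$. The only cosmetic difference is in the last step: the paper evaluates $\Jplus(\upert)=2(g-1)+m_{\gamma_h}+|\gamma_h|=0$ using the genus/ends formula (the equality case of Proposition~\ref{prop:J0}), whereas you compute it straight from the definition via $\Jzero(\upert)=-c_{\tau_0}+Q_{\tau_0}+CZ^{\mathbb{J}}_{\tau_0}=-1$ and $\Jplus=\Jzero+|\alpha|-|\beta|=0$; both one-line evaluations are valid and rest on the same inputs.
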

\begin{proof}
    We use the notation of \S\ref{subsec:AT_in_ECH}. By Lemma~\ref{lem:Q and Chern upert}, and Corollary~\ref{cor:CZ}, we can compute the indices as follows:
    \begin{equation}
        \begin{aligned}
            \ind(\upert) &= -\chi(\upert) + 2c_{\tau_0}(\upert) + CZ_{\tau_0}(\gamma_h) =  -1+2+0 = 1, \\
            I(\upert) &= c_{\tau_0}(\upert) + Q_{\tau_0}(\upert) + CZ_{\tau_0}(\gamma_h) =  1+0+0 = 1,\\
            \Jplus(\upert) &= 2(g-1) + m_{\gamma_h} + |\gamma_h| = 2(0-1)+1+1 = 0.
        \end{aligned}\qedhere
    \end{equation}
\end{proof}
{We conclude with the proof of the main theorem for this section, which is an immediate consequence of the above results.
\begin{proof}[Proof of Theorem~\ref{theorem:construction}]
    Propositions~\ref{prop:explicit_construction} and \ref{prop:existence_survives_pert} guarantee the existence of the planes $u_0$ and $\upert$ positively asymptotic to $\gamma_h$. The Fredholm, ECH and $\Jplus$ indices of $\upert$ are computed in Lemma~\ref{lem:perturbed_plane_index}.
\end{proof}
}
\section{Obstructing pseudoholomorphic curves from the right}  \label{section constraints}
In this section, we will obtain significant constraints on $\Jpert$-holomorphic curves in the case when the contact toric manifold has moment map image traversing an angle strictly greater than $\pi$.

In \S\ref{sec:asymptotic}, we will analyze the possible asymptotic behavior of $\Jpert$-holomorphic curves with a positive asymptotic end at $\gamma_h$ (with multiplicity $1$). One possibility of asymptotic behavior is that the end approaches entirely from one side of the torus $T_{x_0}$.

\begin{definition} \label{def:fromleftfromright}
    Let $(Y,\xi)$ be a contact toric manifold. Let $\lambda$ be any contact form and $\gamma$ a periodic Reeb orbit contained in a torus fiber $T_x$. Let $J$ be any $\lambda$-compatible almost complex structure on $\R\times Y$, and $u$ a $J$-holomorphic curve positively asymptotic to $\gamma$. Let $\delta>0$.  Recalling Notation~\ref{notation:Y_interval}, we say that:
    \begin{itemize}
        \item $u$ approaches $\gamma$ \emph{from the left} (resp. \emph{from the right}) if there exists $S\gg0$ such that the connected component of $u\cap [S,\infty)\times Y$ which approaches $\gamma$ is contained in $\R\times Y_{[x-\delta,x]}$ (resp. $\R\times Y_{[x,x+\delta]}$); 
        \item  $u$ approaches $\gamma$ \emph{from one side} if it approaches from the left or from the right; 
        \item $u$ and $u'$ approach $\gamma$ \emph{from the same side} if they both approach from the left or both approach from the right.
    \end{itemize}
\end{definition}

\begin{notation} \label{not:relhomclass} Given a solid torus portion of a contact toric manifold with one singular toric fiber and one boundary torus fiber, endowed with a Reeb vector field tangent to the fibers. If the collapsing slope of the singular fiber is realized as the slope of the Reeb orbits in a regular fiber $T_{x_0}$ as in Setup~\ref{setup:construction}, let $\gamma_h$ be the hyperbolic Reeb orbit in $T_{x_0}$ with respect to the perturbed contact form $\lambdapert$. Because the second homology of a solid torus is trivial, there is a unique relative homology class represented by a surface lying in the solid torus with (asymptotic) boundary on $\gamma_h$ with multiplicity $1$. We will denote this relative homology class in $H_2(Y,\gamma_h)$ by $E$.
\end{notation}

Note that the $J_0$-holomorphic and $\Jpert$-holomorphic curves constructed in section~\ref{section plane exists} both lie in this class $[u_0]=[\upert]=E$.

\begin{theorem} \label{thm:no curve from right}
Let $\lambda$ be a toric contact form satisfying Setup~\ref{setup:morethanpi}, $\lambdapert$ its $L$-nondegenerate perturbation satisfying Setup~\ref{setup:deltaperturbation}, $\Jpert$  any $\lambdapert$-compatible almost complex structure, and $u$ a connected $\Jpert$-holomorphic curve, with exactly one positive asymptotic end which is at the closed positive hyperbolic Reeb orbit $\gamma_h$ in the torus fiber $T_{x_0}$. 
Then $u$ has no negative ends, $[u]=E\in H_2(Y,\gamma_h)$, $u$ has Fredholm index $1$, and $u$ does not approach $\gamma_h$ from the right.
\end{theorem}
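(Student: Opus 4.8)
The plan is to confine $u$ by positivity of intersection against holomorphic barriers, using an action bound to set up the confinement, and then to read off the homology class and index homologically. Throughout write $N'=Y_{[0,x_+]}$ for the solid-torus region to the left of the barrier fiber $T_{x_+}$, recalling Notation~\ref{notation:Y_interval}. First I would construct the barriers: by Setup~\ref{setup:deltaperturbation} the tori $T_{x_-}$ and $T_{x_+}$ are unperturbed and foliated by closed Reeb orbits of $\lambdapert$ of rational slope and action $>L$, so $\R\times T_{x_-}$ and $\R\times T_{x_+}$ are each foliated by $\Jpert$-holomorphic trivial cylinders $\R\times\gamma'$. Next, Stokes' theorem for $d\lambdapert$ forces every asymptotic end of $u$ to have action at most $\mathcal{A}(\gamma_h)<L$; in particular no end of $u$ is asymptotic to an orbit of $T_{x_-}$ or $T_{x_+}$. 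Hence $u\cap(\R\times T_{x_+})$ is compact, and positivity of intersection against the foliating cylinders makes the algebraic number $u\cdot(\R\times T_{x_+})$ a nonnegative count of isolated positive intersection points. Since $T_{x_+}$ separates $Y$ and the unique end of $u$ lies at $\gamma_h\subset T_{x_0}$, strictly to the left of $T_{x_+}$, this algebraic number vanishes, so $u$ is disjoint from $\R\times T_{x_+}$ and therefore $u\subset\R\times N'$, away from the second singular fiber.

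The homology class and the index then follow. In the solid torus $N'$ one has $H_2(N')=0$, so $H_2(N',\gamma_h)\to H_1(\gamma_h)$ is injective; as $\gamma_h$ is contractible in $N'$ (bounding a disk through the collapsed fiber, by condition \textup{(}\ref{item normalize R-}\textup{)} of Setup~\ref{setup:construction}) and $u$ has a single positive end of multiplicity one, a further action-plus-positivity argument—against the barrier cylinders and against the explicit plane $\upert$ of Theorem~\ref{theorem:construction}—rules out negative ends. This gives $\partial[u]=[\gamma_h]$ and hence $[u]=E$, the class of $\upert$ (Notation~\ref{not:relhomclass}). Now $u$ is somewhere injective, so using Lemma~\ref{lem:Q and Chern upert} ($c_{\tau_0}(E)=1$, $Q_{\tau_0}(E)=0$) and Corollary~\ref{cor:CZ} ($CZ_{\tau_0}(\gamma_h)=0$) I would compute $I(u)=I(E)=1$. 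The ECH index inequality \eqref{indineq} gives $\ind(u)\le I(u)=1$, while $\ind(u)=-\chi(u)+2c_{\tau_0}(u)+CZ_{\tau_0}(\gamma_h)=(2g-1)+2=2g+1\ge1$; therefore $\ind(u)=1$, $g=0$, and $u$ is an embedded plane.

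Finally I would rule out approach from the right, which I expect to be the genuine obstacle. Once $[u]=E$, represented by $\upert$ lying in $\{x\le x_0\}$ and approaching $\gamma_h$ from the left, the homological intersection $E\cdot(\R\times T_{x'})$ vanishes for every $x'>x_0$. On the other hand, if $u$ approached $\gamma_h$ from the right then $x\circ\pi_Y\circ u$ would exceed $x_0$; choosing a rational slope $x'\in(x_0,\sup(x\circ\pi_Y\circ u))$—so that $\R\times T_{x'}$ is again foliated by $\Jpert$-holomorphic cylinders while $u$ has no ends on $T_{x'}$—the curve $u$ would cross $\R\times T_{x'}$ and force $u\cdot(\R\times T_{x'})\ge1$, contradicting the vanishing. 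The crux is to make this dichotomy rigorous: I must ensure that a genuine right-approach produces an actual positive crossing rather than a tangential touch, and, more importantly, I must use Setup~\ref{setup:morethanpi}—the extra $\pi$ of rotation producing the fiber $T_{x_1}$ with $\nu_{x_1}=(0,1)$ together with the negatively proportional pair $T_{x_-},T_{x_+}$—to guarantee that the confinement above really lands $u$ in the \emph{left} solid torus rather than a right one. This is exactly the asymmetry that disappears when $t_2-t_1=\pi$, in which case two planes survive (cf.\ Theorem~\ref{theorem at} and the sided alternative of Theorem~\ref{theorem:asymptotics}).

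I anticipate that the clean route to the asymptotic side is the action argument flagged in the outline, controlling the winding of the end of $u$ relative to $\tau_0$ rather than invoking a naive maximum principle for the coordinate $x$ (which need not be subharmonic along $\Jpert$-holomorphic curves). Converting ``$u$ approaches from the right'' into a bona fide intersection with a barrier cylinder, uniformly in the choice of perturbation data and compatible $\Jpert$, is the step I would spend the most care on; the confinement and index steps should be routine given the barriers and Theorem~\ref{theorem:construction}.
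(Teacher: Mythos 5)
The first gap is in your confinement step. You assert that the algebraic intersection of $u$ with $\R\times T_{x_+}$ vanishes ``since $T_{x_+}$ separates $Y$ and the unique end of $u$ lies to the left.'' This does not follow: $\R\times T_{x_+}$ is a hypersurface, so $u$ meets it in a $1$-dimensional slice $C_{x_+}$, and what positivity of intersection against the foliating trivial cylinders controls is the pairing $pb-qa\geq 0$ of the Reeb slope $(p,q)$ with the class $(a,b)=[C_{x_+}]\in H_1(T^2)$ --- a quantity that is not forced to vanish by the location of the positive end, particularly because at this stage you have not excluded negative ends, which could sit to the right of $T_{x_+}$ and feed a nonzero slice class. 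Indeed, when $t_2-t_1=\pi$ the plane $\upert^+$ \emph{does} live on the right, so no single-torus barrier argument can succeed; the hypothesis of Setup~\ref{setup:morethanpi} must enter quantitatively, and you defer exactly this (``I must use Setup~\ref{setup:morethanpi}\dots'') without supplying it. The paper's Lemma~\ref{lem:sided_curve} is the missing mechanism: since the Reeb direction rotates by $\pi$ between $T_{x_-}$ and $T_{x_+}$, every Reeb slope in $Y_{[x_-,x_+]}$ pairs nonnegatively with the normal $\nu$, the two boundary slice classes pair with $\nu$ with definite opposite signs by Corollary~\ref{cor:posinthomology}, and pairing $\nu$ with the homological balance $0=(a_+,b_+)-(a_-,b_-)-\sum_i m_i(\sigma_i,\tau_i)$ yields a strict contradiction unless $u$ misses $Y_{[x_-,x_+]}$ entirely. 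Your exclusion of negative ends (``a further action-plus-positivity argument'') is likewise only a placeholder; the paper's argument is a concrete slope count: all orbits in $Y_{[0,x_0-\delta]}$ have first slope coordinate $a_i<0$, while the total class of the negative ends would have to equal $(0,-1)+\ell(0,1)$, which is impossible with positive multiplicities.

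The second gap breaks your final step. Confinement only yields $\image(u)\subset \R\times Y_{[0,x_0+\delta)}$, so a curve approaching $\gamma_h$ from the right lives inside the perturbation collar $(x_0,x_0+\delta)$, where the fibers $T_{x'}$ are \emph{not} invariant under the perturbed Reeb field (the $X_{F_L}$-component is transverse to them) and carry no closed orbits of $\lambdapert$, hence no $\Jpert$-holomorphic trivial cylinders. The barrier $T_{x'}$ with $x'\in(x_0,\sup x\circ\pi_Y\circ u)$ that your dichotomy requires simply does not exist in that range, and $E\cdot(\R\times T_{x'})$ cannot be computed there by intersections with holomorphic objects. The paper's actual exclusion is different in kind: Lemma~\ref{lem:disjointcylinder} first shows a right-approaching $u$ is disjoint from $\R\times\gamma_h$ and $\R\times\gamma_e$, combining $Q_{\tau_0}(u,\mathcal{T})=0$ (from $[u]=E$) with vanishing asymptotic linking of the right approach relative to $\tau_0$; then Stokes' theorem applied to the component $B$ of $\pi_Y(\image(u))$ in $Y_{[x_0,x_0+\delta)}$ with boundary cycle $G\subset T_{x_0}$ gives $\mathcal{A}(G)<\mathcal{A}(\gamma_h)$, while an orientation analysis of the two annuli of $T_{x_0}\setminus(\gamma_h\cup\gamma_e)$ shows $\gamma_h$ minimizes action in its homology class in $T_{x_0}$, forcing $\mathcal{A}(G)>\mathcal{A}(\gamma_h)$ --- a contradiction. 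None of this mechanism appears in your sketch, and your stated worry about ``tangential touches'' is not the real obstacle. One point in your favor: once $[u]=E$ is known, your direct computation $\ind(u)=-\chi(u)+2c_{\tau_0}(u)+CZ_{\tau_0}(\gamma_h)=2g+1$ combined with $\ind(u)\leq I(u)=1$ pins down $\ind(u)=1$ and $g=0$ without appealing to genericity of $\Jpert$, which is cleaner than the paper's citation of the low-index classification at this step.
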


To obtain this obstruction, we will use positivity of intersections to show that the curve cannot go very far to the right (i.e. $\image(u)\subset \R\times Y_{[0,x_0+\delta)}$), and cannot have negative ends. Then we will use an action argument to show that the curve cannot approach from the right, to conclude Theorem~\ref{thm:no curve from right}. 

We start with constraints coming from the positivity of intersection argument.
We consider a contact toric manifold, possibly with boundary $(Y,\lambda)$, with moment map image curve $P(x)=(p_1(x),p_2(x))$ oriented counterclockwise and an $L$-nondegenerate perturbed contact form $\lambdapert$ as in \textsection\ref{subsec:nondeg_contact_form}. We will focus on torus fibers $T_x$ containing closed Reeb orbits. These include unperturbed tori which are foliated by degenerate Reeb orbits and tori which are at the center of the perturbation which contain two nondegenerate Reeb orbits. In both cases, recall that the slope of the Reeb orbits in the torus is proportionate to the normal vector to the curve $P$ in the moment map image. For Reeb orbits to be closed, this slope must be rational, and we will denote it by $(p,q)$.

Let $\Jpert$ be an almost complex structure on $\R\times Y$ compatible with $\lambdapert$. Our goal in this section will be to constrain $\Jpert$-holomorphic curves $u$ with a specified positive end.

Denote by $\pi_Y: \R\times Y\to Y$ the projection and fix the following notations:
\[\widetilde{C}_x:=(\R\times T_x)\cap \image(u) \quad\text{ and }\quad C_x:=\pi_Y(\widetilde{C}_x)\subset T_x.\]
When $u$ intersects $\R\times T_x$ transversely, $\widetilde{C}_x$ is a smooth curve. In general, $\widetilde{C}_x$ could be a graph. Denote the homology class $[C_x]\in H_1(T_x)=\Z^2$ by $(a,b)$. Note that there is an orientation ambiguity (between $(a,b)$ and $(-a,-b)$) which we resolve now by setting a convention.

\begin{convention}
 The orientation on $[C_x]$ is determined as follows.
 Orient the image of $u$ by $\Jpert$. Orient $\widetilde{C}_x$ as the boundary of $(\R\times Y_{[0,x]})\cap \image(u)$. This induces an orientation on $C_x$, which determines the sign for its homology class $(a,b)$.
\end{convention}

Observe that this convention depends on the orientation of the curve $P:[0,1]\to \R^2\setminus 0$, which we previously specified to be counterclockwise.

\begin{lemma}\label{l:posint}
Suppose $u$ is a $\Jpert$-holomorphic curve and $T_{x}$ is a torus fiber containing a Reeb orbit $\gamma$ of slope $(p,q)$. If $u$ intersects $\R\times T_x$ transversally at a point $A\in \R\times \gamma$ such that the oriented tangent vector to $C_x$ at $\pi_Y(A)$ in $T_x$ has slope $(\alpha,\beta)$, then $$p\beta-q\alpha > 0.$$ 
\end{lemma}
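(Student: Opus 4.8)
The statement asserts a sign condition on the intersection of a $\Jpert$-holomorphic curve with the hypersurface $\R\times T_x$, where $T_x$ carries a Reeb orbit of slope $(p,q)$. The plan is to interpret this sign as the sign of a transverse intersection of $u$ with $\R\times T_x$ and to show it is controlled by positivity of intersections between pseudoholomorphic objects. First I would observe that $\R\times T_x$ is not itself $\Jpert$-holomorphic, but it \emph{is} foliated by $\Jpert$-holomorphic cylinders: the key geometric input from Setup~\ref{setup:construction} and the definition of a $\lambda$-compatible almost complex structure is that $\Jpert\partial_r = R$, so the trivial cylinder $\R\times\gamma$ over the Reeb orbit $\gamma$ of slope $(p,q)$ is $\Jpert$-holomorphic. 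More generally, each closed Reeb orbit in $T_x$ gives a $\Jpert$-holomorphic cylinder $\R\times(\text{orbit})$, and these sweep out $\R\times T_x$.

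The main step is then a local positivity of intersection computation. At the transverse intersection point $A\in\R\times\gamma$, the curve $u$ meets the $\Jpert$-holomorphic trivial cylinder $Z:=\R\times\gamma$. By positivity of intersections for pseudoholomorphic curves in the almost complex $4$-manifold $(\R\times Y,\Jpert)$ (as in McDuff's local positivity theorem, cf.~\cite{mcduff2012j}), every isolated intersection of two distinct $\Jpert$-holomorphic curves contributes \emph{positively} to the local intersection number. Since $u$ is assumed to meet $\R\times T_x$ transversally at $A$, and $A$ lies on $Z$, I would compute this local intersection sign explicitly in terms of the tangent data. The tangent plane to $Z$ at $A$ is spanned by $\partial_r$ and the Reeb direction, whose projection to $T_x$ has slope $(p,q)$; the tangent plane to $u$ at $A$ projects to $T_x$ with the oriented tangent vector $C_x$ of slope $(\alpha,\beta)$. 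The local intersection sign is positive precisely when the oriented bases agree, and unwinding the orientation conventions (the $\Jpert$-orientation on $u$, and the convention that $\widetilde{C}_x$ is oriented as $\partial\big((\R\times Y_{[0,x]})\cap\image(u)\big)$) the positivity translates into the algebraic condition $p\beta-q\alpha>0$ on the two slopes in $H_1(T_x)=\Z^2$.

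The computation that converts ``local intersection is positive'' into ``$p\beta-q\alpha>0$'' is the heart of the argument and where I expect the main obstacle to lie, since it requires carefully tracking how the complex orientation on $\R\times Y$ (determined by $\omega^2$ with $\omega=d(e^r\lambdapert)$), the orientation of the trivial cylinder $Z$, the orientation of $u$, and the boundary orientation convention on $C_x$ all interact. The determinant $p\beta-q\alpha$ is exactly the algebraic intersection number in $T_x$ of a curve of slope $(\alpha,\beta)$ with a curve of slope $(p,q)$ (up to sign), so the task is to verify that the sign produced by positivity of intersections in the $4$-dimensional symplectization matches the orientation conventions fixed just before the lemma. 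I would carry this out by choosing an explicit oriented frame: using $(\partial_r, R)$ as an oriented basis for $T_A Z$ together with the fact that $\Jpert$ rotates $\xi$ positively, I would write down the four-dimensional orientation, express the tangent plane of $u$ via its projection to $T_x$, and compare. The positivity of the wedge of the two oriented tangent planes then forces the stated inequality. Because the intersection at $A$ is assumed transverse, there are no subtleties with tangential or higher-order contact, so no further input beyond the local positivity theorem and a determinant sign bookkeeping is needed.
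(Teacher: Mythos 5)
Your proposal follows essentially the same route as the paper: the paper's proof also intersects $u$ with the trivial $\Jpert$-holomorphic cylinder $\R\times\gamma$, invokes positivity of intersections, and carries out exactly the orientation bookkeeping you outline, using the positive frame $(\partial_r,\partial_{q_1},\partial_x,\partial_{q_2})$ (verified via $\omega\wedge\omega$), the basis $(\partial_r, R)$ for the cylinder, and the boundary-orientation convention on $\widetilde{C}_x$, reducing everything to a $4\times 4$ determinant that equals $p\beta-q\alpha$. The determinant computation you flag as the ``heart'' is indeed the entire content of the paper's proof, and your plan for it is the correct one.
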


\begin{proof}
There exists the trivial $\Jpert$-holomorphic cylinder $v$ of the form $\R\times \gamma$ where $\gamma$ is a closed Reeb orbit contained in $T_{x}$ of slope $(p,q)$.
Since $u$ is also $\Jpert$-holomorphic, it must intersect $v$ positively. 
On $\R\times Y$, we have coordinates $(r,x,q_1,q_2)$. 
The positive orientation on $\R\times Y$, induced by the symplectic form $\omega = d(e^r\lambdapert)$, is represented by the ordered basis
$$(\partial_r, \partial_{q_1}, \partial_x, \partial_{q_2}).$$
Indeed, this can be checked by computing $\omega\wedge\omega =e^{2(r+\varepsilon F_L)}Q(x)dr\wedge dq_1\wedge dx\wedge dq_2$ where $Q(x)$ is positive as in Equation~\eqref{eqn:Qsign}.

The tangent space to $v$ is spanned by the oriented basis $(\partial_r, R=p\partial_{q_1}+q\partial_{q_2})$.
In order to write an oriented basis spanning the tangent space to $u$, we consider $\widetilde C_x$ oriented as the boundary of $(\R\times T_{[0,x]})\cap\image(u)$. By definition of boundary orientation, the outer normal to $(\R\times T_{[0,x]})\cap\image(u)$, which is of the form $\partial_x+c_1\partial_{q_1}+c_2\partial_{q_2}+c_3\partial_r$ and the oriented tangent to  $\widetilde{C}_{x}$, which is of the form $\alpha \partial_{q_1}+\beta\partial_{q_2}+k\partial_r$, give an ordered positive basis to the tangent space to $u$.  Overall we have positive bases given by 
\begin{align*}
    Tv &= \operatorname{span}\big(\partial_r\ ,\  R=p\partial_{q_1}+q\partial_{q_2}\big), \\
    Tu &= \operatorname{span}\big(\partial_x+c_1\partial_{q_1}+c_2\partial_{q_2}+c_3\partial_r\ ,\  \alpha \partial_{q_1}+\beta\partial_{q_2}+k\partial_r\big).
\end{align*}
Thus, by positivity of intersections, at any point where the images of $u$ and $v$ intersect,
$$(\partial_r\ ,\  p\partial_{q_1}+q\partial_{q_2}\ ,\ \partial_x+c_1\partial_{q_1}+c_2\partial_{q_2}+c_3\partial_r\ ,\  \alpha \partial_{q_1}+\beta\partial_{q_2}+k\partial_r) $$
must be related by a transformation of positive determinant to 
$$(\partial_r, \partial_{q_1},\partial_x,\partial_{q_2}).$$
Writing out this matrix,
$$\left[\begin{array}{cccc}  1&0&c_3&k\\0&p&c_1&\alpha \\ 0&0&1&0\\ 0&q&c_2&\beta\\ \end{array} \right]$$
we see its determinant is $p\beta-q\alpha$.
Thus we conclude that the slope $(\alpha,\beta)$ of the oriented tangent vector to $C_{x}$ at any point where $C_{x}$ intersects $\gamma$ (the Reeb orbit in $T_{x}$), must satisfy $p\beta-q\alpha> 0$.
\end{proof}

\begin{cor}\label{cor:posinthomology}
Suppose $u$ is a $\Jpert$-holomorphic curve and $T_{x}$ is a torus fiber containing a closed Reeb orbit $\gamma$ of slope $(p,q)$. If $u$ {has non-empty} intersection with $\R\times T_{x}$ and $[C_{x}]=(a,b)$, then $$pb-qa \geq0.$$ 
If $T_x$ is foliated by closed Reeb orbits, then $pb-qa=0$ if and only if $u$ is a trivial cylinder in $\R\times T_x$.
\end{cor}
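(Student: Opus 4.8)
The plan is to recognize the integer $pb-qa$ as the algebraic intersection number of the projected curve $C_x$ with the Reeb orbit $\gamma$ inside the torus fiber $T_x$, and then to show that every geometric intersection is forced to be positive by Lemma~\ref{l:posint}. Concretely, I would introduce the closed $1$-form $\eta := p\,dq_2 - q\,dq_1$ on $T_x$, which is Poincar\'e dual to $\gamma$ and satisfies $\int_C \eta = pb - qa$ for any $1$-cycle $C$ of class $(a,b)$; thus $pb-qa = \int_{C_x}\eta$, and on a tangent vector $\alpha\partial_{q_1}+\beta\partial_{q_2}$ to $C_x$ the integrand is exactly the quantity $p\beta - q\alpha$ appearing in Lemma~\ref{l:posint}. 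After replacing $\gamma$ by a generic parallel closed orbit, or perturbing to a nearby transverse situation, I may assume $u$ is transverse to $\R\times T_x$, so that $\widetilde{C}_x$, and hence $C_x$, is an immersed compact $1$-manifold (compactness holds for fibers $T_x$ away from the fiber carrying an asymptotic end of $u$, which is the relevant case).

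In the foliated case, every point of $T_x$ lies on a closed Reeb orbit $\gamma_\theta$ of slope $(p,q)$, over each of which $\R\times\gamma_\theta$ is a trivial $\Jpert$-holomorphic cylinder. At a smooth point $y\in C_x$ where $C_x$ meets the leaf through $y$ transversally, Lemma~\ref{l:posint} applied to $\gamma_\theta$ in place of $\gamma$ gives $p\beta - q\alpha > 0$, while where $C_x$ is tangent to the foliation the tangent is proportional to $(p,q)$ and $p\beta - q\alpha = 0$. Hence the integrand $\eta(\dot{C}_x) = p\beta - q\alpha$ is continuous and $\geq 0$ on a dense set, so $\geq 0$ everywhere, giving $pb - qa = \int_{C_x}\eta \geq 0$.

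For the equality statement, $pb-qa = 0$ forces the nonnegative continuous integrand to vanish identically, so $C_x$ is everywhere tangent to the $(p,q)$-foliation; equivalently $\widetilde{C}_x$ is tangent along a $1$-dimensional locus to the trivial cylinders $\R\times\gamma_\theta$. Since two distinct $\Jpert$-holomorphic curves meet in an isolated set, $u$ must coincide with one such trivial cylinder on a connected component, so a connected $u$ is itself a trivial cylinder in $\R\times T_x$; the converse is immediate, since a trivial cylinder has $C_x=\gamma$ of class $(p,q)$ and therefore $pb-qa=0$.

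When $T_x$ is not foliated (a center of the perturbation) the pointwise argument fails, and instead I would argue homologically: the algebraic intersection number of $u$ with the single trivial cylinder $\R\times\gamma$ equals $pb-qa$ up to the global sign fixed by the orientation computation in the proof of Lemma~\ref{l:posint}, and positivity of intersection of the two $\Jpert$-holomorphic surfaces forces this algebraic count to equal the nonnegative geometric count of intersection points. I expect the main obstacle to be the bookkeeping of orientation conventions, so that the signed count in the $4$-manifold $\R\times Y$, the induced count in $\R\times T_x$, and the intersection pairing on $T_x$ all agree, and in particular that the homological pairing genuinely computes $pb-qa$ rather than its negative. The proof of Lemma~\ref{l:posint} already records the correct local sign $p\beta-q\alpha$ at a transverse point via the explicit determinant computation (using the orientation normalized by \eqref{eqn:Qsign}), which is what pins down this convention.
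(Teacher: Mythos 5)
Your proof is correct and follows essentially the same route as the paper's: both identify $pb-qa$ with the intersection pairing of $C_x$ against the Reeb orbits in $T_x$ (you by integrating the Poincar\'e dual form $\eta = p\,dq_2 - q\,dq_1$, the paper by a signed count of intersection points homotoped to grid-like curves), then invoke Lemma~\ref{l:posint} to make every local contribution nonnegative, with the equality case forcing $C_x$ to lie along a leaf of the Reeb foliation and unique continuation (equivalently, $\lambdapert$-compatibility of $\Jpert$) yielding the trivial cylinder. The one wrinkle is your proposed perturbation to achieve transversality of $u$ with $\R\times T_x$ -- you cannot perturb $u$ and moving to a nearby fiber destroys the closed-orbit foliation on which Lemma~\ref{l:posint} relies -- but this maneuver is inessential, since, as in the paper's argument, a strictly negative (or, in the equality case, any transverse) intersection point must already be a smooth transverse point of $C_x$ on a closed orbit, where the lemma applies directly.
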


\begin{proof}
    Suppose for contradiction that $pb-qa<0$. Note that $pb-qa$ is the oriented algebraic intersection number $I(\gamma,C_{x})$ in $T_{x}$, and hence it is the signed count of intersection points. To see $I((p,q),(a,b))=pb-qa$ one can homotope the curves to piecewise vertical and horizontal pieces as in Figure~\ref{fig:inttorus}. If $pb-qa<0$, there must exist a negatively signed intersection point between $\gamma$ and $C_{x}$. If the slope of $C_{x}$ at this point is $(\alpha,\beta)$, the sign of the intersection is the sign of $p\beta-q\alpha$. Thus there exists a point where $u$ intersects $v$ where $p\beta-q\alpha<0$ a contradiction to Lemma~\ref{l:posint}. 
    
    Similarly if $pb-qa=0$, then $C_{x}$ cannot intersect any Reeb orbit in $T_{x}$ transversely since any such intersection must have slope $(\alpha,\beta)$ with $p\alpha-q\beta >0$. If $T_x$ is foliated by Reeb orbits, the only way this is possible is if $C_x$ agrees precisely with a Reeb orbit of the foliation. Because $\Jpert$ is compatible with $\lambdapert$, the only $\Jpert$ holomorphic curve which intersects $T_x$ in a Reeb orbit is the trivial cylinder over that Reeb orbit.
\begin{figure}
    \centering
    \includegraphics[scale=.5]{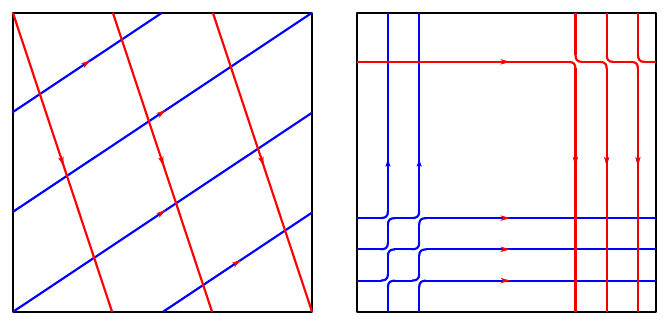}
    \caption{Computing the oriented intersection number of a curve of homology class $(p,q)$ and a curve of homology class $(a,b)$, via a homotopy to grid-like curves. }
    \label{fig:inttorus}
\end{figure}
\end{proof}

 The above corollary was previously stated in the proof of \cite[Prop.~10.12]{HutchingsSullivanT3}.

\begin{lemma}
\label{lem:sided_curve}
Suppose there is a closed interval $[x_-,x_+]\subset [0,1]$ such that $T_{x_-}$ and $T_{x_+}$ are regular torus fibers foliated by closed Reeb orbits for $\lambdapert$, and the Reeb vector field rotates by $\pi$ from $P(x_-)$ to $P(x_+)$. 
 For any connected $\Jpert$-holomorphic curve $u$ such that $\image(u)\cap Y_{[x_-,x_+]}$ contains no positive ends, and $\image(u)\cap (Y\setminus Y_{[x_-,x_+]})$ is non-empty, we have  $\image(u)\cap Y_{[x_-,x_+]}=\emptyset$.
\end{lemma}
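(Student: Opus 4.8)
The plan is to assume, for contradiction, that $\image(u)\cap Y_{[x_-,x_+]}\neq\emptyset$ and to extract a contradiction from positivity of intersections (Lemma~\ref{l:posint}, Corollary~\ref{cor:posinthomology}) together with the hypothesis that the Reeb direction performs a half-turn across $[x_-,x_+]$. For $x\in(0,1)$ let $n(x)$ denote the oriented Reeb direction on $T_x$ (the normal to $P$), so that by hypothesis $n$ rotates counterclockwise by exactly $\pi$ and $n(x_+)=-n(x_-)$. Writing $[C_x]=(a_x,b_x)$, I would track the invariant $g(x):=n(x)\times[C_x]$, whose sign equals that of the quantity $p(x)b_x-q(x)a_x$ of Corollary~\ref{cor:posinthomology}. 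Thus whenever $T_x$ carries closed Reeb orbits and $\widetilde C_x\neq\emptyset$ one has $g(x)=\image(u)\cdot(\R\times\gamma_x)\geq0$, with equality forcing $u$ to be a trivial cylinder over $T_x$; since $u$ is connected and meets $Y\setminus Y_{[x_-,x_+]}$, it is not such a cylinder, so $g(x)>0$ at every rational fiber with $\widetilde C_x\neq\emptyset$.

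The first key step is a maximum principle: I would show that $x\circ u$ has no interior local maximum or minimum at a point over a \emph{regular} fiber $T_{x^\ast}$, $x^\ast\in(0,1)$. At such an extremum, for a rational value $x$ slightly below (for a maximum) or above (for a minimum) $x^\ast$, the slice $\widetilde C_x$ contains a small null-homotopic loop whose oriented tangent winds through every direction of $\R^2$. In particular the tangent enters the open half-plane clockwise from $n(x)$, where the loop crosses the closed Reeb orbit through that point transversally with tangent slope $(\alpha,\beta)$ satisfying $p(x)\beta-q(x)\alpha<0$, contradicting Lemma~\ref{l:posint}. Hence $x_{\max}:=\max(x\circ u)$ can be attained only at the singular fiber $x=1$ or at an asymptotic end.

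Next I would argue that $u$ must cross the whole region. Connectedness makes $x(\image(u))$ an interval containing the value $x_0<x_-$ of the positive end $\gamma_h$ and, by the contradiction hypothesis, a value $\geq x_-$, so $x_{\max}\geq x_-$. I would then establish that a negative end approaches its fiber $T_{x_e}$ from the right (so that $x_e$ is a local minimum of $x\circ u$ along that end) and that crossing such an end with increasing $x$ raises $[C_x]$ by $\mu_j\,n(x_j')$ with $\mu_j>0$. Granting this, no negative end realizes $x_{\max}$, and since there are no positive ends in $Y_{[x_-,x_+]}$, the maximum principle forces $x_{\max}$ to occur at $x=1$ or at a positive end over some $T_{x_e}$ with $x_e>x_+$; either way $x_{\max}>x_+$, so $\widetilde C_x\neq\emptyset$ for all $x\in(x_-,x_+)$. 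Away from ends $[C_x]$ is locally constant (the homological cobordism lives in $\R\times T^2\times[\cdot]\simeq\R\times Y_{(x_-,x_+)}$), and its only jumps, across negative ends, are parallel to $n(x_j')$ and hence contribute $0$ to $g$, so $g$ is continuous on $(x_-,x_+)$. Using $n(x_+)=-n(x_-)$ and $n(x_-)\times n(x_j')>0$ for $x_j'\in(x_-,x_+)$, I would compute
\[
g(x_+)=-g(x_-)-\sum_j \mu_j\bigl(n(x_-)\times n(x_j')\bigr)\leq -g(x_-)\leq 0 ,
\]
with strict inequality as soon as a negative end is present. Since $g(x_-),g(x_+)\geq0$ by positivity, there can be no negative ends and $g(x_-)=g(x_+)=0$; but then $g(x_-)=0$ with $\widetilde C_{x_-}\neq\emptyset$ makes $u$ the trivial cylinder over $T_{x_-}$, contradicting connectedness and $\image(u)\cap(Y\setminus Y_{[x_-,x_+]})\neq\emptyset$.

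The main obstacle is the input used in the third step: pinning down that a negative end meets its torus fiber from the larger-$x$ side and that crossing it increases $[C_x]$ by a \emph{positive} multiple of the oriented Reeb class. For curves with no negative ends in the region the argument is already complete and clean (the rotation alone forces $g(x_-)=g(x_+)=0$ and hence a trivial cylinder), so all the delicacy is concentrated in the behavior of negative ends. I expect to resolve this through the asymptotic expansion analysis of \S\ref{sec:asymptotic} — equivalently, from positivity of the asymptotic linking of the end with the trivial cylinder over its own orbit, which determines the sign of the leading eigenfunction's $\partial_x$-component and thereby both the side of approach and the sign $\mu_j>0$.
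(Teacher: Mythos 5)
Your proposal contains the paper's entire proof inside it --- the final displayed identity $g(x_+)=-g(x_-)-\sum_j\mu_j\bigl(n(x_-)\times n(x_j')\bigr)$ together with the sign constraints at $x_\pm$ \emph{is} the paper's argument --- but you have buried it under machinery that is both unnecessary and, in places, unsound. The paper proceeds directly: the $2$-chain $\image(u)\cap(\R\times Y_{[x_-,x_+]})$ has boundary $C_{x_+}-C_{x_-}-\sum_i m_i\gamma_i$ in $H_1(\R\times Y_{[x_-,x_+]})\cong\Z^2$, where the $\gamma_i$ are the orbits at the negative ends and $m_i>0$ is \emph{automatic}, because a negative asymptotic end positively covers its limiting Reeb orbit by definition. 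No asymptotic expansion, no determination of which side an end approaches from, and no tracking of where along $(x_-,x_+)$ the class $[C_x]$ jumps is needed: one pairs the identity with the single covector $\nu=\nu_{x_+}$, notes that the rotation-by-$\pi$ hypothesis makes $(\sigma_i,\tau_i)\cdot\nu\geq 0$ for every Reeb slope in the region (here the ``no positive ends'' hypothesis is essential, since a positive end would enter with the opposite sign), and applies Corollary~\ref{cor:posinthomology} at the two foliated boundary tori, where equality would force a trivial cylinder, excluded since $u$ has no positive end in the region. Every term is then $\leq 0$ and strictly negative unless $C_{x_-}=C_{x_+}=\emptyset$, and connectedness empties the whole intersection. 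So the ``main obstacle'' you flag is a non-issue: you conflated the pointwise evolution of $[C_x]$ (where sidedness of ends would matter) with the global homological statement (where it does not).

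Beyond being a detour, two of your intermediate steps have genuine gaps. First, the deferred claim --- that each negative end approaches its fiber from the larger-$x$ side and contributes a jump $\mu_j\,n(x_j')$ with $\mu_j>0$ --- is admitted to be unproven, and the sidedness half of it is nowhere established in the paper (Theorem~\ref{theorem:asymptotics} treats only the nondegenerate orbit $\gamma_h$ at a center of perturbation); had it been needed, your proof would be incomplete. Second, your maximum principle fails inside the perturbed intervals $(x_i-\delta,x_i+\delta)$: fibers near an interior extremum of $x\circ u$ located there carry no closed Reeb orbits at all (only $T_{x_i}$ contains $\gamma_e$ and $\gamma_h$), so Lemma~\ref{l:posint} yields no contradiction from the small-loop winding argument; that step also silently needs transversality of the slice and nondegeneracy of the extremum, which require genericity arguments you do not supply. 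Fortunately, as above, neither the maximum principle nor the claim that $\widetilde C_x\neq\emptyset$ for all intermediate $x$ is needed: delete those steps, keep your final sign computation, and justify the $m_i>0$ homologically rather than asymptotically, and you recover the paper's proof.
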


\begin{proof}
Suppose for contradiction, $u$ has non-empty intersection with $Y_{[x_-,x_+]}$. Let $(a_-,b_-)$ denote the homology class of $C_{x_-}$ oriented as the boundary of $\image(u)\cap Y_{[0,x_-]}$, and $(a_+,b_+)$ the homology class of $C_{x_+}$ as the boundary of $\image(u)\cap Y_{[0,x_+]}$. 

Let $(p,q)$ denote the slope of the Reeb vector field in $T_{x_+}$, so $(-p,-q)$ is the Reeb slope in $T_{x_-}$. Let $\nu=\nu_{x_+}=(q,-p)$ be the oriented normal vector to $P$ at $x_+$. Then for every $x\in [x_-,x_+]$, the Reeb vector field in $T_x$ has a slope $(\sigma,\tau)$ such that 
\begin{equation} \label{eqnR}
(\sigma,\tau)\cdot \nu \geq 0
\end{equation}

Note that $u$ is not a trivial cylinder in $Y_{[x_-,x_+]}$ since it has no positive ends in $Y_{[x_-,x_+]}$.
Thus, by Corollary~\ref{cor:posinthomology}, 
\begin{equation}\label{eqn+}
-pb_-+qa_-\geq 0 \qquad\text{and}\qquad pb_+-qa_+ \geq 0,
\end{equation} and the above inequalities are strict unless {$u$ is disjoint from $\R\times T_{x_\pm}$}. Therefore, 
\begin{equation} \label{eqn-}
(a_-,b_-)\cdot \nu\geq 0 \qquad\text{and}\qquad(a_+,b_+)\cdot \nu\leq 0,
\end{equation}
where again the inequality is strict unless the intersection is empty.

In homology, $[\partial(\image(u)\cap Y_{[x_-,x_+]})]=0\in H_1(\R\times Y_{[x_-,x_+]})\cong H_1(T^2)\cong \Z^2$. Denote by $\{(\sigma_i,\tau_i)\}$ the slopes of the Reeb orbits of negative asymptotic ends of $u$ in $Y_{[x_-,x_+]}$, and by $m_i>0$  their multiplicity. Then in $H_1(\R\times Y_{[x_-,x_+]})$, we have
$$0 = (a_+,b_+)-(a_-,b_-) - \sum_i m_i(\sigma_i,\tau_i). $$
Taking the inner product with $\nu$ we obtain
$$0 = (a_+,b_+)\cdot \nu - (a_-,b_-)\cdot \nu - \sum_i m_i(\sigma_i,\tau_i)\cdot \nu.$$
By equations (\ref{eqnR}), (\ref{eqn+}), and (\ref{eqn-}), the right hand side of this equation is strictly negative unless $C_{x_-}$ and $C_{x_+}$ are empty. Since $u$ is connected, if $C_{x_-}$ and $C_{x_+}$ are empty, $\image(u)\cap Y_{[x_-,x_+]}$ must be empty.
\end{proof}

Now we return to the set-up of \textsection\ref{subsec:nondeg_contact_form} in the case that our contact toric manifold has angle greater than $\pi$.

\begin{proposition} 
\label{prop:close to left}
Let $\lambda$ be a toric contact form satisfying Setup~\ref{setup:morethanpi}, $\lambdapert$ its perturbation satisfying Setup~\ref{setup:deltaperturbation}, $\Jpert$ a compatible almost complex structure, and $u$ a connected $\Jpert$-holomorphic curve that has a single positive end asymptotic to the hyperbolic Reeb orbit $\gamma_h$ with multiplicity $1$ in the torus fiber $T_{x_0}$. Assume $u$ is not a trivial cylinder. Let $\upert$ be the $\Jpert$-holomorphic curve constructed in \textsection\ref{section plane exists}.

    Then 
    \begin{itemize}
        \item $\image(u)\subset \R\times Y_{[0,x_0+\delta)}$,
        \item $u$ has no negative ends,
        \item the relative homology class of $u$ is $E$ (see Notation~\ref{not:relhomclass}), and
        \item $\ind(u)=I(u)=1$
    \end{itemize}
\end{proposition}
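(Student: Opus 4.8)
The plan is to establish the four conclusions in the order in which they depend on one another: first the containment $\image(u)\subset \R\times Y_{[0,x_0+\delta)}$, then the absence of negative ends, then the identification of the relative class as $E$, and finally the two index equalities.

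\emph{Containment.} I would apply Lemma~\ref{lem:sided_curve} to the interval $[x_-,x_+]$ furnished by Setup~\ref{setup:deltaperturbation}: there $T_{x_-}$ and $T_{x_+}$ are unperturbed tori foliated by closed Reeb orbits and the Reeb field rotates by exactly $\pi$ between them, so the hypotheses of Lemma~\ref{lem:sided_curve} hold. Since the unique positive end of $u$ lies in $T_{x_0}$ with $x_0<x_-=x_0+\delta$, the set $\image(u)\cap Y_{[x_-,x_+]}$ contains no positive end, while $\image(u)\cap(Y\setminus Y_{[x_-,x_+]})$ is non-empty (it contains a neighborhood of the end). Hence $\image(u)\cap Y_{[x_-,x_+]}=\emptyset$. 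As $u$ is connected with its end in the component $Y_{[0,x_-)}$ of $Y\setminus Y_{[x_-,x_+]}$, and $Y_{(x_+,1]}$ is the other component, it follows that $\image(u)\subset\R\times Y_{[0,x_0+\delta)}$. This is the step where positivity of intersections is really used, via Corollary~\ref{cor:posinthomology} inside Lemma~\ref{lem:sided_curve}.

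\emph{No negative ends.} Write $N:=Y_{[0,x_0+\delta)}$, a solid torus whose core is the singular fiber, so $H_1(N)\cong\Z$ with the collapsing (meridian) slope $(0,1)$ trivial and a regular-fiber class $(p,q)$ mapping to $p$. By Lemma~\ref{lem:morsebott_perturbation}, the only closed Reeb orbits of action below $L$ lying in $N$ are the singular fiber, the pair $\gamma_e,\gamma_h\subset T_{x_0}$, and the elliptic/hyperbolic pairs at the interior centers $x_i\in(0,x_0)$ (there are none in $(x_0,x_0+\delta)$ since the $\delta$-neighborhoods are disjoint). Because $\int_u\omega\geq 0$, every negative end has action $\leq\mathcal{A}(\gamma_h)<L$ and so is asymptotic to one of these orbits. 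The convexity hypotheses of Setup~\ref{setup:construction} force $p_2'(x)<0$ on $(0,x_0)$ and as $x\to 0^+$; consequently the $q_1$-component of the Reeb-oriented slope of every such orbit other than those in $T_{x_0}$ is proportional to $p_2'<0$, so each has strictly negative $H_1(N)$-class, while $\gamma_e,\gamma_h$ (slope $(0,-1)$) have class $0$. Since $[\gamma_h]=0$, the relative class $[u]$ gives $\sum_j n_j[\beta_j]=0$ in $H_1(N)$; as all summands are $\leq 0$, no negative end can occur at the singular fiber or at an interior center. A negative end at $\gamma_e$ or $\gamma_h$ is then excluded by action: $\mathcal{A}(\gamma_e)>\mathcal{A}(\gamma_h)$ and $\mathcal{A}(\gamma_h)\geq\sum_{\mathrm{neg}}\mathcal{A}$, which is impossible for a nontrivial end unless $u$ is a trivial cylinder, contrary to hypothesis. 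Thus $u$ has no negative ends.

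\emph{Relative class and indices.} With no negative ends, $\partial[u]=\gamma_h$ with multiplicity $1$, and $\image(u)\subset\R\times N$; since $H_2(N)=0$ the map $H_2(N,\gamma_h)\to H_1(\gamma_h)$ is injective, so $[u]$ is the unique such class, namely $E$ of Notation~\ref{not:relhomclass}. Then Lemma~\ref{lem:Q and Chern upert} gives $Q_{\tau_0}(u)=0$ and $c_{\tau_0}(u)=1$, and Corollary~\ref{cor:CZ} gives $CZ_{\tau_0}(\gamma_h)=0$, whence $I(u)=c_{\tau_0}(u)+Q_{\tau_0}(u)+CZ_{\tau_0}(\gamma_h)=1$. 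Writing $\chi(u)=1-2g$ for the genus $g$, the Fredholm index is $\ind(u)=-\chi(u)+2c_{\tau_0}(u)+CZ_{\tau_0}(\gamma_h)=1+2g$. As $u$ is somewhere injective (its only end has multiplicity $1$), the index inequality \eqref{indineq}, $\ind(u)\leq I(u)-2\Delta(u)$, forces $g=\Delta(u)=0$ and $\ind(u)=I(u)=1$.

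The hard part will be the bookkeeping in the no-negative-ends step: one must pin down the orientations of the candidate asymptotic orbits relative to the Reeb flow and confirm that the singular-fiber (core) orbit carries the same homological sign as the interior center orbits, which is exactly what the convexity conditions of Setup~\ref{setup:construction} guarantee through the sign of $p_2'$. A technically equivalent but more hands-on alternative, perhaps closer in spirit to the rest of this section, is to track the homological slices $[C_x]$ of Corollary~\ref{cor:posinthomology} directly and rule out each would-be negative end by the sign it would force on a crossing; I expect the solid-torus homology plus action argument above to be the cleanest route.
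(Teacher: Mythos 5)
Your proof is correct, and for the first three bullets it follows the paper's own argument: the same application of Lemma~\ref{lem:sided_curve} to the interval $[x_-,x_+]$ of Setup~\ref{setup:deltaperturbation} plus connectedness for the containment; the same two ingredients (the action bound near the perturbed torus, and the sign constraint on Reeb slopes in $Y_{[0,x_0-\delta]}$ coming from $p_2'<0$) for the absence of negative ends — you phrase the homological step as vanishing of the total class in $H_1(N)\cong\Z$ while the paper writes the impossibility of $\sum_i m_i(a_i,b_i)=(0,-1)+\ell(0,1)$, but these are the same computation in the solid torus, and your explicit action-based exclusion of negative ends at $\gamma_e$ and at $\gamma_h$ itself (the latter forcing a trivial cylinder, excluded by hypothesis) just makes explicit what the paper leaves implicit; and the same $H_2(N)=0$ long-exact-sequence argument for $[u]=E$.

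Where you genuinely diverge is the index step, and your route is arguably better. The paper deduces $I(u)=1$ from the fact that $I$ depends only on the relative class together with $I(\upert)=1$, and then obtains $\ind(u)=1$ by invoking the Low Index Classification of Remark~\ref{rem:I} — which is stated for \emph{generic} $J$, whereas the proposition (like Theorem~\ref{thm:no curve from right}) allows an arbitrary compatible $\Jpert$. You instead compute $I(u)=c_{\tau_0}(u)+Q_{\tau_0}(u)+CZ_{\tau_0}(\gamma_h)=1+0+0$ directly from Lemma~\ref{lem:Q and Chern upert} and Corollary~\ref{cor:CZ}, observe that $u$ is somewhere injective because its single positive end has multiplicity $1$ at the embedded orbit $\gamma_h$ (a $k$-fold cover would force $k=1$), and then close with the index inequality \eqref{indineq}: $\ind(u)=1+2g\leq I(u)-2\Delta(u)=1-2\Delta(u)$ forces $g=\Delta(u)=0$ and $\ind(u)=I(u)=1$. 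This avoids any genericity hypothesis, and it yields genus zero and (via the equality case) embeddedness as byproducts, neither of which the paper's shorter argument records. One ordering point you handled correctly and should keep: Lemma~\ref{lem:Q and Chern upert} pins down the relative class (and hence $c_{\tau_0}$, $Q_{\tau_0}$) only in the absence of negative ends, so it must be applied after, not before, the no-negative-ends step.
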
 

\begin{proof}
    To see that $\image(u)\subset \R\times Y_{[0,x_0+\delta)}$, simply apply Lemma~\ref{lem:sided_curve} where $x_-=x_0+\delta$ and $x_+$ are as in Setup~\ref{setup:deltaperturbation} to conclude $u$ is disjoint from $\R\times Y_{[x_0+\delta,x_+]}$. Since $u$ is connected and intersects $\R\times Y_{[0,x_0+\delta)}$ nontrivially near its positive asymptotic end, $\image(u)\subset \R\times Y_{[0,x_0+\delta)}$.

    Now we assume $u$ is not a trivial cylinder, and show that $u$ has no negative ends. Note that because every Reeb orbit in $Y_{(x_0-\delta,x_0+\delta)}$ has action larger than $\mathcal{A}(\gamma_h)$, $u$ cannot have negative ends in $Y_{(x_0-\delta,x_0+\delta)}$. Thus any potential negative end is in $Y_{[0,x_0-\delta]}$. Because of the assumptions of Setup~\ref{setup:construction}, all Reeb orbits in $Y_{[0,x_0-\delta]}$ have slope $(a_i,b_i)$ with $a_i<0$. Let $(a_i,b_i)$ be the negative ends of $u$ with multiplicity $m_i>0$. Since $[\gamma_h]=(0,-1)$, the total homology class of the negative ends satisfies
    $$ \sum_i m_i(a_i,b_i)  =(0,-1)+\ell(0,1) \quad \text{for some } \ell\in\Z.$$
    However, $a_i<0$ for all $i$ and $m_i>0$, and thus the above equality can only be satisfied if there are no negative ends.

    Finally, we show that $[u]=E\in H_2(Y,\gamma_h)$ and $u$ has {Fredholm and ECH} index $1$. Observe that $\image(u)\subset \R\times Y_{[0,x_0+\delta)}$ and $Y_{[0,x_0+\delta)}$ is a solid torus so $H_2(Y_{[0,x_0+\delta)})=0$. Therefore the boundary map in the long exact sequence of a pair is injective:
    $$H_2(Y_{[0,x_0+\delta)})=0\to H_2(Y_{[0,x_0+\delta)};\gamma_h) \xrightarrow{\partial} H_1(\gamma_h).$$
    In particular, there is a unique relative homology class for such curves $u$ which have a single positive asymptotic end at $\gamma_h$ with multiplicity $1$ (and no negative ends as we just proved). Thus $[u]=E\in H_1(Y,\gamma_h)$. Since ECH index only depends on the relative homology class, $[u]=E=[\upert]$, and $\ind(\upert)=I(\upert)=1$, we conclude that $I(u)=1$, which implies that $\ind(u)=1$ by Remark~\ref{rem:I} (Low Index Classifications).
\end{proof}

Proposition~\ref{prop:close to left}, establishes most of the results stated in Theorem~\ref{thm:no curve from right}, except that we  need to upgrade the result that $\image(u)\subset \R\times Y_{[0,x_0+\delta)}$ to show that $u$ cannot approach $\gamma_h$ asymptotically from the right. This will conclude the proof of Theorem~\ref{thm:no curve from right}.

The following lemma will be necessary input in the proof of the next proposition, which will provide this last piece.

\begin{lemma} \label{lem:disjointcylinder}
    Let $Y_{[0,x_0+\delta)}$ satisfy the assumptions of Setup~\ref{setup:construction}.
    Let $u$ be a $\Jpert$-holomorphic curve in $\R\times Y_{[0,x_0+\delta)}$ which has exactly one positive asymptotic end and no negative ends, where the positive end is asymptotic to $\gamma_h$ the positive hyperbolic Reeb orbit in $T_{x_0}$. If $u$ approaches $\gamma_h$ from the right, then it is disjoint from $\R\times \gamma_e$ and $\R\times \gamma_h$.
\end{lemma}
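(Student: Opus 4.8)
The plan is to compute both intersection numbers using positivity of intersections together with the topology of the solid torus $N=Y_{[0,x_0+\delta)}$, whose meridian slope is $(0,-1)$. Since $\Jpert$ is $\lambdapert$-compatible, the trivial cylinders $\R\times\gamma_e$ and $\R\times\gamma_h$ are $\Jpert$-holomorphic, and because $u$ has no negative ends it cannot coincide with either of these cylinders. Hence positivity of intersections yields $u\cdot(\R\times\gamma_e)\geq 0$ and $u\cdot(\R\times\gamma_h)\geq 0$, where for $\gamma_h$ we count only the interior intersections. It therefore suffices to show each count is at most $0$.

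For $\gamma_e$ the argument is purely topological. The positive end of $u$ lies over $\gamma_h\neq\gamma_e$, so $u$ and $\R\times\gamma_e$ have disjoint asymptotics, and their algebraic intersection number equals the difference of the linking numbers of the asymptotic braids of $u$ with $\gamma_e$ at the two ends. Since $u$ has a single (positive) end and is proper, $\pi_\R\circ u$ is bounded below, so for $S\ll 0$ the slice $\{S\}\times N$ misses $\image(u)$ and the bottom contribution vanishes. The top contribution is the linking number in $N$ of $\gamma_h$ with $\gamma_e$, since the multiplicity-one positive braid near $\gamma_h$ is isotopic to $\gamma_h$ in a neighborhood disjoint from $\gamma_e$. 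Both orbits are meridians of $N$ (slope $(0,-1)$), sitting at $q_1=0$ and $q_1=1/2$ in $T_{x_0}$, and $\gamma_e$ bounds the meridian disk $\{q_1=0\}$ which is disjoint from $\gamma_h$; thus $\mathrm{lk}(\gamma_h,\gamma_e)=0$, so $u\cdot(\R\times\gamma_e)=0$ and $u$ is disjoint from $\R\times\gamma_e$.

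For $\gamma_h$ the curve $u$ shares its positive asymptotic end with the trivial cylinder, so the intersection number splits into a homological part and a contribution ``at infinity''. The homological part equals $\mathrm{lk}(\gamma_h,\gamma_h')$ for a meridian pushoff $\gamma_h'$ of $\gamma_h$ inside $T_{x_0}$ (equivalently $u\cdot(\R\times\gamma_e)$, since $\gamma_e$ and $\gamma_h$ are homologous in $H_1(T_{x_0})$), which again vanishes by the parallel-meridian computation and the vanishing bottom contribution. The contribution at infinity is governed by the leading asymptotic eigenfunction of $u$ at $\gamma_h$: for the positive hyperbolic orbit $\gamma_h$ the extremal winding numbers coincide, so this eigenfunction is one-signed with winding $0$ relative to $\tau_0$. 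The hypothesis that $u$ approaches $\gamma_h$ \emph{from the right}, i.e. from $x\geq x_0$, transversally to $T_{x_0}$ along $+\partial_x=V_1$, fixes the side of this eigenfunction, and the asymptotic braid of $u$ then stays on one side of $\gamma_h$ without wrapping around it, producing no intersections at infinity with the trivial cylinder. Combined with positivity of the interior intersections and the vanishing homological part, this gives $u\cdot(\R\times\gamma_h)=0$, so $u$ is disjoint from $\R\times\gamma_h$.

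I expect the last step to be the main obstacle: controlling the asymptotic ``intersections at infinity'' of $u$ with $\R\times\gamma_h$. This requires the relative asymptotic analysis of Siefring (the cited Cor.~2.5 and 2.6), identifying the leading eigenfunction, computing its winding number relative to $\tau_0$, and checking that the side prescribed by ``from the right'' is exactly the one for which the braid does not wind around $\gamma_h$. I would carry out this eigenfunction computation using the explicit description of the perturbed Reeb flow near $\gamma_h$ from Lemma~\ref{lem:morsebott_perturbation}, where the stable and unstable directions are transverse to $T_{x_0}$ along $\pm\partial_x$, so that the from-the-right approach corresponds to a definite, winding-$0$ eigendirection; the verification that this forces the infinity contribution to vanish then reduces to a direct application of the slicewise positivity of Lemma~\ref{l:posint}.
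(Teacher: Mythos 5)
Your proof is correct, and for the homological part it takes a genuinely different route from the paper's. The paper notes that $[u]=[u_0]=E$ because $H_2$ of the solid torus vanishes, where $u_0$ is the explicitly constructed plane of Proposition~\ref{prop:explicit_construction}, and then obtains $Q_{\tau_0}(u,\mathcal{T})=Q_{\tau_0}(u_0,\mathcal{T})=0$ at once from the fact that $u_0\subset \R\times Y_{[0,x_0)}$ is disjoint from both trivial cylinders $\mathcal{T}\subset\R\times T_{x_0}$. You instead compute everything by elementary meridian linking in the solid torus: $\operatorname{lk}(\gamma_h,\gamma_e)=0$ via the meridian disk over $\{q_1=0\}$, together with a top-minus-bottom slice count whose bottom slice is empty since $u$ has no negative ends. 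Both are valid; the paper's comparison with $u_0$ is shorter and sidesteps your slicewise bookkeeping, while yours is more self-contained in that it never invokes the constructed plane. For the contribution at infinity your mechanism agrees with the paper's: the ``from the right'' hypothesis of Definition~\ref{def:fromleftfromright} confines the asymptotic braid to the half-space $\{x\geq x_0\}$, forcing the $\tau_0$-framing and hence the asymptotic linking $l_{\tau_0}(u,\R\times\gamma_h)$ to vanish, after which Lemma~\ref{lem:Q_tau_lk_def} and positivity of intersections give disjointness.

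One caveat and one simplification. Your intermediate assertion that the leading eigenfunction has winding $0$ ``because the extremal winding numbers coincide'' is not right as stated: for a positive end, Lemma~\ref{lem:eigen_lemmas}(5) only gives the upper bound $\wind_{\tau_0}\leq \floor{CZ_{\tau_0}(\gamma_h)/2}=0$, and upgrading this to equality via Proposition~\ref{prop:maxwinding} would require $\ind(u)=1$, connectedness, and generic $J$ --- hypotheses the lemma does not assume. This does not damage your proof, because the half-space containment you invoke immediately afterwards already does all the work: a multiplicity-one braid contained in $S^1\times\{x_1\geq 0\}$ and disjoint from the core lies in a simply connected region of the normal disk and so has zero winding and zero linking with $\gamma_h$. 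This is exactly how the paper argues, so the Siefring relative asymptotics and eigenfunction analysis you anticipated as the main obstacle (and the appeal to Lemma~\ref{l:posint}) are unnecessary for this lemma; they enter only in \S\ref{sec:asymptotic}.
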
 

\begin{proof}
    Let $u_0$ be the $J_0$-holomorphic curve of Proposition~\ref{prop:explicit_construction}. Then since $u$ and $u_0$ are both contained in $\R\times Y_{[0,x_0+\delta)}$ with the same asymptotic ends, they are in the same relative homology class by the same argument in {Lemma~\ref{lem:Q and Chern upert}}. Let $\mathcal{T}$ denote the trivial cylinder $\R\times \gamma_h$ or $\R\times \gamma_e$. {Observe that $u_0$ and $\mathcal{T}$ are disjoint because $u_0\subset \R\times Y_{[0,x_0)}$ which is disjoint from $\mathcal{T}\subset \R\times T_{x_0}$.} Thus, the relative intersection number is
    $$Q_{\tau_0}(u,\mathcal{T}) = Q_{\tau_0}(u_0,\mathcal{T}) = 0.$$
    
    If $u$ approaches $\gamma_h$ from the right, {$\zeta_1^+:=\image(u)\cap (\{s\}\times \mathcal{N}(\gamma_h))$ will lie to the right of $\gamma_h=\zeta_2^+:=\mathcal{T}\cap \{s\}\times \mathcal{N}(\gamma_h)$. Namely, $\zeta_1^+$ is pushed in the positive $V_1$ direction at all points along $\gamma_h$. {(Recall that $V_1=\partial_x$ as defined in \S~\ref{ss:MBindex}, \eqref{eqn:tau0}.)} With respect to the trivialization $\tau_0=(V_1,V_2)$, $\zeta_1^+$ gives a zero framing along $\gamma_h=\zeta_2^+$. Thus, the linking number of $\zeta_1^+$ with $\zeta_2^+$ is zero, so the asymptotic linking between $u$ and $\mathcal{T}$, $l_{\tau_0}(u,\mathcal{T})$ is zero.}

    In the case of $\mathcal{T}=\R\times \gamma_e$, we have $l_{\tau_0}(u,\mathcal{T})=0$ simply because $u$ has no asymptotic end at $\gamma_e$.
    
    By Lemma~\ref{lem:Q_tau_lk_def} the number of intersection points is given by $Q_{\tau_0}(u,\mathcal{T})+l_{\tau_0}(u,\mathcal{T})=0$ so $u$ and $\mathcal{T}$ are disjoint.
\end{proof}

\begin{proposition}
    Let $Y_{[0,x_0+\delta)}$ satisfy the assumptions of Setup~\ref{setup:construction}.
    Let $u$ be a connected $\Jpert$-holomorphic curve whose image is contained in $\R\times Y_{[0,x_0+\delta)}$ with no negative ends, whose unique positive end is asymptotic to the hyperbolic Reeb orbit $\gamma_h$ in $T_{x_0}$. Then $u$ cannot approach $\gamma_h$ from the right.
\end{proposition}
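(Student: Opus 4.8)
The plan is to argue by contradiction: suppose the connected $\Jpert$-holomorphic curve $u$ does approach $\gamma_h$ from the right, and produce a violation of the positivity of the $d\lambdapert$-area. The whole argument takes place on the single torus fiber $T_{x_0}$, and the engine is the observation that, among all closed curves of homology class $(0,-1)$ in $T_{x_0}$, the orbit $\gamma_h$ is the strict minimizer of the $\lambdapert$-action in the $q_1$-direction. Indeed, in the normalization of Setup~\ref{setup:construction} the Reeb orbits of $T_{x_0}$ have slope $(0,-1)$, and by the construction of \S\ref{ss:perturb} we have $\lambdapert=e^{\varepsilon F_L}\lambda$ with $F_L=\cos(2\pi q_1)$ along $T_{x_0}$; the minimum of $\cos(2\pi q_1)$ occurs exactly at $q_1=\tfrac12$, which is the location of $\gamma_h$, while $\gamma_e$ sits at the maximum $q_1=0$. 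Since $-p_2(x_0)>0$, the action of a $(0,-1)$-loop sitting at a fixed angle $q_1$ is, to leading order, $-p_2(x_0)e^{\varepsilon\cos(2\pi q_1)}$, which is strictly larger than $\mathcal{A}(\gamma_h)=-p_2(x_0)e^{-\varepsilon}$ for every $q_1\ne\tfrac12$.

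First I would invoke Lemma~\ref{lem:disjointcylinder}: the right-approach hypothesis forces $\image(u)$ to be disjoint from both $\R\times\gamma_h$ and $\R\times\gamma_e$. Consequently the slice $C:=\pi_Y\big((\R\times T_{x_0})\cap\image(u)\big)$ is a disjoint union of loops in $T_{x_0}$ that is compact and avoids the two circles $\{q_1=0\}$ and $\{q_1=\tfrac12\}$; in particular $C$ is bounded away from $q_1=\tfrac12$. Next I would cut $u$ along $\R\times T_{x_0}$ and keep the right piece $u^+:=\image(u)\cap(\R\times Y_{[x_0,x_0+\delta)})$. By Proposition~\ref{prop:close to left} the curve $u$ is confined to $\R\times Y_{[0,x_0+\delta)}$ and has no negative ends, so $u^+$ is compact apart from its one cylindrical end at $\gamma_h$, and its finite boundary is $-C$. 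Computing in $H_1(T_{x_0})$ from $u^+$ gives $[C]=[\gamma_h]=(0,-1)$. Applying Stokes' theorem to the $d\lambdapert$-area of $u^+$ then yields the identity $0\le \int_{u^+}\pi_Y^{*}d\lambdapert=\mathcal{A}(\gamma_h)-\int_{C}\lambdapert$, whence $\int_C\lambdapert\le \mathcal{A}(\gamma_h)$. On the other hand, because $C$ has total class $(0,-1)$ and is compactly supported away from $q_1=\tfrac12$ inside $T_{x_0}$, the action minimization above gives $\int_C\lambdapert>\mathcal{A}(\gamma_h)$ once $\varepsilon$ is small. These two inequalities are incompatible, so $u$ cannot approach $\gamma_h$ from the right, which together with Proposition~\ref{prop:close to left} completes the proof of Theorem~\ref{thm:no curve from right}.

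I expect the main obstacle to be two-fold, and both difficulties are genuinely tied to the fact that the $\lambdapert$-action has a \emph{saddle} at $\gamma_h$ within $T_{x_0}$: it is minimized at $\gamma_h$ in the $q_1$-direction but \emph{maximized} there in the $x$-direction, since $x_0$ is a strict local maximum of $-p_2$ by the convexity $p_2''(x_0)>0$. First, this is exactly why the slice must be taken at $x_0$ \emph{precisely} and not at a nearby fiber $T_{x_0\pm\eta}$, where loops of class $(0,-1)$ could have action below $\mathcal{A}(\gamma_h)$; consequently I must justify that $u$ meets $\R\times T_{x_0}$ cleanly enough for the Stokes computation, either by arranging transversality or by a limiting argument over regular values $x_0+\eta\downarrow x_0$ together with a lower semicontinuity estimate for $\int_{C_{x}}\lambdapert$. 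Second, the strict inequality $\int_C\lambdapert>\mathcal{A}(\gamma_h)$ requires controlling the $p_1(x_0)\,dq_1$-contribution to $\int_C\lambdapert$ and the possibility that $C$ has several components with cancelling $q_2$-windings; I would handle this by choosing $\varepsilon$ small so that the $dq_2$-term dominates, and by using that each component of $C$, being confined to one of the two annuli $\{q_1\in(0,\tfrac12)\}$, $\{q_1\in(\tfrac12,1)\}$, has vanishing net $q_1$-winding, so the weighted $dq_1$-integral is of order $\varepsilon$ and cannot overcome the order-one gap coming from the angular action minimization.
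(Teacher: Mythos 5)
Your skeleton coincides with the paper's: assume a right approach, cut $u$ along $\R\times T_{x_0}$, apply Stokes to the right-hand piece to get $\mathcal{A}(G)\leq\mathcal{A}(\gamma_h)$ for the boundary slice $G$, use Lemma~\ref{lem:disjointcylinder} to place $G$ in $T_{x_0}\setminus(\gamma_h\cup\gamma_e)$ with $[G]=[\gamma_h]\in H_1(T_{x_0})$, and contradict this with a lower bound expressing that $\gamma_h$ is the $\lambdapert$-action minimizer in its class. (Two of your side concerns are immaterial: the paper works with just the connected component $\widetilde{B}$ of the end rather than the whole right piece, and it sidesteps your transversality worry by allowing the slice to be a singular $1$-cycle rather than a smooth curve.)

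The genuine gap is in the strict lower bound $\int_G\lambdapert>\mathcal{A}(\gamma_h)$, and your proposed $\varepsilon$-domination argument does not close it. First, the scales are miscalibrated: the ``order-one gap'' is itself $O(\varepsilon)$ --- for a vertical loop at angle $q_1$ the excess over $\mathcal{A}(\gamma_h)$ is $-p_2(x_0)\left(e^{\varepsilon\cos(2\pi q_1)}-e^{-\varepsilon}\right)$ --- so shrinking $\varepsilon$ shrinks the gap at exactly the same rate as any competing terms, and no domination occurs. Your $dq_1$-worry, by contrast, is vacuous: on each annulus of $T_{x_0}\setminus(\gamma_h\cup\gamma_e)$ the function $F_L$ depends only on $q_1$, so $e^{\varepsilon F_L}p_1(x_0)\,dq_1$ is exact there and integrates to zero over every component, not merely to $O(\varepsilon)$. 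Second, and more seriously, the real danger is the weighted $dq_2$-integral: a single component confined to one annulus can run in the $-q_2$ direction where $e^{\varepsilon F_L}$ is large and in the $+q_2$ direction where it is small, or contain clockwise contractible pieces, so componentwise estimates fail. Indeed the statement you rely on --- ``every $1$-cycle of class $(0,-1)$ in $T_{x_0}$ avoiding $\gamma_h\cup\gamma_e$ has action $>\mathcal{A}(\gamma_h)$'' --- is false as stated: add to a vertical loop with $q_1$ near $\tfrac12$ a clockwise contractible loop enclosing sufficient $d\lambdapert$-area, and the total action drops below $\mathcal{A}(\gamma_h)$.

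The paper avoids all pointwise estimates with a global Stokes argument on the cut torus: it orients the two annuli $A_1,A_2$ of $T_{x_0}\setminus(\gamma_h\cup\gamma_e)$ by $(\nabla F_L,R)$, so that $d\lambdapert=\varepsilon e^{\varepsilon F_L}dF_L\wedge\lambda$ is a positive area form on each; it checks, using that $\nabla F_L$ points away from the minimum $\gamma_h$, that both annuli induce the same boundary orientation on $\gamma_h$ (opposite the Reeb orientation, cf.\ Figure~\ref{fig:actionA}); and it produces an oriented $2$-chain $E\subset A_1\cup A_2$ with $\partial E=G-\gamma_h$, whence $\mathcal{A}(G)-\mathcal{A}(\gamma_h)=\int_E d\lambdapert>0$. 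To complete your proof you would need this chain-level construction (or a substitute exploiting properties of $G$ as the slice of a holomorphic curve, beyond its homology class and disjointness from the two orbits) --- smallness of $\varepsilon$ alone cannot do the job.
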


\begin{proof}
Suppose for sake of contradiction that $u$ approaches $\gamma_h$ from the right.
Let $\widetilde{B}$ be the connected component of $\image(u)\cap (\R\times Y_{[x_0,x_0+\delta)})$ containing the positive asymptotic end to $\gamma_h$ and let $B=\pi_Y(\widetilde{B})$ be its projection. Let $\widetilde{G}=-\partial \widetilde{B} = \widetilde{B}\cap (\R\times T_{x_0})$ and $G=\pi_Y(\widetilde{G})\subset T_{x_0}$, see Figure~\ref{fig:actionB}. Note $\widetilde{G}$ is oriented as the boundary of $\image(u)\cap (\R\times Y_{[0,x_0]})$, so as the negative boundary of $\widetilde{B}$. $G$ inherits an orientation from $\widetilde{G}$. 

\begin{figure}
    \centering
    \includegraphics[scale=.5]{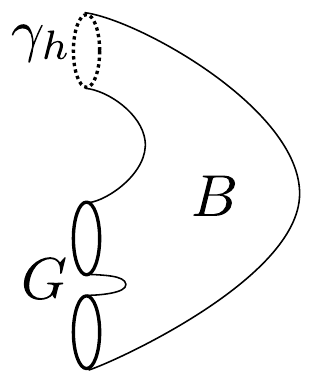}
    \caption{The projection of the component of $u$ in $Y_{[x_0,x_0+\delta)}$ with asymptotic boundary on $\gamma_h$ and actual boundary $G$.}
    \label{fig:actionB}
\end{figure}

Since $B$ is the projection of a portion of $u$,
$$0<\int_B d\lambdapert = \int_{\gamma_h} \lambdapert - \int_{G} \lambdapert = \mathcal{A}(\gamma_h)-\mathcal{A}(G).$$
Thus 
\begin{equation}\label{eqn:actionineq}
\mathcal{A}(\gamma_h) >\mathcal{A}(G).
\end{equation}

We will show that the opposite inequality holds to get a contradiction. Intuitively, $\mathcal{A}(\gamma_h)$ has the smallest action with respect to $\lambdapert$ of any cycle in $T_{x_0}$ in the same homology class because $\gamma_h$ is the minimum of the Morse function $f_0$ we used to perturb the contact form and with the unperturbed contact form, all cycles homologous to $\gamma_h$ had the same action. To prove this precisely, we will use a Stokes' theorem argument on $T_{x_0}$.

By Lemma~\ref{lem:disjointcylinder}, $G\subseteq \pi_Y(\image(u))$ is disjoint from $\gamma_h$ and $\gamma_e$. Therefore $G\subset T_{x_0}\setminus (\gamma_h\cup \gamma_e)$. $G$ may have singularities due to the projection or due to a non-transverse intersection of $\image(u)$ with $\R\times T_{x_0}$, but it still gives a $1$-cycle representing a class in $H_1(T_{x_0})$. Since $Y_{[x_0,x_0+\delta)}$ deformation retracts to $T_{x_0}$, $B$ gives a homology showing that $[G]=[\gamma_h]\in H_1(T_{x_0})$. Notice that $\gamma_h$  is oriented by the Reeb vector field orientation.

Let $A_1$ and $A_2$ be the connected components of $T_{x_0}\setminus (\gamma_h\cup \gamma_e)$, so $A_1\cup A_2 = T_{x_0}\setminus (\gamma_h\cup \gamma_e)$. Orient $A_1\cup A_2$ by $d\lambdapert|_{A_1\cup A_2} = \varepsilon e^{\varepsilon F_L} dF_L\wedge \lambda|_{A_1\cup A_2}$. Equivalently, we declare $(\nabla F_L, R)$ to be an oriented basis. Note that the orientations on $A_1$ and $A_2$ disagree as subsets of $T_{x_0}$ since $dF_L$ and $\nabla F_L$ switch sign when passing $\gamma_h$ or $\gamma_e$. Because of this, viewing $\gamma_h$ as part of the boundary of $A_1$ and $A_2$, the boundary orientation it inherits is the same as a subset of the oriented boundary of $A_1$ and the oriented boundary of $A_2$. {Here we use the convention that the boundary orientation is that which concatenates after an outward normal vector to a positive orientation. Furthermore, the boundary orientations that $A_1$ and $A_2$ both induce on $\gamma_h$ } are {opposite from the Reeb orientation on $\gamma_h$ because $\nabla F_L$ points into the interior of each $A_i$ along $\gamma_h$ since it points away from the minimum $\gamma_h$. See Figure~\ref{fig:actionA}.}

\begin{figure}
    \centering
    \includegraphics[scale=.5]{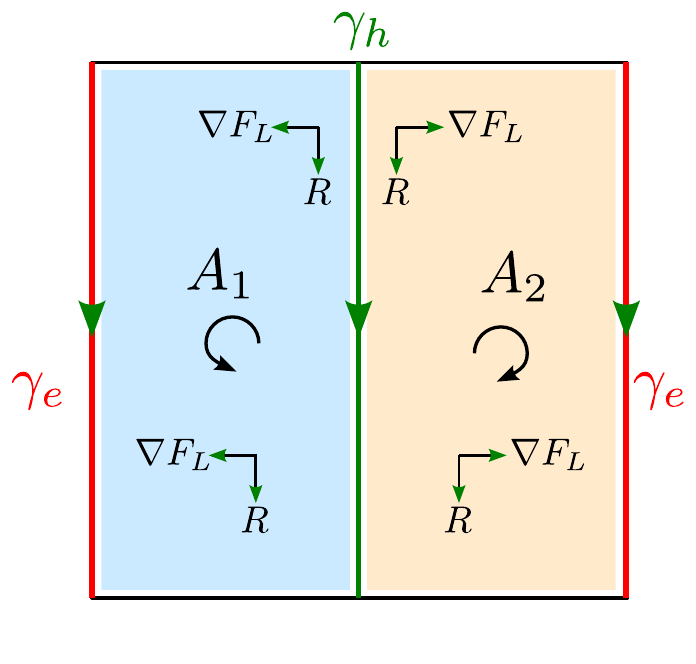}
    \caption{The orientation on $A_1\sqcup A_2$ is represented by $(\nabla F_L, R)$. The orientations on $A_1$ and $A_2$ disagree as subsets of $T_{x_0}$. The boundary orientation that $A_1$ and $A_2$ induce on $\gamma_h$ is negative the Reeb orientation.}
    \label{fig:actionA}
\end{figure}

\begin{figure}
    \centering
    \includegraphics[scale=.5]{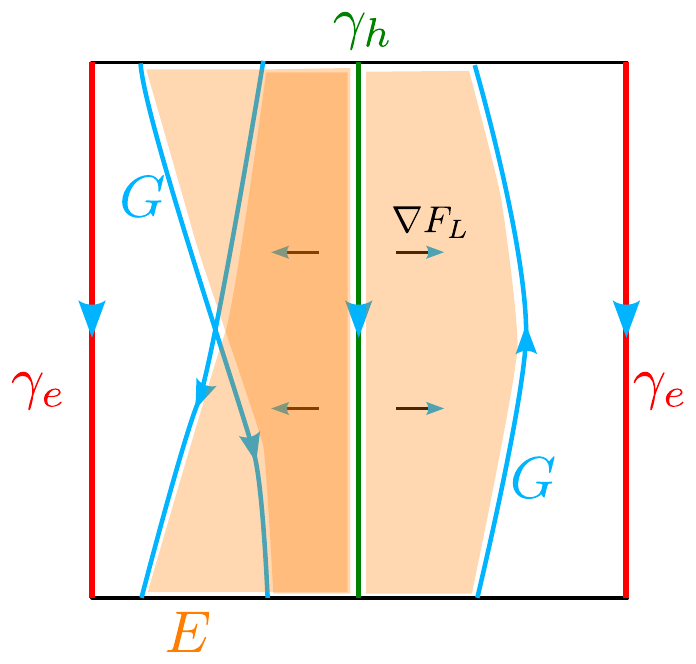}
    \caption{The torus $T_{x_0}$ with Reeb orbits $\gamma_h$ and $\gamma_e$ pointing downwards and cutting $T_{x_0}$ into two annuli oriented by $(\nabla F_L, R)$. $G$ is contained in these two annuli, and there is an oriented $2$-cycle $E$ which co-bounds $G$ and $\gamma_h$.}
    \label{fig:action}
\end{figure}

Because $[G]=[\gamma_h]\in H_1(T_{x_0})$ and $G$ is disjoint from $\gamma_h$ and $\gamma_e$, there exists an oriented $2$-chain $E\subset A_1\cup A_2$ {(oriented using the orientations of $A_1$ and $A_2$ specified above)} with $\partial E = G-\gamma_h$, see Figure~\ref{fig:action}. Hence,
$$0<\int_E d\lambdapert = \int_{G} \lambdapert - \int_{\gamma_h} \lambdapert.$$
Thus, $\mathcal{A}(G)>\mathcal{A}(\gamma_h)$, a contradiction to the opposite inequality above. 
\end{proof}

\section{Asymptotic constraints on $\Jpert$-holomorphic curves}
\label{sec:asymptotic}
In this section, we will analyze the asymptotic ends of $\Jpert$-holomorphic curves, using key results of Hofer-Wysocki-Zehnder and Siefring.  We obtain constraints on the ways such a curve can {asymptotically approach the closed positive hyperbolic orbit $\gamma_h$ at a positive end} and derive a uniqueness result under a homological condition, which will be satisfied in our setting.
Our main result of this section will be the following.

\begin{theorem} \label{theorem:asymptotics}
    For sufficiently small positive $\varepsilon$, let $\lambdapert$ be the perturbed contact form constructed in \S\ref{ss:perturb} and let $\Jpert$  be a compatible almost complex structure on $\R\times Y$. Let $\gamma_h$ be a closed positive hyperbolic Reeb orbit lying in the perturbation of a positive Morse-Bott torus at a center of the perturbation, and $\tau_0$ the trivialization from (\ref{eqn:tau0}).  
    
    Then any Fredholm index one $\Jpert$-holomorphic curve $u$ with a positive asymptotic end at $\gamma_h$ approaches from the left or from the right. 
    
    Moreover, if two Fredholm index one, $\Jpert$-holomorphic curves have $Q_{\tau_0}(u_1,u_2)=0$, no negative ends, the positive end is exactly $\gamma_h$, and $u_1$ and $u_2$ approach $\gamma_h$ from the same side then $u_1=u_2$ up to $\R$-translation.
\end{theorem}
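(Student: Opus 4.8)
The plan is to analyze the positive end of $u$ at $\gamma_h$ through the asymptotic formula of Hofer--Wysocki--Zehnder together with its relative refinement due to Siefring \cite{Siefring}. First I would record the spectral data of the asymptotic operator $L_{\gamma_h}$ associated to the positive hyperbolic orbit $\gamma_h$ in the trivialization $\tau_0$. Since $CZ_{\tau_0}(\gamma_h)=0$ by Corollary~\ref{cor:CZ}, the winding number $\alpha^-_{\tau_0}(\gamma_h)$ of the leading (largest) negative eigenvalue equals $0$, and by the monotonicity of the HWZ winding function this winding is attained at exactly one negative eigenvalue, so every lower negative eigenfunction has winding $\le -1$. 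Moreover, because the perturbation of \S\ref{ss:perturb} makes the stable and unstable manifolds of $\gamma_h$ tangent to the $V_1=\partial_x$ direction (as computed in Lemma~\ref{lem:morsebott_perturbation}, where $X_{F_L}$ is proportional to $\sin(\cdots)\,\partial_x$ near $T_{x_0}$), the leading eigenfunction $e_{-1}$ is proportional to $V_1$; in particular its $V_1$-component has a fixed nonzero sign.

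For the sidedness statement, the HWZ formula writes the non-trivial end of $u$ as a graph over the trivial cylinder $\R\times\gamma_h$ with leading term $e^{\mu s}(e_{-1}(t)+o(1))$ for some eigenvalue $\mu<0$. Since $e_{-1}$ has constant-sign $V_1$-component, the $x$-coordinate of $u(s,\cdot)$ is eventually either strictly greater than $x_0$ (positive sign) or strictly less than $x_0$ (negative sign) for all sufficiently large $s$. In the language of Definition~\ref{def:fromleftfromright}, $u$ therefore approaches $\gamma_h$ from the right or from the left.

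For the uniqueness statement, suppose $u_1,u_2$ are Fredholm index one, have no negative ends, positive end exactly $\gamma_h$, approach from the same side, and satisfy $Q_{\tau_0}(u_1,u_2)=0$; I will show they agree up to $\R$-translation. The key relation is Lemma~\ref{lem:Q_tau_lk_def}: for distinct curves, $\#(u_1\cap u_2)=Q_{\tau_0}(u_1,u_2)+l_{\tau_0}(u_1,u_2)=l_{\tau_0}(u_1,u_2)$. By Siefring's relative asymptotic formula, if $u_1$ is not an $\R$-translate of $u_2$ then $u_1-u_2$ decays like $e^{\mu's}(e(t)+o(1))$ with $e$ an eigenfunction of $L_{\gamma_h}$, and $l_{\tau_0}(u_1,u_2)=\operatorname{wind}_{\tau_0}(e)\le\alpha^-_{\tau_0}(\gamma_h)=0$; combined with positivity of intersections $\#(u_1\cap u_2)\ge 0$ this forces $l_{\tau_0}(u_1,u_2)=0$ and disjointness. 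I would then run this for the whole family $u_2^c$ of $\R$-translates of $u_2$: translation preserves the relative homology class (hence $Q_{\tau_0}(u_1,u_2^c)=0$) and the side of approach, while rescaling the leading asymptotic coefficient of $u_2^c$ by the positive factor $e^{-\mu\sigma(c)}$, which ranges over all of $(0,\infty)$ as $c$ ranges over $\R$. Because $u_1$ and $u_2$ approach from the same side, their leading coefficients share a sign, so there is a unique $c_0$ at which the leading terms of $u_1$ and $u_2^{c_0}$ cancel. At $c_0$ either $u_1=u_2^{c_0}$, which is the desired conclusion, or $u_1-u_2^{c_0}$ is governed by a strictly lower negative eigenfunction of winding $\le -1$, giving $l_{\tau_0}(u_1,u_2^{c_0})\le -1<0$ and contradicting $\#(u_1\cap u_2^{c_0})\ge 0$. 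Hence $u_1=u_2^{c_0}$.

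The hardest part will be the precise spectral bookkeeping: verifying that the leading negative eigenfunction of $L_{\gamma_h}$ genuinely points along $\partial_x$ with fixed sign, rather than merely having winding $0$, and pinning down the sign conventions relating Siefring's relative winding $\operatorname{wind}_{\tau_0}(e)$ to the linking number $l_{\tau_0}$ of Definition~\ref{def:asymp_linking}, together with confirming that cancellation of the leading relative coefficient drops the decay to a strictly lower eigenvalue (so the winding strictly decreases). The direction and sign of $e_{-1}$ should follow from the explicit model of the perturbed Reeb flow near $T_{x_0}$ in Lemma~\ref{lem:morsebott_perturbation}, and the strict drop uses that $CZ_{\tau_0}(\gamma_h)=0$ forces winding $0$ to occur at exactly one negative eigenvalue. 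The opposite-side case, where the two leading coefficients have opposite signs and the cancellation is never achieved, is precisely why same-sidedness is essential and why the two planes $\upert^+$ and $\upert^-$ remain genuinely distinct.
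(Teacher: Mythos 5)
Your uniqueness argument is essentially the paper's own (Proposition~\ref{prop:uniqueness}): same-side ends share the sign of the winding-zero leading coefficient, an $\R$-translation matches the leading terms, and if the curves still differ then the first discrepant eigenfunction has winding $\leq -1$ --- this strict drop is exactly because, with $CZ_{\tau_0}(\gamma_h)=0$, winding $0$ is realized by precisely one positive and one negative eigenvalue --- so $l_{\tau_0}(u_1,u_2^{c_0})\leq -1$ contradicts $\#(u_1\cap u_2^{c_0})=Q_{\tau_0}(u_1,u_2^{c_0})+l_{\tau_0}(u_1,u_2^{c_0})\geq 0$ via Lemma~\ref{lem:Q_tau_lk_def}. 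That half is sound, modulo the sign bookkeeping you flag yourself.

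The sidedness half, however, rests on a false geometric input, and the error is not cosmetic. You claim the stable and unstable manifolds of $\gamma_h$ are tangent to $V_1=\partial_x$, so that the leading eigenfunction is proportional to $V_1$ with fixed sign. The computation you cite from Lemma~\ref{lem:morsebott_perturbation}, namely $X_{F_L}\propto -\sin(2\pi(-b q_1+a q_2))\,\partial_x$, holds only \emph{on} $T_{x_0}$, where $\beta_i'=0$; linearizing transverse to $\gamma_h$ you must also differentiate $\beta_i(x)f_i(q_1,q_2)$ in $x$, and since $\beta_i$ has a maximum at $x_0$ while $f_i$ has a minimum along $\gamma_h$, the resulting saddle has invariant directions along the \emph{diagonals} $x_1x_2<0$ and $x_1x_2>0$ of the $(V_1,V_2)$-frame --- this is exactly what the paper computes in Lemma~\ref{lem:stableunstable}. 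Consequently the four quadrants cut out by the stable/unstable manifolds contain the axes: $Q_1,Q_3$ lie strictly on one side of $T_{x_0}$, but $Q_2,Q_4$ (around $\pm V_2$) straddle it. Winding zero together with the non-crossing result of \cite{CDR} (used in Lemma~\ref{lem: onesided}) only confines the end to a single quadrant; it does \emph{not} exclude $Q_2$ and $Q_4$, so sidedness does not follow from your argument. The paper closes precisely this gap with an orientation argument (Lemma~\ref{lemma:quadrants}): the projected end is a cylinder positively transverse to $R_\varepsilon$ inducing the Reeb boundary orientation on $\gamma_h$, and that is possible only in $Q_1$ or $Q_3$. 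You have no substitute for this step. A secondary point: eigenfunctions of the asymptotic operator are not flow-invariant directions (rescaled linearized-flow solutions do not satisfy the eigenvalue equation), so even with the correct stable/unstable picture, ``leading eigenfunction proportional to $V_1$'' would need a proof; the statement actually available is winding $0$ plus quadrant confinement, which is strictly weaker and is why the extra orientation lemma is needed.
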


The end of a curve that asymptotically approaches a Reeb orbit $\gamma$ lies in a neighborhood of $\R\times \gamma$. We can choose coordinates identifying a neighborhood of $\R\times \gamma$ with $\R\times S^1\times D^2$ by choosing a trivialization of $\xi$ along $\gamma$ (since $\xi$ gives a normal bundle to $\gamma$ in the contact manifold). 

In our setting of a contact toric manifold, we use the trivialization $\tau_0=\langle V_1, V_2\rangle$ given by equation (\ref{eqn:tau0}) in \textsection\ref{ss:MBindex}. Recall $V_1=\partial_x$ is transverse to the torus fibers, $V_2\in\xi\cap T(T_x)$ is tangent to the torus fibers, and $\langle V_1,V_2\rangle$ define a positive basis of $\xi$. Also, note that $CZ_{\tau_0}(\gamma_h)=0$ by Corollary~\ref{cor:CZ}.

We now analyze properties of the stable and unstable manifolds of $R_\varepsilon$ projected to a transversal slice along $\gamma_h$. 
We are interested in comparing the trivialization $\tau_0$ with the trivialization induced by these stable and unstable manifolds. We also are interested in understanding the four quadrants of the transversal plane formed in the complement of these stable and unstable manifolds. In particular, we will analyze in Lemma~\ref{lemma:quadrants} through which quadrants the projection of a pseudoholomorphic curve can approach $\gamma_h$ \emph{positively} asymptotically, and how these quadrants sit with respect to the splitting of the transversal by $T_{x_0}$.

  Let 
  \[
  v_1 = \partial_x, \quad v_2 = \frac{1}{Q(x_0)}\big(p_1(x_0)\partial_{q_2}-p_2(x_0)\partial_{q_1}\big)\quad \text{and}\quad v_3 = -\partial_{q_2}.
  \] 
  Then $(v_1,v_2,v_3)$ agrees with $(V_1,V_2,R)$ along $\gamma_h$ and give a positively oriented frame. We can choose a local coordinate system $(x_1=x-x_0,x_2,x_3)$ such that the coordinate vector fields $(\partial_{x_1},\partial_{x_2},\partial_{x_3})$ are given by 
 $(v_1,v_2,v_3)$. Then slices where $x_3$ is constant provide local transversals, $S$, to $\gamma_h$, with coordinates $(x_1,x_2)$. Note that the $x_2$-axis in such a slice is where $x=x_0$, which is exactly $S\cap T_{x_0}$. We will establish some properties of the stable and unstable manifolds of $R_\varepsilon$ projected to these transversals along the orbits of $R_\varepsilon$.

\begin{lemma}\label{lem:stableunstable}
    \begin{enumerate}
    \item The frame given by the stable and unstable manifolds of $\gamma_h$ as an orbit of $R_\varepsilon$ do not rotate with respect to the frame $V_1,V_2$. 
    \item The stable and unstable manifolds divide a transverse slice into quadrants $Q_1,Q_2,Q_3,Q_4$ such that close to $(x_1,x_2)=(0,0)$,
   \begin{itemize}
        \item       $Q_1\subset \{x_1>0\}$, 
        \item $Q_2\subset \{x_2>0\}$, 
        \item $Q_3\subset \{x_1<0\}$,  
        \item $Q_4\subset\{x_2<0\}.$ 
          \end{itemize}
          Furthermore, if we write $R_\varepsilon = aV_1+bV_2+cR$ then 
      \begin{itemize}
        \item    $b>0$ in $Q_1$, 
         \item $a>0$ in $Q_2$, 
           \item $b<0$ in $Q_3$,  
             \item$a<0$ in $Q_4$.
               \end{itemize}
   (See Figure~\ref{fig:stableunstable}.)
\end{enumerate}
\end{lemma}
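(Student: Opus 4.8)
<br>

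The plan is to analyze the perturbed Reeb vector field $R_\varepsilon$ near $\gamma_h$ directly, using the explicit formulas from Lemma~\ref{lemma:perturbedReeb} and Lemma~\ref{lem:morsebott_perturbation}. Recall that $R_\varepsilon = e^{-\varepsilon F_L}(R - \varepsilon X_{F_L})$, and that in $T_{x_0}$ we computed $X_{F_L}$ to be positively proportionate to $-\sin(2\pi(-b_0 q_1 + a_0 q_2))\partial_x$. For $\gamma_h$ at the minimum $\eqref{cos}$, the linearization of this projected dynamics has eigenvalues of opposite sign, giving the hyperbolic saddle whose stable and unstable manifolds we wish to track.

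First I would establish part (1). Since $\gamma_h$ is positive hyperbolic, the stable and unstable eigendirections do not rotate as one traverses $\gamma_h$ once (this is precisely the defining property, established in the proof of Lemma~\ref{lem:morsebott_perturbation} where positivity was deduced from the fact that $X_{F_L}$ is transverse to $T_{x_0}$ and $\nabla F_L$ is tangent to it). It remains to compare this non-rotating frame to $(V_1,V_2)$. Because $CZ_{\tau_0}(\gamma_h)=0$ by Corollary~\ref{cor:CZ}, and the Conley--Zehnder index of a positive hyperbolic orbit records exactly the (doubled) winding of the eigendirections relative to the trivialization, an index of $0$ forces zero net winding of the stable/unstable frame with respect to $\tau_0 = (V_1,V_2)$. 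This is the cleanest route; alternatively one linearizes $R_\varepsilon$ in the $(x_1,x_2)$ coordinates and checks the eigenvectors stay in fixed quadrants, but invoking the index computation avoids a messy calculation.

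Next, for part (2), I would linearize $R_\varepsilon$ at $\gamma_h$ in the transverse slice with coordinates $(x_1,x_2)=(x-x_0,x_2)$. Writing $R_\varepsilon = aV_1 + bV_2 + cR$, the key is that near the saddle the stable and unstable manifolds are transverse lines through the origin, whose eigendirections I want to show are \emph{not} aligned with the $x_1$- or $x_2$-axes, so that the four open quadrants they cut out each straddle exactly one of the half-planes $\{x_1>0\}$, $\{x_2>0\}$, $\{x_1<0\}$, $\{x_2<0\}$ as claimed. Since $x=x_0$ is exactly $S\cap T_{x_0}$ (the $x_2$-axis), the recollection that $X_{F_L}$ points transversally out of $T_{x_0}$ — i.e. the $a=dx_1$-component of $R_\varepsilon$ changes sign across $T_{x_0}$ controlled by $-\sin(2\pi(\cdots))$ — lets me read off the signs of $a$ and $b$ in each quadrant. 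Concretely, I expect to compute the $2\times 2$ linearization matrix $\begin{pmatrix} 0 & \ast \\ \ast & 0 \end{pmatrix}$-type form (off-diagonal, from the saddle structure combined with the Morse-Bott degeneracy in the $V_2$ direction) and verify the eigenvectors lie in the interiors of the coordinate quadrants, then match each quadrant $Q_i$ to its sign conditions by evaluating $a = dx_1(R_\varepsilon)$ and $b = dx_2(R_\varepsilon)$ along rays into each quadrant.

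The main obstacle I anticipate is the bookkeeping of signs and orientations: getting the labeling of $Q_1,\dots,Q_4$ consistent with Figure~\ref{fig:stableunstable}, and correctly relating the sign of the $V_2$-component $b$ (respectively the $V_1$-component $a$) to the geometry of the stable/unstable separatrices. The subtlety is that the Morse--Bott torus $T_{x_0}$ is degenerate for the unperturbed $\lambda$, so the hyperbolicity is entirely generated by the perturbation term $-\varepsilon X_{F_L}$; I must keep careful track of which direction corresponds to the Morse-Bott (formerly degenerate, $V_2$) direction versus the transverse ($V_1=\partial_x$) direction, and ensure the eigendirections genuinely avoid the axes rather than coinciding with them in the $\varepsilon\to 0$ limit. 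I would resolve this by working to leading order in $\varepsilon$ and using the explicit expression for $X_{F_L}$ and the convexity condition $W>0$ from Lemma~\ref{lem:RS_of_gammah} to pin down the nonvanishing of the relevant linearization entries.
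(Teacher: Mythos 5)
Your proposal is essentially correct, and for part (2) it coincides with the paper's argument: the paper also reduces to the sign structure of the perturbation, writing the projection of $-X_{F_L}$ to the transverse slice as $aV_1+bV_2$ and computing directly that $b$ has the sign of $x_1$ while $a$ has the sign of $x_2$, whence the stable cone lies in $\{x_1x_2<0\}$ and the unstable cone in $\{x_1x_2>0\}$, strictly inside the coordinate quadrants. Where you genuinely diverge is part (1): the paper does not use any index theory there — it observes that the sign computation is independent of the point on $\gamma_h$ (the Morse function $f_i$ is invariant in the Reeb direction, so the transverse dynamics is autonomous along the orbit), so the eigendirections sit in the \emph{same} cones relative to $(V_1,V_2)$ at every point, and non-rotation is immediate. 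Your route via $CZ_{\tau_0}(\gamma_h)=0$ (Corollary~\ref{cor:CZ}) is legitimate and non-circular, since that corollary is proved from the Robbin--Salamon computation of Lemma~\ref{lem:RS_of_gammah} independently of this lemma; but note it only yields zero \emph{net} winding over one period, whereas the later applications (Lemma~\ref{lemma:quadrants} and Corollary~\ref{cor:quadrant}) need the pointwise statement that the quadrants sit consistently on one side of $T_{x_0}$ all along the orbit. That pointwise statement is exactly what your part-(2) linearization delivers, so the $CZ$ shortcut ends up redundant: once you do the sign analysis for (2), part (1) falls out of it, which is how the paper organizes the proof. On the other hand, your insistence on linearizing the full $R_\varepsilon$, keeping the unperturbed term $dR(V_1)=\frac{W}{Q}V_2$ with $W>0$, is arguably more careful than the paper's assertion that the stable/unstable manifolds $C^1$-agree with those of $-X_{F_L}$: the $dR$ contribution to $b$ is $O(1)$ rather than $O(\varepsilon)$, but it has the same sign as the perturbation's contribution, so the conclusion is unaffected either way.

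Two slips to correct before executing the plan. First, "since $\gamma_h$ is positive hyperbolic, the eigendirections do not rotate" is not the defining property: positive hyperbolicity only says the eigendirections return to themselves preserving orientation, i.e.\ rotate by an \emph{even} multiple of $\pi$; it is the computation $CZ_{\tau_0}(\gamma_h)=0$ (or the paper's autonomy observation) that pins the rotation to zero. Second, you have the sign-flip loci interchanged: the factor $-\sin\bigl(2\pi(-b_iq_1+a_iq_2)\bigr)$ makes the $V_1$-component $a$ change sign across the critical circles $\{x_2=0\}$, not across $T_{x_0}$; it is the $V_2$-component $b$ that changes sign across $T_{x_0}=\{x_1=0\}$, sourced by the $dx$-part $\beta_i'(x)f_i$ of $dF_L$ (and reinforced by the $dR$ term). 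Since your stated method is to evaluate $a$ and $b$ on rays into each quadrant, a correct execution would catch this, but as written the sketch attributes the wrong component to the wrong wall.
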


\begin{proof}    
    Recall that by Lemma~\ref{lemma:perturbedReeb}, $R_\varepsilon=e^{-\varepsilon F_L}(R-\varepsilon X_{F_L})$. As in the proof of Lemma~\ref{lem:morsebott_perturbation}, we can choose an auxiliary Riemannian metric $g$ as in \S\ref{ss:perturb} such that $g(u,v)=d\lambda(u,\Jpert v)$ for $u,v\in \xi$ and $\nabla F_L\in \xi$. Then $\nabla F_L$ and $X_{F_L}=J\nabla F_L$ form a positive basis of $\xi$.
   
  The stable and unstable manifolds of $\gamma_h$, as orbits of $R_\varepsilon$, are close to the stable and unstable manifolds of $-X_{F_L}$ and they $C^1$ agree along $\gamma_h$, because the $R$ direction is projected out in the transverse slice. Thus it suffices to analyze the properties of the stable and unstable manifolds of $-X_{F_L}$ projected to the transversal. Let $-Y_{F_L}$ be the projected vector field. Additionally, in the transversal, sufficiently close to $\gamma_h$, $V_2$ is close to $v_2$ (and $V_1=v_1$ by definition). Using the relation $X_{F_L} = \Jpert(\nabla F_L)$ as in the proof of Lemma~\ref{lem:morsebott_perturbation} (2), or simply computing $X_{F_L}$ directly from the conditions $X_{F_L}\in \ker(\lambda)$, $d\lambda(\cdot, X_{F_L}) = dF_L(\cdot)$ as in Lemma~\ref{lemma:perturbedReeb}, we find the following properties as illustrated in Figure~\ref{fig:stableunstable}. 

  \begin{figure}
    \centering
    \includegraphics[scale=.5]{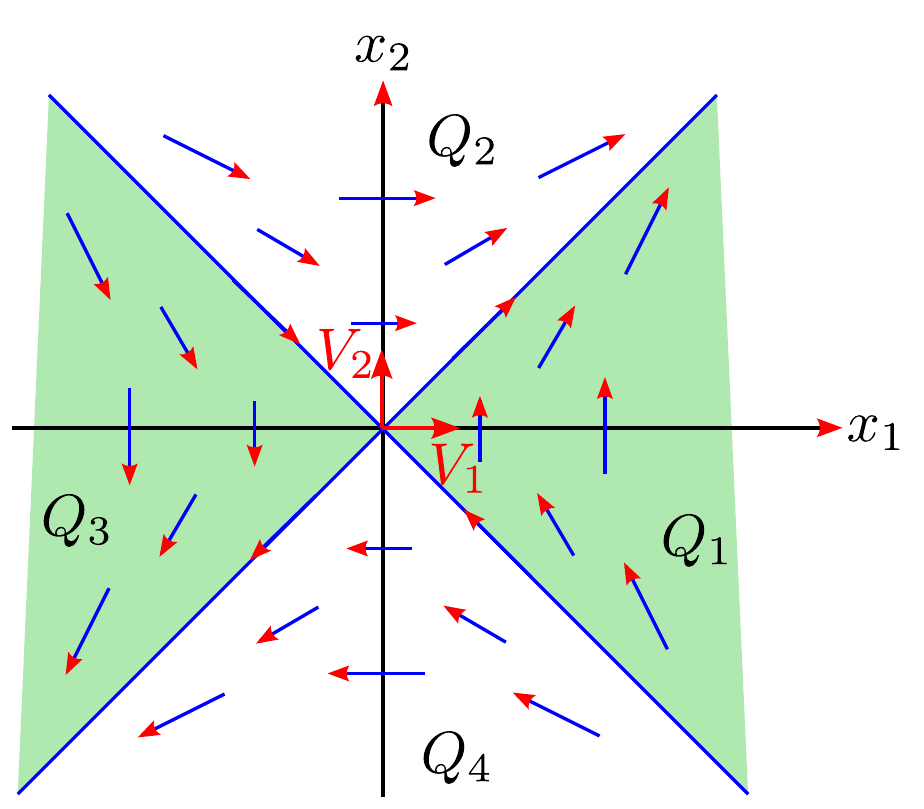}
    \caption{Projection of $R_\varepsilon$ to a $2$-dimensional slice transverse to $R_\varepsilon$ near $\gamma_h$ with the stable and unstable manifolds indicated as well as their positions relative to $V_1$ and $V_2$. The stable and unstable manifolds delimit the quadrants $Q_1$, $Q_2$, $Q_3$ and $Q_4$. The quadrants $Q_1$ and $Q_3$ are colored in green.}
    \label{fig:stableunstable}
    \end{figure}
  
  Writing $-Y_{F_L} = aV_1+bV_2$ where $a,b$ are real valued functions, one can compute:
  \begin{itemize}
        \item $b<0$ when $x<x_0$ {(equivalently, $x_1<0)$},
        \item $b>0$ when $x>x_0$ {(equivalently, $x_1>0)$},
        \item $a>0$ when $x_2>0$,
        \item $a<0$ when $x_2<0$.
  \end{itemize}

  The stable manifold is where $-Y_{F_L}$ is attracted towards $(0,0)$, so the vector field along the stable manifold must point inwards near $(0,0)$. This means that the signs of $(a,b)$ should be the opposite of the signs of $(x_1,x_2)$. Similarly, for the unstable manifold, near $(0,0)$, the signs of $(a,b)$ should be the same as the signs of $(x_1,x_2)$. Analyzing the signs in each of the $(x_1,x_2)$ quadrants, we see that the stable manifold of $-Y_{F_L}$ lies in the cone where $x_1x_2<0$, and the unstable manifold lies in the cone where $x_1x_2>0$. Note that this is truly independent of which point we are on $\gamma_h$, therefore the framing by the stable and unstable directions of $R_\varepsilon$ along a transversal agrees with the framing of $\gamma_h$ by $\tau_0=(V_1,V_2)$, that is, they do not rotate with respect to each other.

  The stable and unstable manifolds of $-Y_{F_L}$ split the transversal $S$ into four quadrants, $Q_1$, $Q_2$, $Q_3$, and $Q_4$. Observe that exactly one of these, $Q_1$, lies entirely on the right of $T_{x_0}$, and exactly one, $Q_3$, lies on the left, and the sign of $b$ is as stated.
\end{proof}

Results in~\cite{HWZI,Siefring} relate the pseudoholomorphic curve equation in an asymptotic end to a particular operator whose eigenfunctions and eigenvalues govern the possible behaviors of pseudoholomorphic curves in their asymptotic ends. 

\begin{definition}
\label{def:asymptotic_operator}
Let $\gamma$ be a nondegenerate embedded closed Reeb orbit and fix a unitary trivialization $\tau$ of the bundle $\gamma^*\xi \to S^1$. By rescaling the $t$ coordinate we may assume that $\gamma$ is parametrized by $S^1:=\R/\Z$ and has period 1. The  asymptotic operator\footnote{Our asymptotic operator is defined with an opposite sign from others in the literature, e.g. \cite{HWZI}.} associated to $\gamma$ is defined by
$$L_\gamma: = J \nabla^R_t: C^\infty(S^1, \gamma^* \xi) \to C^\infty(S^1, \gamma^* \xi),$$ 
where $\nabla^R$ denotes the symplectic connection on $\xi|_{\gamma}$ defined by the linearized Reeb flow. Nondegeneracy of the Reeb orbit $\gamma$ implies that $\gamma^*\xi$ does not have any nonzero section which is parallel with respect to $\nabla^R$, and so zero is not an eigenvalue of $L_\gamma$.

Using the unitary trivialization identifying $d\lambda|_\xi$ and $J|_\xi$ with the standard symplectic and complex structures on $\R^2 \cong \C$ with $J_0$ the associated standard complex structure, this operator can be identified with the first order differential operator $$L = J_0 \partial_t + S_t,$$ 
where $S_t= -J_0\dot \Psi(t)(\Psi(t))^{-1}$ is a symmetric matrix for each $t\in S^1$ defined in terms of the linearized Reeb flow $\Psi(t)$ along $\gamma$ on $\xi|_\gamma \cong \R^2$. Thus the operator $L$ is self-adjoint, and hence has real eigenvalues.
\end{definition}

Next, let $u$ be a pseudoholomorphic curve in $\R\times Y$. Suppose that $u$ has a positive end asymptotic to $\gamma$ and let $\mathcal{N}(\gamma)$ be a tubular neighborhood of $\gamma$.  Choose an identification $\mathcal{N}(\gamma)\simeq (\R/\Z)\times D^2$ compatible with the unitary trivialization $\tau$. The asymptotic behavior of a pseudoholomorphic curve is controlled  as follows, as elucidated by \cite{HWZI}, \cite[Theorems 2.2 and 2.3]{Siefring}, and \cite[Proposition 2.4]{obg2}.  For $s_0\gg0$,  the intersection of this positive end of $u$ with $[s_0,\infty)\times \mathcal{N}(\gamma)\subset [s_0,\infty)\times Y$ can be described as the image of a map
\begin{align*}
[s_0,\infty)\times (\R/\Z) &\longrightarrow \R \times D^2,\\
(s,t) &\longmapsto (s,f_u(s,t)),
\end{align*}
where $f_u$ at the positive end in the asymptotic limit near $\gamma$ takes on the following form (by some choice of parametrization of the domain) via \cite[Theorems 2.2 and 2.3]{Siefring}, a refinement of the asymptotic analysis appearing in \cite{HWZI}.
    \begin{equation}
    \label{eq:expansion}
    f_u(s,t) = \sum_{i\geq 0}^N e^{-\eta_i s} a_i \phi_i(t) + o_N(s,t),
    \end{equation} 
where
\begin{itemize}
    \item $\eta_i>0$ is an eigenvalue of the asymptotic operator $L$,
    \item $\phi_i:S^1\to R^2$ is the eigenfunction corresponding to $\eta_i$,
    \item $a_i \in \R$ are scalars depending on $u$, and
    \item $o_N: [R, \infty) \times S^1 \to \mathbb{R}^2$ is a function satisfying the decay estimate 
    \begin{equation}
    \label{eq:error_estimate}
    |\nabla_s^i \nabla_t^j o_N|\leq M_{ij} e^{-C s},
    \end{equation}
    for some positive constants $M_{ij}$ and $C$.
    \end{itemize}
(A negative end may be described analogously, with the sign of the eigenvalue reversed.) From this normal form, we see that the behavior of the curve as $s\to \infty$ is governed by the eigenvalues and  eigenfunctions associated to the asymptotic operator $L$. 

We call an eigenfunction $\phi_i$  \emph{leading} if its eigenvalue $\eta_i$ is the closest to zero and its coefficient $a_i \neq 0$.

Next, we recall some facts pertaining to the spectral properties of the asymptotic operator and the winding numbers of its eigenfunctions, going back to \cite[\S3]{HWZII}; see also the exposition in \cite[\S 3]{wendl2016lectures}. 

\begin{lemma}\cite[\S 3]{HWZII}, \cite[Lemma 6.4]{index} and \cite[Lemma 3.2]{Hutchings_Nelson_cyl} 
\label{lem:eigen_lemmas}
Let $\gamma$ be a nondegenerate Reeb orbit associated to a contact 3-manifold $(Y,\xi:=\ker \lambda)$.  Let $\sigma(L)$ be the spectrum of $L$. 
\begin{enumerate}
\item{Every (nonzero) eigenfunction $\varphi$ of $L$ is a nonvanishing section of $\gamma^*\xi$, and hence has a well defined winding number (with respect to a chosen unitary trivialization of $\gamma^*\xi$), which we denote by $\wind(\varphi)$.  Any two nontrivial eigenfunctions in the same eigenspace have the same winding number.  \cite[Lemma 3.5]{HWZII}}
\item Let $\varphi_1$ and $\varphi_2$ be eigenfunctions of $L$ with the same winding number and different eigenvalues. Then any nontrivial linear combination of $\varphi_1$ and $\varphi_2$ is a nonvanishing section of $\gamma^*\xi$  \cite[Lemma 3.5]{HWZII}.
\item  If $\phi_1$ and $\phi_2$ are eigenfunctions of $L$ with eigenvalues $\eta_1 \leq \eta_2$ then $\wind_{\tau}(\phi_1) \geq \wind_{\tau}(\phi_2)$. \cite[Lemma 3.7]{HWZII}\footnote{ Note the change of sign according to our definition of the asymptotic operator.}  

\item For every $w \in \Z$, $L$ has exactly two eigenvalues (counting multiplicity) for which the corresponding eigenfunctions have winding number equal to $w$. \cite[Lemma 3.6]{HWZII}.

\item \label{windCZineq} 
If $\gamma$ is a nondegenerate embedded orbit and a positive end, then for any eigenfunction $\phi_i$ with eigenvalue $\eta_i>0$, we have that
$$\wind_{\tau} (\phi_i) \leq \alpha_{\tau}^+(\gamma):=\min\{\wind_{\tau}(\phi_i) \ | \ \eta_i\in \sigma(L)\cap (0,\infty)\} = \floor{CZ_\tau(\gamma)/2}.$$ 
 If $\gamma $ is a nondegenerate embedded orbit and a negative end, then for any eigenfunction $\phi_i$ with eigenvalue $\eta_i<0$, we have that
$$\wind_{\tau} (\phi_i) \geq \alpha_{\tau}^- (\gamma) :=\max\{\wind_{\tau}(\phi_i) \ | \ \eta_i\in \sigma(L)\cap (-\infty, 0)\}= \ceil{CZ_\tau (\gamma)/2}.$$
\cite[Theorem 3.10]{HWZII}.
\end{enumerate}
\end{lemma}
The quantities $\alpha_{\tau}^+(\gamma)$ and $\alpha^{-}_{\tau}(\gamma)$ are  respectively defined to be the \emph{positive and negative extremal winding numbers}.

\begin{proposition}\cite[Proposition 3.2]{obg2} 
\label{prop:maxwinding}
If the $\lambda$-compatible almost complex structure $J$ on $\R \times Y$ is generic, then for any Fredholm index $1$, connected, non-multiply covered $J$-holomorphic curve $C$ having a positive end at $\gamma$, the winding number of the leading eigenfunction of the asymptotic expansion of $C$ achieves the equality in Lemma~\ref{lem:eigen_lemmas}(\ref{windCZineq}).
\end{proposition}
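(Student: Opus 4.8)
The plan is to follow the strategy of Hutchings--Taubes \cite[Prop.~3.2]{obg2}, promoting the winding inequality of Lemma~\ref{lem:eigen_lemmas}(\ref{windCZineq}) to an equality by means of a dimension count on an asymptotically constrained moduli space. Write $L = L_\gamma$ for the asymptotic operator at the positive end of $C$ at $\gamma$, and let $\phi$ be the leading eigenfunction of the expansion \eqref{eq:expansion}, with winding number $w := \wind_\tau(\phi)$ relative to the unitary trivialization $\tau$. By Lemma~\ref{lem:eigen_lemmas}(\ref{windCZineq}) we already know $w \le \alpha_\tau^+(\gamma) = \floor{CZ_\tau(\gamma)/2}$; the content of the proposition is that this is an equality. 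First I would assume for contradiction that $w \le \alpha_\tau^+(\gamma)-1$, so that all eigenfunction coefficients $a_i$ corresponding to eigenvalues whose eigenfunctions wind strictly more than $w$ vanish.

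Next I would encode the constraint $\wind_\tau(\phi)\le w$ as a weighted Cauchy--Riemann problem. Choosing an exponential weight $\delta>0$ lying strictly between the positive eigenvalues of $L$ whose eigenfunctions wind $w$ and those that wind $w+1$, sections of the normal bundle decaying faster than $e^{-\delta s}$ automatically have leading winding at most $w$. Since $C$ actually decays at a rate $\eta>\delta$, the curve $C$ defines a point in the weighted moduli space $\mathcal{M}^\delta$ of somewhere injective curves with the same asymptotics satisfying this decay. The key index bookkeeping is then that passing from the unweighted problem to $D_C^\delta$ drops the Fredholm index by exactly $2(\alpha_\tau^+(\gamma)-w)$: this follows from the spectral-flow description of the weighted index together with the monotonicity of winding in the eigenvalue and the fact that each winding number is realized by exactly two eigenvalues (Lemma~\ref{lem:eigen_lemmas}(3),(4)). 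Consequently the virtual dimension of $\mathcal{M}^\delta$ is $\ind(C) - 2(\alpha_\tau^+(\gamma)-w) \le 1-2 = -1$.

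Finally I would invoke transversality. Because $J$ is generic and $C$ is connected and non-multiply covered, the standard transversality machinery underlying \cite{obg2} (and, in the automatic-transversality-adjacent form, \cite{wendl2016lectures}) applies to the constrained/weighted problem, so $\mathcal{M}^\delta$ is cut out transversally and has actual dimension equal to its virtual dimension. A regular moduli space of negative virtual dimension is empty, contradicting $C\in\mathcal{M}^\delta$. Hence $w \ge \alpha_\tau^+(\gamma)$, and combined with the reverse inequality we conclude $w = \alpha_\tau^+(\gamma)$, i.e.\ the leading eigenfunction achieves the extremal winding; in our application where $CZ_{\tau_0}(\gamma_h)=0$ this says $w=0$.

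The step I expect to be the main obstacle is establishing the transversality of the \emph{asymptotically constrained} moduli space $\mathcal{M}^\delta$ for a fixed generic $J$, phrased so that it rules out sub-extremal winding for \emph{all} such curves simultaneously; this is where somewhere injectivity is essential, since multiply covered ends cannot be perturbed away, and it is what forces the hypothesis ``non-multiply covered'' in the statement. Making the genericity uniform requires setting up a universal moduli space of pairs $(J,C)$ with the degenerate asymptotic behavior and applying a Sard--Smale argument to conclude that the bad locus is avoided by generic $J$. A secondary technical point is the precise accounting of the weighted Fredholm index in terms of the spectrum of $L$, which relies on the refined asymptotic analysis of \cite{HWZI,Siefring} and the nondegeneracy of $\gamma_h$ to guarantee that $0$ is not an eigenvalue and that the relevant eigenvalues are crossed cleanly.
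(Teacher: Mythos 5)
Your overall strategy is the right one: it is essentially a reconstruction of the Hutchings--Taubes proof of \cite[Proposition 3.2]{obg2}, which is exactly what the paper does here --- the paper offers no independent proof, citing \cite{obg2} and noting in the following remark an alternative route via Wendl's automatic transversality (index-one planes with one positive end have $c_N=0$, forcing extremal asymptotics without any genericity assumption on $J$). Your weighted-Fredholm setup, the universal moduli space with Sard--Smale, and the identification of somewhere injectivity as the point where multiple covers would break the argument are all faithful to the cited proof.

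However, there is a concrete error in your index bookkeeping, and it occurs precisely in the case the paper needs. You claim the weighted operator $D_C^\delta$ drops the Fredholm index by \emph{exactly} $2(\alpha_\tau^+(\gamma)-w)$, on the grounds that each winding number is realized by exactly two eigenvalues. But Lemma~\ref{lem:eigen_lemmas}(4) counts eigenvalues over the \emph{whole} spectrum, and the two eigenvalues realizing a given winding need not both be positive. When $CZ_\tau(\gamma)$ is even --- e.g.\ $\gamma=\gamma_h$ positive hyperbolic with $CZ_{\tau_0}(\gamma_h)=0$ --- one has $\alpha_\tau^+=\alpha_\tau^-=\floor{CZ_\tau(\gamma)/2}$, so the extremal winding is realized by \emph{one} positive and one negative eigenvalue. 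Hence for a positive end, imposing $w\le \alpha_\tau^+(\gamma)-1$ crosses only $2(\alpha_\tau^+(\gamma)-w)-1$ positive eigenvalues, and in the borderline case $w=\alpha_\tau^+(\gamma)-1$ the index drops by $1$, not $2$: the constrained moduli space has virtual dimension $0$, not $-1$, and your ``negative virtual dimension implies empty'' conclusion does not apply. The argument is rescued by an observation you omit: the asymptotic constraint is invariant under the $\R$-translation, which acts freely on non-trivial-cylinder curves, so a regular $0$-dimensional constrained moduli space carrying a free $\R$-action must be empty (equivalently, quotient by $\R$ before counting, as Hutchings--Taubes do). Your formula is correct as stated only for odd $CZ_\tau(\gamma)$ (elliptic ends), where both extremal-winding eigenvalues are positive. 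With this correction and the $\R$-invariance step made explicit, the proof goes through.
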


\begin{remark}
We have reformulated the above proposition, as Hutchings--Taubes observe that under these assumptions any such curve has a non-zero coefficient for an eigenfunction with extremal winding number. In particular, the leading eigenfunction of such a curve achieves the extremal winding number, as we have stated. See also the statement and proof of \cite[Lemma 3.2(c)]{Hutchings_Nelson_cyl}.  Alternatively, by Proposition 4.1 in \cite{Wendl_compactness}, any $\ind = 1$ immersed planes with a single positive asymptotic end at $\gamma_h$ must achieve extremal eigenfunctions since such planes have $c_N = 0$. This argument, due to Wendl and Hofer--Wysocki--Zehnder \cite{HWZII} does not assume genericity of $J$.
\end{remark}

We number the four quadrants by $Q_i$ for $i=1,2,3,4$, as in Figure~\ref{fig:stableunstable}, in circular order around $\gamma_h$ starting by the quadrant contained in the right side of $T_x$.
\begin{lemma}\label{lem: onesided}
If $u$ is a Fredholm index one $\Jpert$-holomorphic curve  that is not a trivial cylinder and  has a positive asymptotic end to $\gamma_h$, then:
\begin{enumerate}
    \item the leading winding number with respect to $\tau_0$,  the trivialization from (\ref{eqn:tau0}),   of the  positive asymptotic end of $u$ approaching $\gamma_h$ is $\wind_{\tau_0} (\phi_0)=0$;
    \item the image of the corresponding leading eigenfunction $\phi_0$ is contained in a single quadrant $Q_j$ of the contact plane (see Figure~\ref{fig:stableunstable}). 
\end{enumerate}
\end{lemma}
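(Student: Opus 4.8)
The plan is to compute the leading winding number from the Conley--Zehnder index and then locate the image of the leading eigenfunction using the sign data for $R_\varepsilon$ established in Lemma~\ref{lem:stableunstable}. For part (1), I would invoke Proposition~\ref{prop:maxwinding}: since $u$ is a Fredholm index one, connected, non-multiply-covered $\Jpert$-holomorphic curve with a positive end at $\gamma_h$ (it is not a trivial cylinder, and its single positive end at the embedded orbit $\gamma_h$ forces it to be somewhere injective), the leading eigenfunction $\phi_0$ achieves the extremal winding number in Lemma~\ref{lem:eigen_lemmas}(\ref{windCZineq}). That extremal value is $\alpha^+_{\tau_0}(\gamma_h)=\floor{CZ_{\tau_0}(\gamma_h)/2}$, and by Corollary~\ref{cor:CZ} we have $CZ_{\tau_0}(\gamma_h)=0$, so $\wind_{\tau_0}(\phi_0)=\floor{0/2}=0$.

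For part (2), the key point is that a leading eigenfunction with winding number zero is a nonvanishing section of $\gamma_h^*\xi$ (Lemma~\ref{lem:eigen_lemmas}(1)) whose image, traced out around $\gamma_h$, does not wind around the origin of the transverse slice with respect to $\tau_0=(V_1,V_2)$. First I would recall from Lemma~\ref{lem:stableunstable}(1) that the frame given by the stable/unstable manifolds of $\gamma_h$ does not rotate relative to $(V_1,V_2)$; hence the four quadrants $Q_1,\dots,Q_4$ are, up to the bounded non-rotating error, fixed regions in the transverse slice and winding number zero with respect to $\tau_0$ is the same as winding number zero with respect to the stable/unstable frame. The expansion \eqref{eq:expansion} shows that near the puncture the end of $u$ is $C^1$-close to the graph of $e^{-\eta_0 s}a_0\phi_0(t)$, so the transverse projection of the end traces a curve in the direction of $\phi_0(t)$. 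Since $\phi_0$ has winding number zero, this direction stays within an arc subtending angle strictly less than $2\pi$; I would then argue that, because the approach must follow the Reeb flow (the curve is positively asymptotic, so its transverse projection is attracted to $(0,0)$ along orbits of the projected $R_\varepsilon$), the image of $\phi_0$ is confined to the unstable cone and cannot cross a stable/unstable separatrix, pinning it into a single quadrant $Q_j$.

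The main obstacle I anticipate is making the last confinement step fully rigorous: winding number zero alone only guarantees the direction $\phi_0(t)$ stays in \emph{some} half-plane (an arc of length $<2\pi$), not necessarily in one of the four quadrants cut out by the separatrices. To upgrade this, I would use the dynamical input of Lemma~\ref{lem:stableunstable}(2): the signs of the coefficients $a$ and $b$ of $R_\varepsilon=aV_1+bV_2+cR$ are constant on each quadrant, and a positively asymptotic end must approach $\gamma_h$ along the incoming (unstable, in backward time / stable in the relevant sense) flow direction, so the leading-order motion $a_0\phi_0$ is tangent to the flow and therefore cannot leave the quadrant determined by the sign pattern of $(a,b)$ compatible with approaching the orbit. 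Concretely, crossing a separatrix would force a sign change in the appropriate coefficient, contradicting the fixed asymptotic direction read off from the single leading eigenfunction. I would phrase this as: the leading eigenfunction, having winding zero and pointing along a fixed asymptotic ray, has image in the open half-plane bounded by the line through that ray, and intersecting with the quadrant structure of Lemma~\ref{lem:stableunstable} forces it into exactly one $Q_j$. This quadrant dichotomy is precisely what underlies the ``from the left/from the right'' trichotomy that Theorem~\ref{theorem:asymptotics} then exploits.
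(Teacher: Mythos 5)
Your part (1) is correct and coincides with the paper's argument: Corollary~\ref{cor:CZ} gives $CZ_{\tau_0}(\gamma_h)=0$, and Lemma~\ref{lem:eigen_lemmas}(5) together with Proposition~\ref{prop:maxwinding} forces the leading eigenfunction to have extremal winding $\wind_{\tau_0}(\phi_0)=0$ (your side remark justifying somewhere injectivity is fine). The gap is in part (2), and it is the step you yourself flagged. First, the claim that winding number zero confines the direction of $\phi_0(t)$ to an arc of angle less than $2\pi$ (or to a half-plane) is false: winding zero only constrains the \emph{net} rotation of the nonvanishing section $\phi_0$, and a priori its direction can sweep the entire circle and return, crossing separatrices along the way. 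Second, your dynamical patch does not work. The transverse projection of a $\Jpert$-holomorphic end is not an orbit of the projected Reeb vector field, and the leading eigenfunction is not tangent to the flow; indeed, the projected-flow trajectories that converge to the origin constitute precisely the stable manifold of $\gamma_h$, which is exactly the set the end must be shown to \emph{avoid} — so "attracted to $(0,0)$ along orbits of the projected $R_\varepsilon$" argues the opposite of what is needed. To see that eigenfunctions are transverse to the flow structure rather than aligned with it, take the model where the linearized flow in the stable/unstable frame is $\Psi(t)=\operatorname{diag}(e^{\mu t},e^{-\mu t})$: the two winding-zero eigenfunctions of $L=J_0\partial_t+S_t$ are the constant diagonal vectors proportional to $(1,1)$ and $(1,-1)$, pointing into the interiors of quadrants, transverse to both separatrices.

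The missing ingredient is a genuine theorem, not a routine argument: after using Lemma~\ref{lem:stableunstable}(1) to identify zero winding with respect to $\tau_0$ with zero winding with respect to the stable/unstable framing (which you do correctly), the paper invokes Lemma~3.5 of \cite{CDR}, which states that an asymptotic end whose winding in the stable/unstable framing is zero cannot intersect the stable or unstable manifolds of $\gamma_h$; connectedness of the end then pins it into a single quadrant $Q_j$. Note also that the sign data of Lemma~\ref{lem:stableunstable}(2) for $R_\varepsilon=aV_1+bV_2+cR$, which you try to use for the confinement itself, plays no role at this stage in the paper: it is used only afterwards, in Lemma~\ref{lemma:quadrants}, to determine \emph{which} quadrants ($Q_1$ or $Q_3$) are compatible with a positive end, via an orientation argument rather than a sign-change/separatrix argument.
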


\begin{proof}
  By Corollary~\ref{cor:CZ}, $CZ_{\tau_0}(\gamma_h)=0$. Thus, by Lemma~\ref{lem:eigen_lemmas}(5) and Proposition~\ref{prop:maxwinding}, if $\phi$ is the leading asymptotic eigenfunction for the positive asymptotic end of $u$ then $\wind_{\tau_0}(\phi) =0$. This in turn implies that the winding of $u$ with respect to $\tau_0$ is equal to zero (see \cite[\S~6.2]{index}). 

    Observe that by Lemma~\ref{lem:stableunstable}, the winding number with respect to the trivialization given by the stable and unstable directions of $\gamma_h$ of a positive asymptotic end of $u$ approaching $\gamma_h$ is zero. Lemma~3.5 in \cite{CDR} guarantees that in this case, the  asymptotic end cannot intersect the stable or unstable manifolds of $\gamma_h$. Thus, the asymptotic end, must lie in a fixed quadrant defined by the stable and unstable manifolds. 
\end{proof}

    \begin{lemma}\label{lemma:quadrants}
    Let $C$ be an oriented cylinder contained in a tubular neighborhood $U$ of $\gamma_h$, such that one boundary component is $\gamma_h$ and the other one is in $\partial U$. Assume that $C\subset Q_i$ for some $1\leq i\leq 4$ and that $C$ is positively transverse to $R_\varepsilon$ (thus $d\lambdapert|_{TC}>0$). Then the boundary orientation on $\gamma_h$ induced by $C$ agrees with the orientation on $\gamma_h$ generated by $R_\varepsilon$ if and only if $C$ is in $Q_1$ or $Q_3$.
    \end{lemma}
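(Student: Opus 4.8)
The plan is to reduce the statement about the boundary orientation of $\gamma_h$ to a robust computation with the \emph{co-orientation} of $C$, thereby sidestepping the fact that $d\lambdapert$ degenerates on $TC$ along $\gamma_h$ (where $R_\varepsilon$ is tangent to $C$). I would work in the local coordinates $(x_1,x_2,x_3)$ near $\gamma_h$ introduced just before Lemma~\ref{lem:stableunstable}, in which $(\partial_{x_1},\partial_{x_2},\partial_{x_3})=(V_1,V_2,R)$ is positively oriented and $(R,V_1,V_2)$ is the ambient orientation of $Y$. Writing $R_\varepsilon=aV_1+bV_2+cR$, recall from Lemma~\ref{lemma:perturbedReeb} that $c>0$, and from the proof of Lemma~\ref{lem:stableunstable} that to leading order $a\approx \mu x_2$ and $b\approx \nu x_1$ with $\mu,\nu>0$; thus $Q_1,Q_3$ are the sectors about the $\pm x_1$-axis (where $\nu x_1^2>\mu x_2^2$) and $Q_2,Q_4$ are those about the $\pm x_2$-axis. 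Along $\gamma_h$ the tangent plane is $T_pC=\operatorname{span}(R,w)$, where $w\in\xi$ points into the sector $Q_i$ containing $C$.

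First I would record the orientation principle. Since $(\lambdapert\wedge d\lambdapert)(R_\varepsilon,e_1,e_2)=d\lambdapert(e_1,e_2)$ and $\lambdapert\wedge d\lambdapert>0$ is the ambient volume form, the hypothesis $d\lambdapert|_{TC}>0$ says precisely that an oriented frame $(e_1,e_2)$ of $C$ satisfies $(R_\varepsilon,e_1,e_2)>0$; equivalently, $R_\varepsilon$ crosses $C$ to its positive side. Letting $\mathbf n\in\xi$ be the normal to $TC$ on the side into which $R_\varepsilon$ exits (with respect to the auxiliary metric used in Lemma~\ref{lem:stableunstable}, for which $\xi\perp R$ and $V_1,V_2$ are orthogonal of equal length), the oriented frame is then characterized by $(\mathbf n,e_1,e_2)>0$.

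The heart of the argument is that $\mathbf n$ is pinned down \emph{robustly} by the quadrant alone. Along $\gamma_h$ we have $\mathbf n\perp\operatorname{span}(R,w)$ and $\mathbf n\in\xi$, so $\mathbf n=\pm(w_1V_2-w_2V_1)$ is the right-angle rotation of $w$ within $\xi$, and its sign is fixed by $g(R_\varepsilon,\mathbf n)>0$, i.e.\ by the sign of $bw_1-aw_2=\tfrac1s(\nu x_1^2-\mu x_2^2)$ (read off as a limit from the interior, where $a,b\neq0$). Crucially, this sign involves only the $\xi$-components $a,b$ of $R_\varepsilon$, so there is no degeneracy and no first-order or curvature correction to track: it is positive on $Q_1,Q_3$ and negative on $Q_2,Q_4$. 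Hence $\mathbf n=+(w_1V_2-w_2V_1)$ on $Q_1,Q_3$ and $\mathbf n=-(w_1V_2-w_2V_1)$ on $Q_2,Q_4$. A direct determinant computation in the basis $(R,V_1,V_2)$ then gives $\det(\mathbf n,R,w)=\pm(w_1^2+w_2^2)$, positive exactly when $\mathbf n=+(w_1V_2-w_2V_1)$; so the oriented frame of $C$ is $(R,w)$ on $Q_1,Q_3$ and $(w,R)$ on $Q_2,Q_4$.

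Finally I would translate this into the boundary orientation using the convention that the orientation equals $(\nu_{\mathrm{out}},\tau_\partial)$ with outward normal $\nu_{\mathrm{out}}=-w$. On $Q_1,Q_3$ the relation $(-w,\tau_\partial)\sim(R,w)$ forces $\tau_\partial=+R$, which is the $R_\varepsilon$-orientation of $\gamma_h$ (as $R_\varepsilon=cR$ with $c>0$ there), so the two agree; on $Q_2,Q_4$ the relation $(-w,\tau_\partial)\sim(w,R)$ forces $\tau_\partial=-R$, so they disagree. This is exactly the claim. I expect the only genuinely delicate point to be the passage in the second and third paragraphs: one must avoid orienting $T_pC$ directly (where $d\lambdapert$ vanishes because $R_\varepsilon$ is tangent to $C$ along its boundary) and instead argue through the co-orientation $\mathbf n\in\xi$, whose sign is governed purely by the quadrant through the signs of $a$ and $b$.
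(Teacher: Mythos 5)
Your global strategy differs from the paper's: the paper first isotopes an arbitrary cylinder within its quadrant to one of four explicit model cylinders (using the flow, and noting that the induced boundary orientation is constant in such a family), and then checks frames only for those models; you instead try to read off the limiting co-orientation $\mathbf n$ along $\gamma_h$ directly for an arbitrary cylinder. Your orientation bookkeeping is correct --- the characterization of the orientation of $C$ by $(R_\varepsilon,e_1,e_2)>0$, the determinant $\det(\mathbf n,R,w)=\pm(w_1^2+w_2^2)$, and the final passage to the boundary orientation via the outward normal $-w$ all check out, and for straight cylinders (constant transverse slope, as in the paper's models) your computation agrees with the paper's. The gap is exactly at the step you yourself flag as the heart: the claim that the sign of $\mathbf n$ ``involves only the $\xi$-components $a,b$ of $R_\varepsilon$'' with ``no first-order or curvature correction to track'' is false for a general cylinder. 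The defining positivity $g(R_\varepsilon(p_s),N(p_s))>0$ holds with the \emph{interior} normal $N(p_s)$, which deviates from the limiting $\pm(w_1V_2-w_2V_1)$ by $O(s)$ in the transverse distance $s$; pairing the $R$-component of that deviation with the $cR$-part of $R_\varepsilon$ (where $c>0$) contributes at the \emph{same} order $O(s)$ as your quadrant term $bw_1-aw_2=\tfrac1s(\nu x_1^2-\mu x_2^2)$. Concretely, for the cone $(x_1,x_2)=s\,u(t)$ over a loop $u(t)=r(t)(\cos\psi(t),\sin\psi(t))$ in the open sector, the unnormalized normal is proportional to $\bigl(u_2,\,-u_1,\,s(u_1\dot u_2-u_2\dot u_1)\bigr)$, and the transversality pairing is $\pm\, s\bigl[\nu u_1^2-\mu u_2^2-c\,r^2\dot\psi\bigr]+o(s)$: the twist term $c\,r^2\dot\psi$ enters at leading order with unconstrained pointwise sign, so your pointwise limit does not pin down the sign of $\mathbf n$.

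The lemma is nevertheless true, but rescuing your argument requires a genuinely global input that you do not supply: in $Q_1$, say, the wrong co-orientation $\mathbf n=-(w_1V_2-w_2V_1)$ together with constancy of the transversality sign around the boundary circle would force $c\,r^2\dot\psi>\nu u_1^2-\mu u_2^2>0$ for all $t$, hence $\dot\psi>0$ everywhere, contradicting periodicity of $\psi$ (the loop is confined to a sector, so $\psi$ is a single-valued periodic function) --- and one would still have to treat non-conical ends, as well as the boundary case where $w$ lies along the stable or unstable direction (there $\nu w_1^2-\mu w_2^2=0$ and your formula gives no sign at all). Alternatively, and more simply, insert the paper's first step: isotope $C$ within its quadrant to a straight model cylinder using the flow, observe that the induced boundary orientation cannot jump along a continuous family of cylinders satisfying the hypotheses, and then your frame computation applies verbatim to the model.
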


    \begin{proof}
       Observe first that given two cylinders $C_1$, $C_2$ satisfying the hypotheses of the lemma that are contained in the same quadrant, we can isotope one to the other using the flow, preserving the orientation by $d\lambdapert$. Since $C_1$ and $C_2$ are positively transverse to $R_\varepsilon$, they induce the same orientation on $\gamma_h$. If the boundary orientation on $\gamma_h$ induced by the orientation on the cylinder agrees with the direction of $R_\varepsilon$, we say that the oriented cylinder positively orients  $\gamma_h$. 

       For each quadrant, we choose a model cylinder and determine whether it positively or negatively orients $\gamma_h$. By the isotopy described above, any other cylinder will induce the same orientation on $\gamma_h$ as the model cylinder in the same quadrant.
       
       Let $X=a_i\partial_{q_1}+b_i\partial_{q_2}$ be the vector field on $U$ that is parallel to $R|_{T_x}$ (i.e. on each torus fiber it has the slope of $\gamma_h$). In what follows, each cylinder is oriented by a vector field that points in the outward normal direction along $\gamma_h$ and $\pm X$.

       In $Q_1$ consider the cylinder $C_1$ tangent to $V_1$ and $X$, then $-V_1$ is an outward normal along $\gamma_h$. By Lemma~\ref{lem:stableunstable}, in $Q_1$, $R_\varepsilon$ has a positive component in the $V_2$ direction along $C_1$. The basis $\langle -V_1, X\rangle$ of $C_1$ orients $\gamma_h$ positively and the basis $\langle R_\varepsilon,-V_1, X\rangle$ is a positive basis of $Y$. Hence cylinders in $Q_1$ positively orient $\gamma_h$.

       In $Q_3$ consider the cylinder $C_3$ tangent to $V_1$ and $X$, then $V_1$ is  outer normal along $\gamma_h$. Here $R_\varepsilon$ has a negative component in the $V_2$ direction along $C_3$, by Lemma~\ref{lem:stableunstable}. The basis $\langle V_1, X\rangle$ of $C_3$ orients $\gamma_h$ positively and the basis $\langle R_\varepsilon,V_1, X\rangle$ is a positive basis of $Y$. Hence cylinders in $Q_3$  positively orient $\gamma_h$.

       In $Q_2$ consider the cylinder $C_2$ tangent to $V_2$ and $X$, then $-V_2$ is outer normal along $\gamma_h$. Here $R_\varepsilon$ has a positive component in the $V_1$ direction along $C_2$ by Lemma~\ref{lem:stableunstable}. The basis $\langle -V_2, -X\rangle$ of $C_2$ orients $\gamma_h$ negatively and the basis $\langle R_\varepsilon,-V_2, -X\rangle$ is a positive basis of $Y$. Hence cylinders in $Q_2$  negatively orient $\gamma_h$.

       Finally, in $Q_4$ consider the cylinder $C_4$ tangent to $V_2$ and $X$, then $V_2$ is outer normal along $\gamma_h$. Here $R_\varepsilon$ has a negative component in the $V_1$ direction along $C_4$ by Lemma~\ref{lem:stableunstable}. The basis $\langle V_2, -X\rangle$ of $C_2$ orients $\gamma_h$ negatively and the basis $\langle R_\varepsilon,V_2, -X\rangle$ is a positive basis of $Y$. Hence cylinders in $Q_2$ negatively orient $\gamma_h$.
    \end{proof}

    The lemma implies that {any} cylinder $C$ that is oriented so that it is positively transverse to $R_\varepsilon$ and that orients $\gamma_h$ positively, {must be} contained in either $Q_1$ or $Q_3$. Together with Lemma~\ref{lem: onesided}, this proves that $u$ approaches $\gamma_h$ from the left or from the right.

\begin{cor}
    If $u$ is a Fredholm index one $\Jpert$-holomorphic curve that is not a trivial cylinder and has a positive asymptotic end to $\gamma_h$ (with multiplicity $1$) then $u$ approaches $\gamma_h$ from the quadrant $Q_1$ or $Q_3$. In particular, $u$ is sided.
\end{cor}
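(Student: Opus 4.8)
The plan is to combine the three preceding lemmas into a single clean deduction. First I would recall the hypotheses: $u$ is a Fredholm index one $\Jpert$-holomorphic curve, not a trivial cylinder, with a positive asymptotic end at $\gamma_h$ of multiplicity $1$. By Lemma~\ref{lem: onesided}, the leading eigenfunction $\phi_0$ of the asymptotic expansion of this end has $\wind_{\tau_0}(\phi_0)=0$ and its image lies entirely within a single quadrant $Q_j$ determined by the stable and unstable manifolds of $\gamma_h$. The content of the asymptotic normal form \eqref{eq:expansion} is that, for $s\gg0$, the projection of $u$ to a transverse slice is dominated by this leading term, so the end of $u$ is a cylinder that, after projecting out the $R$-direction, sits inside the cone $Q_j$.

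The key step is to feed this cylinder into Lemma~\ref{lemma:quadrants}. I would take $C$ to be the portion of $\image(u)$ lying in a small tubular neighborhood $U$ of $\gamma_h$ near its positive end, truncated so that one boundary component limits onto $\gamma_h$ and the other lies in $\partial U$; this is an oriented cylinder. Since $u$ is $\Jpert$-holomorphic and $\gamma_h$ is a positive end, $C$ is positively transverse to $R_\varepsilon$ (equivalently $d\lambdapert|_{TC}>0$, which holds on any nontrivial holomorphic end by the compatibility of $\Jpert$ with $\lambdapert$), and the boundary orientation it induces on $\gamma_h$ agrees with the Reeb orientation. By Lemma~\ref{lem: onesided}(2), $C$ is contained in a single quadrant, so Lemma~\ref{lemma:quadrants} applies and forces that quadrant to be $Q_1$ or $Q_3$: these are exactly the quadrants whose positively-oriented positively-transverse cylinders orient $\gamma_h$ in the Reeb direction. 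Quadrants $Q_2$ and $Q_4$ orient $\gamma_h$ oppositely and are thereby excluded.

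Finally I would translate the quadrant conclusion into the sidedness statement. By the description of the quadrants in Lemma~\ref{lem:stableunstable}(2), $Q_1$ is the quadrant lying to the right of $T_{x_0}$ (where $x_1=x-x_0>0$) and $Q_3$ is the quadrant lying to the left (where $x_1<0$). Hence if the end lies in $Q_1$ then $u$ approaches $\gamma_h$ from the right, and if it lies in $Q_3$ then $u$ approaches from the left, in the sense of Definition~\ref{def:fromleftfromright}. In particular $u$ is sided. I do not expect a serious obstacle here; the only point requiring care is verifying that the truncated end genuinely satisfies the hypotheses of Lemma~\ref{lemma:quadrants}, namely that it is an embedded (or at least transversely immersed) cylinder positively transverse to $R_\varepsilon$ with the correct boundary orientation. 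This follows from the exponential asymptotic convergence in \eqref{eq:expansion} together with $\wind_{\tau_0}(\phi_0)=0$, which guarantees the end does not wind around $\gamma_h$ and hence projects injectively into a single quadrant for $s$ large. I would write this up as:

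\begin{proof}
Let $u$ be a Fredholm index one $\Jpert$-holomorphic curve that is not a trivial cylinder and has a positive asymptotic end at $\gamma_h$ with multiplicity $1$. By Lemma~\ref{lem: onesided}, the leading eigenfunction $\phi_0$ of the asymptotic expansion of this end satisfies $\wind_{\tau_0}(\phi_0)=0$, and its image lies in a single quadrant $Q_j$ of the contact plane, for some $1\leq j\leq 4$. By the asymptotic normal form \eqref{eq:expansion}, for $s\gg0$ the end of $u$ is governed by $\phi_0$; since $\wind_{\tau_0}(\phi_0)=0$, the end does not wind around $\gamma_h$, and its projection to a transverse slice is contained in $Q_j$ for all $s$ sufficiently large.

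Truncate this end to obtain an oriented cylinder $C$ contained in a tubular neighborhood $U$ of $\gamma_h$, with one boundary component limiting onto $\gamma_h$ and the other in $\partial U$. Because $u$ is $\Jpert$-holomorphic and $\gamma_h$ is a positive end, $C$ is positively transverse to $R_\varepsilon$, so $d\lambdapert|_{TC}>0$, and the boundary orientation it induces on $\gamma_h$ agrees with the Reeb orientation. As $C\subset Q_j$, Lemma~\ref{lemma:quadrants} applies and shows that $C$ induces the Reeb orientation on $\gamma_h$ if and only if $C\subset Q_1$ or $C\subset Q_3$. Since $C$ does induce the Reeb orientation, we conclude $j\in\{1,3\}$.

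By Lemma~\ref{lem:stableunstable}(2), $Q_1\subset\{x_1>0\}$ and $Q_3\subset\{x_1<0\}$, where $x_1=x-x_0$. Thus if $j=1$ the end of $u$ lies in $\R\times Y_{[x_0,x_0+\delta]}$, so $u$ approaches $\gamma_h$ from the right, and if $j=3$ the end lies in $\R\times Y_{[x_0-\delta,x_0]}$, so $u$ approaches from the left. In either case $u$ is sided.
\end{proof}
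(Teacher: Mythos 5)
Your proof is correct and follows essentially the same route as the paper's: combine Lemma~\ref{lem: onesided}(2) (the end lies in a single quadrant) with the positive orientation of $\gamma_h$ at a positive end and Lemma~\ref{lemma:quadrants} to exclude $Q_2$ and $Q_4$. You simply spell out details the paper leaves implicit -- the truncation of the end to a cylinder positively transverse to $R_\varepsilon$, and the translation of $Q_1$ versus $Q_3$ into right/left via Lemma~\ref{lem:stableunstable}(2) -- all of which are accurate.
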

\begin{proof}
    By Lemma~\ref{lem: onesided}(2), the image of the leading eigenfunction of $u$ is contained in a single quadrant $Q_i$. Since $u$ has a positive end on $\gamma_h$, it orients it positively and by Lemma~\ref{lemma:quadrants}, the image must be contained in either $Q_1$ or $Q_3$.
\end{proof}

The following corollary shows that if two curves are asymptotic to the same positive hyperbolic orbit after a perturbation on the Morse-Bott torus $T_x$, and they are on the same side of $T_x$, then they lie in the same quadrant given by the stable and unstable direction of the hyperbolic orbit $\gamma_h$.

\begin{cor}
\label{cor:quadrant}  
    If $u_1$ and $u_2$ are two connected Fredholm index one $\Jpert$-holomorphic curves positively asymptotic to $\gamma_h$, which approach from the same side of $T_{x}$, then their asymptotic ends are strictly contained in the same quadrant of $\gamma_h$. 
\end{cor}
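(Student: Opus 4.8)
The statement of Corollary~\ref{cor:quadrant} is an immediate consequence of the work already assembled in this section, so the plan is to extract it cleanly from the preceding lemmas rather than to introduce new machinery. First I would recall that by Lemma~\ref{lem: onesided}(2), each of $u_1$ and $u_2$, being a Fredholm index one $\Jpert$-holomorphic curve (not a trivial cylinder) with a positive asymptotic end at $\gamma_h$, has its leading eigenfunction $\phi_0$ of winding number $\wind_{\tau_0}(\phi_0)=0$ and the associated asymptotic end contained in a single quadrant $Q_j$ cut out by the stable and unstable manifolds of $\gamma_h$. Combined with the orientation analysis of Lemma~\ref{lemma:quadrants} (which forces a positively oriented end to sit in $Q_1$ or $Q_3$), this means each $u_i$ approaches $\gamma_h$ through exactly one of the two quadrants $Q_1$ or $Q_3$.

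The only remaining point is to translate the hypothesis ``$u_1$ and $u_2$ approach from the same side of $T_x$'' into ``they occupy the same quadrant.'' The key geometric fact, established in Lemma~\ref{lem:stableunstable}(2), is that precisely one admissible quadrant lies in the right half $\{x_1>0\}$, namely $Q_1$, and precisely one lies in the left half $\{x_1<0\}$, namely $Q_3$. Thus I would argue as follows: by Definition~\ref{def:fromleftfromright}, approaching $\gamma_h$ from the right (resp. from the left) means the relevant component of the end lies in $\R\times Y_{[x_0,x_0+\delta]}$ (resp. $\R\times Y_{[x_0-\delta,x_0]}$), i.e. in $\{x_1\geq 0\}$ (resp. $\{x_1\leq 0\}$). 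Since each end already lies in a single quadrant which must be $Q_1$ or $Q_3$, an end approaching from the right is forced into $Q_1$ and an end approaching from the left into $Q_3$. Therefore, if $u_1$ and $u_2$ approach from the same side, they are contained in the same quadrant.

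There is no genuine obstacle here; the mild care needed is only to confirm that the notion of ``side'' in Definition~\ref{def:fromleftfromright} matches the sign of $x_1$ used in Lemma~\ref{lem:stableunstable}, which holds because $V_1=\partial_x$ and $x_1=x-x_0$, so that $\{x_1>0\}=Y_{(x_0,x_0+\delta]}$ locally. The word ``strictly'' in the statement should be read as reflecting that, by Lemma~\ref{lem: onesided}(2), the image of the leading eigenfunction avoids the stable and unstable manifolds themselves (via the cited \cite[Lemma~3.5]{CDR}), so the ends lie in the open quadrants. I would write the proof in two short sentences invoking these two lemmas and the definition, for example:

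\begin{proof}
By Lemma~\ref{lem: onesided}, each $u_i$ has leading winding number $\wind_{\tau_0}(\phi_0)=0$ and its asymptotic end is contained in a single (open) quadrant of the complement of the stable and unstable manifolds of $\gamma_h$. Since $u_i$ is positively asymptotic to $\gamma_h$ and hence orients it positively, Lemma~\ref{lemma:quadrants} forces this quadrant to be $Q_1$ or $Q_3$. By Lemma~\ref{lem:stableunstable}(2), $Q_1\subset\{x_1>0\}$ lies in the right half $Y_{(x_0,x_0+\delta]}$ and $Q_3\subset\{x_1<0\}$ lies in the left half $Y_{[x_0-\delta,x_0)}$. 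Thus, in the sense of Definition~\ref{def:fromleftfromright}, $u_i$ approaches $\gamma_h$ from the right precisely when its end lies in $Q_1$, and from the left precisely when its end lies in $Q_3$. Consequently, if $u_1$ and $u_2$ approach $\gamma_h$ from the same side, their ends are strictly contained in the same quadrant.
\end{proof}
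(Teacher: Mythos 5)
Your proposal is correct and takes essentially the same route as the paper: both arguments reduce to zero winding of the asymptotic end with respect to $\tau_0$ (hence, via Lemma~\ref{lem:stableunstable}, with respect to the stable/unstable trivialization), invoke Lemma~\ref{lemma:quadrants} to restrict the end to $Q_1$ or $Q_3$, and use that $Q_1$ lies on the right of $T_{x}$ and $Q_3$ on the left to conclude that same side forces same quadrant. The only cosmetic difference is that you route the single-quadrant containment through Lemma~\ref{lem: onesided}, while the paper's proof extracts the zero winding directly from the same-side hypothesis before projecting the ends; the substance is identical.
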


\begin{proof}
    We consider one asymptotic end $\widetilde{C}_i\subset u_i$ of each curve limiting positively to $\gamma_h$. Since $u_1$ and $u_2$ lie on the same side of $T_x$, their asymptotic ends $\widetilde{C}_1$ and $\widetilde{C}_2$ approach $\gamma_h$ without winding with respect to the trivialization $\tau_0$. By Lemma~\ref{lem:stableunstable}, the winding is also zero with respect to the trivialization given by the stable and unstable manifolds of $\gamma_h$. We now project to $Y$ the two asymptotic ends to obtain near $\gamma_h$ two cylinders $C_i$. By Lemma~\ref{lemma:quadrants}, each one of these cylinders is contained in $Q_1$ or $Q_3$. Since they are on the same side of $T_x$, either the two are in $Q_1$ or the two are in $Q_3$, proving the claim.
\end{proof}

Now we show that a $\Jpert$-holomorphic curve positively asymptotically approaching $\gamma_h$ from the left (resp. right) is unique. Thus there are at most two $\Jpert$-holomorphic curves whose positive end is asymptotic to $\gamma_h$. Notice that a priori, there is no topological assumption on the curves and this is necessary for analyzing simpleness of the Reeb current $(\gamma_h,1)$ in the proof of Theorem~\ref{theorem at}~\ref{case more pi}. Similar asymptotic arguments have been used in \cite[Proposition 5.4]{Luya}. 

\begin{proposition}
\label{prop:uniqueness}
   Let $u_1$ and $u_2$ be index one pseudoholomorphic curves each with a positive asymptotic end at $\gamma_h$ and which approach $\gamma_h$ from the same side. Assume $u_1$ has no other asymptotic ends (positive or negative). Suppose that $Q_{\tau_0}(u_1, u_2) = 0$. Then, $u_1 = u_2$ up to $\R$-translation.
\end{proposition}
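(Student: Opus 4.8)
The plan is to combine positivity of intersections with Siefring's relative asymptotic formula, exploiting the $\mathbb{R}$-translation symmetry of the symplectization. If $u_1=u_2$ up to translation there is nothing to prove, so assume the images are distinct; since $u_1$ is a plane it is somewhere injective, and distinct somewhere injective $J$-holomorphic curves share no components, so for every $r\in\mathbb{R}$ the signed count $\#\big((r\cdot u_1)\cap u_2\big)$ is a nonnegative integer unless $r\cdot u_1=u_2$. The goal is to produce a single value of $r$ for which this count is forced to be negative, which can only be escaped if $r\cdot u_1 = u_2$.

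First I would record the translation invariance of the homological data. The projection $\pi_Y(r\cdot u_1)=\pi_Y(u_1)$ is independent of $r$, so $[r\cdot u_1]=[u_1]\in H_2(Y,\gamma_h)$ and hence $Q_{\tau_0}(r\cdot u_1,u_2)=Q_{\tau_0}(u_1,u_2)=0$ for all $r$. Likewise, since $\mathbb{R}$-translation is an isotopy it preserves the isotopy class of the positive-end braid of $u_1$ and hence the asymptotic linking. By Lemma~\ref{lem:Q_tau_lk_def}, $\#\big((r\cdot u_1)\cap u_2\big)=l_{\tau_0}(r\cdot u_1,u_2)$, and because $u_1$ has no negative ends only the common positive end at $\gamma_h$ contributes, so $l_{\tau_0}(r\cdot u_1,u_2)=\ell_{\tau_0}(\zeta^+_{r\cdot u_1},\zeta^+_{u_2})$.

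Next I would translate the asymptotic picture into eigenfunction data. By Proposition~\ref{prop:maxwinding} together with $CZ_{\tau_0}(\gamma_h)=0$ (Corollary~\ref{cor:CZ}), both $u_1$ and $u_2$ have nonvanishing leading coefficient on the extremal winding-zero eigenfunction $\phi^+$ associated to the smallest positive eigenvalue $\eta^+$ of the asymptotic operator $L_{\gamma_h}$; write their leading terms as $a_i\, e^{-\eta^+ s}\phi^+(t)$. Because $u_1$ and $u_2$ approach $\gamma_h$ from the same side, Lemma~\ref{lem: onesided}(2) and Corollary~\ref{cor:quadrant} force the leading eigenfunctions into the same quadrant, so $a_1$ and $a_2$ have the same sign. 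Translating $u_1$ upward by $c$ rescales its leading coefficient to $a_1 e^{\eta^+ c}$, which sweeps out every value of the sign of $a_1$; since $a_1,a_2$ share a sign there is a unique $c$ with $a_1 e^{\eta^+ c}=a_2$, and for that $c$ the leading terms of $c\cdot u_1$ and $u_2$ cancel in the difference $f_{c\cdot u_1}-f_{u_2}$.

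Finally I would derive the contradiction. If $f_{c\cdot u_1}-f_{u_2}\not\equiv 0$ near the end, then by Siefring's relative asymptotic formula its leading eigenfunction $e'$ has eigenvalue $\eta'>\eta^+$; since $\eta^+$ is the only positive eigenvalue whose eigenfunctions have winding $0$, the monotonicity of winding in the eigenvalue (Lemma~\ref{lem:eigen_lemmas}(3)--(4)) gives $\operatorname{wind}_{\tau_0}(e')\le -1$. Hence $\ell_{\tau_0}(\zeta^+_{c\cdot u_1},\zeta^+_{u_2})\le -1$ and $\#\big((c\cdot u_1)\cap u_2\big)=0+(\le -1)<0$, contradicting positivity of intersections (as $c\cdot u_1\ne u_2$). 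Therefore the difference vanishes along the end, so $c\cdot u_1$ and $u_2$ agree on an open set and, by unique continuation for $J$-holomorphic curves, $c\cdot u_1=u_2$, which is the claim. I expect the main obstacle to be the bookkeeping in this last step: invoking Siefring's expansion \eqref{eq:expansion} correctly to read off the winding of the leading \emph{relative} eigenfunction, and verifying that matching the extremal leading coefficients pushes the next eigenvalue strictly above $\eta^+$ so that the relative winding drops below the extremal value. This is precisely the mechanism that converts the sign of the asymptotic linking into a violation of positivity.
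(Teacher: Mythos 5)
Your proof is correct and takes essentially the same route as the paper's: same-side approach forces same-sign coefficients on the extremal winding-zero leading eigenfunction (via Corollary~\ref{cor:quadrant} and $CZ_{\tau_0}(\gamma_h)=0$), an $\R$-translation matches the leading coefficients, and winding monotonicity (Lemma~\ref{lem:eigen_lemmas}(3)--(4)) then drives the asymptotic linking to $\le -1$, contradicting positivity of intersections through $Q_{\tau_0}(u_1,u_2)+l_{\tau_0}(u_1,u_2)=\#(u_1\cap u_2)\ge 0$ (Lemma~\ref{lem:Q_tau_lk_def}). Your explicit appeal to Siefring's relative asymptotic formula for the difference and the closing unique-continuation step are mild refinements of the paper's version, which compares coefficients of the two individual expansions and leaves the all-coefficients-agree case implicit, but they do not constitute a different argument.
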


\begin{proof}
    Recall that we have a normal form of $u$ near the asymptotic limit at $\gamma_h$ as 
    \begin{equation}
    f_u(s,t) = \sum_{i\geq 0}^N e^{-\eta_i s} a_i \phi_i(t) + o_N(s,t)
    \end{equation} 
    satisfying Equation (\ref{eq:error_estimate}) as explained above.

    The proof is by contradiction as follows.  Suppose that we have two pseudoholomorphic curves $u_1$ and $u_2$ up to $\R$-translation asymptotic to $\gamma_h$ as a positive end and that $u_1$ and $u_2$ lie in the same quadrant. 
    We write the asymptotic expansions of $u_1$ and $u_2$ at $\gamma_h$ as:
    \begin{equation}\label{e:u1}
    f_{u_1} = \sum_{i\geq 0}^N e^{-\eta_i s} c_i \phi_i(t) +o_N(s,t)
    \end{equation}

    \begin{equation}\label{e:u2}
    f_{u_2} = \sum_{i\geq 0}^N e^{-\eta_i s} b_i \phi_i(t) +o_N(s,t)
    \end{equation}

  By Lemma \ref{lem: onesided}(1), we have that $\wind_{\tau_0}(\phi_0) = 0$, i.e., the leading eigenfunction is a vector in the plane given by the trivialization $\tau_0$.
  By Corollary~\ref{cor:quadrant}, the asymptotic ends of $u_1$ and $u_2$ approach $\gamma_h$ from the same quadrant determined by the coordinates of $\tau_0$, which by Lemma \ref{lem:stableunstable} identify with the stable and unstable directions of the linearized return map of the Reeb flow along $\gamma_h$ as in Figure~\ref{fig:stableunstable}. 
    This implies that the leading eigenterms of $u_1$ and $u_2$ are positive scalar multiple of one another. In particular, the signs of the coefficients of their leading eigenfunctions are the same. Thus in \eqref{e:u1} and \eqref{e:u2}, $c_0$ and $b_0$ must have the same sign.

    If $c_0= b_0$, then we may find the first $i>0$ such that $c_i \neq b_i$.      
    Now, by Lemma~\ref{lem:eigen_lemmas}(3), we have for any $\eta_i>\eta_0$ that $$\wind_{\tau_0}(\phi_{i}) \leq \wind_{\tau_0}(\phi_{0}).$$ 
    Recall that $\wind_{\tau_0}(\phi_0) = 0$ as above. Therefore, now by Lemma~\ref{lem:eigen_lemmas}(4), $$\wind_{\tau_0}(\phi_{i}) < \wind_{\tau_0}(\phi_{0}) = 0,$$ 
    since the winding number that is $0$ is achieved by the corresponding eigenfunctions of both the maximal negative and minimal positive eigenvalues.

    By the above discussion on winding numbers, we have that $$l_{\tau_0}(u_1, u_2) \leq \wind_{\tau_0}(\phi_{i}) \leq -1.$$

    If $c_0\neq b_0$, we may shift $u_2$ in the $\partial_s$ direction to some $s'\neq s$ so that $c_0 e^{-\eta_0 s} = b_0 e^{-\eta_0 s'}$. Let $u_2'$ denote the shifted curve. Again, this leads to $$l_{\tau_0}(u_1, u_2') \leq \wind_{\tau_0}(\phi_{i}) \leq -1.$$

    Now, since $Q_{\tau_0}(u_1, u_2) = Q_{\tau_0}(u_1, u_2') = 0$, we have a contradiction to Lemma~\ref{lem:Q_tau_lk_def} by intersection positivity for pseudoholomorphic curves, since
    \begin{equation}
        Q_{\tau_0}(u_1, u_2) + l_{\tau_0}(u_1, u_2) = \#(u_1, u_2) \geq 0,
    \end{equation}   
     and similarly for the pair $(u_1, u_2')$.

\end{proof}

\bibliographystyle{amsalpha}
\bibliography{reference.bib}

\end{document}